\documentclass[12pt]{article}

\usepackage[T1,T2A]{fontenc}
\usepackage[utf8]{inputenc}

\usepackage{latexsym}
\usepackage{mathrsfs}
\usepackage[all]{xy}
\usepackage{bbm}
\usepackage{xcolor}
\usepackage[mathscr]{euscript}
 
\usepackage{amsfonts}\usepackage{amssymb}
\usepackage{amsmath}
\usepackage{amsthm}
\usepackage{hyperref}
\usepackage{subfiles}

\usepackage{tikz}
\usepackage{tikzscale}
\usetikzlibrary{calc, angles, quotes}

\usepackage{blindtext}

\usepackage{comment}

\usepackage{titlesec}
\usepackage{stackengine}
\stackMath

\usepackage{amsmath,amsthm,amssymb,tikz,graphicx,adjustbox,caption,nicefrac}
\usepackage{tabularx}
\usetikzlibrary{quotes,positioning,automata}

\NewDocumentCommand{\vdotss}{ O{0.35em} O{0.8pt} m }{%
  \begin{tikzpicture}[baseline={(0, -0.5ex)}]
    \pgfmathsetmacro{\numdots}{#3 - 1}
    \foreach \i in {0, 1, ..., \numdots}{%
      \fill (0, {-\i * #1}) circle [radius=#2];
    }
    \path (0, 0.3ex) -- (0, {-\numdots * #1 - 0.3ex});
  \end{tikzpicture}%
}

\newcommand\eq[2]{
\begin{equation}
\label{eq:#1}
{#2}
\end{equation}
}
\newcommand{\equ}[1]{\eqref{eq:#1}}

\newcommand{\ignore}[1]{}
\newcommand{\UA}{\operatorname{U\mspace{-3mu}A}}
\newcommand{\conv}{\operatorname{Conv}}
\newcommand{\dist}{\operatorname{dist}}
\newcommand{\GL}{\operatorname{GL}}
\newcommand{\Span}{\operatorname{Span}}
\newcommand{\vp}{{\bf p}}
\newcommand{\vq}{{\bf q}}
\newcommand{\vb}{{\bf b}}

\newcommand{\nz}{\smallsetminus\{0\}}
\newcommand\mr{M_{m,n}}

\newtheorem{theorem}{Theorem}[section]
\newtheorem{lemma}[theorem]{Lemma}

\newtheorem{corollary}[theorem]{Corollary}

\newtheorem{observation}[theorem]{Observation}
\newtheorem{proposition}[theorem]{Proposition}

\newtheorem{thm}{Theorem}

\theoremstyle{definition}
\newtheorem{definition}[theorem]{Definition}
\newtheorem{example}[theorem]{Example}
\theoremstyle{remark}
\newtheorem{remark}[theorem]{Remark}
\usepackage{enumitem}

\newcommand{\R}{{\mathbb{R}}}
\newcommand{\Z}{{\mathbb{Z}}}
\newcommand{\N}{{\mathbb{N}}}

\newcommand{\discrete}{{discrete}}
\newcommand{\locallyfinite}{{locally finite}}

 \newcommand{\tmn}{{\bf T}_{m,n}}

\newcommand{\ba}{{\bf a}}
\newcommand{\q}{{\bf q}}
\newcommand{\p}{{\bf p}}

\newcommand{\bv}{{\bf v}}
\newcommand{\bx}{{\bf x}}
\newcommand{\by}{{\bf y}}
\newcommand{\br}{{\bf r}}
\newcommand{\be}{{\bf e}}
\newcommand{\bfL}{{L}}

\newcommand{\pr}{\pi}

\newcommand{\fR}{E}
\newcommand{\fQ}{{\Theta}}

\newcommand{\alf}{{\underline{\alpha}}}
\newcommand{\betf}{{\underline{\beta}}}

\numberwithin{equation}{section}

\def \le {\leqslant}
\def \ge {\geqslant}
\newcommand {\comm}[1]   {\textcolor{purple}{#1}}

\usepackage{tikz} 
\usetikzlibrary{calc,arrows,decorations.pathreplacing,fadings,3d,positioning}

\makeatletter
\def\namedlabel#1#2{\begingroup
    #2%
    \def\@currentlabel{#2}%
    \phantomsection\label{#1}\endgroup
}
\makeatother

\title{%On Sufficient Conditions for Total Density
Uniform Diophantine approximation with restrictions\\ via {total} density of collections of subspaces}

\author{Leo Hong \\
        { \small Department of Mathematics, UNCC, Charlotte, NC 28223, USA} \\
        {\small \texttt{lhong6@charlotte.edu}} \\
        Dmitry Kleinbock \\
        { \small Department of Mathematics, Brandeis University, Waltham, MA 02453, USA} \\
        {\small \texttt{kleinboc@brandeis.edu} }\\
  Vasiliy Neckrasov \\
  {\small Department of Mathematics, Brandeis University, Waltham, MA 02453, USA} \\
  {\small \texttt{vneckrasov@brandeis.edu}}
}
\date{}

\begin{document}

\maketitle

\begin{abstract} In 1926 Khintchine introduced a topological argument proving the existence of uncountably many nontrivial singular linear forms of $n \geq 2$ variables. Throughout the years, this argument {has been} extensively modified and generalized. Most recently, Kleinbock {et al.\ (2025)} introduced a general framework of Diophantine systems  and %it was 
{showed} that a certain topological property called total density implies a far-reaching generalization of Khintchine's result. We describe a way to establish total density for a variety of Diophantine systems, and thus prove that the sets of singular objects are uncountable and dense in a wide range of set-ups in Diophantine approximation. As a special case, we establish {such a} result for inhomogeneous approximation, {proving {the} existence of uncountably many singular systems of affine forms with a fixed translation part. One can also consider} approximation with  prime denominators, or more generally, approximation under some strong restrictions on numerators and   denominators.
\end{abstract}

\section{Introduction}\label{intro}

{
\it Throughout the text we mark our original theorems by numbers ({such as} Theorem 1.6, Theorem 2.1) and previously known ones by letters ({such as} Theorem A, Theorem B).}

\subsection{{The Khintchine--Jarnik Theorem and total density}}\label{beginning}

%!
{Throughout the paper we denote the supremum norm on $\R^k$, $k\in\N$, by $| \cdot |$, and  the Euclidean norm by $\|\cdot\|$.  $M_{m,n}$
% = M_{m,n}(\mathbb{R})$ 
will denote} the set of $m \times n$ real matrices. This paper is about {\it uniform} Diophantine approximation; that is, for a matrix $A \in M_{m,n}$ and a function $\psi$ we are interested in whether or not the system of inequalities
\begin{equation}\label{mainsystem}
%{{|A\vq   - \vp |
%\le  \psi(t)}
%  \ \ \ \mathrm{and}  
%\ \ |\vq|
%\le  t}
\begin{cases}
     | A{\q}-{\p}|\le \psi(t)\\ 
     \qquad\ \ | {\q}|\le t
\end{cases}
\end{equation}
has solutions $ {(\q, \p) \in (\mathbb{Z}^n\nz) \times \mathbb{Z}^m}$ for all {{sufficiently large} $t > 0$}. The question motivates the following definition:

\begin{definition}\label{UA-matrices}
    For a %non-increasing
     function $\psi:\R_{> 0}\to \R_{> 0}$ and a matrix $A\in M_{m,n}%(\mathbb{R})
    $, we say {that} $A$ is {\underbar{$\psi$-uniformly} \underbar{approximable}, or  \underbar{$\psi$-uniform} for brevity}, if for all {sufficiently large} $t$ there exist ${\bf q}\in\mathbb{Z}^n\nz$ and ${\bf p}\in\mathbb{Z}^m$ such that \eqref{mainsystem} holds. We denote the set of $\psi$-uniform $m\times n$ matrices by $\UA_{m,n}(\psi)$.
\end{definition}

We note that a similar notion is also sometimes called $\psi$-Dirichlet in the literature; see \cite{KW18} for the case $m=n=1$ and \cite{KW19} for the general case\footnote{{There and in the later papers \cite{KK, KSY,SY} instead of \eqref{mainsystem} a modification %\begin{equation*}\label{mainsystem}
$\begin{cases}
     | A{\q}-{\p}|^m < \psi(t)\\ 
     \qquad\ \ | {\q}|^n < t
\end{cases}$
%"${{|A\vq   - \vp |^m
%<  \psi(t)}}$ and  
%$|\vq|^n
%<  t$"
%\begin{cases}
%     | A{\q}-{\p}|\le \psi(t)\\ 
%     | {\q}|\le t
%\end{cases}
%\end{equation*} 
is used.}}. {We have decided to follow the notation and terminology of \cite{KMWW}, on which a substantial portion of our approach is based.}

%Throughout the paper we use a convenient notation: $$\psi_a(t) = %\frac{1}
%{t^{-a}}.$$

The classical Dirichlet's theorem states that every $m \times n$ matrix is $\psi_{{{n}/{m}}}$-uniform, {where} $${\psi_{{{n}/{m}}}(t) = %\frac{1}
{t^{-n/m}};}$$ this result is the best possible
{in the sense that the set $\UA_{m,n}( c \psi_{{{n}/{m}}})$  has Lebesgue measure zero for any $c<1$}
%,  that is, one can not replace $\psi_{\frac{n}{m}}$ with a quicker decreasing function 
(see \cite{DS1, DS2}). {Matrices that are $c \psi_{{{n}/{m}}}$-uniform for all $c>0$ are called  \underbar{singular}.} 
%Much less is known about the sets of $\psi$-uniform matrices if $\psi$ is different from $\psi_{\frac{n}{m}}
%\comm{We never seem to use this notation afterwards, or do we? -- we use it later to talk about the case $n=1$.}

It is clear that some matrices are $\psi$-uniform for trivial reasons, no matter what $\psi$ is: this happens when $ A{\bf q}\in \mathbb{Z}^m$ for some non-zero $ \q \in  \mathbb{Z}^n$; that is, the columns of $A$ are linearly dependent {over $\mathbb{Q}$ together with rational $m$-vectors}. Following \cite{MN25}, we will call such matrices $A$  \underbar{trivially singular}. 
A weaker and related property is the one of a matrix being not totally irrational: we call a matrix  \underbar{totally irrational} if it does not belong to any rational affine subspace of $M_{m,n}$; it is clear that any trivially singular matrix is not totally irrational, and if $m = 1$, these two notions coincide.

A natural question arises from these definitions: given a {positive} function $\psi$,
% (decreasing faster than $\psi_{\frac{n}{m}}$), 
do there {always} exist $\psi$-uniform $m \times n$ matrices {that} are not trivially singular, or even totally irrational? In the simplest case $m=n=1$ the answer is negative:  namely, it is due to Khintchine \cite{H1} that the only singular numbers are the {rational numbers}.
However, if $\max(m,n) > 1$, {the situation is different: Khintchine \cite{H1}, and then  Jarn\'{\i}k \cite{J} in bigger generality, showed
that  there exist
uncountably many totally
irrational singular matrices. More generally, the following is true:}
% which are  {\sl },not trivially singular and even totally irrational $\psi$-uniform matrices exist for a variety of functions $\psi$. In \cite{J}, Jarn\'{i}k proved that the sets of $\psi$-uniform matrices are big:
\begin{thm}\label{Jarnik}
    Let $m,n\in\mathbb{N}$.% where $n>1$.

    \begin{itemize}
    \item[\rm (1)] If $n > 1$, then for any non-increasing function $\psi:\,\,\, \R_{> 0}\to\R_{> 0}$ the set of totally irrational $\psi$-uniform $m\times n$ matrices is uncountable and dense in $M_{m,n}$;
 \item[\rm (2)]  If  $n = 1$ and one assumes in addition that $\lim\limits_{t \rightarrow \infty} t\psi(t) = \infty$, then for any $m > 1$ the set of totally irrational $\psi$-uniform $m\times 1$ matrices (column vectors) is uncountable and dense in ${M_{m,1}}$.    
\end{itemize}
\end{thm}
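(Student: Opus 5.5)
We prove the statement by a Khintchine-type nested construction, set up as a Baire-category argument on a closed set of \emph{rational} matrices. The key notion is the following. Suppose a matrix $A$ has a dense sequence of "good" rational approximants $A_k=P_k/q_k$, where each $A_k$ is trivially singular with a small kernel vector $\q_k$ (so $A_k\q_k\in\Z^m$). Then we only need to ensure two things: that the intervals of $t$ on which $\q_k$ serves as a solution of \eqref{mainsystem} overlap and cover $[t_0,\infty)$, and that the limit is totally irrational.

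Concretely, we build nested closed balls $B_1\supset B_2\supset\cdots$ in $M_{m,n}$ with centers $A_k$ and integer vectors $\q_k\in\Z^n\nz$, $\p_k\in\Z^m$ satisfying $A_k\q_k=\p_k$. For every $A\in B_k$ we have $|A\q_k-\p_k|\le r_k|\q_k|$, where $r_k$ is the radius of $B_k$. Hence $\q_k$ solves \eqref{mainsystem} for all $t\in[|\q_k|,T_k]$ as long as $r_k|\q_k|\le\psi(T_k)$. Because $\psi$ is non-increasing, it suffices to choose $r_k\le \psi(T_k)/|\q_k|$. The inductive step is then: given $A_k$, $\q_k$, $r_k$, and a large $T_k$, find a rational matrix $A_{k+1}\in B_k$, lying near a prescribed point if we want density, such that
\begin{itemize}
\item $A_{k+1}\q_k\in\Z^m$ still holds, which keeps $\q_k$ valid on the next ball; and
\item $A_{k+1}$ has a new integer relation $\q_{k+1}$ with $|\q_{k+1}|\le T_k$.
\end{itemize}
Once both hold, we shrink $r_{k+1}$ so that $B_{k+1}\subset B_k$ and the inequality for $\q_{k+1}$ holds up to $T_{k+1}$. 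The intervals $[|\q_k|,T_k]$ then cover all large $t$.

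\textbf{Case $n>1$.} The condition $A\q_k\in\Z^m$ defines a rational affine subspace of codimension $m$. Since $n\ge2$, this subspace still contains points whose remaining columns are arbitrary. We can therefore perturb $A_k$ inside it to a rational matrix $A_{k+1}$. Its new relation $\q_{k+1}$ may be large, but we are free to choose $T_k\ge|\q_{k+1}|$ in advance. More precisely, we choose $A_{k+1}$ first and set $T_k$ afterwards, noting that the condition on $\q_k$ holds on the whole subspace, so it is unaffected by this choice. The order of choices is therefore: pick $A_{k+1}$ and $\q_{k+1}$, set $T_k=|\q_{k+1}|$, then set $r_{k+1}$. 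One point needs care. Shrinking $r_k$ after choosing $T_k$ is legitimate only because every later center also satisfies $A\q_k\in\Z^m$. Hence the ball for $\q_k$ can be taken as an $r_k$-neighbourhood in the direction transverse to the subspace, or more simply as nested balls inside which all later centers lie.

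\textbf{Case $n=1$, and the main obstacle.} Here $A_{k+1}\q_k\in\Z^m$ with $\q_k\in\Z$ forces $A_{k+1}$ into the finite lattice $\frac1{q_k}\Z^m$, so we cannot keep $q_k$ valid exactly. Instead we let $A_{k+1}=\p_{k+1}/q_{k+1}$ be a nearby rational point with $|A_{k+1}-A_k|$ small but nonzero. This yields the bound $|q_kA-\p_k|\le q_k|A-A_k|$, which must stay $\le\psi(t)$ up to $t=q_{k+1}$. We choose $A_{k+1}$ on a rational line through $A_k$; since $m\ge2$, there are points of denominator $q_{k+1}$ at distance about $1/(q_kq_{k+1})$. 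The requirement then becomes $1/q_{k+1}\lesssim\psi(q_{k+1})$, that is, $q_{k+1}\psi(q_{k+1})\to\infty$. This is exactly the hypothesis, and checking these estimates is the step I expect to be delicate.

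\textbf{Uncountability, total irrationality, and density.} At each stage we have at least two disjoint choices of $B_{k+1}$, which gives a Cantor set of limits and hence uncountably many. To get total irrationality, we enumerate the countably many rational affine subspaces and, at stage $k$, choose $B_{k+1}$ disjoint from the $k$-th one. This is possible because $A_{k+1}$ can be moved off any fixed proper subspace. Density follows by starting $B_1$ inside an arbitrary open set.
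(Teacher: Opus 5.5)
Your plan has a genuine gap in the case $n>1$. In that case your own constraint $A_{k+1}\q_k\in\Z^m$ forces $A_{k+1}\in L_{\p_k,\q_k}=\{A: A\q_k=\p_k\}$, since for small radii the relation must be exact. Now suppose the $k$-th rational affine subspace $R_k$ of your enumeration contains $L_{\p_k,\q_k}$. Examples are $R_k=L_{\p_k,\q_k}$ itself, or the hyperplane of matrices whose first row ${\bf a}_1$ satisfies ${\bf a}_1\cdot\q_k=p_{k,1}$. Then every admissible $A_{k+1}$ lies in $R_k$, and no ball $B_{k+1}$ about it is disjoint from $R_k$. So ``$A_{k+1}$ can be moved off any fixed proper subspace'' is false under your constraint, and nothing in your construction excludes this situation.

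The missing idea is to postpone the avoidance by one step and to choose the \emph{next relation} transversally. Among the full-rank lattice of relations of the rational matrix $A_{k+1}$, pick $\q_{k+1}$ so that $L_{A_{k+1}\q_{k+1},\q_{k+1}}\not\subseteq R_k$. This is possible: if $A_{k+1}\notin R_k$ there is nothing to do. Otherwise write $R_k=A_{k+1}+V$ and take $0\ne C\perp V$. Then $\{X: X\q=0\}\subseteq V$ forces $C={\bf u}\q^{\top}$, which rules out at most one direction of $\q$, and $n\ge2$ leaves other directions available. Only at the next stage do you take $A_{k+2}\in L_{\p_{k+1},\q_{k+1}}\smallsetminus R_k$ with $B_{k+2}\cap R_k=\varnothing$. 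This is exactly the point the paper's route handles abstractly. There, part (1) comes from Theorem \ref{kmww}: total density of $\mathcal{L}$ ensures that the resonant subspaces meeting $L\cap W$ are not all contained in a nowhere dense $R$, and the fact that $R$ is respected by $\mathcal{L}$ lets one leave $R$ at the following step.

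A second problem is your resolution of the circularity between $r_k$ and $T_k=|\q_{k+1}|$, which is self-defeating. If ``every later centre also satisfies $A\q_k\in\Z^m$'', then so does the limit, since this is a closed condition. The limit is then trivially singular, not totally irrational. The correct bookkeeping is that the ball controlling $\q_k$ is $B_{k+1}$, not $B_k$. Only $A_{k+1}$ lies on $L_{\p_k,\q_k}$. Once $\q_{k+1}$ is known, you choose $r_{k+1}\le\psi(|\q_{k+1}|)/|\q_k|$ with $B_{k+1}\subset B_k$ and $r_k\to0$. Then $|A\q_k-\p_k|\le\psi(t)$ for $|\q_k|\le t\le|\q_{k+1}|$ on all later balls, and $|\q_k|\to\infty$ follows because the limit is not trivially singular.

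Your $n=1$ sketch is essentially fine, and it is a more direct route than the transference argument the paper invokes for part (2). The estimate there is not delicate: take $A_{k+1}\in\frac1{q_{k+1}}\Z^m\smallsetminus R_k$ within $\psi(q_{k+1})/(2q_k)$ of $A_k$. This is possible once $q_{k+1}\psi(q_{k+1})$ is a large multiple of $q_k$.
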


{In recent years there have been several extensions and modifications of the Khintchine--Jarn\'{\i}k construction. Most recently, in \cite{KMWW} the following general definition was introduced. 
\begin{definition}\label{diophsys}
    A  \underbar{Diophantine system}  is
 a triple $\mathcal{X} = \left( X, \mathcal{D}, \mathcal{H}\right)$,
  where  $X$ is a topological space, 
  $\mathcal{D}= \{d_s : s\in  I\}$ is a sequence of continuous functions on $X$ taking nonnegative values ({\sl generalized distance functions}), and     $\mathcal{H}= \{h_s: s\in  I\}$ is a sequence of positive real numbers; we will say that $h_s$ is the {\sl height} associated with %resonant sets, so that $h_\alpha$ is the height of $R_\alpha$.
    $d_s$.
   Here $I$ is a fixed countable set
  %\footnote{{For our exposition it will be convenient to have a freedom of choice of  $\I$ instead of always using $\I = \N$.}}
  of indices.   
  
 Given a non-increasing function $\psi:\R_{> 0}\to \R_{> 0}$, a point $x\in X$
     is called  \underbar{$(\mathcal{X},\psi)$-uniform}, with the notation $x\in\UA_{\mathcal{X}}(\psi)$,  if %there exists a positive
                                %$\vre < 1$ such that  
for every sufficiently
large $t$  one can find $s\in I$ such that 
%a nonzero integer 
%\eq{diverygeneral}
$${
d_s (x)
\le \psi(t)
  \ \ \ \mathrm{and}  
\ \ h_s
\le  t
.}$$
\end{definition}
The set-up of Theorem \ref{Jarnik} %discussed in \S\ref{beginning} 
clearly corresponds to 
\eq{mainexample}{ X = \mr,\  I =  \Z^m\times (\Z^n\nz), \ %R_{\vp,\vq} = \{A\in\mr: A\vq = \vp\},\\ &
h_{\vp,\vq} = |\vq|\, \text{ and } \ d_{\vp,\vq}(A) = |A\vq - \vp|{.}%\quad\end{aligned}
}
It immediately follows from the above definition that 
points $x\in X$ such that $d_s(x) = 0$ for some $s\in I$ are $(\mathcal{X},\psi)$-uniform  for any positive function $\psi$. Such points will again be referred to as {trivially singular}. In order to prove existence of $(\mathcal{X},\psi)$-uniform points that are not trivially singular one needs to pay special attention to the geometry of the  zero sets $d_s^{-1}(\{0\})$; those are usually called  \underbar{resonant sets}. In \cite{KMWW} this has been done using  a certain convenient language, which we present below in a slightly simplified way. %\pagebreak
\begin{definition}\label{collections} Let  a   collection $\mathcal{L}$ %and $\mathcal{R}$ 
     of proper closed subsets of %a %locally compact 
%metric space 
$X$ be given. 
\begin{itemize}
 \item 
  Say that    $\mathcal{L}$   is 
 \underbar{dense}   
%  relative to $\mathcal{R}$} %(to be abbreviated by TD)]
 if 	the union  %\eq{vd1}
 ${\bigcup_{L\in \mathcal{L}} L\text{ is dense in }X}$.
 \item 
  Say that    $\mathcal{L}$   is 
 \underbar{totally
%very 
dense} (in $X$) 
%  relative to $\mathcal{R}$} %(to be abbreviated by TD)]
 if it is dense,
and in addition  for any open  $W
 \subset                           X$ and  any $L\in \mathcal{L}$ with $L  \cap W \neq
                          \varnothing$, the closure of the union $$\bigcup_{L'\in \mathcal{L}\,:\,L'\cap L \cap W  \ne
                          \varnothing}L'$$ has a nonempty interior.
                          %\qquad\\\ \text{ 
% in a proper affine subspace of $Y$ (resp.,  
%is dense in }Y)&.
\item
Say that  $\mathcal{L}$ \underbar{is aligned with}  a Diophantine system $\mathcal{X} =
     \left( X, \mathcal{D}, \mathcal{H}\right)$   if for any $L\in \mathcal{L}$ there exists $s\in I$ such that  $d_{s}|_{L}\equiv 0$; in other words, if $L$ is contained in one of the resonant sets $d_s^{-1}(\{0\})$. Note that this property is independent of the {choice of heights and generalized distance functions} and depends only on the resonant sets.
\item Say that  %$\mathcal{R}$ (or {
a collection $\mathcal{R}$ of subsets of $X$ % $\mathcal{R}$ 
\underbar{is respected by} $\mathcal{L}$
if whenever $L\in \mathcal{L}$ and $R\in \mathcal{R}$ are 
such that  $L\cap R$ has nonempty interior in $L$,
% (in the topology induced from $X$),
 it follows that $L\subset R$.  %\comm{Maybe a picture illustrating disrespectful behavior?} 
%\textsl{respects}  if it respects every $R\in \mathcal{R}$.  
\end{itemize}
\end{definition}
The following is a simplified version of \cite[Theorem  1.5]{KMWW}.
   \begin{thm} \label{kmww}
	Let $X$ be a locally compact %complete 
	metric space and $\mathcal{X} =
     \left( X, \mathcal{D}, \mathcal{H}\right)$ a Diophantine system.
        %, and let $S \subseteq Y$ be a nonempty locally
        %closed set. % with no isolated points. %\comm{Actually I (Dima)
                                %am not sure why we need $\R^d$, maybe
                                %an arbitrary metric space will do.}  
                                %on $\varphi_k$?}.  
	Let $\mathcal{L}$  be a countable 
        collections of closed connected  subsets of $X$ %with empty interior 
       that is
            %\begin{itemize}
        %\item 
        totally dense and aligned with $\mathcal{X}$, and        %\item 
        let $\mathcal{R}$ be a countable collection of closed nowhere dense subsets of $X$ respected by $\mathcal{L}$.
       	Then  the set 
	%\eq{specialset}
	${%Y_{\mathrm{sing}} \df
	%\left\{x\in Y :
        %  \forall\,k\in\N, \ \varphi_k(x)\text{ is $(\mathcal{X}_k,f_k)$-uniform }\right\}
         \displaystyle\UA_{\mathcal{X}}(\psi)
        \smallsetminus  \bigcup_{{R\in\mathcal{R}}} R
	}$
	%there exist 
	is %uncountable and 
	 uncountable and dense. 
		\end{thm}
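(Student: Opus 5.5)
The plan is a Khintchine-type nested construction. We build a decreasing sequence of nonempty open sets $W_0\supset \overline{W_1}\supset W_1\supset\cdots$ with compact closures and diameters tending to zero; local compactness of $X$ lets us arrange this. The point of $\bigcap_k \overline{W_k}$ will be $\psi$-uniform and will avoid every $R\in\mathcal R$. Density of the resulting set comes from starting with $W_0$ inside an arbitrary open set. Uncountability comes from branching: at each stage we choose two disjoint candidate sets, which yields a Cantor set of limit points. Since $\mathcal R$ is countable, we enumerate it as $R_1,R_2,\dots$, and we enumerate the countable collection $\mathcal L$ as well.

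The inductive step carries a subset $L_k\in\mathcal L$ with $L_k\cap W_k\neq\varnothing$, together with an index $s_k$ with $d_{s_k}|_{L_k}\equiv 0$ (alignment). For all $x\in W_k$ we also keep the bound $d_{s_k}(x)\le \psi(T_{k+1})$, where $T_{k+1}$ is a time chosen later. This bound can be secured by shrinking $W_k$ to a small neighbourhood of a point of $L_k$, since $d_{s_k}$ is continuous and vanishes there.

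To pass from $k$ to $k+1$, apply total density to $W_k$ and $L_k$. The closure of the union of those $L'\in\mathcal L$ with $L'\cap L_k\cap W_k\neq\varnothing$ contains an open set $U$, which we may take inside $W_k$. We then do two things.

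1. \emph{Avoid $R_{k+1}$.} Since $R_{k+1}$ is closed and nowhere dense, $U\smallsetminus R_{k+1}$ contains a nonempty open set $U'$. Some $L'$ from the union meets $U'$, and this $L'$ meets $L_k$ inside $W_k$.
2. \emph{Keep $L'$ out of $R_{k+1}$ entirely.} Because $\mathcal L$ respects $\mathcal R$, either $L'\subset R_{k+1}$, which is impossible since $L'$ meets $U'$, or $L'\cap R_{k+1}$ has empty interior in $L'$. So $L'\smallsetminus R_{k+1}$ is dense in $L'$.

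Set $L_{k+1}=L'$ with its index $s_{k+1}$, and take $h=\max(h_{s_k},h_{s_{k+1}})$. At a point $y\in L_k\cap L'\cap W_k$ both $d_{s_k}$ and $d_{s_{k+1}}$ vanish. For $t\in[T_k,T_{k+1}]$ we use $s_k$, which is valid once $T_k\ge h_{s_k}$. The heights of the new index can be handled by choosing $T_{k+1}\ge h_{s_{k+1}}$ large and then a small neighbourhood $W_{k+1}$ of a point of $L'\smallsetminus R_{k+1}$ near $y$. On $W_{k+1}$ we need $d_{s_k}\le\psi(T_{k+1})$ and $d_{s_{k+1}}\le\psi(T_{k+2})$, and $\overline{W_{k+1}}$ must be disjoint from $R_{k+1}$.

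The main obstacle is the ordering of these choices. The neighbourhood $W_{k+1}$ must lie close to $y\in L_k\cap L'$ so that $d_{s_k}$ stays small, yet it must also avoid $R_{k+1}$. We resolve this using connectedness and respect. Since $L'$ is connected and $L'\cap R_{k+1}$ is nowhere dense in $L'$, points of $L'\smallsetminus R_{k+1}$ accumulate at $y$. We first fix $T_{k+1}:=\max(T_k,h_{s_{k+1}})$. By continuity of $d_{s_k}$ at $y$ we then pick $z\in L'\smallsetminus R_{k+1}$ close enough to $y$ that $d_{s_k}(z)<\psi(T_{k+1})$, and let $W_{k+1}$ be a small ball around $z$. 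With this ordering we have $d_{s_k}\le\psi(t)$ for every $t\le T_{k+1}$ on $W_{k+1}$, using that $\psi$ is non-increasing. Connectedness of $L'$ is what excludes $y$ being isolated in $L'$.

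Finally we verify the limit point $x$. For each $t$ in a stage $[T_k,T_{k+1}]$ the index $s_k$ satisfies $h_{s_k}\le T_k\le t$ and $d_{s_k}(x)\le\psi(T_{k+1})\le\psi(t)$. This requires $T_k\to\infty$, which we enforce by adding $k$ to each $T_{k+1}$. By construction $x\notin R_k$ for every $k$. Branching into two disjoint choices of $z$ at each step produces uncountably many distinct limits.
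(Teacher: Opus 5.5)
The paper does not prove this statement; it quotes it in simplified form from the earlier paper of Kleinbock et al. Your argument therefore has to stand on its own, and most of it does. The key steps are correct:
\begin{itemize}
\item choosing $L_{k+1}$ from the total-density union so that it passes through some $y\in L_k\cap L_{k+1}\cap W_k$;
\item using respect to see that $L_{k+1}\smallsetminus R_{k+1}$ is dense in $L_{k+1}$;
\item fixing $T_{k+1}\ge h_{s_{k+1}}$ \emph{before} shrinking to a ball $W_{k+1}$ around a point $z\in L_{k+1}\smallsetminus R_{k+1}$, on whose closure $d_{s_k}<\psi(T_{k+1})$ holds and which misses $R_{k+1}$.
\end{itemize}
Together these produce a point of $\UA_{\mathcal X}(\psi)\smallsetminus\bigcup_{R\in\mathcal R}R$ in any prescribed open set. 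Two small remarks. First, the definition of total density only says that the closure of the union has nonempty interior \emph{somewhere}, so ``which we may take inside $W_k$'' is unjustified. You never use it, though, because $y\in W_k$ already follows from how the union is indexed. Second, the bound $d_{s_k}\le\psi(T_{k+1})$ belongs on $W_{k+1}$, not on $W_k$, as your later ordering of choices shows.

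The genuine gap is in the uncountability step. Connectedness does not exclude $L'=\{y\}$. In that case $y$ is isolated in $L'$, the only admissible choice is $z=y$, and your branching fails. This is more than a flaw in the write-up: under the hypotheses as literally stated, the conclusion can be false. Take $X=\{a,b\}$ with the discrete metric, and set $d_1=\mathbf{1}_{\{b\}}$, $d_2=\mathbf{1}_{\{a\}}$, $h_1=h_2=1$, $\mathcal L=\{\{a\},\{b\}\}$, $\mathcal R=\varnothing$. Then $\mathcal L$ is aligned and totally dense (for $L=\{a\}$ the relevant union is $\{a\}$, which is open), yet $\UA_{\mathcal X}(\psi)=X$ is finite. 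So an implicit nondegeneracy assumption is needed, for example that $X$ has no isolated points; this holds for $X=M_{m,n}$ in the paper's applications.

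Under that assumption there are two ways to close the gap.
\begin{itemize}
\item \emph{Repair the branching.} Suppose every member of the union meeting $U'$ were a singleton. Those singletons are points of $L_k$, and they are dense in $U'$, so $U'\subseteq L_k$. But $L_k$ itself belongs to the union and meets $U'$, so $L_k$ would be a singleton, making $U'$ an isolated point, which is impossible. Hence you can choose $L'$ connected with at least two points. Such an $L'$ has no isolated points, so two distinct admissible $z$'s exist near $y$.
\item \emph{Deduce uncountability from density.} In a space without isolated points, every singleton is closed and nowhere dense. Each singleton is respected by $\mathcal L$ precisely because the members of $\mathcal L$ are connected: if $\{x\}$ is open in a connected $L$, then $L=\{x\}$. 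If the set in question were countable, adding its points to $\mathcal R$ would contradict the density you have already proved. This is arguably the real role of the connectedness hypothesis.
\end{itemize}
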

In the classical Diophantine set-up \equ{mainexample} the relevant collection $\mathcal{L}$ consists precisely of resonant sets:
\eq{ell}{%\begin{aligned}
 \mathcal{L} = \{L_{\vp,\vq} : \vq \in\Z^n\nz, \ \vp\in\Z^m\},
 \text{  
% for $\vp\in\Z^m$ and $\vq \in\Z^n\nz$ we will let  $L_{\vp,\vq}$ be the affine subspace of $\mr$ given by
where }L_{\vp,\vq} := \{A\in\mr: A\vq = \vp\}.
%\end{aligned}
}
{These are rational affine subspaces of $\mr$ of codimension $m$; in fact one can  easily see that if $\p = (p_1, \ldots, p_m)$, then
\begin{equation}\label{direct product}
\bfL_{\p,\q} = \bfL_{p_1, \q} \times \ldots \times \bfL_{p_m, \q},
\end{equation}
where each $\bfL_{p_i, \q}$ is a rational affine hyperplane in the space of $i$th rows of matrices in $\mr$.} 
The total density {of  $\mathcal{L}$ as in \equ{ell} under the assumption   $n>1$} was proved in \cite[Proposition 3.1]{KMWW} and is straightforward to verify when $m=1$. Also it is not hard to see that any closed real analytic submanifold   of $\mr$  is respected by %$\mathcal{L}$ as in \equ{ell} 
{any collection of affine subspaces} \cite[Lemma 3.2]{KMWW}. Thus Theorem \ref{Jarnik}(1) becomes a special case of  Theorem \ref{kmww}. In fact  a slightly stronger version follows, with the set of totally irrational $\psi$-uniform matrices replaced by $$  \UA_{m,n}(\psi)
        \smallsetminus  \bigcup_{{R\in\mathcal{R}}} R,$$ where $\mathcal{R}$ is a countable collection of  proper {closed} analytic submanifolds     of $\mr$. We remark that an additional argument using transference {derives a similar strengthening of Theorem \ref{Jarnik}(2) from Theorem \ref{kmww}}, see \cite[Theorem 7.2]{KMWW}.} %\comm{I think here we should have some more examples of totally dense collections. Maybe mention rational lines in $\R^2$ parallel to coordinate axes?}

\subsection{Uniform approximation with restrictions}\label{restr} {The paper \cite{KMWW} outlines many diverse applications of the general set-up of Definition \ref{diophsys}, see e.g.\ \cite[\S\S 1.4--1.9]{KMWW}. One of them is the following:} instead of requiring the solutions $\q, \p$ of the system \eqref{mainsystem} to be {arbitrary} integer vectors, one can assume that they belong to given subsets of $\mathbb{R}^n\nz$ and $\mathbb{R}^m$ {respectively} and study solvability of \eqref{mainsystem} for all {sufficiently large $t$}. The definitions of $\psi$-uniform matrices %and $\psi$-Dirichlet spectrum 
can be naturally generalized for this set-up.

\begin{definition}\label{UAPQ_def}
    Let {$P \subseteq \mathbb{R}^m$ and $Q \subseteq \mathbb{R}^n\nz$}. For a non-increasing function $\psi: \,\,\,\R_{> 0}\to \R_{> 0}$
    % and a matrix $A\in M_{m,n}(\mathbb{R})$, 
    we say that $A\in M_{m,n}$ is $\psi$-uniform with respect to ${P,Q}$, with the notation $A\in \UA_{P, Q}(\psi)$, if for all %large enough 
    {sufficiently large $t$ there exist} {${\bf p}\in P$ and ${\bf q}\in Q$}   such that \eqref{mainsystem} holds.
\end{definition}

{In order to apply Theorem \ref{kmww} to this new set-up, one is led to considering the collection
%\eq{ellpq}{%\begin{aligned}
 $$\mathcal{L}_{P,Q} := \{L_{\vp,\vq} : \vp\in P,\ \vq \in Q \}%\end{aligned}
 $$
%}
instead of \equ{ell}. Clearly, the smaller are $P$ and $Q$, the harder it may be to prove the total density of $\mathcal{L}_{P,Q}$ and to exhibit elements of $\UA_{P, Q}(\psi)$. For motivation let us consider the following example: take \eq{niceexample}
{n=2, \ m=1,\  P=\Z\ \text{ and }\ Q\ = \big(\Z(1,0) \cup \Z(0,1)\big)\smallsetminus\{(0,0)\}.}
Then   the collection $\mathcal{L}_{P,Q}$ consists of rational lines parallel to the coordinate axes in $\R^2$ and can be easily checked to be totally dense. Thus it follows from Theorem \ref{kmww} that for any positive non-increasing $\psi$ the set
$$
\UA_{P, Q} = \left\{
 (x,y)\in\R^2\left|
 \begin{aligned}\text{ for all  %large enough
 {sufficiently large }}t > 0 \text{ one of the systems }\\
\  \ \begin{cases}
     |qx   - p |\le \psi(t)\\ 
      \ \  0 < q \le t
\end{cases}
\text{ and  \ \ \ }\begin{cases}
     |qy   - p |\le \psi(t)\\ 
     \ \  0 < q \le t
\end{cases}\\ \text{ has a non-trivial integer solution.}\qquad\ \ 
\end{aligned}\right.\right\}
%\begin{cases}
%     | A{\q}-{\p}|\le \psi(t)\\ 
%     | {\q}|\le t
%\end{cases}    
$$
is uncountable and dense.
Also in general one does not %have
{need} to assume that $P \subseteq \mathbb{Z}^m$ and $Q \subseteq \mathbb{Z}^n$. A special case of  our general set-up %is that of  
is  {inhomogeneous} Diophantine approximation, which we discuss in \S\ref{inhomappr} below.}

\smallskip

One of %our results
{the goals of this paper is to prove} the following theorem, stated without proof in \cite[\S8]{KMWW}.
 \begin{theorem} \label{8.1} The collection  $\mathcal{L}_{P,Q}$ is totally dense in $\mr$, and hence 
\eq{conclusion1}{
\begin{aligned} \UA_{P,Q}(\psi)
        \smallsetminus  \bigcup_{{R\in\mathcal{R}}} R\text{ is   uncountable and dense for any %positive
        non-increasing }
     %function 
\psi: \R_{>0} \to \R_{>0}\\ \text{and any  countable collection $\mathcal{R}$ of proper analytic submanifolds of }\mr,\qquad
\end{aligned}}
     if
 \begin{itemize}
\item[\rm(a)]  $Q = \Z^n\nz$ and  $P$ is a subgroup of $\Z^m$ of rank $>m-n+1$;
\item[\rm(b)]  $Q = Q_1\times\cdots\times Q_n\subseteq \Z\times\cdots\times\Z$, 
where
 at least two of the sets $Q_i$ are infinite, 
and $P$ is of bounded Hausdorff distance from $\R^m$.
\end{itemize}
%It is clear that this collection is very dense whenever $n > 1$ \comm{(does it require an explanation?)}
%Consequently, for any %positive
 %       non-increasing 
     %function 
%$\psi: \R_{>0} \to \R_{>0}$ the set of %totally irrational
%     $m \times n$ matrices	 that are  $\psi$-uniform with respect to $P,Q$ and not contained in a given countable family of proper analytic submanifolds of $\mr$ is
%     uncountable and dense. 
 \end{theorem}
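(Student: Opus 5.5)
The plan is to reduce everything to total density. Once $\mathcal{L}_{P,Q}$ is shown to be totally dense, the conclusion \equ{conclusion1} follows from Theorem \ref{kmww}. We take $X=\mr$, index set $I=P\times Q$, heights $h_{\vp,\vq}=|\vq|$ and distances $d_{\vp,\vq}(A)=|A\vq-\vp|$. The collection $\mathcal{L}_{P,Q}$ consists of closed connected affine subspaces, it is countable because $P,Q$ are countable in both cases, and it is aligned by construction. Any countable family of proper closed analytic submanifolds is nowhere dense and is respected by affine subspaces by \cite[Lemma 3.2]{KMWW}. The whole burden is therefore total density, which we treat separately in the two cases. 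The basic tool is the product structure \eqref{direct product}: $L_{\vp,\vq}$ is a product of rational hyperplanes $\{\ba\in\R^n:\ba\cdot\vq=p_i\}$ in the row spaces.

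For (b), density is straightforward. Given an open $W\ni A$, pick $\vq\in Q$ with $|\vq|$ large. Because $P$ is within bounded Hausdorff distance of $\R^m$, there is $\vp\in P$ with $|A\vq-\vp|\le C$, so $L_{\vp,\vq}$ passes within $O(C/|\vq|)$ of $A$.

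For total density in (b), fix $L=L_{\vp_0,\vq_0}$ meeting $W$ at a point $A_0$ and a small ball $B\subset W$. The claim is that the union of $L_{\vp,\vq}$ with $\vq\in Q$ and $L_{\vp,\vq}\cap L\cap B\neq\varnothing$ is dense in a neighbourhood of $A_0$. Let $i\ne j$ be two indices with $Q_i,Q_j$ infinite. We take $\vq=\vq_0+k e_i$, or more flexibly $\vq=(\dots,a,\dots,b,\dots)$ with large entries $a\in Q_i$, $b\in Q_j$ and the remaining coordinates fixed. The condition $L_{\vp,\vq}\cap L\ne\varnothing$ is then a solvable linear system in $A$, since $\vq$ and $\vq_0$ are independent. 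Given any target $A'$ near $A_0$, we want $A\in L\cap B$ and $\vp\in P$ with $A\vq=\vp$ and $A'$ close to $L_{\vp,\vq}$. Because $|\vq|$ is large, the hyperplanes $L_{\vp,\vq}$ for $\vp\in P$ form an $O(1/|\vq|)$-dense family. The freedom of moving $A$ inside $L\cap B$, which has positive dimension because $n\ge2$, changes $A\vq$ by an amount of order $|\vq|\cdot\mathrm{radius}(B)\gg C$, so $A\vq$ can be made to hit some $\vp\in P$ exactly. What remains is to check that such an $L_{\vp,\vq}$ also passes near $A'$. This should follow from choosing $\vq$ whose direction is close to a generic direction, using two infinite coordinates to rotate $\vq$ away from $\vq_0$. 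It is the step where the hypothesis of two infinite $Q_i$ is really needed; with only one, all large $\vq$ would be asymptotically parallel to one axis.

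For (a) we follow \cite[Proposition 3.1]{KMWW}. There the proof finds, for $A\in L_{\vp_0,\vq_0}\cap W$ and target $A'$, integer $\vq$ and $\vp=A\vq$ with $A$ adjusted inside $L$. The extra constraint is $\vp\in P$, a sublattice of rank $r>m-n+1$. The plan is a dimension count. The set of $A\in L$ with $A\vq\in P\otimes\R=:V$ has codimension in $L$ at most $m-r$. Meanwhile $\dim L=m(n-1)$, and a point of $L$ together with hitting the target needs roughly $m$ degrees of freedom, so the available freedom is $m(n-1)-(m-r)$ minus the fitting constraints. The inequality $r>m-n+1$ should ensure that this remains positive, so the KMWW argument goes through inside the rational subspace $\{A: A\vq\in V\}$. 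Density of $\mathcal{L}_{P,Q}$ itself requires the $L_{\vp,\vq}$ with $\vp\in P$ to fill $\mr$. This holds because for fixed $\vq$ the set $\{A:A\vq\in V\}$ is a proper subspace whose union over $\vq$ is dense, which is true once $V\ne0$ and $n\ge2$. The main obstacle I expect is making this dimension count rigorous, especially the lattice-point step of hitting $P$ exactly rather than only $V$, for which I would use a Minkowski-type argument in $V$.
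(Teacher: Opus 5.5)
The reduction to total density via Theorem \ref{kmww} is fine. One small correction: in (b) $P$ need not be countable, so first pass to a countable subset still at bounded Hausdorff distance from $\R^m$. The gap is that total density is not actually established in either case.

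\textbf{Part (a).} What you have is a heuristic, and its concrete claims are wrong. Density of $\mathcal{L}_{P,Q}$ does not follow from $V\ne0$ and $n\ge2$. For $m=3$, $n=2$ and $V=\R\bv$, the union over $\vq$ of $\{A:A\vq\in V\}$ lies in the hypersurface $\det(A\be_1,A\be_2,\bv)=0$. The correct count is that $\{\vq:A\vq\in V\}$ has dimension $\max(n-m+r,0)$ for generic $A$, so density needs $r\ge m-n+1$. The hypothesis $r\ge m-n+2$ says exactly that through a generic $A$ passes a positive-dimensional sphere of subspaces $L_{\bx,\theta}$ with $\bx\in V$. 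Hence you can choose them with $\theta$ away from $\pm\vq_0/\|\vq_0\|$. At $r=m-n+1$ only one such subspace (up to sign) passes through a generic $A$, and total density fails (Lemma \ref{not_any_k} with $k=n-1$). Your count $m(n-1)-(m-r)-m$ does not yield this threshold. The lattice-point step is also not an obstacle. Since $P$ is a lattice in $V$ and every direction occurs in $\Z^n$, you get $V\times\mathbb{S}^{n-1}\subseteq\overline{\pr(P\times Q)}$. Lemma \ref{closure} then lets you work with the closure, so no Minkowski-type argument is needed.

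\textbf{Part (b).} You stop exactly at the crux (``This should follow\dots''), and the order of your construction is backwards. An arbitrary $A\in L\cap B$ with $A\vq\in P$ gives an $L_{\vp,\vq}$ through $A$, with no control on its distance to $A'$. The missing ingredient is the transversality estimate of Lemma \ref{anglelemma}. Choose $\vp\in P$ with $\|A'\vq-\vp\|\le C$, so $L_{\vp,\vq}$ passes within $C/\|\vq\|$ of $A'$. If $\vq/\|\vq\|$ is at distance $>\delta$ from $\pm\vq_0/\|\vq_0\|$, then $L_{\vp,\vq}\cap L$ contains a point within $C(\delta)\big(\dist(A',L)+C/\|\vq\|\big)$ of the foot of $A'$ on $L$. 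That point lies in $B$ once $A'$ is close to $A_0$ and $\|\vq\|$ is large. A generic direction for $\vq$ is neither available nor needed. It is not available because limit directions of a product set can be very sparse: $Q_1=Q_2=\{2^{2^j}\}$ gives only finitely many. What is needed is uniform separation from $\pm\vq_0/\|\vq_0\|$. You get it by taking one huge coordinate in an infinite $Q_i$ with $\be_i$ not parallel to $\vq_0$, and this is where two infinite factors are used.

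\textbf{How the paper organizes this.} Lemma \ref{usefullemma} shows the following: if $\mathcal{L}_{E_0}$ is dense for some closed $E_0\subseteq\overline{\fR}$ at positive distance from $\pm\br_0$, then the local condition of total density holds at $L_{\br_0}$. For (a) one takes $E_0=V'\times\Phi$. Here $\Phi$ is a $k$-subsphere, $k=\min(n-2,m)$, avoiding $\pm\vq_0/\|\vq_0\|$, and $V'\subseteq V$ has dimension $m-k$; density is Proposition \ref{density_any_k}. For (b) one takes $E_0=\R^m\times\{\be_i\}$, dense by Corollary \ref{sections_density}.
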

% Note that both (a) and (b) implicitly assume that $n > 1$, and the set-up of (b) includes {inhomogeneous Diophantine approximation as a special case, see \S\ref{inhomappr}}.
 
 %\smallskip
 
 {%One of the goals of the present paper is to %provide a proof of
{Below we state and prove} several extensions of Theorem~\ref{8.1}. One new observation that we make in this paper is as follows: in order to establish \equ{conclusion1} one does not have to work with the whole collection $\mathcal{L}_{P,Q}$; instead it suffices to find a totally dense subcollection. Indeed, if $\mathcal{L}\subseteq\mathcal{L}_{P,Q}$ is totally dense, one can apply Theorem \ref{kmww} to $\mathcal{L}$ and \equ{conclusion1} will follow. One of the reasons why this observation is valuable is 
the existence of collections of subspaces that are not   totally dense but contain  totally dense subcollections. See {Proposition \ref{subcoll}} for an example.} 
%!
{It will be convenient to %abbreviate 
{refer to} this property %and
%Another way of saying this is the following: let us 
%say 
{by saying} that a pair $(P, Q)$ \underline{has property (TDS)} if $\mathcal{L}_{P,Q}$ contains a  totally dense subcollection. Obviously this property is stable with respect to taking {supersets}, which, in view of the aforementioned example, cannot be said about the (less {well-behaved}) property of  $\mathcal{L}_{P,Q}$ being totally dense.}
%\comm{Here we should refer the reader to an example in some later section.}

{In order to state %two \comm{( I wrote "two" because was thinking of adding another theorem here, let's discuss it)} 
 a notable special case of our more general results,
  %In order to understand how small can $P$ and $Q$ be for that to happen,} w
  we need some additional definitions and notation.} For {$P\subseteq \R^m$, an $m$-dimensional unit vector $\phi \in \mathbb{S}^{m-1}$} and $\varepsilon > 0$, define  the set
%\eq{pepsilon}{
$$
%Q_{\varepsilon}^{\theta}: = \left\{ \q \in Q: \,\,\, \left\| \frac{\q}{\| \q \|} - \theta \right\| < \varepsilon \right\} \,\,\,\,\,\,\,\,\,\,\,\, \text{and} \,\,\,\,\,\,\,\,\,\,\,\, 
P_{\varepsilon}^{\phi}: = \left\{ \p \in P: \,\,\, \left\| \frac{\p}{\| \p \|} - \phi\right\| < \varepsilon \right\} 
$$
of elements of %$Q$ and 
$P$ belonging to the $\varepsilon$-cone in direction $\phi$. 
We use the notation
%\eq{pepsilonnorms}{
$$
%|Q|_{\varepsilon}^{\theta}: = \{ \| \q \|: \,\,\, \q \in  Q_{\varepsilon}^{\theta} \} \,\,\,\,\,\,\,\, \text{and} \,\,\,\,\,\,\,\, 
|P|_{\varepsilon}^{\phi}: = \big\{ \| \p \|: \,\,\, \p \in  P_{\varepsilon}^{\phi} \big\}
$$
for the sets of corresponding norms. 
Say {that $P$ is \underbar{logarithmically dense} along $\phi$ if  for %any $\phi \in \mathbb{S}^{m-1}\,\cap\, {H}$ and 
any $\varepsilon > 0$ the set $|P|_{\varepsilon}^{\phi}$ 
contains an
unbounded sequence $\{ s_k \}_{k=1}^{\infty}$ %of positive real numbers 
with $\lim\limits_{k \rightarrow \infty} \frac{s_{k+1}}{s_k}=1$; equivalently (a characterization that motivates the terminology), if the gaps between logarithms of elements of $|P|_{\varepsilon}^{\phi}$ tend to $0$ at infinity. See Lemma \ref{LDequiv} for several equivalent restatements. We will say that $P$ is  \underbar{logarithmically dense in a cone} ${H}\subseteq \R^m$ if it is  logarithmically dense along  any $\phi \in \mathbb{S}^{m-1}\,\cap\, {H}$.}

%{ We can naturally extend the definition of logarithmic density if we take half-spaces or, more generally, any cones in place of ${H}$.}
%For a set $P \subseteq \mathbb{R}^m$, we say that $Q$ is {\it exponentially dense 
%in direction $\theta \in S^{m-1}$} 
%Then say that $P\subseteq \R^m$ is  \underbar{logarithmically dense in direction} $\phi \in \mathbb{S}^{m-1}$ if for any $\varepsilon > 0$ the set $|P|_{\varepsilon}^{\phi}$ is logarithmically  dense. We say that $P$ is \underbar{logarithmically dense} if is  is logarithmically dense in any direction.} 
{Clearly being  of  bounded Hausdorff distance from $\R^m$ implies logarithmic  density in $\R^m$. But the latter condition is much less restrictive. For example {the sets}  $\mathbb P$ (prime numbers) or $\{\pm k^l : k\in\N\}$ for any fixed $l>0$ are logarithmically dense in $\R$. Also a Cartesian product of two logarithmically dense sets is logarithmically dense, see Proposition \ref{prod_of_log_dense}.}
\smallskip

Let us also introduce the spherical projection  \eq{defpi}{\pi:\R^n\nz \to \mathbb{S}^{n-1}, \quad\q\mapsto \q/\|\q\|.} By  $B_C(\bx)$ we will denote the Euclidean ball of radius $C$ centered in $\bx\in\R^n$; this convention will be extended to other metric spaces. 
Throughout the paper we will often %refer to 
{work with} spheres and halfspheres. {By a \underbar{$k$-subsphere} of $\mathbb{S}^{d-1}$ we will %always refer to 
mean the intersection of a $(k+1)$-dimensional} linear subspace of $\R^d$ with $\mathbb{S}^{d-1}$. Likewise, {by a \underbar{$k$-halfsphere} of $\mathbb{S}^{d-1}$ we will %always refer to 
mean 
%always refer to 
the intersection of a $(k+1)$-dimensional} linear half-space of $\R^d$ with $\mathbb{S}^{d-1}$. 

\smallskip

The following theorem
%, a notable special case of our more general results, 
{incorporates the set-ups of both parts of Theorem~\ref{8.1}:}

%\textcolor{orange}{\begin{definition}
%    Let $0\le k\le n$ be integers. Then a $k$-dimensional halfsphere $T$ in $\mathbb{S}^{n}$ is a subspace $T\subseteq \mathbb{S}^n$ satisfying that $T=\mathbb{S}^n\cap L$ for some $k+1$ dimensional half-space $L$ in $\mathbb{R}^{n+1}$ whose boundary contains the origin.
%\end{definition}}

\begin{theorem}\label{not_on_line}
    Let $m,n\in\mathbb{N}$ 
    %where $n>1$. 
    and  $0 \leq k \leq \min(n-2, m)$. Suppose $\varnothing \ne P \subseteq \mathbb{R}^m$ is {logarithmically dense in an $(m-k)$-dimensional subspace ${H}$ of $\R^m$, and $Q\subseteq \R^n\nz$ %is an unbounded set with an additional condition: $Q$ 
    %contains two non-proportional elements with arbitrarily large norms. 
    satisfies the following property:
   \eq{twohalfspheres}{
    \begin{aligned}\forall\, C>0  \text{ there exist two $k$-halfspheres }&\Phi_1,\Phi_2\subseteq \mathbb{S}^{n-1}\text{ not contained in a single $k$-sphere}\\
    \text{ such that }&\Phi_1\cup\Phi_2 \subseteq \overline{\pi\big(Q\smallsetminus B_C(0)\big)}.
    \end{aligned}
   }
    Then 
    %!
    %the collection  $\mathcal{L}_{P,Q}$ contains a  totally dense subcollection
    {$(P,Q)$  has property {\rm (TDS)}}, hence \equ{conclusion1} holds.}
    %. Consequently,   the set  $$\UA_{P,Q}(\psi)
   %     \smallsetminus  \bigcup_{{R\in\mathcal{R}}} R$$ is   uncountable and dense for any %positive
 %       non-increasing 
     %function 
%$\psi: \R_{>0} \to \R_{>0}$ and any  countable collection $\mathcal{R}$ of proper analytic submanifolds of $\mr$.}
 %    Statement \ref{central} (and, more generally, Statement \ref{central2} for any Diophantine system $\mathcal{X}$) holds.   
\end{theorem}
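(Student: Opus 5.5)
The plan is to prove that the whole collection $\mathcal{L}_{P,Q}$, or if necessary the subcollection $\{L_{\vp,\vq}:\vp\in P,\ \vq\in Q,\ |\vq|\text{ large}\}$, is totally dense. Everything rests on one elementary distance estimate. For $A_0\in\mr$ the nearest point of $L_{\vp,\vq}$ to $A_0$ is $A_0+(\vp-A_0\vq)\vq^T/\|\vq\|^2$, so
$$\dist(A_0,L_{\vp,\vq})\ll |A_0\vq-\vp|/\|\vq\|.$$
It therefore suffices to find $\vq\in Q$ with $\|\vq\|\to\infty$ and $\vp\in P$ with $\|A_0\vq-\vp\|\le\varepsilon\|\vq\|$. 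Here is how I would get such pairs.

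\textbf{Step 1: choosing a direction.} Fix a $k$-halfsphere $\Phi\subseteq\overline{\pi(Q\smallsetminus B_C(0))}$ given by \equ{twohalfspheres}. The condition $A_0\theta\in H$ consists of $k$ linear equations on $\theta$. Since $\Phi$ is cut out by a $(k+1)$-dimensional half-space, the set $\{\theta\in\Phi : A_0\theta\in H\}$ is nonempty.

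\textbf{Step 2: approximating $\vq$ and $\vp$.} Take $\vq\in Q$ with $\|\vq\|$ large and $\pi(\vq)$ close to $\theta$. Log-density of $P$ along the direction $\phi=A_0\theta/\|A_0\theta\|\in H$ gives $\vp\in P^{\phi}_{\varepsilon}$ whose norm agrees with $\|A_0\vq\|$ up to a factor $1+\varepsilon$. This is possible because the ratios of consecutive elements of $|P|^\phi_\varepsilon$ tend to $1$. Then $\|A_0\vq-\vp\|\ll\varepsilon\|\vq\|$, which yields density.

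\textbf{Degenerate case.} The case $A_0\theta=0$ needs separate care. I would either perturb $A_0$ slightly, or use that $\vp$ may then be taken with $\|\vp\|=o(\|\vq\|)$ along any direction of $H$.

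\textbf{Step 3: total density.} Fix $L=L_{\vp_0,\vq_0}$ and an open $W$ meeting $L$, and pick $A_0\in L\cap W$. I claim every $B$ in a small neighbourhood of $A_0$ lies in the closure of $\bigcup\{L':L'\cap L\cap W\ne\varnothing\}$. Given $B$, choose $\theta\in\Phi_1\cup\Phi_2$ with $B\theta\in H$ and $\theta$ bounded away from $\pm\pi(\vq_0)$. This is exactly where both halfspheres are needed: since $\Phi_1\cup\Phi_2$ does not lie in a single $k$-sphere, the admissible set $\{\theta: B\theta\in H\}$ cannot collapse onto $\pm\pi(\vq_0)$ for both halfspheres. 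The hypothesis $k\le n-2$ leaves enough dimensions for this. Next pick $u\in\R^n$ with $u\cdot\vq_0=1$ and $u\cdot\theta=0$, and set
$$A=B+(\vp_0-B\vq_0)u^T.$$
Then $A\in L$, $A\theta=B\theta$, and $A$ is close to $B$, hence lies in $W$, because $\vp_0-B\vq_0$ is small. The bound on $u$ is uniform since $\theta$ stays away from $\pi(\vq_0)$.

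\textbf{Step 4: the joint approximation.} Choose $(\vp,\vq)$ as in Step 2 for the common value $A\theta=B\theta$. Then $L_{\vp,\vq}$ is within $O(\varepsilon)$ of $B$. The remaining point is that $L_{\vp,\vq}$ also meets $L$ near $A$. For this I would apply the distance estimate inside the affine space $L$, solving $A'\vq_0=\vp_0$ and $A'\vq=\vp$ simultaneously. This is a rank-$2$ correction whose size is controlled by $|A\vq-\vp|/\|\vq\|$ divided by the angle between $\vq$ and $\vq_0$, which stays bounded below. Letting $\varepsilon\to0$ shows $B$ lies in the closure of the union, so the closure contains a neighbourhood of $A_0$. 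Theorem \ref{kmww}, together with the respect lemma \cite[Lemma 3.2]{KMWW}, then gives \equ{conclusion1}.

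\textbf{Main obstacle.} I expect the hardest part to be Step 3: choosing $\theta$ uniformly in $B$, with $B\theta\in H$, $B\theta\ne0$, and $\theta$ kept away from $\pm\pi(\vq_0)$. The case analysis there, including what to do when $B\theta=0$, is where the exact hypotheses $k\le\min(n-2,m)$ and the two non-cospherical halfspheres must be used.
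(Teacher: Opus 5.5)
There is a genuine gap: the statement you set out to prove in place of (TDS) is false. Neither $\mathcal{L}_{P,Q}$ (see Proposition~\ref{subcoll}) nor its part with $\|\vq\|$ large need be totally dense.

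Take $m=1$, $n=2$, $k=0$, $P=\Z$ and $Q=\{(1,j),(0,j): j\in\N\}$. The hypotheses hold, since $(0,1)$ and $\pi(1,j)$ with $j>C$ are non-proportional directions in $\overline{\pi(Q\smallsetminus B_C(0))}$. Yet $L=L_{0,(0,j)}=\{a_2=0\}$ is a member for every $j$. Let $W$ be the disc of radius $1/4$ about $(1/2,0)$. No other member meets $L\cap W$: the lines $\{a_2=p/j\}$ with $p\ne 0$ are parallel to $L$, and $\{a_1+ja_2=p\}$ meets $L$ only at $(p,0)$ with $p\in\Z$.

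The error sits in Step~3. Step~2 needs $\|\vq\|\to\infty$ with $\pi(\vq)\to\theta$, so $\theta$ must lie in $\bigcap_C\overline{\pi(Q\smallsetminus B_C(0))}$. But \equ{twohalfspheres} gives non-cospherical halfspheres only for each fixed $C$, with no uniform separation. In this example the only limit direction is $(0,1)=\pi(\vq_0)$, and the second direction $\pi(1,j')$ collapses onto it. Your rank-two correction then has size $(1/j')/(1/j')\asymp 1$, which does not tend to zero.

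Even with two genuinely distinct limit $k$-spheres $S_1,S_2$, Step~3 breaks when $\pi(\vq_0)\in S_1\cap S_2$ and $\vp_0/\|\vq_0\|\in H$. Take $m=1$, $n=3$, $k=1$, $P=\{0\}$, $Q=\big((\Z^2\times\{0\})\cup(\{0\}\times\Z^2)\big)\smallsetminus\{0\}$ and $\vq_0=j\be_2$. Every member other than $L=\{a_2=0\}$ meets $L$ only along the $a_1$- or $a_3$-axis. The reason is that for $B$ near $L$ the solutions of $B\theta=0$ on both great circles are close to $\pm\be_2$. So the admissible $\theta$ does collapse onto $\pm\pi(\vq_0)$ for both spheres.

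The missing idea is that you must discard a carefully chosen part of the collection, which is what the paper does. Compactness of the Grassmannian gives a $k$-halfsphere in $\bigcap_C\overline{\pi(Q\smallsetminus B_C(0))}$. Then there are two cases:
\begin{itemize}
\item There are two non-cospherical limit halfspheres, which gives \eqref{anykA}.
\item The halfspheres from \equ{twohalfspheres} degenerate into a single $k$-sphere as $C\to\infty$. One then gets a limit halfsphere $\Phi\subseteq\bigcap_C\overline{\pi(Q\smallsetminus B_C(0))\smallsetminus\Phi}$, which gives \eqref{anykC}.
\end{itemize}
Both cases use Lemma~\ref{compatibility_from_logdense} and symmetrization.

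Theorem~\ref{any_k_theorem} is then proved through Proposition~\ref{usefulforproducts}. In case \eqref{anykC}, all $\br\in H\times\Phi$ are thrown away; in your first example these are the horizontal lines. In case \eqref{anykA}, with $N_i=H\times\Phi_i$, one keeps only those $\br$ lying in neighbourhoods of $N_i\smallsetminus N_{3-i}$ that are disjoint from $N_{3-i}$. In your second example this removes all $\vq\parallel\be_2$.

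For each surviving $\br_0$, some set of the form $H\times\Phi$ still lies in the closure of what remains after deleting small balls around $\pm\br_0$. Its collection is dense by Proposition~\ref{density_any_k}, and this is exactly hypothesis \equ{densityinU} of Lemma~\ref{usefullemma}. Your Steps~3--4 are essentially Lemma~\ref{usefullemma} combined with Lemma~\ref{anglelemma}, and they go through once this discarding is built in. Your Steps~1--2 (density) are fine provided $\theta$ is taken in a limit halfsphere.
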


%But %the latter 
%are much less restrictive. 

%\comm{This formulation makes no sense in case $k = n-1$. In more general results we can just take two supspaces in $P$-coordinate rather than $Q$, but here we can't. We can just restrict $k \leq n-2$, but maybe it can be addressed in a better way?}

{Note} that one cannot relax the condition of {the} logarithmic density of $P$ without putting   additional constraints on $Q$ and still guarantee 
%!
{(TDS)}; 
%the existence of a totally dense subcollection {of $\mathcal{L}_{P,Q}$}; 
see \S \ref{logdense_optimal_section} for details and discussion.

As an illustration, let us see what this theorem  is saying in the case $k=0$. Then ${H} = \R^m$,   $0$-dimensional subspheres are simply {pairs of} directions in $\mathbb{S}^{n-1}$, and 
%it is easy to see that    
\equ{twohalfspheres} is equivalent to 
\eq{nonprop}{Q \text{
    contains two non-proportional elements with arbitrarily large norms.}}
    Obviously \equ{nonprop} is satisfied 
    when $Q$ is as in Theorem \ref{8.1}(b);
    % and $P$ of  bounded Hausdorff distance from $\R^m$ is clearly logarithmically dense in $\R^m$. 
    hence the "$k=0$" case of Theorem \ref{not_on_line} implies Theorem \ref{8.1}(b) in a slightly weaker form, with  total density replaced by %existence of a totally dense subcollection
    %!
    {(TDS)}.   %But the assumptions of 
    However in Theorem \ref{not_on_line} $Q$ does not have to have the product structure, e.g.\ it can be the set of integer points on an unbounded algebraic curve such as $\{(x,x^2, \ldots,x^n) \}$; and as was mentioned before, logarithmic  density in $\R^m$ is much weaker than bounded Hausdorff distance from $\R^m$. % implies . But the latter condition is much less restrictive
%    are much less restrictive.
%For example they will be satisfied, {still with $k=0$,} if %one takes $P$ consisting of vectors with prime coordinates, 
%or vectors satisfying some congruence conditions, 
%and lets $Q$ be the set of integer points on an unbounded algebraic curve such as $\{(x,x^2, \ldots,x^n) \}$.  
The assumptions will be weakened even further in our main results, Theorems  %Theorem 
\ref{any_k_theorem} and \ref{ch_theorem}. {See Examples \ref{ch_examples}, \ref{ex_logdenseP} and \ref{thelastexample} for several concrete special cases in which the aforementioned two theorems can be applied.}
%\comm{Need to refer to a discussion somewhere later in the paper.} 
%\comm{I believe we should find a good way of organizing examples; now some of them are just in the text, some are treated as examples to particular theorems, and some are formulated as a separate section. There are also intersections.}%{Another remark to make is that the conditions on $P,Q$ in  Theorem \ref{not_on_line}, unlike those of Theorem \ref{8.1}, are invariant by scaling. This makes sense since one can easily observe that $$L_{c\vp,d\vq} = \frac cd L_{\vp,\vq},$$ hence  $\mathcal{L}_{P,Q}$ is totally dense, or contains a  totally dense subcollection, if and only if the same is true for $\mathcal{L}_{cP,dQ}$ for any $c,d > 0$.} 

\begin{remark}  \rm   Note that if one adds some additional restrictions guaranteeing that $Q$ and $P$ are not "too dense", e.g.\  assumes that $P \subseteq \mathbb{Z}^m$ and $Q \subseteq \mathbb{Z}^n \nz$, then one can show that  condition \equ{nonprop}
%{given in} Theorem \ref{not_on_line}
%cannot be weakened.
is optimal.
%\comm{Please write down your propositions somewhere later and refer to them here.}
 %   Moreover, one can show that in this case the condition on $Q$ cannot be improved if we want an analog of Theorem \ref{Jarnik}(1) to hold. 
    %\comm{Not clear: are you talking about \equ{twohalfspheres} or \equ{nonprop}?} 
    Indeed: suppose all except finitely many elements $\q \in Q$ are proportional, that is, are of form $k \q_0$ for some fixed $\q_0 \in \mathbb{Z}^n\nz$ and $k \in \mathbb{Z}$. Let ${\ba} = A \q_0 \in \mathbb{R}^m$. In order for \eqref{mainsystem} to {be solvable in $\vq\in Q$ and $\vp\in\Z^m$}, we need the system 
    $$
    {{|{k\ba}   - \vp |
\le  \psi(t)}
  \ \ \ \mathrm{and}  
\ \ {k
\le  t/|\vq_0|}}
    $$
%    equivalently, 
%    $$
%    {{|A' k   - \vp |
%\le  \psi(|\q_0| t')}
%  \ \ \ \mathrm{and}  
%\ \ k
%\le  t'},
%    $$
to have a solution $k \in \mathbb{Z}, \, \vp \in %P \subseteq 
\mathbb{Z}^m$ for any 
%large enough 
{sufficiently large} $t$. %(respectively, $t'$) . 
If $\lim\limits_{t \rightarrow \infty} t \psi(|\q_0| t) = 0$, {this} implies that every coordinate of the vector ${\ba}$ is singular, hence rational. %In this case  
{Consequently $A (k\q_0) \in \mathbb{Z}^m $ for some $k \in \mathbb{Z}$, thus} $A$ has to be trivially singular (in particular, not totally irrational). This argument shows that 
%in the absence of 
\equ{nonprop} is necessary for \equ{conclusion1}, and hence for 
%!
{$(P,Q)$  having property (TDS)}.
%the existence of a totally dense subcollection inside $\mathcal{L}_{P,Q}$.

We also remark that  %restrictions on $Q$ similar to 
%the 
%restriciton 
\equ{nonprop} %on $Q$
has already been used in a similar context: namely, it is equivalent to the negation of condition (B) from  \cite{KM03}, and it was shown there 
(see the last paragraph of the paper) %\comm{(to be honest, I looked at \cite{KM03} and did not underdstand much, hope we'll talk about it and maye edit this remark.)}
that in case $m = 1, \, P = \mathbb{Z}$ and $Q \subseteq \mathbb{Z}^n$ it is enough for $Q$ to %have two arbitrarily large non-proportional elements 
satisfy \equ{nonprop} in order for the set $\UA_{P, Q}(\psi)$ to contain linear forms which are not trivially singular. %\comm{Need to check what exactly they say about these sets - it's more than nonemptiness. YES –– PLEASE CHECK.}
\end{remark}

%\smallskip

%It is also easy to see 
{Note} that the assumptions of Theorem \ref{not_on_line} hold in the case of Theorem \ref{8.1}(a) as well. {Indeed, take $Q = \Z^n\nz$ and a  subgroup $P$ of $\Z^m$ of rank $l\ge \max(m-n+2,0)$, and let ${H}$ be the $\mathbb{R}$-span of $P$. Put   $k := m-l $.
%= \dim \Pi$.
%$k := m-l$. The $\mathbb{R}$-span of $P$ is an $(m-k)$-dimensional linear subspace of $\mathbb{R}^m$. 
%, thus it contains a  subspace $\Pi\subseteq\mathbb{R}^m$ of dimension $\max(m-n+2, 0) = m-k$. 
%let $k = \min(n-2, m)$. In this theorem $n>1$, hence $k \ge0$. T
If $l = m-k = 0$, then  the logarithmic density of $P$ in ${H}$ is a trivial condition which holds for any nonempty set $P$. Otherwise $P$ is a lattice in ${H}$, which clearly implies the logarithmic density of $P$ in ${H}$.
%, and we only need to verify nonemptiness. 
%If $l > 0$, fix $\phi \in \mathbb{S}^{m-1} \cap \Pi$ and $\varepsilon > 0$. Since $P$ is a subgroup of $\mathbb{Z}^m$, there exists a constant $C>0$ such that every point of $\mathbb{R}$-span of $P$ has at most distance $C$ to $P$. In particular, for every $x \in \mathbb{R}_+$ there exists $\p \in P$ such that $|x \phi - \p| \leq C$. It remains to notice that for all $x$ large enough such points $\p$ have to belong to $P_{\varepsilon}^{\phi}$. Thus $P$ is logarithmically dense in $\Pi$.}
Further, for any $C>0$ we can see that $\overline{\pi\big(\Z^n\smallsetminus B_C(0)\big)} = \mathbb{S}^{n-1}$   %for any $C>0$, this set 
  contains any number of $k$-dimensional halfspheres or even spheres since $k \le m - (m-n+2) = n-2$.
Hence \equ{twohalfspheres} holds, and a weaker 
%!
{(TDS)} form of Theorem \ref{8.1}(b) also follows from Theorem \ref{not_on_line}.}
{Yet, the more restrictive conditions of Theorem \ref{8.1} make it possible to strengthen the conclusion to the total density of the collection itself, as was announced in \cite{KMWW}. See \S\ref{pf8.1} for the proof.}

%\comm{Here  we also should comment that the more restrictive conditions of the theorem allow to strengthen the conclusion to the total density of the collection itself, again referring the reader to \S\ref{proof8.1}.} %On the other hand, we can relax the set-up of Theorem \ref{8.1}(a) even further, as the next example shows.
%Hence Theorem \ref{not_on_line} implies Theorem \ref{8.1} in a slightly weaker form, with the total density replaced by existence of a totally dense subcollection.  

\subsection{Inhomogeneous Diophantine approximation}\label{inhomappr}
%Let $\vb \in \mathbb{R}^m$ be a vector; 
In the inhomogeneous set-up one replaces linear forms $\q\mapsto A\q $ with affine forms $\q\mapsto A\q  - \vb $, where $\vb \in \mathbb{R}^m$, and studies the same questions as above.
%, we obtain the set-up of  Diophantine approximation. 
More precisely,  the system \eqref{mainsystem} is replaced with \begin{equation}\label{digeneralinh}%\tag{2.5}
%{|A\vq + \vb - \vp |
%\le  \psi(t)}
%  \ \ \ \mathrm{and}  
%\ \ |\vq|
%\le  t
%.
\begin{cases}
     | A{\q} - \vb-{\p}|\le \psi(t)\\ 
     \qquad\qquad  \ \ | {\q}|\le t
\end{cases}\end{equation}  
and one can choose $P \subseteq \mathbb{R}^m$, $Q\subseteq \R^n\nz$ and  define the set
 $$
    \widehat{\UA}_{P, Q}(\psi): = \big\{ (A, \vb) \in M_{m,n} \times \mathbb{R}^m: \,\, %\text{\eqref{inhsystem}
    \eqref{digeneralinh}\text{  has a solution  $(\p, \q) \in P \times Q$ for all  
    %large enough 
    {sufficiently large} }   t   \big\}.
    $$
    Denote $\widehat{\UA}_{\mathbb{Z}^m, \mathbb{Z}^n\nz}(\psi)$ %\comm{(we need to decide whether or not to include $\q=0$)} 
    simply by $\widehat{\UA}_{m,n}(\psi)$, and  say that $(A, \vb) $  is   \underbar{trivially singular} if   $ A{\bf q}-\vb\in \mathbb{Z}^m$ for some  $ \q \in  \mathbb{Z}^n\nz$.
  It was proved by Khintchine in 1947  \cite{Khi} that  for any $m,n\in\N$ and any non-increasing function $\psi: \R_{> 0}\to\R_{> 0}$ the set  of pairs $(A, \vb) \in\widehat{\UA}_{m,n}(\psi)$ that are not {trivially singular is {uncountable and dense}  (see also \cite[\S 3.3]{MN25} for a discussion).}

%the system 
%\begin{equation}\label{inhsystem}\tag{2.4}
%\begin{cases}
%     | A{\q} - \vb -{\p}|\le \psi(t),\\ 
%     | {\q}|\le t.
%\end{cases}
%\end{equation}
%We will discuss two possible set-ups: 

\smallskip
Using Theorem \ref{not_on_line} we can generalize this as follows: 

\begin{corollary}\label{inhom_pairs}
    For any $m,n\in\mathbb{N}$, suppose that $Q\subseteq \R^n\nz$ is unbounded and $P \subseteq \mathbb{R}^m$ is logarithmically dense in  $\R^m$.  
    Then  the set $$\widehat{\UA}_{P, Q}(\psi)
        \smallsetminus  \bigcup_{{R\in\mathcal{R}}} R$$ is   uncountable and dense for any %positive
        non-increasing  
     %function 
$\psi: \R_{>0} \to \R_{>0}$ and any  countable collection $\mathcal{R}$ of proper analytic submanifolds of $\mr\times \mathbb{R}^m$.
\end{corollary}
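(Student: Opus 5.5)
The plan is to reduce the inhomogeneous problem to the homogeneous setting of Theorem \ref{not_on_line} with $m$ unchanged and $n$ replaced by $n+1$. Identify a pair $(A,\vb)\in M_{m,n}\times\R^m$ with the $m\times(n+1)$ matrix $A' = (A \mid -\vb)$, and set $Q' := Q\times\{1\}\subseteq \R^{n+1}\nz$. For $\q' = (\q,1)$ we get $A'\q' - \p = A\q - \vb - \p$. The heights must also match: in \eqref{digeneralinh} the height is $|\q|$, not $|\q'| = \max(|\q|,1)$. Since $Q$ is unbounded, only $\q$ with $|\q|\ge 1$ matter for large $t$. To avoid fussing over this, I will apply the abstract Theorem \ref{kmww} directly to the Diophantine system on $X=M_{m,n}\times\R^m$ with $I = P\times Q$, $d_{\p,\q}(A,\vb) = |A\q-\vb-\p|$ and $h_{\p,\q}=|\q|$. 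Its resonant sets are exactly the affine subspaces $L_{\p,\q'}$ in $M_{m,n+1}\cong X$, so the collection $\mathcal{L}_{P,Q'}$ is aligned with this system. Every proper analytic submanifold $R$ is respected by any collection of affine subspaces \cite[Lemma 3.2]{KMWW}. It therefore suffices to show that $(P,Q')$ has property (TDS) in $M_{m,n+1}$; this is exactly the argument that the proof of Theorem \ref{not_on_line} uses to pass to \equ{conclusion1}.

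To get (TDS), I apply Theorem \ref{not_on_line} with $n+1$ in place of $n$ and $k=0$. This is allowed since $0\le\min(n-1,m)$ for $n\ge1$. Then $H=\R^m$, and $P$ is logarithmically dense in $\R^m$ by assumption. By the paper's discussion of the case $k=0$, condition \equ{twohalfspheres} is equivalent to \equ{nonprop}: $Q'$ must contain two non-proportional elements of arbitrarily large norm. I will verify \equ{twohalfspheres} directly. Fix $C>0$. Since $Q$ is unbounded, there is a sequence $\q_j\in Q$ with $\|\q_j\|\to\infty$. Then $\pi(\q_j,1) = (\q_j,1)/\|(\q_j,1)\|$, and after passing to a subsequence it converges to $(\theta,0)$ for some $\theta\in\mathbb{S}^{n-1}$. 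So $(\theta,0)\in\overline{\pi(Q'\smallsetminus B_C(0))}$.

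The main obstacle is producing a second, non-antipodal direction in this closure. A single unbounded sequence only gives the limit point $(\theta,0)$, plus possibly other limit points on the equator. For the second point I use the element $(\q_j,1)$ itself: for $j$ large its norm exceeds $C$, and its last coordinate is positive. Hence $\pi(\q_j,1)$ is neither $\pm(\theta,0)$, so it is non-proportional to $(\theta,0)$. This gives two $0$-halfspheres (points) $\Phi_1=\{(\theta,0)\}$ and $\Phi_2=\{\pi(\q_j,1)\}$, not lying in a single $0$-sphere $\{\pm v\}$, both contained in $\overline{\pi(Q'\smallsetminus B_C(0))}$. As $C$ was arbitrary, \equ{twohalfspheres} holds.

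Theorem \ref{not_on_line} then gives a totally dense subcollection of $\mathcal{L}_{P,Q'}$. Theorem \ref{kmww}, applied to the system above with $\mathcal{R}$ a countable collection of proper analytic submanifolds of $\mr\times\R^m$, yields the claimed uncountability and density.
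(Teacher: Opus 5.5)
Your proposal is correct and follows the paper's route: you augment $A$ by the column $\mp\vb$, check that $Q\times\{\pm1\}$ satisfies the $k=0$ case of \equ{twohalfspheres} (equivalently \equ{nonprop}, since any two distinct elements of $Q\times\{1\}$ are non-proportional), and apply Theorem \ref{not_on_line}. The differences are cosmetic. You use $Q\times\{1\}$ with $(A\mid -\vb)$ instead of $Q\times\{-1\}$ with $(A\mid \vb)$, and you invoke Theorem \ref{kmww} directly on $M_{m,n}\times\R^m$ to avoid the height mismatch. That mismatch is harmless anyway, since the conditions $|\q|\le t$ and $\max(|\q|,1)\le t$ coincide for $t\ge 1$ (not because $Q$ is unbounded, as you wrote).
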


\begin{proof} Clearly   a pair $(A, \vb)$ is in  $ \widehat{\UA}_{P, Q}(\psi)$ if and only if   {the} augmented  matrix 
$(A\,|\,\vb) \in M_{m,n+1}$ belongs to $ {\UA}_{P, Q'}(\psi)$, where 
$Q': = Q \times \{ -1 \} \subseteq \mathbb{R}^{n+1}$. The unboundedness of $Q$ implies that $Q'$ satisfies   \equ{nonprop}, thus Theorem \ref{not_on_line}  applies and the conclusion follows.
    \end{proof}

One can also fix a vector $\vb \in \mathbb{R}^m\smallsetminus \Z^m$ and study the set of matrices $A$ such that the pair $(A, \vb)$ is $\psi$-uniform; that is,  %consider 
{the set} 
$$\begin{aligned}
    \widehat{\UA}_{P, Q}^\vb(\psi) &: = \big\{ A \in M_{m,n}: (A, \vb) \in \widehat{\UA}_{P, Q}(\psi)  \big\}\\ 
    &\ = \big\{ A \in M_{m,n}:  \eqref{digeneralinh}\text{ has a solution   $(\p, \q) \in P \times Q$ for all  
    %large enough
    {sufficiently large} }   t  \big\}. \end{aligned}   $$
    Here, in order to apply our technique, one needs to assume\footnote{However see \cite{Schl} for some results in the case $n=1$ for inhomogeneously singular vectors, that is, elements of $\bigcap_{c>0}\widehat{\UA}_{m,1}^\vb(c\psi_{1/m})$ for fixed $\vb \in \mathbb{R}^m$.} $n>1$. 
    To the best of the authors' knowledge there have been no results in the literature concerning {the} existence of non-trivial elements of $\widehat{\UA}_{m,n}^\vb(\psi): =  \widehat{\UA}_{\mathbb{Z}^m, \mathbb{Z}^n\nz}^\vb(\psi)$. %\comm{Is this true? -- At least I have never seen it.} 
    However, given the obvious relationship
    $$
    \widehat{\UA}_{P, Q}^\vb(\psi)  = {\UA}_{P+\vb, Q}(\psi) 
    $$
and the fact that logarithmic density is invariant under translation (see Lemma \ref{LDequiv}), Theorem \ref{not_on_line} immediately implies

\begin{corollary}\label{inhom_single}
    For any $m,n\in\mathbb{N}$ with $n>1$, suppose that $Q\subseteq \R^n\nz$ satisfies  \equ{nonprop}, and $P \subseteq \mathbb{R}^m$ is logarithmically dense in  $\R^m$.  
    Then  the set $$\widehat{\UA}^\vb_{P, Q}(\psi)
        \smallsetminus  \bigcup_{{R\in\mathcal{R}}} R$$ is   uncountable and dense for any $\vb \in \mathbb{R}^m$, any %positive
        non-increasing  
     %function 
$\psi: \R_{>0} \to \R_{>0}$ and any  countable collection $\mathcal{R}$ of proper analytic submanifolds of $\mr$.
\end{corollary}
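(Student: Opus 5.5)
The corollary should follow from Theorem \ref{not_on_line} with $k=0$, applied to the translated set $P+\vb$ in place of $P$. The first step is the identity
$$\widehat{\UA}^\vb_{P,Q}(\psi)={\UA}_{P+\vb,Q}(\psi).$$
It holds because $|A\q-\vb-\p|=|A\q-(\p+\vb)|$, and $\p$ runs over $P$ exactly when $\p+\vb$ runs over $P+\vb$. So it suffices to show that the pair $(P+\vb,Q)$ satisfies the hypotheses of Theorem \ref{not_on_line}.

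With $k=0$ the constraint $0\le k\le\min(n-2,m)$ reduces to $n\ge 2$, which is exactly the assumption $n>1$. In that case $H=\R^m$, and condition \equ{twohalfspheres} is equivalent to \equ{nonprop}. To see this, note that a $0$-halfsphere is a single point of $\mathbb{S}^{n-1}$, and two of them fail to lie in a common $0$-sphere $\{\pm\theta\}$ exactly when they are not proportional. If \equ{nonprop} holds, then for every $C$ there are non-proportional $\q_1,\q_2\in Q\smallsetminus B_C(0)$, and $\Phi_i=\{\pi(\q_i)\}$ serve as the required halfspheres. So \equ{twohalfspheres} holds by assumption.

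It remains to show that $P+\vb$ is logarithmically dense in $\R^m$. The paper cites Lemma \ref{LDequiv} for translation invariance; I would also check it directly. Fix $\phi$ and $\varepsilon>0$, and take $\p\in P^{\phi}_{\varepsilon/2}$ with $\|\p\|$ large. Then $\p+\vb$ lies in $(P+\vb)^{\phi}_{\varepsilon}$ as soon as $\|\p\|\gg\|\vb\|/\varepsilon$, since the direction changes by $O(\|\vb\|/\|\p\|)$. Moreover $\|\p+\vb\|=\|\p\|+O(\|\vb\|)$, so an unbounded sequence $s_j$ in $|P|^{\phi}_{\varepsilon/2}$ with $s_{j+1}/s_j\to1$ yields an unbounded sequence $s'_j$ in $|P+\vb|^{\phi}_{\varepsilon}$ with $s'_j/s_j\to1$, and hence $s'_{j+1}/s'_j\to1$. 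The translation moves the norm only by a bounded amount, so ratios of consecutive norms are preserved asymptotically. This is the only point that needs any care.

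Theorem \ref{not_on_line} then gives property (TDS) for $(P+\vb,Q)$, and hence \equ{conclusion1} for ${\UA}_{P+\vb,Q}(\psi)$. By the identity above, this is exactly the claimed conclusion for $\widehat{\UA}^\vb_{P,Q}(\psi)\smallsetminus\bigcup_{R\in\mathcal R}R$, for every $\vb$.
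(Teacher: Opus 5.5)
Your proof is correct and follows the paper's route: the identity $\widehat{\UA}^\vb_{P,Q}(\psi)={\UA}_{P+\vb,Q}(\psi)$, translation invariance of logarithmic density, and Theorem \ref{not_on_line} with $k=0$, where \equ{twohalfspheres} reduces to \equ{nonprop}. Your direct check of translation invariance is essentially the paper's proof of (i)$\Leftrightarrow$(v) in Lemma \ref{LDequiv}.
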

We refer the reader to a recent preprint \cite{Agg} by Aggarwal for some upper estimates of the Hausdorff dimension of sets %$\widehat{\UA}_{m,n}(\psi)$ and 
$\widehat{\UA}_{m,n}^\vb(\psi)$. %\comm{Vasya: Should we also mention Kim+Kim then? I think it's equally relevant. DK: Kim+Kim is about the Hausdorff dimension of the complement to $\UA$ for slowly decreasing $\psi$. }

%\comm{Here we can send the reader to some later remark. Also here it would be nice to  have something more general resembling Theorem \ref{8.1}(a).
%Also we can mention here that Theorem \ref{not_on_line} implies the $\psi$-uniform case of Corollary \ref{corollary_pairs}, again referring to a later section.}
%\bigskip

%\centerline{MY CHANGES STOP HERE}

%\vfil\eject

\subsection{{The structure of the paper}}\label{struct}
%{
The main results of this paper are presented in \S\ref{2theorems}: namely, these are Theorem \ref{any_k_theorem}, which is a generalization of Theorem \ref{not_on_line}, and Theorem \ref{ch_theorem}. In \S\ref{pf1.6} we show how to deduce Theorem \ref{not_on_line} from Theorem \ref{any_k_theorem}. In \S\ref{appl} we {exhibit} a variety of examples and special cases, illustrating our main {theorems}.

{The proof of  Theorems \ref{any_k_theorem}  and   \ref{ch_theorem} occupies \S\S\ref{just_density_section}--\ref{proofs}. In \S\ref{just_density_section} we describe 
%two general conditions for the density %guaranteeing 
 some ways to prove  that a %certain 
%of 
collection of subspaces of} $M_{m,n}$ is dense, %these %conditions will 
%be used in the proof of our main results.
while in
\S\ref{general_conditions_section} we describe a general topological argument, Lemma \ref{usefullemma}, which relates density {to} total density
%. This argument is 
{and constitutes} the main ingredient of our proofs. In \S\ref{proofs} we apply the method developed in \S\ref{general_conditions_section} to prove our main results: Theorem \ref{any_k_theorem} is proven in \S\ref{proof_anyk_section} and Theorem \ref{ch_theorem} is proven in \S\ref{Ch_proof_section}.

In \S\ref{optimality} we introduce a formal framework that allows us to reformulate our   {results} in a uniform way and show that they are optimal. 
In \S\ref{logdense_sets_section} we discuss logarithmically dense sets in {detail},
%Namely, \S\ref{logdense_expl} shows alternative definitions of logarithmic density which are convenient to use in different contexts and gathers some examples and general properties of logarithmically dense sets, which can be used to produce a wide range of examples to our main statements. In Section \ref{logdense_optimal_section} we 
{in particular showing} that the logarithmic density requirement in  Theorem \ref{not_on_line} cannot be relaxed.

In Section \ref{pf8.1} we prove Theorem \ref{8.1}: namely, %we prove 
we show that for $P,Q$ as in this theorem 
the collections ${\mathcal L}_{P,Q}$ do not just have a totally dense subcollection, as follows from Theorem \ref{not_on_line}, but are totally dense. %However, in general it is not true that (TDS) implies total density of the collection itself: we shed the light on the difference
{We elaborate on the distinction between (TDS) and total density in \S \ref{subcoll_td_section}, {describing} an example of not totally dense collection with a totally dense subcollection}.
%of a collection which has property (TDS) but is not totally dense.
Finally, \S\ref{technicalsection} is technical: it  is devoted to establishing  Lemma \ref{opensetlemma},   an  auxiliary result {that} plays a key role in our argument.
%}

\subsection{Acknowledgments}
{The authors} are grateful {to} PRIMES for making this research opportunity possible, {and to Maxim Arnold and Tanya Khovanova for useful comments on a preliminary version of the paper}. The second-named author has been supported by  the NSF
 grant No.\ DMS-2155111. {This work was completed in Spring 2026, while the second-named author was in residence at the Simons Laufer Mathematical Sciences Institute in Berkeley, CA, supported by the National Science Foundation under Grant No.\ DMS-1928930. The hospitality of  SLMath is gratefully acknowledged.}

%\vskip+0.6cm

%is a special case of more general results. We state those in the next section, then prove them later. Also we have many interesting examples.

%\vskip 1in

 %We slightly change the notation and denote the set of $\psi$-uniform $m \times n$ matrices by $\UA_{P, Q}(\psi)$. The set denoted by $\UA_{m, n}(\psi)$ before should now be denoted by $\UA_{\mathbb{Z}^m, \mathbb{Z}^n \smallsetminus \{ 0 \}}(\psi)$; when it is clear from context, we will still use the notation $\UA_{m, n}(\psi)$ for convenience.

%Just as in the classical case, we would like to sort out the matrices $A$ which are $\psi$-uniform for trivial reasons. We call a matrix $A \in M_{m,n}$ {\it trivially singular with respect to $Q, P$} if there exist such $\q \in Q, \, \p \in P$ that $A  \q = \p$. 

%We denote the set of $\psi$-uniform with respect to $Q,P$, not trivially singular with respect to $Q, P$ matrices by $\UA_{P, Q}^*(\psi)$.

%\bigskip

\section{Main results}\label{main}

\subsection{Two general theorems}\label{2theorems}

%We will denote the unit $(d-1)$-sphere centered at the origin in $\mathbb{R}^d$ by ${\mathbb S}^{d-1}$. For a vector ${ \bf v} \in \mathbb{R}^n\smallsetminus\{0\}$, we define its angle as ${ \bf v}^* = \frac{ \bf v}{\lVert { \bf v } \rVert} \in {\mathbb S}^{n-1}$. 

%Given a hyperplane $\bf H$ of codimension 1, let $\bf v$ be a vector normal to $\bf H$. If $\bf H$ is non-linear, choose $\bf v$ so that $k\bf v\in\bf H$ for some positive $k$. Otherwise, for certainty, choose $\bf v$ so that the non-zero coordinate with smallest index is positive.

%We then define the angle of $\bf H$ as $\bf H^*=\bf v^*$ where $\bf v$ is defined as above. We define its magnitude $\lVert \bf H\rVert$ as its distance to the origin (in particular, $\lVert \bf H\rVert=0$ if and only if $\bf H$ is linear). 

%For any two points ${\bf x}$ and ${\bf y}$ , we will let $d({\bf x},{\bf y})$ denote the distance between these points. For an angle $\q \in {\mathbb S}^{n-1}$ and a set $\mathcal{L}$ of vectors or hyperplanes of codimension 1, we denote the set of elements of $\mathcal{L}$ whose angle is within $\epsilon$ distance of $\q$ by $\mathcal{L}^\q_\epsilon$ and the set of magnitudes of elements of $\mathcal{L}$ by $\widehat{\mathcal{L}}$.

%If $S\subseteq \prod_{i\in I} A_i$ for some index set $I$, we let $\text{pr}_{A_i}(S)=\text{pr}_i(S)$ be the projection mapping onto $A_i$. In particular, for a point $x = (x_i)_{i \in I}\in\prod_{i\in I} A_i$, we let $\text{pr}_i(x)=x_i$.

%Given Theorem \ref{kmww}, our goal 
{The goal of the paper} is to describe some sufficient conditions on the {sets {$P, Q$}} under which $\mathcal{L}_{P, Q}$ is totally dense
%!
{or contains a totally dense subcollection}. More generally, we would like to 
{fix a subset $\fR$ of $ \mathbb{R}^m\times\left( \mathbb{R}^{n} \nz \right) $ and}
work with {the collections}
$$
\mathcal{L}_\fR: = \{ L_{\p, \q}, \,\,\, (\p, \q) \in \fR  \}.
$$
%!
{Similarly to the special case $E = P\times Q$, we will say that $\fR$ {has property (TDS)} if $\mathcal{L}_{\fR}$ contains a  totally dense subcollection.}
%{In other words, 
%in the original Diophantine problem 
%one can study the solvability of the system \eqref{mainsystem} with the constraints on $\vp$ depending on the values of $\vq$.} \comm{Do we have good examples of that? - Not really. We can construct some exotic examples which will satisfy our main theorems, but they won't essentially use this dependence, since the conditions of main theorems are based on product structure. We can use statements like Lemma \ref{usefullemma} or Lemma \ref{usefulforproducts} directly and prove total density for such situations, but it is something significantly different. I suggest not stressing this part here}
%for some $R \subseteq \left( \mathbb{R}^{n} \setminus \{ 0 \} \right) \times \mathbb{R}^m$. 
One of the issues we face with this {set-up} is that, as one can easily observe, {\begin{equation}\label{scaling}
%\tag{2.0}
L_{h\vp,g\vq} = h L_{\vp,\vq}g^{-1}{\ \forall\,h \in \GL_m(\R),\ g \in \GL_n(\R);\text{ in particular }L_{c\vp,c\vq} =   L_{\vp,\vq}\ \forall\,c \in\R.}\end{equation} }Therefore there exists a variety of possible sets $\fR$ realizing the same collection $\mathcal{L}_\fR$. 
%hence  $\mathcal{L}_{P,Q}$ is totally dense, or contains a  totally dense subcollection, if and only if the same is true for $\mathcal{L}_{cP,dQ}$ for any $c,d > 0$. 
To {state} our main results it will be convenient to have some sort of   standard form for the set $\fR$. %Note that multiplying any elements of $\fR$ by a nonzero real valued scalars does not change the collection $\mathcal{L}_\fR$.
%For instance, 
{Namely we are going to}
%we can 
assume that the last $n$ coordinates of any element of $\fR$ form a unit vector.
More formally, %define
%!
{consider} the space $${\bf T}_{m,n}: = \mathbb{R}^m\times {\mathbb S}^{n-1} $$ and the projection map {extending $\pi$ defined in \equ{defpi} on $\mathbb{R}^n \smallsetminus \{ 0 \}$ to its product with $\R^m$:}
\eq{generalpi}{
\pr: \,\,\, {\mathbb{R}^m  \times \left(\mathbb{R}^n \smallsetminus \{ 0 \} \right)} \rightarrow \mathfrak{\bf T}_{m,n}, \,\,\,\,\,\,\,\,\,\,\,\, (\p, \q) \mapsto \left(\frac{\p}{\|\q\|}, \frac{\q}{\|\q\|} \right).
}
We will view $\tmn$ as a metric space with distance induced by the %supremum norm on $\R^m\oplus\R^n$
product of the distance functions on $\mathbb{R}^m$ and  ${\mathbb S}^{n-1} $, where $\R^m$ is endowed with the Euclidean metric, and the metric on $\mathbb{S}^{n-1}$ is induced by the Euclidean metric on $\R^n$. 
%\textcolor{orange}{metric on $\mathbb{R}^{m}\oplus\mathbb{R}^{n}$}. %\comm{Please check.}

It is clear that $\mathcal{L}_\fR = \mathcal{L}_{\pr(\fR)}$; thus, in our main results, we can {assume without loss of generality} that $\fR = \pr(\fR)$, or, equivalently, that {$\fR\subseteq {\bf T}_{m,n}$}.
Such a reduction clearly does not guarantee the unique parametrization of the set $\mathcal{L}_\fR$: to achieve uniqueness, one needs to work in ${\bf T}_{m,n}/\sim$, where {$(\p,\theta) \sim (-\p,-\theta)$}. However it will be more convenient for us to formulate and prove statements for ${\bf T}_{m,n}$. {Note that without any loss of generality one can impose an additional assumption that $\fR = -\fR$; that is,  starting with an arbitrary $\fR\subset {\bf T}_{m,n}$, one can symmetrize it by replacing it with $\fR\cup (-\fR)$.} %\comm{Do we need such a remark?}

%In most of our proofs, we will assume that $S = \pr(S)$.

%\begin{remark}
%    Clearly, considering $\pr(R)$ still 
%\end{remark}

%To make our notation clear, we will use fraktur font for subsets of ${\bf T}_{m,n}$; namely, we will be using the notation $\fR \subseteq { \bf T}_{m,n}$.%$, \, \mathfrak{Q} \subseteq {\mathbb S}^{n-1}$ and $\mathfrak{P} \subseteq \mathbb{R}^m$ for $\mathfrak{R} = \mathfrak{Q} \times \mathfrak{P}$.

%\vskip+0.3cm

\medskip
We are now ready to formulate our main results, Theorems 
\ref{any_k_theorem} and \ref{ch_theorem}. %\comm{Maybe we should switch the order of the theorems.}
%{The first one addresses the situation when the $\R^m$-part of $\pi(E)$, i.e. the set of ratios $\left\{\frac{\p}{|\q|}: (\p,\q)\in E\right\}$, is dense in some vector subspace ${H}$ of $\R^m$, and the ${\mathbb S}^{n-1}$-part of $\pi(E)$ is "big enough", where the latter notion depends on the codimension $k$ of ${H}$. It was designed to generalize Theorem \ref{not_on_line} and produce other interesting examples.} \comm{Please edit.}
The first one generalizes Theorem \ref{not_on_line} and produces other nontrivial examples. Its rough idea is {that (TDS) for $\fR$  can be verified if one shows that the closure of $E$ in $\tmn$  contains %set $\fR$ should be
a certain  $m$-dimensional submanifold $D$, 
with its $m$   dimensions split between (one or two) $(m-k)$-dimensional subspaces in the $\R^m$-part and (one or two) $k$-dimensional spheres in the ${\mathbb S}^{n-1}$-part of $\tmn$.} %have property (TDS+). 
{One can also work with $D$ being the product of one $(m-k)$-dimensional subspace and one $k$-sphere, and replace the assumption $D\subset \overline{E}$ with  $D\subset \overline{E\smallsetminus D}$. 
The second theorem has the same rough idea, but instead of subspaces in the $\R^m$-part we work with half-spaces. {We remark that %the statements of both  theorems above appear to be quite long; 
%however 
in \S\ref{optimality} we %will 
develop a certain formalism that allows to restate both theorems in a shorter and more conceptual way, at the same time demonstrating  their optimality.} 

%. The same formalism will be used to explain the optimality. 

%in \S\ref{optimality}
%in a shorter way 
%Both theorems will be   formalized and restated in \S\ref{optimality} %\comm{(I really hope so!)} 
%by introducing properties   (TDS$\pm$) of those subsets $D$ of $\tmn$.}

 %{In fact we will restate them in \S\ref{optimality} even in a more abstract way.} However we are going to

%with its $m$   dimensions split between an $(m-k)$-dimensional subspace in the $\R^m$-part and a $k$-dimensional sphere in the ${\mathbb S}^{n-1}$-part of $\tmn$. In fact, taking $\overline{\fR}$ exactly as small as the product of an $(m-k)$-dimensional subspace and a $k$-sphere is in general not enough for the existence of a totally dense subcollection of $\mathcal{L}_\fR$, see Lemma \ref{not_any_k}. However, adding one more $k$-sphere or one more $(m-k)$-dimensional subspace turns out to be sufficient.

\begin{theorem}\label{any_k_theorem}
    Let $\fR \subseteq { \bf T}_{m,n}$. Fix $0 \leq k \leq \min(n-1, m)$, and suppose %\textcolor{orange}
    {at least} one of the following three conditions holds:
    %\begin{enumerate}
        %\item\label
{\begin{equation}\label{anykA}\tag{\ref{any_k_theorem}a}
\begin{aligned}\text{there exist two {different}
$k$-dimensional subspheres }&\Phi_1,  \Phi_2\text{ of }\mathbb{S}^{n-1}\\ \text{ and an $(m-k)$-dimensional linear subspace }&{H} \text{ of }\mathbb{R}^m\quad\\  \text{ such that }
%\left(
         {H}\times \left(\Phi_1 \cup \Phi_2 \right) %\cap \mathbb{S}^{n-1} %\right) 
         \subseteq \overline{\fR};\quad
\end{aligned}
        \end{equation}
        %\item 
\begin{equation}\label{anykB}\tag{\ref{any_k_theorem}b}
\begin{aligned}\text{there exist a
$k$-dimensional subsphere }\Phi \text{ of }\mathbb{S}^{n-1}\\ \text{ and two  {different} $(m-k)$-dimensional linear subspaces }&{H}_1,{H}_2 \text{ of }\mathbb{R}^m\quad\\  \text{ such that }
%\left(
         \left( {H}_1 \cup {H}_2 \right) \times \Phi %\cap \mathbb{S}^{n-1} 
%\right) 
\subseteq \overline{\fR};\quad\qquad
\end{aligned}
%\text{there exists $\Phi$ and ${H}_1, {H}_2$ such that }%\left(
%\Phi %\cap \mathbb{S}^{n-1} 
%\right) 
%\times \left( {H}_1 \cup {H}_2 \right) \subseteq \overline{\fR},
\end{equation}
or \begin{equation}\label{anykC}\tag{\ref{any_k_theorem}c}
\begin{aligned}\text{there exist a
$k$-dimensional subsphere }\Phi \text{ of }&\mathbb{S}^{n-1}\\ \text{ and an $(m-k)$-dimensional linear subspace }&{H} \text{ of }\mathbb{R}^m\quad\\  \text{ such that }
%\left(
{H} %\cap \mathbb{S}^{n-1} %\right) 
       \times \Phi \subseteq \overline{\fR \smallsetminus \left( %\left( 
       {H} %\cap \mathbb{S}^{n-1} \right) 
       \times \Phi \right)}.\quad
\end{aligned}\end{equation}}
%\text{there exist $\Phi$ and ${H}$ such that }%\left( 
%       \Phi %\cap \mathbb{S}^{n-1} %\right) 
%       \times {H} \subseteq \overline{\fR \smallsetminus \left( %\left( 
%       \Phi %\cap \mathbb{S}^{n-1} \right) 
%       \times {H} \right)}.\end{equation}
%     
Then  %$\mathcal{L}_{\fR}$ contains a totally dense subcollection.
{$\fR$ has property {\rm (TDS)}}.
   % \end{enumerate}
\end{theorem}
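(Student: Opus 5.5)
We will exhibit an explicit subcollection of $\mathcal{L}_\fR$ and verify its total density directly from Definition \ref{collections}. The subcollection is the one formed by $L_{\p,\theta}$ with $(\p,\theta)\in\fR$ close to the relevant piece $D$ of $\overline{\fR}$, that is, to $H\times(\Phi_1\cup\Phi_2)$, to $(H_1\cup H_2)\times\Phi$, or to $H\times\Phi$, depending on the case. The starting point is continuity: the map $(\p,\theta)\mapsto L_{\p,\theta}$ is continuous from $\tmn$ to the space of codimension-$m$ affine subspaces of $\mr$, with local Hausdorff convergence on compacts. Hence if $(\p_j,\theta_j)\to(\p,\theta)$ and $L_{\p,\theta}$ meets an open set $W$, then so does $L_{\p_j,\theta_j}$ for large $j$. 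Consequently both density and the "neighbours of $L$ inside $W$" condition can be checked on $\overline{\fR}$ first and then transferred to $\fR$ by approximation.

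The first step is density. We compute $\bigcup_{(\p,\theta)\in D}L_{\p,\theta}$. For a single $k$-sphere $\Phi=\mathbb{S}^{n-1}\cap V$ with $\dim V=k+1$, and an $(m-k)$-dimensional $H$, the set $\{A: A\theta\in H\text{ for some }\theta\in\Phi\}$ requires the $m\times(k+1)$ map $A|_V$ composed with the projection $\R^m\to\R^m/H\cong\R^k$ to have a kernel. A $k\times(k+1)$ matrix always has a kernel, so this set is all of $\mr$. Thus already $\bigcup_{H\times\Phi}L$ is everything, and density of $\mathcal{L}_\fR$ near $D$ follows. This is the lemma that \S\ref{just_density_section} presumably supplies, and I will use it in this form.

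The second step, the main one, is total density. Fix $L=L_{\p_0,\theta_0}$ from the subcollection and an open $W$ meeting it. We must show that the union of those $L'$ meeting $L\cap W$ has interior. The plan is a dimension count, and the key tool would be the topological Lemma \ref{usefullemma} announced in \S\ref{general_conditions_section}. Take $A_0\in L\cap W$ and consider the family of $L_{\p,\theta}$ through points near $A_0$ on $L$. The union of the $L'$ for $(\p,\theta)\in D$ with $L'\cap L\cap W\neq\varnothing$ contains the set of $A$ such that $A\theta-\p\in$ \{small perturbations of $B\theta-\p$, $B\in L$\} for some $(\p,\theta)\in D$. One then shows that the map $(B,(\p,\theta),Y)\mapsto$ "corresponding $A$" is a submersion at a suitable point, giving an open image. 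The role of the second sphere (case a), the second subspace (case b), or the accumulation condition (case c) is that $(\p_0,\theta_0)$ lying on a single product $H\times\Phi$ can make this map degenerate. Indeed, the point of $D$ through $L$ gives no transversality, as in the counterexample Lemma \ref{not_any_k} hinted at. So we must use neighbours $(\p,\theta)\in D$ coming from a different sphere or subspace, or, in case (c), points of $\fR\smallsetminus D$ converging to $D$, whose subspaces $L'$ cross $L$ transversally.

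Concretely, I would pick $(\p,\theta)$ on the other component, say $H\times\Phi_2$ when $L$ comes from $\Phi_1$. The intersection $L\cap L'$ is then nonempty of expected dimension, and the tangent directions of $D$ at $(\p,\theta)$ move $L'$ in directions complementing $T L'$, giving a neighbourhood. I expect the main obstacle to be verifying this transversality uniformly in the three cases, especially case (c), where one only has a sequence from $\fR\smallsetminus D$ approaching $D$. There I would apply Lemma \ref{usefullemma} with $L'$ ranging over a limit family and use the continuity of the first paragraph to pass from $\overline{\fR}$ back to $\fR$.
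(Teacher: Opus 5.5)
The density step is fine; your kernel argument even gives $\bigcup_{\br\in H\times\Phi}\bfL_{\br}=\mr$. The total-density step, however, has a genuine gap, and it starts with the choice of subcollection.

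\textbf{The subcollection is wrong.} Keeping all $\br\in\fR$ close to $D$ retains $\bfL_{\br_0}$ in two bad situations:
\begin{itemize}
\item in case \eqref{anykC}, for $\br_0\in\fR\cap D$;
\item in cases \eqref{anykA}, \eqref{anykB}, for $\br_0\in\fR\cap N_1\cap N_2$, where $N_1\cap N_2$ is $H\times(\Phi_1\cap\Phi_2)$, resp.\ $(H_1\cap H_2)\times\Phi$.
\end{itemize}
For such $L=\bfL_{\br_0}$ there is no ``other component''. At a generic $A\in L$ the relevant kernel (of $A$ on $\Span\Phi$ modulo $H$) is just $\R\theta_0$. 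So $\pm\br_0$ are the only $\br\in D$ with $A\in\bfL_{\br}$, and the transversal neighbours you rely on need not exist.

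The subcollection can genuinely fail. Take case \eqref{anykC} with $k\ge1$, and let $\mathrm{pr}$ be the orthogonal projection onto $H^\perp$. Take a small ball $W$ centred on $L$ with $\|\mathrm{pr}(Ae)\|\ge c>0$ for all $A\in W$ and all unit $e\in\Span\Phi\cap\theta_0^\perp$. Approximate a dense subset of $\{(\p',\theta')\in D:\theta'\ne\pm\theta_0\}$ by points $(\p'+u,\theta')$ with $0\ne u\in H^\perp$ and $\|u\|<c\,|\sin\angle(\theta',\theta_0)|$. Applying $\mathrm{pr}$ to $A\theta'=\p'+u$ shows that none of these subspaces meets $L\cap W$. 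Hence $\fR=\{\br_0\}\cup\{\text{these points}\}$ satisfies \eqref{anykC}, yet any subcollection containing $L$, yours in particular, is not totally dense.

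The paper avoids this. In case \eqref{anykC} it first replaces $\fR$ by $\fR\smallsetminus(H\times\Phi)$. Proposition \ref{usefulforproducts} then keeps only those $\br\in\fR$ lying in symmetric neighbourhoods $U_i$ of $N_i\smallsetminus N_i^*$ disjoint from $N_i^*$. So every retained $\br_0$ is at positive distance from some piece whose collection is dense.

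\textbf{The local condition is only asserted.} Even for admissible $\br_0$, you claim ``a submersion at a suitable point'' without proof, and you leave case \eqref{anykC} open. The paper needs no transversality computation here. Lemma \ref{usefullemma} shows that if the collection with $\varepsilon$-balls around $\pm\br_0$ removed is still dense near $L$, then each of its members through a thin tube $\Omega$ around $L\cap B_\sigma(Y_0)$ has $\theta$ bounded away from $\pm\theta_0$. By Lemma \ref{anglelemma} such a member meets $L$ inside $B_\sigma(Y_0)$, so the closure of their union contains $\Omega$.

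Your idea can also be completed, and then \eqref{anykC} is no harder than \eqref{anykA}, \eqref{anykB}:
\begin{itemize}
\item Suppose $\br_0\notin N_j$ for a symmetric piece $N_j$ with $\bigcup_{\br\in N_j}\bfL_{\br}=\mr$. Then every $A\in L\cap W$ lies on some $\bfL_{\br'}$ with $\br'\in N_j$, and necessarily $\theta'\ne\pm\theta_0$ (otherwise $\br'=\pm\br_0$).
\item Members with $\br''$ in a small neighbourhood $U'$ of $\br'$ in $N_j$ still meet $L\cap W$.
\item $\bigcup_{\br''\in U'}\bfL_{\br''}$ has interior by Lemma \ref{opensetlemma}. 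This is the rigorous form of your submersion claim, which holds only at generic points since it needs $A(T_\theta\Phi)+H=\R^m$.
\item Approximating each $\br''$ from $\fR$ transfers this. Transversality is what makes members meeting $L\cap W$ survive the approximation.
\end{itemize}
As written, however, the proposal supplies neither argument.
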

%\comm{Is the following true:
%for any proper closed symmetric subset $D$ of $\left({H}_1 \cup {H}_2 \right) \times \Phi$ the collection $\mathcal{L}_{D}$ does not contain a totally dense subcollection? Yes.}

%\comm{Is the following true:
%for any
%$k$-dimensional subsphere $\Phi$   of $\mathbb{S}^{n-1}$, any two  {different} $(m-k)$-dimensional linear subspaces ${H}_1,{H}_2$   of $\mathbb{R}^m$, and    any proper closed symmetric subset $D$ of $\left({H}_1 \cup {H}_2 \right) \times \Phi$ there exists $E$ such  that $ D %\cap \mathbb{S}^{n-1} 
%\right) 
%\subseteq \overline{\fR}$, but the collection $\mathcal{L}_{E}$ does not contain a totally dense subcollection? These are the questions to consider when writing \S\ref{optimality}.}

{Let us briefly explain how the above theorem can be applied. If we are given a subset $E$ of $\tmn$ of the form  $E= \pi(P\times Q)$, where $\pi$ is as in \equ{generalpi}, to verify  %property 
(TDS) for $E$ using Theorem~\ref{any_k_theorem}(a) one needs to check that %the $\R^m$-part of $E$, that is, 
the set of ratios 
$${\left\{\left(\frac{\p}{\|\q\|}, \frac{\q}{\|\q\|} \right): \p\in P,\ \q\in Q\right\},}$$ is dense  in a product of some $(m-k)$-dimensional subspace ${H}$ of $\R^m$ and the union of two different $k$-spheres. %\comm{Vasya: This is not completely true. Maybe the set of ratios is dense, but it is reached by taking $\q$ with different directions, and so no set $(\theta, \phi \R_+)$ is in the closure. We need to replace $Q$ with $Q_{\varepsilon}^\phi$ for all $\Phi$ for this to work, however then it becomes hard to understand and we actually do it formally later. From the other hand, I don't see how it is more simple than the theorem itself. Do we really want this remark?}
Part (b) is similar, with one $(m-k)$-space and two $k$-spheres replaced by two $(m-k)$-spaces and one $k$-sphere. And to apply part (c) is a bit trickier: one needs to exhibit 
an $(m-k)$-dimensional subspace ${H}$ of $\R^m$ and a  $k$-subsphere $\Phi $ of  $\mathbb{S}^{n-1}$ such that ${H} %\cap \mathbb{S}^{n-1} %\right) 
       \times \Phi $ is in the closure of 
       $$
       \left\{\left(\frac{\p}{\|\q\|},\frac{\q}{\|\q\|}\right): \p\in P,\ \q\in Q\right\}\smallsetminus ({H} %\cap \mathbb{S}^{n-1} %\right) 
       \times \Phi ) =  \left\{\left(\frac{\p}{\|\q\|},\frac{\q}{\|\q\|}\right):  \text{ either }\p\notin H \text{ or } \frac{\q}{\|\q\|}\notin\Phi\right\}.
       $$}
%{Note  that \eqref{anykA} cannot hold when  $k=n-1$,  and  \eqref{anykB} cannot hold when $k=0$.} %\comm{Should we say anything else here?}

%\smallskip
%One can ask if {assuming that only one set of the form ${H} \times \Phi$ belongs} to $\overline{\fR}$ is ever enough to guarantee a totally dense subcollection. The answer is no. More specifically, conditions \eqref{anykA}, \eqref{anykB} and \eqref{anykC} are our ways to guarantee that after we remove a {set} of the form ${H} \times \Phi$ from $E$, we are still left with a dense collection of subspaces. The latter turns out to be a necessary condition for a collection to contain a totally dense subcollection; see Proposition \ref{not_any_k} for details. \comm{Please edit.}

\begin{remark}\label{illustration} Similarly to the discussion after Theorem \ref{not_on_line}, it is worthwhile to illustrate Theorem~\ref{any_k_theorem} by elaborating on the case $k=0$. Then ${H} = \R^m$, and $0$-dimensional subspheres are simply pairs of opposite points of $\mathbb{S}^{n-1}$. In fact %without loss of generality
 using the symmetrization argument discussed prior to Theorem~\ref{any_k_theorem} one can %replace each 
 simply work with single points instead of pairs. Thus in this case 
 %Theorem \ref{any_k_theorem} states that 
 {$\fR$ has property {\rm (TDS)}} if either 
%\begin{itemize}
%    \item 
\begin{equation}\label{ill_a}\tag{\ref{illustration}a}\text{there exist  non-opposite $\theta_1,\theta_2\in\mathbb{S}^{n-1}$ such that $\R^m\times \{\theta_1,\theta_2\} %\cap \mathbb{S}^{n-1} %\right) 
         \subseteq \overline{\fR}$,}
\end{equation}
or
\begin{equation}\label{ill_b}\tag{\ref{illustration}b}\text{there exists $\theta \in\mathbb{S}^{n-1}$ such that $\R^m\times \{\theta\} %\cap \mathbb{S}^{n-1} %\right) 
         \subseteq \overline{\fR\smallsetminus (\R^m\times \{\theta\})}$.}
         \end{equation}
\end{remark}
         
 %condition \eqref{anykA} is equivalent to the existence of non-opposite $\theta_1,\theta_2\in\mathbb{S}^{n-1}$ such that $\R^m\times \{\theta_1,\theta_2\} %\cap \mathbb{S}^{n-1} %\right) 
 %        \subseteq \overline{\fR}$, condition \eqref{anykC} is equivalent to the existence of $\theta \in\mathbb{S}^{n-1}$ such that $\R^m\times \{\theta\} %\cap \mathbb{S}^{n-1} %\right) 
 %        \subseteq \overline{\fR\smallsetminus (\R^m\times \{\theta\})}$, and condition \eqref{anykB} cannot hold.

\smallskip
Our second main theorem is meant to address the asymmetric situation   when %the set of ratios %$\left\{\frac{\p}{\|\q\|}: (\p,\q)\in E\right\}$ 
%\equ{ratios} is dense only in 
$\R^m$ is %not a linear space but 
replaced by a half-space ${H}$  of $\mathbb{R}^{m}$. %In fact for simplicity we will consider only half-spaces of full dimension. 
 Clearly it can be reduced to the   case ${H} = \R^m$ %of Theorem \ref{any_k_theorem} 
 by symmetrization under the additional assumption $Q = -Q$.
 %replacing   $\fR$ with $\fR\cup (-\fR)$. 
 It turns out that in the half-space case the symmetry condition can be weakened to the assumption that certain subsets of $\mathbb{S}^{n-1}$ contain the origin in their closed convex hulls.
 
 %But what if the symmetry of $Q$ is not assumed? 
 %The next statement generalizes the either--or clause mentioned in the previous paragraph by 
 %It turns out that in this case the sets $\{\theta_1,\theta_2\}$ and $\{\theta\}$ mentioned in the preceding paragraph 
 %can be replaced by  certain 
 %introducing subsets $\Theta_1,\Theta_2$ of $\mathbb{S}^{n-1}$ satisfying some additional  conditions.}
%However the above theorem is not applicable when $Q$ is not symmetric and $P$, for example, consists of vectors with a nonnegative first coordinate. 
%These are the kind of situations addressed by the next theorem.}

\begin{theorem}\label{ch_theorem}
    Let $\fR \subseteq { \bf T}_{m,n}$. Suppose there exist disjoint {(possibly empty)} sets $\Theta_1, \Theta_2 \subseteq \mathbb{S}^{n-1}$ and a closed linear half-space ${H}$ of $\mathbb{R}^{m}$ such that the following {three} conditions hold:
%    \begin{enumerate}
        %\item\label{bigtheta1} 
\begin{equation}\label{bigtheta1}\tag{\ref{ch_theorem}a} {H}\times\Theta_1  \subseteq \overline{\fR},
    \end{equation}
\begin{equation}\label{bigtheta2}\tag{\ref{ch_theorem}b} {H} \times \{\theta\}
 \subseteq \overline{\fR \smallsetminus \left( {H} \times \{\theta\} \right)} \text{ for any }\theta \in \Theta_2,
    \end{equation} and 
\begin{equation}\label{ch_theorem_condition}\tag{\ref{ch_theorem}c}
    0 \in \begin{cases}\conv \big( \overline{\Theta_1 \cup \Theta_2 \smallsetminus \{ \theta \}} \big) %\,\,\,\,\,\,\,\, 
    \text{ for any} \,\, \theta \in \Theta_1 &\text{if } \Theta_1\ne\varnothing,\\
    \conv \left( \overline{ \Theta_2} \right)&\text{if } \Theta_1 = \varnothing.\end{cases}
\end{equation}
%\begin{equation}\label{ch_theorem_condition}\tag{2.1c}
%    0 \in \conv \left( \overline{\Theta_1 \cup \Theta_2 \smallsetminus \{ \theta \}} \right) \,\,\,\,\,\,\,\, \text{for any} \,\, \theta \in \Theta_1.
 %   \end{equation}
 %   If $\Theta_1$ is empty and condition \eqref{ch_theorem_condition} is trivial, we need to check that \comm{я не понимаю формулировки we need to check, это то же самое что assume that?}
%\begin{equation}\label{bigtheta3}\tag{2.1d}0 \in \conv \left( \overline{\Theta_1 \cup \Theta_2} \right).
%    \end{equation}
    Then {$\fR$ has property {\rm (TDS)}}.
    (Here and hereafter $\conv (\Theta)$ stands for the convex hull of $\Theta$ in $\R^n$.)
    %$\mathcal{L}_{\fR}$ contains a totally dense subcollection.
\end{theorem}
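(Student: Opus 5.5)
The plan is to exhibit an explicit subcollection of $\mathcal{L}_\fR$ and verify total density in two steps: density first, then the local condition of Definition \ref{collections}. The subcollection is $\mathcal{L}_{\fR'}$, where $\fR'\subseteq\fR$ consists of the points of $\fR$ lying close to ${H}\times(\Theta_1\cup\Theta_2)$.

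\textbf{Elementary observation.} Since $L_{\p,\q}$ depends continuously on $(\p,\q)\in\tmn$, we have
$$A\in L_{\p,\theta}\iff A\theta=\p.$$
Hence $A\in\mr$ lies in the closure of $\bigcup_{(\p,\theta)\in \fR'}L_{\p,\theta}$ whenever $(A\theta,\theta)\in\overline{\fR'}$ for some $\theta$, because $L_{\p',\theta'}$ then passes near $A$ when $(\p',\theta')$ is near $(A\theta,\theta)$.

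\textbf{Step 1: density.} We must show that every open $W\subseteq\mr$ contains some $A$ with $A\theta\in{H}$ for some $\theta\in\Theta_1\cup\Theta_2$. Write ${H}=\{\p:\langle \mathbf{u},\p\rangle\ge0\}$. The condition becomes $\langle A^{T}\mathbf{u},\theta\rangle\ge0$. Put $\mathbf{v}=A^T\mathbf{u}\in\R^n$; we need $\langle\mathbf{v},\theta\rangle\ge0$ for some $\theta\in\Theta_1\cup\Theta_2$. Condition \eqref{ch_theorem_condition} gives $0\in\conv(\overline{\Theta_1\cup\Theta_2})$, so no $\mathbf{v}\neq0$ is strictly negative on all of $\overline{\Theta_1\cup\Theta_2}$. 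After a small perturbation we get $\langle\mathbf{v},\theta\rangle>0$ for some $\theta$ in the closure, and then also for some $\theta$ in the set itself. Conditions \eqref{bigtheta1} and \eqref{bigtheta2} then put $(A\theta,\theta)$ in $\overline{\fR}$. The case $\mathbf{v}=0$ is avoided by perturbing $A$.

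\textbf{Step 2: total density.} Fix an open $W$ and an element $L=L_{\p_0,\theta_0}$ of the subcollection meeting $W$. The set $L\cap W$ is an open piece of an affine subspace of codimension $m$. The key tool is Lemma \ref{usefullemma} from \S\ref{general_conditions_section}: it reduces total density to density of the subcollection of those $L'$ that meet $L\cap W$. I will argue as follows.

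Take $A_0\in L\cap W$ with $A_0$ generic. Consider a direction $\theta$ near the relevant part of $\Theta_1\cup\Theta_2$ and $\p\in {H}$ near $A_0\theta$. Then $L_{\p,\theta}$ meets $L$ near $A_0$ as soon as the linear system
$$A\theta_0=\p_0,\qquad A\theta=\p$$
is solvable near $A_0$. This holds whenever $\theta\neq\pm\theta_0$, by a transversality argument, since the two constraints are independent. The union of such $L_{\p,\theta}$ with $\p$ ranging in an open piece of ${H}$ then sweeps out an open set. This is the nonempty-interior claim of Lemma \ref{opensetlemma}.

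So we need $\theta$'s arbitrarily close to a suitable direction, different from $\theta_0$, with ${H}$-fibres in $\overline{\fR}$:
\begin{itemize}
\item[(i)] If $\theta_0\in\Theta_1$, condition \eqref{ch_theorem_condition} applied to $\Theta_1\cup\Theta_2\smallsetminus\{\theta_0\}$ supplies such directions $\theta\neq\theta_0$ satisfying the half-space sign condition. The sign condition is needed because ${H}$ is only a half-space: $A_0\theta$ must be pushed into ${H}$, and the convex-hull condition guarantees some $\theta$ with $\langle A_0^T\mathbf{u},\theta\rangle\ge0$.
\item[(ii)] If $\theta_0\in\Theta_2$, we may instead use $\theta=\theta_0$ itself, approximated by points of $\fR\smallsetminus({H}\times\{\theta_0\})$ as in \eqref{bigtheta2}. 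This gives elements $L'$ with either a distinct direction or $\p\notin {H}$. I would check transversality here by a limiting argument, and this looks delicate.
\end{itemize}

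\textbf{Main obstacle.} The hardest part is the half-space sign bookkeeping. For $A\in L\cap W$ the intersections must land in ${H}$ and not merely in $\R^m$, and the exceptional direction $\theta_0$ has to be removed. This is where I would lean on the form of \eqref{ch_theorem_condition} with $\theta$ removed, together with a separation (Hahn–Banach) argument.
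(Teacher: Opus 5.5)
Your Step 1 is fine: it is the easy half of Proposition \ref{ditect product dense} combined with Lemma \ref{closure}. Step 2, however, has a genuine gap, and it begins with the subcollection never being pinned down. The element $L=L_{\p_0,\theta_0}$ you must handle is an \emph{arbitrary} member of the subcollection, i.e.\ $(\p_0,\theta_0)\in\fR$. In general $\theta_0\notin\Theta_1\cup\Theta_2$ and $\p_0\notin H$, so a case split on $\theta_0\in\Theta_1$ versus $\theta_0\in\Theta_2$ does not cover what must be proved.

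More seriously, with your choice of subcollection the local condition can fail outright, so case (ii) is false, not just delicate. Take $m=1$, $n=2$, $H=[0,\infty)$, $\Theta_1=\varnothing$, and $\Theta_2=\{\theta_0,\theta^2,\theta^3\}$ with $\theta_0+\theta^2+\theta^3=0$. Let $\fR$ consist of three pieces: $H\times\Theta_2$; full fibres $H\times\{\theta^i_j\}$ with $\theta^i_j\to\theta^i$ inside small arcs ($i=2,3$); and the sparse points $(k/j,\theta_j)$, $k\in\Z_{\ge0}$, $j\in\N$, where $\|\theta_j-\theta_0\|=j^{-3}$. All hypotheses hold. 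Now take $\br_0=(1,\theta_0)$ and a small ball $W$ around $Y_0=\theta_0^{\top}$, where $Y_0\theta^2,Y_0\theta^3<0$. The lines of $\mathcal{L}_{\fR}$ meeting $L_{\br_0}\cap W$ are $L_{\br_0}$ itself and at most one line for each $j$, and these converge to $L_{\br_0}$. The closure of their union is nowhere dense, so no subcollection containing $L_{\br_0}$ works: points of $H\times\Theta_2$ must be discarded.

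The same mechanism defeats case (i) for $\br_0=(\p_0,\theta_0)$ with $\theta_0\in\Theta_1$, $-\theta_0\in\Theta_2$ and $\p_0$ just outside $H$. There the only direction that \eqref{ch_theorem_condition} supplies with the right sign near $L\cap W$ can be $-\theta_0$, which gives lines parallel to $L$. Two smaller points: Lemma \ref{opensetlemma} is not needed, since for fixed $\theta$ the set $\{A:A\theta\in V\}$ is trivially open. And Lemma \ref{usefullemma} does not reduce total density to density of the lines meeting $L\cap W$, as you say.

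What is missing is the paper's choice of subcollection and its uniform transversality mechanism.
\begin{enumerate}
\item Lemma \ref{claim_theta1} shows that non-isolated points of $\Theta=\Theta_1\cup\Theta_2$ behave like $\Theta_2$-points.
\item Theorem \ref{generalized_caratheodory} then reduces to finite sets $\Theta_1'=\{\theta_1,\dots,\theta_l\}$ and $\Theta_2'$ that still satisfy the ``$\theta$ removed'' hull condition.
\item One deletes $N_0=\pm(H\times\Theta_2')$ from $\fR$; by \eqref{bigtheta2} it stays in the closure.
\item One keeps only the points lying in disjoint symmetric neighbourhoods $U_i$ of $N_i'$, where $N_i=\pm(H\times\{\theta_i\})$ and $U_i\cap N_i^*=\varnothing$.
\end{enumerate}
For an arbitrary $\br_0$ of this subcollection one then uses Lemma \ref{usefullemma}. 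Any line through a point near $Y_0\in L$ whose parameter is $\varepsilon$-far from $\pm\br_0$ automatically has direction bounded away from $\pm\theta_0$. By Lemma \ref{anglelemma} it therefore meets $L$ nearby, so it suffices that the collection with the $\varepsilon$-balls around $\pm\br_0$ removed is dense.
\begin{itemize}
\item If $\br_0\in U_i$ with $i\ge1$, one removes all of $U_i$. Density then comes from $N_i^*$, via Proposition \ref{ditect product dense} and the $\theta_i$-removed hull condition.
\item If $\br_0\in U_0$, then $\br_0\notin N_0$, precisely because $N_0$ was deleted. Small balls around $\pm\br_0$ therefore miss $N$, and density comes from $\mathcal{L}_N$.
\end{itemize}
This is Proposition \ref{usefulforproducts}.
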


%{We refer the reader to Definition \ref{generaltds} where conditions \eqref{bigtheta1} and \eqref{bigtheta2} are put in a more general context.}

%Note that if one in addition assumes that the sets $\Theta_1, \Theta_2$ are symmetric around the origin, then to verify \eqref{ch_theorem_condition}
%holds as long as one of them is 
%it is enough to check that either $\Theta_1$ is not contained in a straight line passing through the origin, or $\Theta_2\ne\varnothing$. However some interesting applications exist for non-symmetric $\Theta_i$.} 

{We remark that the introduction of two sets $\Theta_1$, $\Theta_2$ in the above theorem serves as a generalization of the either--or clause mentioned in Remark \ref{illustration}. Indeed,
to obtain conditions \eqref{ill_a}  or \eqref{ill_b}  one can take $\Theta_1 = \{\pm\theta_1,\pm\theta_2\}$ and $\Theta_2 = \varnothing$, or, respectively, $\Theta_2 = \{\pm\theta\}$ and $\Theta_1 = \varnothing$.
In fact, quite often Theorem \ref{ch_theorem} will be applied with one of the sets $\Theta_i$ being empty. It is instructive to simplify the assumptions of the theorem in these cases: when $\Theta_2= \varnothing$, one  assumes the existence of $\Theta_1  \subseteq \mathbb{S}^{n-1}$ satisfying   $$0 \in  \conv \big( \overline{\Theta_1 \smallsetminus \{ \theta \}} \big)  %\,\,\,\,\,\,\,\, 
  \text{ for any }\theta \in \Theta_1$$
  and \eqref{bigtheta1}.  
And  when $\Theta_1= \varnothing$, one  assumes the existence of $\Theta_2  \subseteq \mathbb{S}^{n-1}$ satisfying  $0 \in \conv \left( \overline{ \Theta_2} \right)$ and \eqref{bigtheta2}. %\comm{Not sure if it makes sense to have this remark, but I personally find it helpful.} 
However interesting consequences of Theorem \ref{ch_theorem} exist when both $\Theta_1$ and $\Theta_2$ are nonempty.}

{We also remark} that, in the presence of \eqref{bigtheta1} and \eqref{bigtheta2}, condition \eqref{ch_theorem_condition} %of  the condition that the convex hull contains the origin 
{happens to be} not only sufficient, but also necessary to guarantee {(TDS) for $\fR$}.
%the existence of a totally dense subcollection  {of $\mathcal{L}_{\fR}$}. 
For  details, see Proposition \ref{ch_thm_optimal_new}.  

\smallskip
%As can be easily seen, Theorem \ref{ch_theorem} deals with the case when the set of ratios $\left\{\frac{\p}{\|\q\|}: \p\in P,\,\q\in Q\right\}$ is dense in some half-space of $\R^m$. However this is not assumed in some natural Diophantine set-ups, such as the one described in Theorem \ref{8.1}(a). 

\smallskip
In the remaining part of the section we show that several special cases described in the introduction, as well as some other natural examples, are consequences of these theorems.

%\comm{Maybe this should be moved further in the text?}

\subsection{Proof of  Theorem \ref{not_on_line}}\label{pf1.6}

As the first application, we %will 
show how Theorem \ref{any_k_theorem} implies Theorem \ref{not_on_line}. %Let us say that 
%$\theta\in \mathbb{S}^{n-1}$ is a \underbar{limit direction} of $Q$ if 
%$\theta = \lim_{i\to\infty}\frac{\q_{i}}{|\q_i|}$ for some sequence $\{ \q_i \}_{i=1}^{\infty}$ %be a sequence 
%    of elements of $Q$ with
%    $\lim\limits_{i \rightarrow \infty} |\q_i| = \infty$. \comm{This definition is used only in Observation \ref{replacement of compatibility}, right? and even then we can probably be OK without it. -- Now also in proposition \ref{examples_for_anyk}, and I believe it's the easiest way to formulate the latter}
First we %will show a 
{establish the following} simple fact.

\begin{lemma}\label{compatibility_from_logdense}
    Suppose {a nonempty subset  $P$ of $ \mathbb{R}^m$} is %a 
    logarithmically dense in some 
    %$s$-dimensional 
    cone ${H} \subseteq \mathbb{R}^m$, %$0 \leq s \leq m$, 
    and $Q\subset \R^n\nz$ is unbounded. Then 
    the set %\equ{ratios}
\begin{equation}\label{comp_logdens_lemma_condition}%\tag{2.3}
        \left\{ \frac{\p}{\|\q\|}:      \p \in P, \, \q \in Q  \right\}
    \end{equation}%  
is dense in ${H}$.
\end{lemma}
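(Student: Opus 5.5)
Fix a target point $\bx\in{H}$ and $\delta>0$; we want $\p\in P$, $\q\in Q$ with $\big\|\p/\|\q\|-\bx\big\|<\delta$. First dispose of $\bx=0$. Since $P\ne\varnothing$, fix any $\p_0\in P$. Because $Q$ is unbounded, we can choose $\q\in Q$ with $\|\q\|>\|\p_0\|/\delta$. Then $\|\p_0/\|\q\|\|<\delta$. (Note that $0$ lies in the closure of any nonempty cone, so this case is genuinely needed. It uses only $P\neq\varnothing$.)

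Now let $\bx\ne0$, and put $\phi=\bx/\|\bx\|\in\mathbb{S}^{m-1}\cap{H}$. It is the direction of a point of the cone, and we use that the cone is closed under positive scaling. By logarithmic density along $\phi$, for a small $\varepsilon>0$ the set $|P|^\phi_\varepsilon$ contains an unbounded sequence $s_k$ with $s_{k+1}/s_k\to1$. Pick $\q\in Q$ of large norm, and set $r:=\|\bx\|\cdot\|\q\|$, the norm we want $\p$ to have. Since $s_k\to\infty$ and the ratios tend to $1$, for $r$ large there is an index $k$ with $s_k\le r<s_{k+1}$. Then $s_k/r\in(s_k/s_{k+1},1]$, which tends to $1$ as $r\to\infty$; this is the standard consequence of the ratio condition. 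Unboundedness of $Q$ lets us make $r$ as large as needed. Take $\p\in P^\phi_\varepsilon$ with $\|\p\|=s_k$.

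For the estimate, write $\p/\|\q\|=\big(\|\p\|/\|\q\|\big)\,\p/\|\p\|$. The factor $\|\p\|/\|\q\|=\|\bx\|\,s_k/r$ is within $\|\bx\|\eta$ of $\|\bx\|$, where $\eta$ is small, and the direction satisfies $\|\p/\|\p\|-\phi\|<\varepsilon$. By the triangle inequality,
$$\Big\|\frac{\p}{\|\q\|}-\bx\Big\|\le \Big|\frac{\|\p\|}{\|\q\|}-\|\bx\|\Big|+\|\bx\|\,\Big\|\frac{\p}{\|\p\|}-\phi\Big\|\le \|\bx\|(\eta+\varepsilon),$$
which is $<\delta$ once we choose $\varepsilon$ and then $\|\q\|$ appropriately.

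No step is a real obstacle. The only point needing care is the order of the quantifiers: $\varepsilon$ is chosen first from $\delta$ and $\bx$, then the sequence $(s_k)$, then $\|\q\|$ large enough that $s_k/s_{k+1}$ is close to $1$ for all $k$ beyond the relevant index. Also note that we never need $\p\in{H}$, only that its direction is close to $\phi$.
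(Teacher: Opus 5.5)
Your proof is correct and follows essentially the same route as the paper's: take the sequence of norms in the $\varepsilon$-cone around $\phi$ supplied by logarithmic density, choose $\q\in Q$ of large norm, and use the index where this sequence crosses the target value, so that the ratio condition forces $\|\p\|/\|\q\|$ close to $\|\bx\|$. The paper phrases this as hitting an arbitrary interval $(a,b)$ rather than a point $\bx$, which also covers $\bx=0$ by taking $b$ small. One small inaccuracy: the definition only gives an unbounded sequence, not $s_k\to\infty$, but your crossing argument needs only unboundedness, since once $r>\max_{j\le K+1}s_j$, any index with $s_k\le r<s_{k+1}$ satisfies $k>K$.
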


\begin{proof} If %$s = 0$, the statement is trivial: 
{${H} = \{ 0 \}$, the statement is trivial:}   since $Q$ is unbounded, {the set \eqref{comp_logdens_lemma_condition} 
%\equ{ratios} 
has $0$  in its closure}. %From now on assume that $s \geq 1$. 
{Otherwise fix} an arbitrary $\phi \in \mathbb{S}^{m-1} \cap {H}$, $\varepsilon > 0$ and %real numbers 
$0 < a < b$;  it is enough to show that %for any 
%. Fix an arbitrary .
there exist %such 
$\p \in P_{\varepsilon}^{\phi}$ and $\q \in Q$ such that $\frac{\|\p\|}{\|\q\|} \in (a, b)$.
%(this notation was defined in \equ{pepsilon}).
    {Since $P$ is logarithmically dense in ${H}$, there exists a sequence} $\{ \p_i \}_{i=1}^{\infty}$ %be a sequence 
    of elements of $P_{\varepsilon}^{\phi}$ such that 
    $$
    \|\p_{i+1}\| > \|\p_i\| \text{ for all } i\in \N, \,\,\,\,\,\,\,\, \lim\limits_{i \rightarrow \infty} \|\p_i\| = \infty \,\,\,\,\,\,\,\, \text{and} \,\,\,\,\,\,\,\, \lim\limits_{i \rightarrow \infty} \frac{\|\p_{i+1}\|}{\|\p_i\|} = 1.
    $$
    %such a sequence exists . 
    Let $N \in \mathbb{N}$ be such that for any $i \geq N$ one has $1 < \frac{\|\p_{i+1}\|}{\|\p_i\|} < \frac{b}{a}$, and let us fix $\q \in Q$ such that $a\|\q\| > \|\p_N\|$. We will show that $\frac{|\p_{i}|}{|\q|} \in (a, b)$ for some $i > N$. Let $i-1$ be the maximal index such that $\|\p_{i-1}\| \leq a \|\q\|$; it is well defined by {the} construction of $\{ \p_i \}$. {It follows that} $\|\p_i\| > a\|\q\|$. {On} the other hand,   {$i-1 \geq N$ by definition of $\q$, hence}
    $$
    \|\p_i\| < \|\p_{i-1}\| \frac{b}{a} \leq b \|\q\|, 
    $$
    which shows that $a < \frac{\|\p_i\|}{\|\q\|} < b$.
\end{proof}

\begin{proof}[Proof of Theorem \ref{not_on_line}] Recall that we are given a set $P \subseteq \mathbb{R}^m$ {that is} logarithmically dense in {an $(m-k)$-dimensional subspace ${H}$ of $\R^m$} and a subset $Q$ of $ \R^n\nz$ such that \equ{twohalfspheres} holds. %Note that by Lemma \ref{halfsphere_convergence} 
{From the compactness of the Grassmanian of $k$-dimensional subspaces of $\R^n$ it follows that} there exists a $k$-halfsphere $\Phi \subseteq \mathbb{S}^{n-1}$ such that  $$\Phi \subseteq \bigcap\limits_{C > 0} \overline{\pi\big(Q\smallsetminus B_C(0)\big)}.$$
    {Now define $\fR := \pr(P \times Q) \cup \big(-\pr(P \times Q)\big);$ it is enough to show} that $\mathcal{L}_{\fR}$ contains a totally dense subcollection. {Let us consider two cases.}

\medskip

%\begin{enumerate}
    %\item[
    \noindent{\bf Case 1.} There exist {two  $k$-halfspheres $\Phi_1$ and $\Phi_2$ not contained in a single $k$-sphere such that}
    $$
    \Phi_1 \cup \Phi_2 \subseteq \bigcap\limits_{C > 0} \overline{\pi\big(Q\smallsetminus B_C(0)\big)}.
    $$
    Fix $\varepsilon > 0$. For any $\theta \in \Phi_i, \,\, i=1, 2,$ the set $Q_{\varepsilon}^{\theta}$ is unbounded, thus we can apply Lemma \ref{compatibility_from_logdense} to show that for any ${\bf x} \in {H}$ there exists $\q \in Q_{\varepsilon}^{\theta}$ and $\p \in P$ such that
    $$
    \left\|\theta -  \frac{\q}{\|\q\|} \right\| < \varepsilon \,\,\,\, \text{and} \,\,\,\, \left\| { \bf x } - \frac{\p}{\|\q\|} \right\| < \varepsilon \,\,\,\,\,\,\,\, \iff \,\,\,\,\,\,\,\, \dist\Big( \pr\big((\p, \q)\big), ({\bf x},\theta) \Big) < \varepsilon.
    $$
    %\comm{(I guess for the above equivalence we need to work with the distance induced by the supremum norm.)} 
    Since $\varepsilon > 0, \, \phi \in \Phi_i$ and ${ \bf x } \in {H}$ were chosen arbitrarily, we conclude that 
    $$
    {H}\times \left( \Phi_1 \cup \Phi_2 \right)  \subseteq \overline{\pr(P \times Q)} \subseteq \overline{\fR}.
    $$
    Since {${H} = -{H}$  and  $\fR = -\fR$, %is centrally symmetric, 
    we can %use the fact 
    conclude that $- \big({H}\times( \Phi_1 \cup \Phi_2 ) \big) = {H}\times\big( (-\Phi_1) \cup (-\Phi_2) \big)$, and %since $\fR = -\fR$, we conclude that 
    hence} ${H}\times\big( ( -\Phi_1) \cup (-\Phi_2) \big)   \subseteq \overline{\pr(P \times Q)} \subseteq \overline{\fR}.$ We can now change the notation and denote the whole $k$-sphere containing $\Phi_i$ by $\Phi_i$; then \eqref{anykA} holds, and thus by Theorem \ref{any_k_theorem} %$\mathcal{L}_{\fR}% = \mathcal{L}_{P, Q}
    %$ contains a totally dense subcollection.
    {$\fR$ has property {\rm (TDS)}}.

\medskip

    %\item[
    \noindent{\bf Case 2.}   There exists %such 
    {a $k$-halfsphere $\Phi$ such that}
    $$
    \Phi \subseteq \bigcap\limits_{C > 0} \overline{\pi\big(Q\smallsetminus B_C(0)\big) \smallsetminus \Phi}.
    $$
    Note that {in view of \equ{twohalfspheres}} for any $C>0$ one of the halfspheres $\Phi_1$ or $\Phi_2$ does not belong to the same $k$-sphere as $\Phi$. Similarly to Case 1 we can show that 
    $$
    {H} \times \Phi \subseteq \overline{\pr(P \times Q) \smallsetminus \left( {H} \times \Phi \right)} \subseteq \overline{\fR\smallsetminus \left( {H} \times \Phi \right)}.
    $$
    Replacing $\Phi$ with the corresponding sphere and again using {the} symmetry of ${H}$ and $\fR$, we conclude that \eqref{anykC} holds. {Hence}, {again by Theorem \ref{any_k_theorem}, $\fR$ has property {\rm (TDS)}}.  %$\mathcal{L}_{\fR} %= \mathcal{L}_{P, Q}
    %$ contains a totally dense subcollection.
\end{proof}

%\comm{Maybe mention that it can be also derived from the other theorem. -- It's actually not true now, when our Theorem 1.6 is not only for k=0.}
%\comm{This proof produces two questions: do we have meaningful examples when (1) both $\Theta_1$ and $ \Theta_2$ are nonempty? (2) ${H}$ is an honest half-space? It will be nice to mention them here.}
% And we should also remark that the more restrictive conditions of Theorem \ref{8.1}(b) make it possible to strengthen the conclusion to the total density of the collection itself, referring the reader to \S\ref{proof8.1}.}

\subsection{Other applications and examples}\label{appl}
Let us say that 
$\theta\in \mathbb{S}^{n-1}$ is a \underbar{limit direction} of $Q$ if 
$\theta = \lim_{i\to\infty}\frac{\q_{i}}{\|\q_i\|}$ for some sequence $\{ \q_i \}_{i=1}^{\infty}$ %be a sequence 
    of elements of $Q$ with
    $\lim\limits_{i \rightarrow \infty} \|\q_i\| = \infty$, {with 
    %the same definition of 
    $\phi\in \mathbb{S}^{m-1}$ 
    {defined analogously as}  a limit direction of $P$.} 
    %\comm{This definition is used only in Observation \ref{replacement of compatibility}, right? and even then we can probably be OK without it. -- Now also in proposition \ref{examples_for_anyk}, and I believe it's the easiest way to formulate the latter} 
    The following simple observation is useful to check the conditions of {Theorems  %Theorem 
    \ref{any_k_theorem} and \ref{ch_theorem}}:

\begin{observation}\label{replacement of compatibility}
    Let $\theta$ be a limit direction of $Q$ and $\phi$ be a limit direction of $P$. If for any $\varepsilon > 0$ 
\begin{equation}\label{Obs_24_set}%\tag{2.4}
    {\text{the set }\left\{ \frac{x}{y}: \,\,\, x \in |P|_{\varepsilon}^{\phi}, \, y \in |Q|_{\varepsilon}^{\theta} \right\}\text{is dense in }\mathbb{R}_{\geq 0},}
    \end{equation}
     then $ \mathbb{R}_{\geq 0} \phi\times  \{ \theta \} \subseteq %\overline{\fR}$ (where $\fR = 
    \pr(P \times Q)$.
\end{observation}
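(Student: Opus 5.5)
The claim should read $\mathbb{R}_{\ge 0}\phi\times\{\theta\}\subseteq\overline{\pr(P\times Q)}$; it is presumably the closure that is meant. Fix a target point $(r\phi,\theta)$ with $r\ge 0$ and a tolerance $\delta>0$. The goal is to find $\p\in P$ and $\q\in Q$ with
$$\Big\|\frac{\p}{\|\q\|}-r\phi\Big\|<\delta \quad\text{and}\quad \Big\|\frac{\q}{\|\q\|}-\theta\Big\|<\delta .$$
The strategy is to split the first quantity into a radial part and an angular part. The radial part is controlled by the density assumption \eqref{Obs_24_set}; the angular part is controlled by working inside the $\varepsilon$-cones.

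Choose $\varepsilon>0$ small, to be fixed in terms of $\delta$ and $r$. By \eqref{Obs_24_set}, there are $x\in|P|^{\phi}_{\varepsilon}$ and $y\in|Q|^{\theta}_{\varepsilon}$ with $|x/y-r|<\varepsilon$. Write $x=\|\p\|$ with $\p\in P^\phi_\varepsilon$, and $y=\|\q\|$ with $\q\in Q^\theta_\varepsilon$. Here $Q^\theta_\varepsilon$ is defined analogously to $P^\phi_\varepsilon$, and $|Q|^\theta_\varepsilon$ is its set of norms.

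The second inequality holds at once, since $\q\in Q^\theta_\varepsilon$ gives $\|\q/\|\q\|-\theta\|<\varepsilon\le\delta$.

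For the first inequality, write
$$\frac{\p}{\|\q\|}=\frac{x}{y}\cdot\frac{\p}{\|\p\|}.$$
By the triangle inequality,
$$\Big\|\frac{\p}{\|\q\|}-r\phi\Big\|\le \Big|\frac{x}{y}-r\Big|\,\Big\|\frac{\p}{\|\p\|}\Big\| + r\,\Big\|\frac{\p}{\|\p\|}-\phi\Big\| < \varepsilon+r\varepsilon .$$
Choosing $\varepsilon<\delta/(1+r)$ completes the argument.

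There is one degenerate point to check. If $\p=0$, then $\p/\|\p\|$ is undefined and $0$ does not belong to $P^\phi_\varepsilon$ anyway, so every $x$ above is positive. For the target $r=0$ we only need $x/y$ small, which the same estimate handles, since the radial factor is bounded by $1$.

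I expect no real obstacle. The only care needed is to note that density in $\mathbb{R}_{\ge0}$ must be applied with the cone parameter and the approximation parameter tied together, which is why a single $\varepsilon$ is used for both. The hypothesis that $\theta$ and $\phi$ are limit directions is not needed in the argument beyond guaranteeing that the cones are nonempty, so that the hypothesis \eqref{Obs_24_set} is not vacuous.
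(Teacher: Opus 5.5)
Your proof is correct. You read the conclusion as $\mathbb{R}_{\ge 0}\phi\times\{\theta\}\subseteq\overline{\pr(P\times Q)}$, which is what is intended. Choosing $\p\in P^{\phi}_{\varepsilon}$ and $\q\in Q^{\theta}_{\varepsilon}$ with $\bigl|\|\p\|/\|\q\|-r\bigr|<\varepsilon$, and splitting the error into a radial part and an angular part, is exactly the routine check the paper leaves implicit; it states this observation without proof. One small correction to your last remark: the density hypothesis already forces the cones to be nonempty, so the limit-direction assumptions play no role in the implication at all.
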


%{ 
Due to Lemma \ref{compatibility_from_logdense} and Observation \ref{replacement of compatibility}, logarithmically dense sets provide a variety of examples for {both of our main theorems}. 
%Theorem \ref{ch_theorem} and Theorem \ref{any_k_theorem}. 
%We start with 
{Let us now consider} some examples where Theorem \ref{ch_theorem} can be applied.

\begin{example}\label{ch_examples}
    Let $n =2$, $ m = 1$, and   let $P \subseteq \R_{\geq 0}$ be any  set logarithmically dense in $\R_{\geq 0}$
    %; for instance, $P = \mathbb{N}, \, P = \mathbb{P}$ or $P = \{ k^3: \,\,\, k \in \mathbb{N} \}$ 
    %(hereafter we denote the set of prime numbers by $\mathbb{P}$).
    {(see \S\ref{logdense_expl} for examples). Consider the following cases:}
    \begin{enumerate}
        \item[(i)] %Let 
        $\bv_1 = (1,0)%^{\top}
        $, $\bv_2 = (1,2)%^{\top}
        $, $\bv_3 = (-2,1)%^{\top}
        $, $\bv_4 = (-2,-1),%^{\top}
        $  $\bv_5 = (1,-2)%^{\top}
        $, and $Q = \bigcup_{i=1}^5\mathbb{N} \bv_i$.
        %\cup \ldots \cup \mathbb{N} \bv_5$.
        \item[(ii)] %Let 
        $\bv_1 = (1,0)%^{\top}
        $, $\bv_2 = (-1,2)%^{\top}
        $,  $\bv_3 = (-1,-2)%^{\top}
        $, and  $Q = (0, 1)%^{\top} 
        + \bigcup_{i=1}^3\mathbb{N} \bv_i$.
        %\left( \mathbb{N} \bv_1 \cup \mathbb{N} \bv_2 \cup \mathbb{N} \bv_3 \right)$ .
        \item[(iii)] %Let 
        $Q = \mathbb{Z} \times (\mathbb{N} + 10) %= \{ (x, y)^{\top} \in \mathbb{Z}^2: \,\,\, y > 1000 \}
        $.
        \item[(iv)]  %Let 
        $Q = \big\{ (s, s^2), (s, -s^4): s \in \Z \big\}\smallsetminus \{(0,0)\}$.
        \item[(v)] %Let 
        $Q = \big\{ (s, s^2), (s, -|s|), (0, -|s|): s \in \Z \big\}\smallsetminus \{(0,0)\} $. 

        \item[(vi)]   $\{ a_s\}$ is an unbounded increasing sequence of positive real numbers equidistributed modulo $\pi$, and $Q = \{ (a_s \cos a_s, a_s \sin a_s) \}$ {(in particular $a_s = s$ works)}.
    \end{enumerate}

%-----------------------------------------------------------------------------------------------------------------------------------------------------------------------------------------------------------------------------------------------------------------------------------------------------------------------------------------------------------------------------------

\begin{figure}[htbp]
\centering

\begin{minipage}{0.32\textwidth}
\centering
% --- TikZ block 1 ---
\begin{tikzpicture}[scale=0.25]

    % ---------- PARAMETERS ----------
    \def\xmin{-10}
    \def\xmax{10}
    \def\ymin{-10}
    \def\ymax{10}
    \def\ptsize{4pt}
    \def\smax{12}
    % --------------------------------

    % Axes (no captions)
    \draw[->] (\xmin,0) -- (\xmax,0);
    \draw[->] (0,\ymin) -- (0,\ymax);

    % Light integer grid
    \foreach \x in {\xmin,...,\xmax} {
        \foreach \y in {\ymin,...,\ymax} {
            \fill[gray!40] (\x,\y) circle (0.6pt);
        }
    }

   % vectors
\coordinate (v1) at (1,0);
\coordinate (v2) at (1,2);
\coordinate (v3) at (-2,1);
\coordinate (v4) at (-2,-1);
\coordinate (v5) at (1,-2);

% draw dotted rays
\draw[dotted] (0,0) -- (10,0);
\draw[dotted] (0,0) -- (5,10);
\draw[dotted] (0,0) -- (-10,5);
\draw[dotted] (0,0) -- (-10,-5);
\draw[dotted] (0,0) -- (5,-10);

% draw the generating vectors as arrows
\draw[->,thick] (0,0) -- (1,0);
\draw[->,thick] (0,0) -- (1,2);
\draw[->,thick] (0,0) -- (-2,1);
\draw[->,thick] (0,0) -- (-2,-1);
\draw[->,thick] (0,0) -- (1,-2);

% labels for vectors
\node[above] at (1.8,-0.4) {$\mathbf v_1$};
\node[right] at (1,2) {$\mathbf v_2$};
\node[left] at (-2,1) {$\mathbf v_3$};
\node[left] at (-2,-1) {$\mathbf v_4$};
\node[right] at (1,-2) {$\mathbf v_5$};

% integer lattice points up to k = 8
\foreach \k in {1,...,9} {
    \fill (\k,0) circle (4pt);}
\foreach \k in {1,...,4} {    
    \fill (\k,2*\k) circle (4pt);
    \fill (-2*\k,\k) circle (4pt);
    \fill (-2*\k,-\k) circle (4pt);
    \fill (\k,-2*\k) circle (4pt);
}

% origin
\fill (0,0) circle (2pt);

\end{tikzpicture}

\par\smallskip
\caption{{The set $Q$ in Case} (i)}\label{fig1}
\end{minipage}
\hfill
\begin{minipage}{0.32\textwidth}
\centering
% --- TikZ block 3 ---
\begin{tikzpicture}[scale=0.25]

    % ---------- PARAMETERS ----------
    \def\xmin{-10}
    \def\xmax{10}
    \def\ymin{-10}
    \def\ymax{10}
    \def\ptsize{4pt}
    \def\smax{12}
    % --------------------------------

    % Axes (no captions)
    \draw[->] (\xmin,0) -- (\xmax,0);
    \draw[->] (0,\ymin) -- (0,\ymax);

    % Light integer grid
    \foreach \x in {\xmin,...,\xmax} {
        \foreach \y in {\ymin,...,\ymax} {
            \fill[gray!40] (\x,\y) circle (0.6pt);
        }
    }

    % base point
\coordinate (s) at (0,1);

% vectors (as displacements)
\coordinate (v1) at (1,0);
\coordinate (v2) at (-1,2);
\coordinate (v3) at (-1,-2);

% dotted rays from (0,1)
\draw[dotted] (s) -- ++(10,0);
\draw[dotted] (s) -- ++(-4.5,9);
\draw[dotted] (s) -- ++(-5.5,-11);

% draw vectors from (0,1)
\draw[->,thick] (s) -- ++(1,0);
\draw[->,thick] (s) -- ++(-1,2);
\draw[->,thick] (s) -- ++(-1,-2);

% vector labels (placed away from lines)
\node[above] at (1.2,1.2) {$\mathbf v_1$};
\node[left] at (-1.3,3.2) {$\mathbf v_2$};
\node[left] at (-1.3,-1.2) {$\mathbf v_3$};

% integer points: (0,1) + k v_i, k=1..6
\foreach \k in {1,...,9} {
    \fill ({\k},{1}) circle (4pt); }

\foreach \k in {1,...,4} {    
    \fill ({-\k},{1+2*\k}) circle (4pt);}

\foreach \k in {1,...,5} {   
    \fill ({-\k},{1-2*\k}) circle (4pt);
}

% base point
\fill (0,1) circle (2pt) node[left] {$(0,1)$};

% origin (for reference)
\fill (0,0) circle (1.2pt);

\end{tikzpicture}

\par\smallskip
\caption{{The set $Q$ in Case} (ii)}\label{fig2}
\end{minipage}
\hfill\begin{minipage}{0.32\textwidth}
\centering
% --- TikZ block 1 ---
\begin{tikzpicture}[scale=0.25]

    % ---------- PARAMETERS ----------
    \def\xmin{-10}
    \def\xmax{10}
    \def\ymin{-1}
    \def\ymax{19}
    \def\ptsize{4pt}
    \def\smax{12}
    % --------------------------------

    % Axes (no captions)
    \draw[->] (\xmin,0) -- (\xmax,0);
    \draw[->] (0,\ymin) -- (0,\ymax);

    % Light integer grid
    \foreach \x in {\xmin,...,\xmax} {
        \foreach \y in {\ymin,...,\ymax} {
            \fill[gray!40] (\x,\y) circle (0.6pt);
        }
    }

    % -------- integer points (solid black) ----------
    \foreach \s in {-10,...,10} \foreach \k in {10,...,18} {
        \pgfmathsetmacro{\xx}{\s}
        \pgfmathsetmacro{\yy}{\k}
        \fill (\xx,\yy) circle (2pt);
    }

\end{tikzpicture}

\par\smallskip
\caption{{The set $Q$ in Case} (iii)}\label{fig3}
\end{minipage}

\end{figure}

%-----------------------------------------------------------------------------------------------------------------------------------------------------------------------------------------------------------------------------------------------------------------------------------------------------------------------------------------------------------------------------------

    Let $E := {\pr(P \times Q)}$; we claim that in all of the above cases  the collection $\mathcal{L}_{\fR} =\mathcal{L}_{P, Q}$ contains a totally dense subcollection.  Indeed:
\begin{enumerate}
    \item[(i)] Let
    $
    \Theta_1 = \left\{ \frac{\bv_i}{\|\bv_i\|} : i = 1, \ldots, 5 \right\}$ and  $\Theta_2 = \varnothing$.
    It follows from Observation \ref{replacement of compatibility} and Lemma \ref{compatibility_from_logdense} that $ \mathbb{R}_{\geq 0} \times \Theta_1\subseteq \overline{\fR}$; %= 
    %\overline{\pr(P \times Q)}$; 
    thus \eqref{bigtheta1} holds. It is also clear that \eqref{ch_theorem_condition} holds, since any four out of five vectors $\bv_i$ contain zero in convex hull, hence Theorem \ref{ch_theorem} 
    applies.
    %the collection $%\mathcal{L}_{\fR} =
    %\mathcal{L}_{P, Q}$ contains a totally dense subcollection.

    \item[(ii)] Here we take
    $
    \Theta_1 = \varnothing$ and $\Theta_2 = \left\{ \frac{\bv_i}{\|\bv_i\|}: i = 1, 2, 3 \right\} 
    $.
    Again by Observation \ref{replacement of compatibility} and Lemma \ref{compatibility_from_logdense} we see that $ \mathbb{R}_{\geq 0} \times\Theta_2 \subseteq \overline{\fR}$; 
    %= 
    %\overline{\pr(P \times Q)}$; 
    and since $\mathbb{R}_{\geq 0} \bv_i \,\cap\, Q = \varnothing$, \eqref{bigtheta2} holds. Condition \eqref{ch_theorem_condition} holds as well since $0 \in \conv\left( \{ \bv_1, \bv_2, \bv_3 \} \right)$, thus Theorem \ref{ch_theorem} is applicable again.
    %the collection $\mathcal{L}_{\fR} = \mathcal{L}_{P, Q}$ contains a totally dense subcollection.

    \item[(iii)] Let $\Theta_1 = \big\{ \theta = (\theta_1, \theta_2)%^{\top} 
    \in \mathbb{S}^1: \,\,\, \theta_2 > 0 \big\}$. %\theta = (\theta_1, \theta_2)^{\top} \in \mathbb{S}^1$ and suppose $\theta_2 > 0$. 
    It is not hard to see that for any $\theta\in\Theta_1$ and $\varepsilon > 0$ the set
    $
    \left\{ \frac{\|\p\|}{\|\q\|}:  \p \in P, \, \q \in Q_{\varepsilon}^{\theta} \right\}
    $
    is dense in $\mathbb{R}_{\geq 0}$. So, by Observation \ref{replacement of compatibility}, 
    ${\R_{\geq 0}\times\Theta_1} \subseteq \overline{\fR}$. 
    %= 
    %\overline{\pr(P \times Q)}$; 
    Since $$\overline{\Theta_1 \smallsetminus \{ \theta \}} = \big\{(\theta_1, \theta_2)%^{\top}
    \in \mathbb{S}^1: \,\,\, \theta_2 \geq 0 \big\} \supseteq \{ (\pm 1, 0)%^{\top}
    \} \,\,\,\,\,\,\,\, \text{for any} \,\, \theta \in \Theta_1,$$  %condition \eqref{ch_theorem_condition} holds
    it follows that $0 \in  \conv \big(  \overline{\Theta_1 \smallsetminus \{ \theta \}} \big)$, and Theorem \ref{ch_theorem}
    can be applied with  $\Theta_2 = \varnothing$. %the collection $\mathcal{L}_{\fR} = \mathcal{L}_{P, Q}$ contains a totally dense subcollection.
{Note that here it is not true that $0 \in  \conv \big(  {\Theta_1 \smallsetminus \{ \theta \}} \big)$, so it is essential to take the closure.}
\end{enumerate}

%-----------------------------------------------------------------------------------------------------------------------------------------------------------------------------------------------------------------------------------------------------------------------------------------------------------------------------------------------------------------------------------

\begin{figure}[htbp]
\centering

\begin{minipage}{0.32\textwidth}
\centering
% --- TikZ block 3 ---
\begin{tikzpicture}[scale=0.25]\label{(iv)-pic}

    % ---------- PARAMETERS ----------
    \def\xmin{-10}
    \def\xmax{10}
    \def\ymin{-16}
    \def\ymax{4}
    \def\ptsize{4pt}
    \def\smax{12}
    % --------------------------------

    % Axes (no captions)
    \draw[->] (\xmin,0) -- (\xmax,0);
    \draw[->] (0,\ymin) -- (0,\ymax);

    % Light integer grid
    \foreach \x in {\xmin,...,\xmax} {
        \foreach \y in {\ymin,...,\ymax} {
            \fill[gray!40] (\x,\y) circle (0.6pt);
        }
    }

    % -------- dotted зфкфищдф ----------
    \draw[dotted, domain=-2:2, samples=40, smooth]
        plot ({\x}, {\x*\x});

    \draw[dotted, domain=-2:2, samples=40, smooth]
        plot ({\x}, {-\x*\x*\x*\x});

    % -------- integer points (solid black) ----------
    \foreach \s in {-2,...,2} {
        \pgfmathsetmacro{\xx}{\s}
        \pgfmathsetmacro{\yy}{\s*\s}
        \fill (\xx,\yy) circle (\ptsize);
    }

    \foreach \s in {-2,...,2} {
        \pgfmathsetmacro{\xx}{\s}
        \pgfmathsetmacro{\yy}{-\s*\s*\s*\s}
        \fill (\xx,\yy) circle (\ptsize);
    }

\end{tikzpicture}
\par\smallskip
\caption{{The set $Q$ in Case} (iv)}\label{fig4}
\end{minipage}
\hfill
\begin{minipage}{0.32\textwidth}
\centering
% --- TikZ block 1 ---
\begin{tikzpicture}[scale=0.25]

    % ---------- PARAMETERS ----------
    \def\xmin{-10}
    \def\xmax{10}
    \def\ymin{-10}
    \def\ymax{10}
    \def\ptsize{4pt}
    \def\smax{12}
    % --------------------------------

    % Axes (no captions)
    \draw[->] (\xmin,0) -- (\xmax,0);
    \draw[->] (0,\ymin) -- (0,\ymax);

    % Light integer grid
    \foreach \x in {\xmin,...,\xmax} {
        \foreach \y in {\ymin,...,\ymax} {
            \fill[gray!40] (\x,\y) circle (0.6pt);
        }
    }

    % -------- dotted зфкфищдф ----------
    \draw[dotted, domain=-3:3, samples=40, smooth]
        plot ({\x}, {\x*\x});

        % draw dotted rays
\draw[dotted] (0,0) -- (-9,-9);
\draw[dotted] (0,0) -- (9,-9);
\draw[dotted] (0,0) -- (0,-9);

    % -------- integer points (solid black) ----------
    \foreach \s in {-3,...,3} {
        \pgfmathsetmacro{\xx}{\s}
        \pgfmathsetmacro{\yy}{\s*\s}
        \fill (\xx,\yy) circle (\ptsize);
    }

    % integer lattice points up to k = 8
\foreach \k in {1,...,9} {
    \fill (0,-\k) circle (\ptsize);
    \fill (-\k,-\k) circle (\ptsize);
    \fill (\k,-\k) circle (\ptsize);
    }

\end{tikzpicture}

\par\smallskip
\caption{{The set $Q$ in Case} (v)}\label{fig5}
\end{minipage}
\hfill
\begin{minipage}{0.32\textwidth}
\centering
% --- TikZ block 3 ---
\begin{tikzpicture}[scale=0.25]

    % ---------- PARAMETERS ----------
    \def\xmin{-10}
    \def\xmax{10}
    \def\ymin{-12}
    \def\ymax{8}
    \def\ptsize{4pt}
    \def\smax{12}
    % --------------------------------

    % Axes (no captions)
    \draw[->] (\xmin,0) -- (\xmax,0);
    \draw[->] (0,\ymin) -- (0,\ymax);

    % Light integer grid
    \foreach \x in {\xmin,...,\xmax} {
        \foreach \y in {\ymin,...,\ymax} {
            \fill[gray!40] (\x,\y) circle (0.6pt);
        }
    }

    % -------- dotted spiral curve ----------
    \draw[dotted, domain=0:12, samples=200, smooth]
        plot ({\x*cos(\x r)}, {\x*sin(\x r)});

    % -------- integer points (solid black) ----------
    \foreach \s in {1,...,\smax} {
        \pgfmathsetmacro{\xx}{\s*cos(\s r)}
        \pgfmathsetmacro{\yy}{\s*sin(\s r)}
        \fill (\xx,\yy) circle (\ptsize);
    }

\end{tikzpicture}

\par\smallskip
\caption{{The set $Q$ in Case} (vi)}\label{fig6}
\end{minipage}

\end{figure}

%-----------------------------------------------------------------------------------------------------------------------------------------------------------------------------------------------------------------------------------------------------------------------------------------------------------------------------------------------------------------------------------

\begin{enumerate}
    \item[(iv)]   Let 
    $    \Theta_1 = \varnothing$ and $\Theta_2 = \left\{ \pm(0, 1)%^{\top} 
    \right\}$.
    Then for any $\theta \in \Theta_2$ and any $\varepsilon > 0$ the set $Q_{\varepsilon}^{\theta} \smallsetminus \R_{\geq 0} \theta$ is unbounded (it contains all except finitely many points from one of the parabolas). It means that 
    \eq{closurecomplement}{{\R_{\geq 0}\times\Theta_2} \subseteq \overline{\fR \smallsetminus \left( {\R_{\geq 0}\times\Theta_1} \right)}.} Since $0 \in \conv(\Theta_2)$, the result follows. %\comm{A question: can we replace this example by $Q = \big\{ (s, \pm s^2): s \in \N\}$? -- The theorem will work, however it will give us a symmetric set $Q$, which we can turn into a symmetric P and then apply Theorem 1.6. The current example does not follow from 1.6. DK: why symmetric? $-Q = \{ (-s, \pm s^2): s \in \N\}\ne Q$, how do we use Theorem 1.6 then? -- That's true, it's 2.1, not 2.2. }
    
    \item[(v)] In this case one can take 
    $   \Theta_1 = \left\{ (0, -1)%^{\top}
    , \left(\pm \frac{1}{\sqrt{2}}, -\frac{1}{\sqrt{2}}\right)%^{\top}
    \right\}$ and $\Theta_2 = \left\{ (0, 1)%^{\top}
    \right\}$. We have already shown that %$\Theta_2 \times {\R_{\geq 0}} \subseteq \overline{\fR \setminus \left( \Theta_2 \times {\R_{\geq 0}} \right)}$ 
{\equ{closurecomplement} holds} in case (iv), and one can also check \eqref{bigtheta1} in the same way it was done in (i). It remains to check \eqref{ch_theorem_condition} and apply Theorem \ref{ch_theorem}. 
    %The rest of this proof follows the same lines as before. \comm{No! this is the only case when both sets are nonempty, it has to be explained in a bit more detail. Can you do it?}

    \item[(vi)] Equidistribution modulo $\pi$ %means 
    {implies} that every cone centered at zero contains infinitely many points of $Q$. By Observation \ref{replacement of compatibility} and Lemma \ref{compatibility_from_logdense} we conclude that ${\R_{\geq 0}\times\mathbb{S}^1}     \subseteq \overline{\fR}$. Since $\overline{\mathbb{S}^1 \smallsetminus \{ \theta \}} = \mathbb{S}^1$ for any $\theta \in \mathbb{S}^1$,   Theorem \ref{ch_theorem} %$\mathcal{L}_{P,Q}$ %contains a totally dense subcollection.
    can be applied with   $\Theta_1 = \mathbb{S}^1$ and  $\Theta_2 = \varnothing$. 
\end{enumerate}
\end{example}

\medskip

{Now let us turn to some more applications of Theorem \ref{any_k_theorem}. The following list of examples shows how to generate 
a number of situations similar to or more general than that of Theorem \ref{8.1}(a).} %\comm{Your Proposition \ref{examples_for_anyk} is nice but way too long. I am leaving it for now, but  prefer two propositions that I carved from it.}

\begin{proposition}\label{prop_logdenseP}
    For $m,n\in\N$ with $ n> 1$ and nonempty subsets $P\subset \R^m$ and $Q\subset \R^n\nz$,  %the collection $\mathcal{L}_{P,Q}$ contains a totally dense subcollection 
     {$(P,Q)$ has property {\rm (TDS)}} in all of the   cases below:
    \begin{enumerate}
        \item[\rm(a)]\label{parta_Ex2.6} Fix $0 \leq k \leq \min(n-2, m)$, let $F_1, F_2$ be two different $(k+1)$-dimensional subspaces of $\R^n$, and let ${H}$ be an $(m-k)$-dimensional subspace of $\R^m$ such that the set of limit directions of $Q$ contains  $(F_1\cup F_2)\cap \mathbb{S}^{n-1}$, and $ P$ is logarithmically dense in ${H}$.
        %\cup \ldots \cup \mathbb{N} \bv_5$.
        
         \item[\rm(b)]\label{partb_Ex2.6} Fix $0 \leq k \leq \min(n-1, m-1)$, let $F$ be a $(k+1)$-dimensional subspace of $\R^n$, and let ${H}_1,{H}_2$ be two different $(m-k)$-dimensional subspaces of $\R^m$ such that the set of limit directions of $Q$ contains  $F\cap\mathbb{S}^{n-1}$, and $ P$ is logarithmically dense in both ${H}_1$ and ${H}_2$.
        \item[\rm(c)] \label{partc_Ex2.6}
        %Let 
        Fix $0 \leq k \leq \min(n-1, m)$, let $F$ be a $(k+1)$-dimensional subspace of $\R^n$, and let ${H}$ be an $(m-k)$-dimensional linear subspace of $\R^m$ such that the set of limit directions of $Q$ contains  $F\cap\mathbb{S}^{n-1}$,  $P$ is logarithmically dense in ${H}$, and 
\begin{equation}\label{additional}%\tag{2.5}
\text{either }P\cap {H} = \varnothing\ \ \text{  or }\ \ Q \cap F = \varnothing.
        \end{equation}
        %\comm{Did I get this part right? -- That was weaker than we want (in particular it wouldn't imply some parts of Example 2.8.), I changed it.}
    \end{enumerate}
%In addition, in each of the cases above we can replace either $F_1,F_2,F$ or $H_1,H_2,H$ by their  half-spaces (but not both at the same time).
\end{proposition}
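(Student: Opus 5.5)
The plan is to reduce each case to the corresponding part of Theorem \ref{any_k_theorem}, applied to the symmetrized set $\fR := \pr(P\times Q)\cup\big(-\pr(P\times Q)\big)$. Note that $\mathcal{L}_\fR = \mathcal{L}_{P,Q}$, since $L_{-\p,-\q}=L_{\p,\q}$. The common ingredient is the following claim.

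\emph{Claim.} If $\theta$ is a limit direction of $Q$ and $P$ is logarithmically dense in a subspace ${H}$, then ${H}\times\{\theta\}\subseteq\overline{\pr(P\times Q)}$.

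To prove it, fix $\varepsilon>0$. Since $\theta$ is a limit direction, $Q_\varepsilon^\theta$ is unbounded, so Lemma \ref{compatibility_from_logdense}, applied to $P$ and $Q_\varepsilon^\theta$ with the cone ${H}$, gives that $\{\p/\|\q\|:\p\in P,\ \q\in Q^\theta_\varepsilon\}$ is dense in ${H}$. Hence every point $(\bx,\theta)$ with $\bx\in{H}$ is within $O(\varepsilon)$ of $\pr(P\times Q)$, exactly as in Case 1 of the proof of Theorem \ref{not_on_line}. Since ${H}=-{H}$, the symmetric statement ${H}\times\{-\theta\}\subseteq\overline{\fR}$ also holds; this is not even needed, because $F\cap\mathbb{S}^{n-1}$ is already symmetric.

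For (a), apply the claim to every $\theta\in(F_i\cap\mathbb{S}^{n-1})$ to get ${H}\times(\Phi_1\cup\Phi_2)\subseteq\overline{\fR}$, where $\Phi_i:=F_i\cap\mathbb{S}^{n-1}$ are two different $k$-subspheres. This is \eqref{anykA}, with $k\le\min(n-1,m)$ satisfied.

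For (b), with $\Phi:=F\cap\mathbb{S}^{n-1}$, the claim applied to ${H}_1$ and to ${H}_2$ gives $({H}_1\cup{H}_2)\times\Phi\subseteq\overline{\fR}$, which is \eqref{anykB}.

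For (c), we need the stronger inclusion ${H}\times\Phi\subseteq\overline{\fR\smallsetminus({H}\times\Phi)}$, and this is the only step requiring care. The key observation is that $\pr(\p,\q)\in{H}\times\Phi$ holds if and only if $\p/\|\q\|\in{H}$ and $\q/\|\q\|\in\Phi$, that is, if and only if $\p\in{H}$ and $\q\in F$. Under \eqref{additional} this never happens, so $\pr(P\times Q)$ is disjoint from ${H}\times\Phi$. The same holds for $-\pr(P\times Q)$, since ${H}\times\Phi$ is symmetric. Hence $\fR\smallsetminus({H}\times\Phi)=\fR$, and the claim yields \eqref{anykC}.

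The main subtlety is checking that the approximating points produced by Lemma \ref{compatibility_from_logdense} really lie in $\fR$ outside ${H}\times\Phi$. By the disjointness just shown, this is automatic. Theorem \ref{any_k_theorem} then gives (TDS) in all three cases.
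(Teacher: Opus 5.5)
Your proof is correct and follows the paper's route. In each case you reduce to the corresponding part of Theorem \ref{any_k_theorem} via Lemma \ref{compatibility_from_logdense}, and in (c) you use that $\pi(\p,\q)\in H\times\Phi$ exactly when $\p\in H$ and $\q\in F$, so that \eqref{additional} makes $\pi(P\times Q)$ disjoint from $H\times\Phi$.

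The one difference is that you apply Lemma \ref{compatibility_from_logdense} directly to $P$ and $Q^{\theta}_{\varepsilon}$ (as in Case 1 of the proof of Theorem \ref{not_on_line}) instead of going ray by ray through Observation \ref{replacement of compatibility}. As a result, the degenerate case $k=m$, $H=\{0\}$, is absorbed by the trivial case of that lemma, whereas the paper handles it separately.
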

%\comm{A question: can "$= \varnothing$" in \eqref{additional} be replaced by "bounded"? -- Yes, since we can generally just ignore bounded sets. They don't contribute to neither log density nor limit directions, so we can always remove any bounded subsets from $P$ and $Q$. DK: maybe I should have said "finite", in which case of course the answer is yes. Ignoring bounded can be a problem: look at $P = \Q,\,Q = \mathbb{S}^{1}$ or $P = \{0,1\},\,Q = \Q^{2}\nz$.  -- In general that's true, but if we already assume that the other conditions in (c) hold, we can ignore bounded sets. For the case when bounded sets probide total density: should we actually have an example of such situation in the text?} 

%\comm{To be honest, I don't like the appearance of half-spaces here. I understand that it makes the proposition stronger. But do we really need it? I think we had enough asymmetric examples applying the previous theorem. Maybe we can keep everything symmetric here, and at the end of the section make a remark that the proofs can be slightly modified with half-spaces to produce a stronger statement? I would stick with only symmetric examples when illustrating the lasto two propositions of the section.}

\begin{example}\label{ex_logdenseP}  Take any $0 \leq k \leq \min(n-2, m)$,   let $P$ be logarithmically dense in {some} $(m-k)$-dimensional subspace of $\R^m$ {and  let} 
\begin{equation}\label{exampleQ}%\tag{2.6}
Q  = \big(Q_1\times 
\{0^{n-k-1}\} \big) \cup \big(\{0^{n-k-1}\}\times Q_2\big)\nz,\end{equation}
where {$Q_1,Q_2\subset \R^{k+1}$} 
%are logarithmically dense in $\R^{k+1}$, or, more generally, 
have every direction in ${\mathbb S}^{k}$ as their limit direction. {Then the assumptions of  Proposition~\ref{prop_logdenseP}(a) will be satisfied}. This
 can be thought of as a %far-reaching  
 higher-dimensional generalization of our motivational example~\equ{niceexample}. {Or {else} one can replace \eqref{exampleQ} by 
 $$Q=Q_1\times 
\{0^{n-k-1}\}+\bv,$$ where $\bv$ does not belong to   $\Span_\R(Q_1\times 
\{0^{n-k-1}\})$, and keep the same $P$; then 
Proposition~\ref{prop_logdenseP}(c) will apply. Another option is to simply take  $Q=Q_1\times 
\{0^{n-k-1}\}$ and, as a compensation, require the logarithmic density of $P$   in {two different} $(m-k)$-dimensional subspaces; {this can be handled} by part (b) of the proposition.}
\end{example}
 %\comm{Maybe here we can also come up with the simplest examples of cases (b) and (c)? -- We can, but they will be essentially the same, and I feel like the text becomes too heavy. Shold we? DK: I tried to write what I had in mind, hope it is not too heavy. But some examples below are rather heavy, perhaps we should make them lighter.}
%\medskip

 {Our next} observation %in this section 
 is that {one can}, roughly speaking, interchange the roles of $P$ and $Q$ in Proposition \ref{prop_logdenseP}. This can be achieved due to a built-in symmetry of Observation \ref{replacement of compatibility}. 
Here is  what this symmetry produces: 

\begin{proposition}\label{prop_logdenseQ}
    For $m,n\in\N$ with $ n> 1$ and nonempty subsets $P\subset \R^m$ and $Q\subset \R^n\nz$,  {$(P,Q)$ has property {\rm (TDS)}}
    %the collection $\mathcal{L}_{P,Q}$ contains a totally dense subcollection 
    in all of the   cases below:
    \begin{enumerate}
        \item[\rm(a)]\label{parta_Ex2.7} Fix $0 \leq k \leq \min(n-2, m)$, let $F_1, F_2$ be two different $(k + 1)$-dimensional
linear subspaces of $\R^n$, and let ${H}$ be an $(m-k)$-dimensional subspace of $\R^m$ such that $Q$ is logarithmically dense in both $F_1$ and $F_2$, and 
the set of limit directions of $P$ contains  $H\cap\mathbb{S}^{m -1}$.
        %\cup \ldots \cup \mathbb{N} \bv_5$.
        
        \item[\rm(b)]\label{partb_Ex2.7} Fix $0 \leq k \leq \min(n-1, m-1)$, let $F$ be a $(k + 1)$-dimensional
linear subspace of $\R^n$, and let ${H}_1,{H}_2$ be two different $(m-k)$-dimensional subspaces of $\R^m$ such that $Q$ is logarithmically dense in $F$, and 
the set of limit directions of $P$ contains  $(H_1\cup H_2)\cap\mathbb{S}^{m -1}$.
        \item[\rm(c)] \label{partc_Ex2.7}
        %Let 
        Fix $0 \leq k \leq \min(n-1, m)$, let $F$ be a $(k + 1)$-dimensional
linear subspace of $\R^n$, and let ${H}$ be an $(m-k)$-dimensional linear subspace of $\R^m$ such that $Q$ is logarithmically dense in   $F$,   
the set of limit directions of $P$ contains  $H\cap\mathbb{S}^{m-1}$, and {\eqref{additional} holds}.
%{ one of the two conditions holds: either $P\cap span_{\R} \Psi = \varnothing$ or $Q \cap F = \varnothing$. } 
    \end{enumerate}
    %In addition, in each of the cases above we can replace either $F_1,F_2,F$ or $H_1,H_2,H$ by their  half-spaces (but not both at the same time).
%{  In addition, in each of the cases above we can either replace the $(k+1)$-dimensional subspaces $F$ by their half-spacess or replace the $(m-k-1)$-dimensional spheres $\Psi$ by their halfspheres (but not both at the same time).}
\end{proposition}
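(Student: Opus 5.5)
We will reduce each part to the corresponding part of Theorem~\ref{any_k_theorem}, applied to the symmetrized set $\fR := \pr(P\times Q)\cup\big(-\pr(P\times Q)\big)$. The key tool is Observation~\ref{replacement of compatibility}, whose hypothesis \eqref{Obs_24_set} is symmetric in $P$ and $Q$. We will first prove a mirror version of Lemma~\ref{compatibility_from_logdense}: if $Q$ is logarithmically dense in a cone $F$ and $P$ has a limit direction $\phi$, then for every $\theta\in\mathbb{S}^{n-1}\cap F$ and every $\varepsilon>0$ the set of ratios $\{x/y : x\in|P|^{\phi}_{\varepsilon},\ y\in|Q|^{\theta}_{\varepsilon}\}$ is dense in $\R_{\ge0}$. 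The argument is literally the same with numerator and denominator swapped. We fix $y$-values $s_i\in|Q|^\theta_\varepsilon$ with $s_{i+1}/s_i\to1$, and a large $x\in|P|^\phi_\varepsilon$, which exists because $\phi$ is a limit direction. Given $0<a<b$, we take the first $i$ with $s_i\ge x/b$, so that $x/s_i$ lands in $(a,b)$ once consecutive ratios are below $b/a$. The value $0$ is obtained by letting $y\to\infty$ with $x$ fixed. By Observation~\ref{replacement of compatibility} this gives $\R_{\ge0}\phi\times\{\theta\}\subseteq\overline{\pr(P\times Q)}$ for every limit direction $\phi$ of $P$ and every $\theta\in F\cap\mathbb{S}^{n-1}$.

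For part (a), taking $\phi$ over $H\cap\mathbb{S}^{m-1}$ and $\theta$ over $F_i\cap\mathbb{S}^{n-1}$ (for $i=1,2$) and using closedness, we obtain $H\times\big((F_1\cup F_2)\cap\mathbb{S}^{n-1}\big)\subseteq\overline{\fR}$. Here $\Phi_i:=F_i\cap\mathbb{S}^{n-1}$ are two different $k$-subspheres, since $\dim F_i=k+1$, and $H$ has dimension $m-k$. Hence \eqref{anykA} holds, and Theorem~\ref{any_k_theorem} yields (TDS). Part (b) is identical, with $\Phi=F\cap\mathbb{S}^{n-1}$ and the two subspaces $H_1,H_2$; it gives \eqref{anykB}.

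For part (c) we get $H\times\Phi\subseteq\overline{\pr(P\times Q)}$ as above, and we need the stronger inclusion \eqref{anykC}. Suppose first that $P\cap H=\varnothing$. Then no point $\pr(\p,\q)$ has first coordinate in $H$, because $\p/\|\q\|\in H$ iff $\p\in H$. So $\pr(P\times Q)$ is disjoint from $H\times\Phi$, and the same holds for its negative since $H=-H$. Suppose instead that $Q\cap F=\varnothing$. Then $\q/\|\q\|\notin\Phi$ for every $\q\in Q$, and again $\fR$ misses $H\times\Phi$, using $\Phi=-\Phi$. In either case $\overline{\fR\smallsetminus(H\times\Phi)}=\overline{\fR}\supseteq H\times\Phi$. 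Thus \eqref{anykC} holds, and Theorem~\ref{any_k_theorem} applies.

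The main obstacle is the mirrored density lemma. We must check that logarithmic density of $Q$ along $\theta$ provides norms in the narrow cone $Q^\theta_\varepsilon$, rather than merely in $Q$, so that the resulting points of $\pr(P\times Q)$ lie within $\varepsilon$ of $(\cdot,\theta)$. We must also check that a limit direction of $P$ gives arbitrarily large norms in $P^\phi_\varepsilon$. Both follow directly from the definitions, so the rest is bookkeeping of dimensions: $\dim F=k+1$ gives a $k$-sphere, and the bounds on $k$ keep the spheres and subspaces proper and distinct where needed.
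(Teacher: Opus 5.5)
\textbf{Overall.} Your route is the same as the paper's. Observation~\ref{replacement of compatibility}, combined with Lemma~\ref{compatibility_from_logdense} with the roles of $P$ and $Q$ swapped, gives $H\times\Phi\subseteq\overline{\fR}$. Then \eqref{additional} makes $\pr(P\times Q)$ (and its negative) disjoint from $H\times\Phi$, so Theorem~\ref{any_k_theorem} applies. When $m-k\ge 1$ the argument is correct, and in particular it fully covers part (b).

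\textbf{The gap: the case $k=m$.} The statement allows $k=m$ in part (a) when $m\le n-2$, and in part (c) when $m\le n-1$. In that case $H=\{0\}$ and $H\cap\mathbb{S}^{m-1}=\varnothing$, so the hypothesis on $P$ is vacuous.
\begin{itemize}
\item $P$ need not have any limit direction at all (for instance, $P$ finite), so your mirrored lemma cannot be invoked.
\item Taking $\phi$ over $H\cap\mathbb{S}^{m-1}$ gives a union of rays $\R_{\ge0}\phi$ over an empty index set. That union is empty, not $\{0\}$.
\end{itemize}
Consequently the inclusions $\{0\}\times(\Phi_1\cup\Phi_2)\subseteq\overline{\fR}$, needed for \eqref{anykA}, and $\{0\}\times\Phi\subseteq\overline{\fR}$, needed for \eqref{anykC}, are never established. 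Your closing remark about the bounds on $k$ overlooks exactly this case.

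\textbf{How to fix it.} The paper treats this case separately, and the fix uses only $P\neq\varnothing$.
\begin{itemize}
\item Pick any $\p_0\in P$.
\item Since $Q$ is logarithmically dense in $F$ (or in $F_1$ and $F_2$), every $\theta\in\Phi$ is a limit direction of $Q$. So there are $\q_j\in Q$ with $\|\q_j\|\to\infty$ and $\q_j/\|\q_j\|\to\theta$.
\item Then $\pr(\p_0,\q_j)=(\p_0/\|\q_j\|,\,\q_j/\|\q_j\|)\to(0,\theta)$, which gives the missing inclusion into $\overline{\pr(P\times Q)}$.
\end{itemize}
For (a) this yields \eqref{anykA} directly. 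For (c), your disjointness argument from \eqref{additional} then finishes the proof verbatim.

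The paper splits this case differently. If $P\ne\{0\}$ it chooses $\p_0\neq0$, which gives \eqref{anykC} without using \eqref{additional}. If $P=\{0\}$ it is forced to use $Q\cap F=\varnothing$. Your uniform appeal to \eqref{additional} handles both subcases at once, once the closure inclusion is in place.
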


%\comm{Write a few sentences about examples symmetric to the discussion after the previous proposition.}

One can construct a series of examples {illustrating Proposition \ref{prop_logdenseQ} in a similar way to what was done in Example} \ref{ex_logdenseP}:  take $0 \leq k \leq \min(n-1, m-1)$, let $Q$ be logarithmically dense in some $(k+1)$-dimensional subspace of $\R^n$, and let 
$$
P  = \big(P_1\times 
\{0^{k}\} \big) \cup \big(\{0^{k}\}\times P_2\big)
%\nz, \comm{\quad\text{do we need }\nz?}
$$
where $P_1, P_2 \subset \R^{m-k}$ have every direction in $\mathbb{S}^{m-k-1}$ as their limit direction. This {satisfies the %examples to 
assumptions of} Proposition \ref{prop_logdenseQ}(b). {Applications of parts (a) and (c) of Proposition \ref{prop_logdenseQ} can  also be %cooked up 
{constructed} similarly to those in Example \ref{ex_logdenseP}}.
 %\comm{DK: I'll edit it later.}
\smallskip

{Let us now discuss the proof of the two propositions above. %It is rather straightforward to
{One can readily} reduce their three cases  to the corresponding cases of Theorem \ref{any_k_theorem}, with $\Phi_1,\Phi_2,\Phi$ being the intersections of the corresponding subspaces with $\mathbb{S}^{n-1}$. This way} conditions \eqref{anykA}, \eqref{anykB} or \eqref{anykC} hold in the cases (a), (b) and (c) respectively. As an illustration, we will show the deduction of part (c); the first two parts can be obtained in a similar way.

%{
\begin{proof}[Proof of Propositions \ref{prop_logdenseP}(c) and %Proposition 
\ref{prop_logdenseQ}(c)] 
First assume that $k \neq m$. Fix {$\theta \in \Phi:= F \cap \mathbb{S}^{m-1}$}, $\phi \in {H} \cap \mathbb{S}^{m-1}$ and $\varepsilon > 0$. We will prove that \eqref{Obs_24_set} holds under assumptions of either of {the} propositions:
    
    \begin{itemize}
        \item %{\bf In assumption of Proposition \ref{prop_logdenseP}:} 
        {Assuming that the set of limit directions of $Q$ contains  $F\cap\mathbb{S}^{n-1}$ and  $P$ is logarithmically dense in ${H}$:}
        since $|P|_{\varepsilon}^{\phi}$ is logarithmically dense in $\R_{\geq 0}$ and $Q_{\varepsilon}^{\theta}$ is unbounded, \eqref{Obs_24_set} %in $\R_{\geq 0}$  
    follows  from Lemma \ref{compatibility_from_logdense}.
    \item %{\bf In assumption of Proposition \ref{prop_logdenseQ}:} 
    {Assuming that $Q$ is logarithmically dense in   $F$ and   
the set of limit directions of $P$ contains  $H\cap\mathbb{S}^{m-k-1}$:} since $|Q|_{\varepsilon}^{\theta}$ is logarithmically dense in $\R_{\geq 0}$ and $P_{\varepsilon}^{\phi}$ is unbounded, we can apply Lemma \ref{compatibility_from_logdense} with $|Q|_{\varepsilon}^{\theta}$ in place of $P$ and $P_{\varepsilon}^{\phi}$ in place of $Q$ and deduce that
    $$
    {\text{the set }\left\{ \frac{y}{x}: \,\,\, x \in |P|_{\varepsilon}^{\phi}, \, y \in |Q|_{\varepsilon}^{\theta} \right\}\text{is dense in }\mathbb{R}_{\geq 0},}
    $$
    which is equivalent to \eqref{Obs_24_set}.
    \end{itemize}
    %{Recall that we have assumed that either $F$ or $H$, but not both, can be   half-spaces.} 
    Let $\fR: = \pr(P \times Q) $. By Observation \ref{replacement of compatibility}, ${H} \times \Phi \subseteq \overline{\fR}$. %Therefore, 
    %$$
    %\left( {H} \cup -{H} \right) \times \left( \Phi \cup -\Phi\right) \subseteq \overline{\fR},
    %{H} \times \Phi \subseteq \overline{\fR},
    %$$
    %where ${H} \cup -{H}$ and $\Phi \cup -\Phi$ are {now} a full subspace and a full sphere. \comm{Can you explain what exactly goes wrong when both $F$ and $H$ are half-spaces?}
    {On the other hand, \eqref{additional} implies that 
    %Since 
    $
    %\fR \cap \left( {H} \cup -{H} \right) \times \left( \Phi \cup -\Phi\right) = \varnothing,
    \fR \cap \left( {H}\times  \Phi \right) = \varnothing
    $, hence}
    \eqref{anykC} holds, and the conclusion follows from Theorem \ref{any_k_theorem}. %Thus, $\mathcal{L}_\fR = \mathcal{L}_{P, Q}$ contains a totally dense subcollection.
\smallskip

    It remains to show that the case $k = m$, i.e, ${H} = \{0\}$, also works. Then logarithmic density in ${H}$ is a vacuous condition. Since $P \neq \varnothing$, there exists an element $\p_0 \in P$. 
    %and $P\,\cap\, {H} = \varnothing$ implies that there exists   $\p_0 \in P\nz$. 
    %The set
    {Under the assumptions of either of the propositions} $\Phi$ is contained in the set of limit directions of $Q$, and thus of $Q \smallsetminus B_C(0)$ for any $C > 0$. Let $\fR = \pr(\{ \p_0 \} \times Q )$. 
    If $P \neq \{ 0 \}$, then we can additionally assume that we chose $\p_0 \neq 0$. 
   {This implies} 
\begin{equation}\label{0inclosure}%\tag{2.6}
\{ 0 \} \times\Phi \subseteq \overline{\fR \smallsetminus ( \{ 0 \} \times\Phi) },\end{equation} and it remains to apply Theorem \ref{any_k_theorem} again.
    If $P = \{ 0 \}$, then the condition $P \cap \Pi = \varnothing$ does not hold, thus $Q \cap F = \varnothing$ {has to hold, which, as above, implies \eqref{0inclosure}}. 
    %$ \{ 0 \} \times\Phi \subseteq \overline{\fR \smallsetminus ( \{ 0 \} \times\Phi) \} }$.
\end{proof}

%\comm{OK, maybe we don't need  detailed proofs here, but at least we need to explain the   idea to swap $P$ with $Q$ using some example! -- I did something like this in the next one}

%\comm{Make a remark about half-spaces, then have some examples. Or vice versa.}

\begin{remark}\label{half-spaces} \rm Our last observation in this section is that we do not always have to consider the whole subspaces in Proposition \ref{prop_logdenseP} and \ref{prop_logdenseQ} for {their conclusions} to hold. It turns out that 
%this argument is quite general: in 
each of the cases of {these propositions one can replace either $F_1,F_2,F$ or $H_1,H_2,H$ (but not both at the same time) by their %\comm{(closed?)}
half-spaces}.

 To explain why this works, 
%\comm{(maybe instead of the full proof just explain the main idea?)}
%we need to 
let us recall that in the proof of Propositions \ref{prop_logdenseP} and \ref{prop_logdenseQ} we used $\fR = \pr(P \times Q)$; {however  one} can always use a larger and symmetric set $\pr(P \times Q) \cup - \pr(P \times Q)$. As an illustration, we show how this idea can be implemented in the proof of part (c) above in case $F$ is a half-space.
%First of all, we can always remove some points from $P$ and assume $F \cap -P = \varnothing$; this does not affect neither logarithmic density in $F$ nor the condition that the set of limit direction contains $F \cap \mathbb{S}^{m-1}$.
%Let us denote the subspace $F \cup -F$ by $F'$. 
{One can  follow the proof of Proposition \ref{prop_logdenseP} and \ref{prop_logdenseQ}   and, removing some points from $P$ if necessary,    show that $${H} \times F \subseteq \overline{\pr(P \times Q) \smallsetminus \left( {H} \times (F \cup -F) \right)}.$$ Then, using the fact that ${H} = -{H}$, one can similarly show that $${H} \times -F \subseteq \overline{-\pr(P \times Q) \smallsetminus \left( {H} \times (F \cup -F) \right)}.$$ It remains to combine the two statements above} and
%$$
%{H} \times F' \subseteq \overline{\fR \setminus \left(  {H} \times F' \right)}
%$$
%and can finish the proof by 
apply Theorem \ref{any_k_theorem}.
%\comm{DK: I'll edit it later.}
{We note} that this argument does not work if  both $F$ and ${H}$ are replaced by half-spaces simultaneously, since
$$
({H} \times F) \cup -({H} \times F) \neq ({H} \cup -{H}) \times (F \cup -F).
$$
\end{remark}

%\comm{Thanks for these examples. I won't read them today, maybe we can edit them together on zoom.}

{\begin{example}\label{thelastexample}
      {Suppose that $n = m+1$,
    %$n = 3, m = 2$ 
    %{$m,n\in\N$ with $n\ge \max(m,2)$}. %and $k = 1$.  L
    take 
        $$
        Q = \left\{ \big(q_1, \dots,q_{m}, e^{-(q_1^2 + \cdots  +q_{m}^2)}%,0,\dots,0
        \big) : \,\,\, q_1, \dots,q_{m} \in \Z\right\},
        $$
        and let $P \subseteq \R^m$ be any unbounded set.
    We claim that %$\mathcal{L}_{P,Q}$ contains a totally dense subcollection.
    $(P,Q)$ has property (TDS). 
    Indeed: in this case %again 
     we can take $k=m-1$. Clearly $Q$ is logarithmically dense in the $(k+1)$-dimensional subspace $F:=\R^{m}\oplus\{0\}$ of $\R^n$, and at the same time  $Q \cap F%\left( \R^2\oplus\{0\}\right) 
     = \varnothing$.
    On the other hand} the set of limit directions of any unbounded set is nonempty, {hence it contains $H\cap\mathbb{S}^{m-1}$ for some  ray %(one-dimensional half-space) 
    $H$ in $\R^m$. %at least one zero-dimensional halfsphere of ${\mathbb S}^{m-1}$.
    Thus  the conditions of Proposition \ref{prop_logdenseQ}(c) will be  satisfied if ${H}$ is taken to be a one-dimensional half-space.} 
    %\comm{DK: I'll edit it later.}
\end{example}
}

\section{Two sufficient conditions for density}\label{just_density_section}

%It will be convenient for us to say {\it $\mathcal{L}$ is dense in $M_{m,n}(\mathbb{R})$ } meaning that the set $\bigcup\limits_{\bfL \in \mathcal{L}} \bfL$ is dense. 
In this section we %formulate and prove 
{describe} two conditions {(in Propositions \ref{density_any_k} and \ref{ditect product dense}, {to be used in the proof of Theorems \ref{any_k_theorem} and \ref{ch_theorem}} respectively)} guaranteeing that {for a given subset $\fR$ of $ {\bf T}_{m,n}$, the} collection $\mathcal{L}_{\fR}$ is dense {in $\mr$} (but not necessarily totally dense).
%{\color{brown} Condition shown in Proposition \ref{density_any_k} will be important in the proof of Theorem \ref{any_k_theorem}, and condition shown in Proposition \ref{ditect product dense} plays a key role in the proof of Theorem \ref{ch_theorem}.}
%One can also easily see that if $\p = (p_1, \ldots, p_m)$, then
%\begin{equation}\label{direct product}
%\bfL_{\p,\q} = \bfL_{\q, p_1} \times \ldots \times \bfL_{\q, p_m}.
%\end{equation}
{First, let} us note the following simple but useful property:

\begin{lemma}\label{closure}
For any $\fR \subseteq {\bf T}_{m,n}$,
    $$
    \overline{\bigcup\limits_{\br \in \fR} \bfL_{\br}} = \overline{\bigcup\limits_{\br \in \overline{\fR}} \bfL_{\br}},
    $$
    where $\overline{\fR}$ denotes the closure of $\fR$. In particular, $\mathcal{L}_{\fR}$ is dense if and only if $\mathcal{L}_{\overline{\fR}}$ is dense.
\end{lemma}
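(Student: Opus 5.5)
The inclusion $\subseteq$ is immediate, since $\fR\subseteq\overline{\fR}$. The real content is the reverse inclusion. For that it suffices to show that for every $\br\in\overline{\fR}$ and every matrix $A\in\bfL_{\br}$, the point $A$ is a limit of points $A_i\in\bfL_{\br_i}$ with $\br_i\in\fR$. The plan is a continuity argument for the family of affine subspaces $\br\mapsto\bfL_{\br}$ on $\tmn$, where we use the explicit description $\bfL_{(\p,\theta)}=\{A\in\mr: A\theta=\p\}$ with $\|\theta\|=1$.

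Fix $\br=(\p,\theta)\in\overline{\fR}$, $A\in\bfL_{\br}$, and a sequence $\br_i=(\p_i,\theta_i)\in\fR$ converging to $(\p,\theta)$ in $\tmn$, so that $\p_i\to\p$ in $\R^m$ and $\theta_i\to\theta$ in $\mathbb{S}^{n-1}$. We correct $A$ by a rank-one matrix:
$$A_i := A + (\p_i - A\theta_i)\,\theta_i^{\top},$$
using $\|\theta_i\|=1$. Then
$$A_i\theta_i = A\theta_i + \p_i - A\theta_i = \p_i,$$
so $A_i\in\bfL_{\br_i}$. The correction term has operator norm at most $\|\p_i - A\theta_i\|$. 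As $i\to\infty$ this tends to $\|\p - A\theta\| = 0$, because $A\in\bfL_{\br}$. Hence $A_i\to A$, so $A$ lies in the closure of $\bigcup_{\br\in\fR}\bfL_{\br}$. Taking closures of both sides then gives the displayed equality.

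The ``in particular'' part follows at once. $\mathcal{L}_{\fR}$ is dense exactly when the left-hand closure is all of $\mr$, and $\mathcal{L}_{\overline{\fR}}$ is dense exactly when the right-hand closure is all of $\mr$, and these two closures coincide.

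There is no serious obstacle here. The only point needing care is that we work in $\tmn$, where the $\q$-component is normalized to a unit vector. This makes the projection formula $(\p_i-A\theta_i)\theta_i^\top$ exact and keeps it uniformly controlled. In the unnormalized setting with $\q\to 0$ this control would fail, which is why the lemma is stated for subsets of $\tmn$.
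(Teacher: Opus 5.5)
Your proof is correct and follows the same route as the paper's: approximate $\br\in\overline{\fR}$ by a sequence $\br_i\in\fR$ and show that $\bfL_{\br}\subseteq\overline{\bigcup_i\bfL_{\br_i}}$. The paper only asserts this last inclusion, whereas your explicit rank-one correction $A_i=A+(\p_i-A\theta_i)\theta_i^{\top}$, which uses $\|\theta_i\|=1$, actually proves it.
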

\begin{proof}
Suppose $\br \in \overline{\fR}$ and $x \in  \bfL_{\br}$. If $\{ \br_i\}_{i=1}^\infty\subseteq \fR$ converges to $\br\in \overline{\fR}$, then $\bfL_{\br}\subseteq \overline{\bigcup_{i=1}^\infty \bfL_{\br_i}}$, therefore $x \in \overline{\bigcup\limits_{\br \in \fR} \bfL_{\br}}$, which implies the inclusion $\overline{\bigcup\limits_{\br \in \overline{\fR}} \bfL_{\br}} \subseteq \overline{\bigcup\limits_{\br \in \fR} \bfL_{\br}}$. The opposite inclusion is trivial.
%The "only if" case is trivial by inclusion. Suppose now $\mathcal{L}_{\overline{\fR}}$ is totally dense; fix $\br_0 = (\q_0, \p_0) \in \mathbb{R}^n \times \mathbb{R}^m$, open neighborhood $W \subseteq M_{m,n}$ ${\bfL} \cap W \neq \varnothing$ and let $\Omega$ be as in Definition \ref{loc_dens_definition}.
%    Define $\fR': = \{ \br \in \fR: \,\,\, \bfL_{\br} \cap {\bfL} \cap \Omega \neq \varnothing  \}$; it is easy to check that 
%    $$
%    \{ \br \in \overline{\fR}: \,\,\, \bfL_{\br} \cap {\bfL} \cap \Omega \neq \varnothing  \} \subseteq \overline{\fR'}.
%    $$
%    Thus, by part \ref{closure1}, 
%    $$
%    \overline{\bigcup\limits_{(\q, \p) \in {\fR}: \,\,\bfL_{\p,\q} \cap {\bfL} \cap \Omega \neq \varnothing} \bfL_{\p,\q}} \supseteq \overline{\bigcup\limits_{\br \in \fR'} \bfL_{\br}} = \overline{\bigcup\limits_{\br \in \overline{\fR}} \bfL_{\br}} \supseteq \overline{\bigcup\limits_{(\q, \p) \in \overline{\fR}: \,\,\bfL_{\p,\q} \cap {\bfL} \cap \Omega \neq \varnothing} \bfL_{\p,\q}} \supseteq \Omega,
%    $$
%    which completes the proof.
\end{proof}

  We will also use the following {elementary} lemma:

\begin{lemma}\label{generic dimension}
    Let ${F}$ be an $s$-dimensional linear subspace of $\mathbb{R}^n$ and ${H}$   an $l$-dimensional linear subspace of $\mathbb{R}^m$. Then the set 
\begin{equation}\label{alg_set}
    \left\{A \in M_{m,n}: \dim \left( A{F} + {H} \right) \neq \min(s+l, m)\right\}
    \end{equation}
    is a proper algebraic subset of $M_{m,n}$;
    {hence its complement}
    %. In particular, %almost every $A_0 \in M_{m,n}$ has an open neighborhood $W$ such that 
   % the set
    $$
    \{ A \in M_{m,n}: \dim \left( A{F} + {H} \right) = \min(s+l, m)\} 
    $$
    is open and dense in $M_{m,n}$.
    %\comm{Maybe better to say that the set of such $A_0$ is open and dense? you don't really use Lebesgue measure, right?}
\end{lemma}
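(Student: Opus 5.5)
The plan is to express the condition $\dim(AF+H)<\min(s+l,m)$ as the vanishing of finitely many polynomials in the entries of $A$, and then to exhibit one matrix $A_0$ at which some of these polynomials do not vanish. Fix a basis $f_1,\dots,f_s$ of $F$ and a basis $h_1,\dots,h_l$ of $H$. For each $A\in M_{m,n}$ form the $m\times(s+l)$ matrix
$$M(A)=\big(Af_1\,|\,\cdots\,|\,Af_s\,|\,h_1\,|\,\cdots\,|\,h_l\big).$$
Its column span is exactly $AF+H$, so $\dim(AF+H)=\operatorname{rank}M(A)$, which is always at most $\min(s+l,m)$.

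Set $r:=\min(s+l,m)$. The rank of $M(A)$ is $<r$ if and only if all $r\times r$ minors of $M(A)$ vanish. The entries of $M(A)$ depend linearly on the entries of $A$, so each such minor is a polynomial in the entries of $A$. Hence the set \eqref{alg_set} is the common zero locus of finitely many polynomials, i.e.\ an algebraic subset of $M_{m,n}$. (If $r=0$ the set is empty, which is trivially a proper algebraic set.)

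To show the set is proper, I will construct $A_0$ with $\operatorname{rank}M(A_0)=r$. Choose a complement $H'$ of $H$ in $\mathbb{R}^m$, so $\dim H'=m-l$. Let $t:=\min(s,m-l)=r-l$. Take linearly independent vectors $u_1,\dots,u_t\in H'$, and define $A_0$ by $A_0f_i=u_i$ for $i\le t$ and $A_0f_i=0$ for $t<i\le s$, extended arbitrarily (say by zero) on a complement of $F$. This is possible because the $f_i$ are linearly independent. Then $A_0F+H\supseteq\Span(u_1,\dots,u_t)\oplus H$, which has dimension $t+l=r$. So some $r\times r$ minor is nonzero at $A_0$, and the algebraic set is proper.

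Finally, a proper algebraic subset of $M_{m,n}\cong\R^{mn}$ is closed. It is also nowhere dense: a nonzero polynomial cannot vanish identically on any nonempty open set, by the identity theorem for polynomials. Therefore its complement is open and dense.

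I expect no real obstacle here. The only points needing care are the degenerate cases $s=0$ or $l=0$, and choosing $A_0$ so that the images of the $f_i$ avoid $H$. Both are handled by working in the complement $H'$ as above.
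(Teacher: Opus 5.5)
Your proposal is correct and follows essentially the same route as the paper. The set is cut out by the $r\times r$ minors of the augmented matrix with columns $Af_1,\dots,Af_s,h_1,\dots,h_l$, and properness comes from a witness matrix that sends $r-l$ basis vectors of $F$ to vectors completing a basis of $H$ to an independent set. Your write-up is slightly more careful than the paper's in two places: you correctly record that the witness gives rank exactly $r$ (the paper's text says $r-l$ there, evidently meaning $r$), and you explicitly justify why a proper algebraic set has open dense complement.
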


%\comm{Vasya: I wrote up a detailed proof, but this is an obvious linear algebra argument - so maybe we should shorten it or completely remove and just say that this lemma is an obvious observation?}

\begin{proof}
    Let $\q^1, \ldots, \q^s$ be a basis of ${F}$ and $\p^1, \ldots, \p^l$ %be 
    a basis of ${H}$. Then $A$ belongs to the set \eqref{alg_set} if and only if the rank of %joint
{the augmented} matrix
    \begin{equation}\label{matrix_combined}
    \begin{pmatrix}
        A \q^1 \,| \ldots \,|\, A\q^s \,|\, \p^1 \,|\, \ldots \,|\, \p^l
    \end{pmatrix}
    \end{equation}
    is smaller than $r: = \min(s+l, m)$. This is equivalent to %all the 
    {vanishing of all its $r \times r$ minors},
    %submatrices being degenerate,
     which, in turn, is an algebraic condition on {the} coefficients of $A$. {Hence} the set \eqref{alg_set} is algebraic, and it remains to prove that it does not coincide with $M_{m,n}$. 
    Let ${\bf w}^1, \ldots, { \bf w}^{r-l}$ be a collection of $m$-vectors completing $\p^1, \ldots, \p^l$ to a linearly independent set. Since $r-l \leq s$, there exists a matrix $A \in M_{m,n}$ which maps $\q^1, \ldots, \q^{r-l}$ to ${\bf w}^1, \ldots, { \bf w}^{r-l}$. %In this case 
    {For such $A$} the matrix $\eqref{matrix_combined}$ has rank $r-l$, {which implies that} 
    %matrix 
    $A$ does not belong to the set \eqref{alg_set}. %and 
    {Thus} %\eqref{alg_set} 
    {the latter} is a proper algebraic set.
\end{proof}

{Finally, the following fact will be useful:
\begin{lemma}\label{opensetlemma}
    Fix $0 \leq k \leq \min(n-1, m)$. Let $\Phi$ be a $k$-dimensional subsphere of $\mathbb{S}^{n-1}$ and ${H} \subseteq \mathbb{R}^m$ an $(m-k)$-dimensional linear subspace. Let $U$ be an open subset of ${H} \times \Phi$. Then the set 
    $$
{\bigcup\limits_{
%(\p, \theta)
\br\in U} \bfL_{
%\p, \theta
\br}}
    $$
    has a nonempty interior in $M_{m,n}$.
\end{lemma}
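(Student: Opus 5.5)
The plan is to realize the union $\bigcup_{\br\in U}\bfL_{\br}$, at least near a suitable point, as the preimage of $U$ under a continuous map defined on an open subset of $M_{m,n}$. Continuity of that map gives openness of the preimage. The dimension count makes this plausible: $U$ is $m$-dimensional, since $\dim{H}+\dim\Phi=(m-k)+k$, and each $\bfL_{\p,\theta}$ has codimension $m$ in $M_{m,n}$.

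\textbf{Step 1: the map $\Psi$.} Write $\Phi=V\cap\mathbb{S}^{n-1}$ with $V$ a $(k+1)$-dimensional subspace of $\R^n$, and let $\rho:\R^m\to\R^m/{H}\cong\R^k$ be the quotient map. For $A\in M_{m,n}$ consider the linear map $\rho\circ A|_V:V\to\R^k$. A matrix $A$ lies in some $\bfL_{\p,\theta}$ with $(\p,\theta)\in{H}\times\Phi$ if and only if $\theta\in\ker(\rho\circ A|_V)\cap\mathbb{S}^{n-1}$, and in that case $\p=A\theta$. By Lemma \ref{generic dimension} with $s=k+1$ and $l=m-k$, the set
$$W:=\{A:\dim(AV+{H})=m\}$$
is open and dense. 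For $A\in W$ the map $\rho\circ A|_V$ is surjective, so its kernel is a line $\ell_A$. Hence $\ell_A\cap\mathbb{S}^{n-1}=\{\pm\theta_A\}$, and every $A\in W$ lies in exactly the subspaces $\bfL_{\pm(A\theta_A,\theta_A)}$ among those indexed by ${H}\times\Phi$. (Recall $\bfL_{-\br}=\bfL_{\br}$.) When $k=0$ we have $V=\ell_A$ and $W=M_{m,n}$, so this case needs no separate treatment.

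\textbf{Step 2: continuity.} On $W$ the kernel of a rank-$k$ linear map depending continuously on $A$ varies continuously. Concretely, $\ell_A$ can be written via cofactors of a fixed basis representation, which are polynomial in $A$. So on a sufficiently small neighbourhood $W_0\subseteq W$ of any given point one can choose the sign of $\theta_A$ continuously. This yields a continuous map
$$\Psi:W_0\to{H}\times\Phi,\qquad \Psi(A)=(A\theta_A,\theta_A).$$
It satisfies $A\in\bfL_{\Psi(A)}$, and so
$$\Psi^{-1}(U)\subseteq\bigcup_{\br\in U}\bfL_{\br}.$$
Since $\Psi$ is continuous and $U$ is open, $\Psi^{-1}(U)$ is open in $W_0$, hence open in $M_{m,n}$.

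\textbf{Step 3: nonemptiness.} This is the step I expect to need the most care. Fix $(\p_0,\theta_0)\in U$. I need some $A\in W$ with $A\theta_0=\p_0$; setting up $W_0$ around this $A$ with $\theta_A=\theta_0$ then gives $\Psi(A)=(\p_0,\theta_0)\in U$.

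To construct $A$, complete $\theta_0$ to a basis $\theta_0,\bv_1,\dots,\bv_k$ of $V$, and pick $\bw_1,\dots,\bw_k\in\R^m$ completing a basis of ${H}$ to a basis of $\R^m$. Then take any $A$ with $A\theta_0=\p_0$ and $A\bv_i=\bw_i$; such $A$ exists because these vectors are linearly independent in $\R^n$. Since $\p_0\in{H}$, we get $AV+{H}\supseteq{H}+\Span(\bw_i)=\R^m$, so $A\in W$. Moreover $\rho(A\theta_0)=0$, so $\theta_0$ spans $\ell_A$, and with the sign chosen so that $\theta_A=\theta_0$ we have $\Psi(A)=(\p_0,\theta_0)\in U$.

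Therefore $\Psi^{-1}(U)$ is a nonempty open set contained in $\bigcup_{\br\in U}\bfL_{\br}$, which proves the lemma.
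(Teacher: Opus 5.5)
Your proof is correct, but it runs in the opposite direction from the paper's.

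The paper first normalizes $\theta_0=(1,0,\dots,0)$ and takes ${H}$ to be a coordinate subspace. It then parametrizes a neighbourhood of $(\p_0,\theta_0)$ by $(\bx,\theta)$ using the unipotent matrices $R_\theta$ and \eqref{scaling}. This writes the union as the image of an explicit real-analytic map $\Pi:\R^{mn}\to\R^{mn}$. Its Jacobian is block triangular with determinant $\pm\det A_2$, so the inverse function theorem gives interior points wherever $A_2$ is invertible.

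You work with the inverse map instead. On the open dense set $W$ from Lemma~\ref{generic dimension} (the same set as in Proposition~\ref{density_any_k}), each matrix $A$ determines its pair $\pm(A\theta_A,\theta_A)$ continuously via cofactors. Openness then comes from continuity alone. Nonemptiness comes from your explicit choice of $A$ with $A\theta_0=\p_0$ and $A\bv_i=\bw_i$.

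What your route buys:
\begin{itemize}
\item It needs no coordinates, rotations, tangent-space identification or differentiation.
\item It proves more. If you fix the sign globally by the cofactor formula, then $W\cap\bigcup_{\br\in U}\bfL_{\br}=\Psi^{-1}\big(U\cup(-U)\big)$ is open. So the union is open away from the nowhere dense set $M_{m,n}\smallsetminus W$, which is what the ``furthermore'' part of Proposition~\ref{density_any_k} needs.
\end{itemize}

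What the paper's route buys: its Jacobian computation gives smooth information, namely a local diffeomorphism with the explicit nondegeneracy criterion $\det A_2\neq 0$. It also does not use Lemma~\ref{generic dimension}.
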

Since the proof of Lemma \ref{opensetlemma} is rather long and technical, we will provide it separately in \S\ref{technicalsection}.}
%{\bf Proof of Proposition \ref{not_any_k}.}
We are now ready to formulate and prove the main results of this section.

\begin{proposition}\label{density_any_k}
     Fix $0 \leq k \leq \min(n-1, m)$. Let $\Phi$ be a $k$-dimensional subsphere of $\mathbb{S}^{n-1}$ and ${H} \subseteq \mathbb{R}^m$ an $(m-k)$-dimensional linear subspace. Then  the set%for almost every $A_0 \in M_{m,n}$ there exists an open set $W \in M_{m,n}$ containing $A_0$ such that 
     \begin{equation}\label{density_in_W}
         W = \big\{A \in M_{m,n}: \,\, \text{there exists a unique up to sign pair} \, (\p, \theta) \in {H} \times \Phi \,\, \text{such that} \,\, A \theta = \p\big\} 
     \end{equation}
     is open and dense in $M_{m,n}$.
In particular, the collection $\mathcal{L}_{{H} \times \Phi}$ is dense.

\noindent {Furthermore,}
%However: 
if ${D}$ is a proper closed symmetric subset of ${H} \times \Phi$, then the collection $\mathcal{L}_{{D}}$ is not dense.
\end{proposition}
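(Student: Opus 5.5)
Throughout, write $\Phi = F\cap\mathbb{S}^{n-1}$, where $F$ is a $(k+1)$-dimensional linear subspace of $\R^n$. For $(\p,\theta)\in\tmn$ we have $L_{\p,\theta}=\{A: A\theta=\p\}$. The plan is to identify $W$ exactly with the complement of the algebraic set from Lemma \ref{generic dimension}, taken with $s=k+1$ and $l=m-k$, so that $\min(s+l,m)=m$.

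\textbf{Step 1: $W$ is open and dense.} Fix $A\in\mr$ and consider the linear map
$$T_A: F\to \R^m/{H},\qquad x\mapsto Ax \bmod {H}.$$
\begin{itemize}
\item A pair $(\p,\theta)\in{H}\times\Phi$ with $A\theta=\p$ is the same as a unit vector $\theta\in\ker T_A$, since then $\p=A\theta$ is forced.
\item Hence $A\in W$ if and only if $\dim\ker T_A=1$.
\item Since $\dim F=k+1$ and $\dim \R^m/{H}=k$, the kernel is always at least $1$-dimensional. It is exactly $1$-dimensional if and only if $\operatorname{rank}T_A=k$.
\item Moreover $\operatorname{rank}T_A=\dim(AF+{H})-(m-k)$, so this happens if and only if $\dim(AF+{H})=m$.
\end{itemize}
So $W=\{A:\dim(AF+{H})=m\}$, which is open and dense by Lemma \ref{generic dimension}. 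The degenerate case $k=m$, ${H}=\{0\}$, is covered by the same computation. Since $W\subseteq\bigcup_{\br\in{H}\times\Phi}L_\br$, the collection $\mathcal{L}_{{H}\times\Phi}$ is dense.

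\textbf{Step 2: $\mathcal{L}_D$ is not dense.}
\begin{itemize}
\item Pick $\br_0\in({H}\times\Phi)\smallsetminus D$.
\item Because $D$ is closed and symmetric, we can choose a relatively open neighborhood $U$ of $\br_0$ in ${H}\times\Phi$ with $(U\cup -U)\cap D=\varnothing$.
\item By Lemma \ref{opensetlemma}, $\bigcup_{\br\in U}L_\br$ contains a nonempty open set $V_0$.
\item Since $W$ is open and dense, $V:=V_0\cap W$ is nonempty and open.
\end{itemize}
We claim $V$ misses every $L_{\bf d}$ with ${\bf d}\in D$. Take $A\in V$. Then $A\in L_\br$ for some $\br\in U$, and because $A\in W$ the only pairs in ${H}\times\Phi$ solving $A\theta=\p$ are $\pm\br$. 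If also $A\in L_{\bf d}$ with ${\bf d}\in D\subseteq{H}\times\Phi$, then ${\bf d}=\pm\br\in U\cup-U$, contradicting $(U\cup -U)\cap D=\varnothing$. Thus the union $\bigcup_{{\bf d}\in D}L_{\bf d}$ avoids the nonempty open set $V$, so $\mathcal{L}_D$ is not dense.

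The only step requiring care is the equivalence in Step 1 between uniqueness up to sign and the rank condition $\dim(AF+{H})=m$. The heavy lifting in Step 2 is entirely delegated to Lemma \ref{opensetlemma}.
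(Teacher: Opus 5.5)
Your proof is correct. Step~2 is the paper's own argument: Lemma~\ref{opensetlemma} combined with the uniqueness of the pair on $W$.

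Step~1 takes a slightly different and cleaner route.

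\textbf{The paper's Step~1.} The paper fixes bases $\q^i$ of $F$ and $\p^j$ of $H$ and looks for a nonzero tuple with $\sum\lambda_iA\q^i=\sum\mu_j\p^j$. It then splits into two cases:
\begin{itemize}
\item[(1)] $k+1\le m$: it imposes both $\dim AF=k+1$ and $\dim(AF+H)=m$, so that $\dim(AF\cap H)=1$;
\item[(2)] $k=m$: it imposes that every $k$ of the vectors $A\q^i$ are linearly independent.
\end{itemize}
Each of these generic conditions is strictly stronger than membership in $W$. For example, when $k+1\le m$, a matrix with $\dim AF=k$ and $AF\cap H=\{0\}$ still lies in $W$; its unique pair is $\pm(0,\theta)$ with $\theta$ spanning $\ker A\cap F$. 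So the paper really exhibits an open dense \emph{subset} of $W$. That is enough for density and for all later uses, but it leaves openness of $W$ itself implicit.

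\textbf{Your Step~1.} Your quotient map $T_A\colon F\to\R^m/H$ identifies $W$ exactly as $\{A:\dim(AF+H)=m\}$. This buys three things:
\begin{itemize}
\item it handles $k=m$ with no separate case;
\item it needs only a single application of Lemma~\ref{generic dimension}, with $s=k+1$ and $l=m-k$;
\item it proves the proposition as literally stated, including that $W$ itself is open.
\end{itemize}
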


%\comm{Need to write up the proof of the last part.}

\begin{proof}
Let $\q^1, \ldots, \q^{k+1}$ be a basis of $F = \Span_{\mathbb{R}} \Phi$, and let  $\p^1, \ldots, \p^{m-k}$ be
a basis of ${H}$. Let $\ba^i : = A \q^i \in \mathbb{R}^m$. 
    %The condition 
    {To prove} the proposition, %\eqref{density_in_W} %is equivalent to the existence of 
    it suffices to show the existence and uniqueness up to proportionality of a tuple 
\begin{equation}\label{nonzerotuple}
    0 \neq (\lambda_1, \ldots, \lambda_{k+1}, \mu_1, \ldots, \mu_{m-k}) \in \mathbb{R}^{m+1}
    \end{equation} %for which
    {such that}
\begin{equation}\label{lin_dependence}
    \lambda_1 \ba^1 + \ldots + \lambda_{k+1} \ba^{k+1} = \mu_1 \p^1 + \mu_{m-k} \p^{m-k}
    \end{equation}
for any $A$ in some open and dense set.
%\comm{This is formally not correct, \eqref{density_in_W} is about $W$ which you first need to exhibit.}
    Note that, by the linear independence of %the collection 
    $\{ \p^1, \ldots, \p^{m-k} \}$, condition \eqref{nonzerotuple} is equivalent to $0 \neq (\lambda_1, \ldots, \lambda_{k+1})$.
We consider two cases.
\medskip

\noindent{\bf Case 1.} Suppose $k + 1 \leq m$. By Lemma \ref{generic dimension}, %for every $A_0 \in W$ there exists an open set $W \in M_{m,n}$ containing $A_0$ such that
the set $W$ of $A \in M_{m,n}$ such that
$$
\dim(A F) = k+1 \,\,\,\,\,\,\,\, \text{and} \,\,\,\,\,\,\,\, \dim(AF + {H}) = m \,\, \Longrightarrow \,\, \dim(AF \cap {H}) = 1
$$
is open and dense. Since $\dim(A F) = k+1$, vectors $\{ \ba^i \}$ are linearly independent, and thus \eqref{lin_dependence} is a nonzero vector from a one-dimensional vector space $AF \cap {H}$, proving the desired existence and uniqueness up to scaling of the vector \eqref{nonzerotuple}.

\medskip
\noindent{\bf Case 2.} Suppose $k = m$, and thus $\dim H = m-k = 0$. Applying Lemma \ref{generic dimension} to subspaces $F_i: = \Span(\q^1, \ldots, \q^{i-1}, \q^{i+1}, \ldots, \q^{k+1})$, we deduce: 
%for almost every $A_0 \in M_{m,n}$ there exists an open set $W \in M_{m,n}$ containing $A_0$ such that 
there exists an open and dense set $W \subseteq M_{m,n}$  such that for any  $A \in W$ every $k$-element subcollection of $\{\ba^1, \ldots, \ba^{k+1}\}$ is linearly independent. It follows that there exists a unique up to proportionality %collection 
$(k+1)$-tuple $(\lambda_1, \ldots, \lambda_{k+1})$ such that 
$
\lambda_1 \ba^1 + \ldots + \lambda_{k+1} \ba^{k+1} = 0.
$

\medskip

{{For the proof of the "furthermore" part} let us fix a proper closed symmetric subset ${D}$ of $H \times \Phi$, and let $U \subseteq H \times \Phi$ be an open set such that $\left( U \cup -U \right) \times {D} = \varnothing.$ By Lemma \ref{opensetlemma}, the set
$$
V: = W \cap \Big( \bigcup\limits_{(\p, \theta) \in U} \bfL_{\p, \theta} \Big)
$$
has nonempty interior in $M_{m,n}$. %It remains to show 
{We claim} that $V \cap \Big( \bigcup\limits_{(\p, \theta) \in {D}} \bfL_{\p, \theta} \Big) = \varnothing$.
Indeed: let $A \in V$. Since $A \in W$, there exists a unique up to sign pair $(\p, \theta) \in H \times \Phi$ such that $A \in \mathcal{L}_{\p, \theta}$. Since $A \in \bigcup\limits_{(\p, \theta) \in U} \bfL_{\p, \theta} $, we know that $(\p, \theta) \in U \cap -U$. This observation completes the proof.}
\end{proof}

%\begin{remark}\label{continuous_on_halfsphere}
    %\begin{itemize}
        %\item 
 %       \rm
%        Let $k\ge 1$ and let $\Phi'$ be a $(k-1)$-dimensional subsphere of $\Phi$. Applying Lemma \ref{generic dimension} one more time,
        %to the same argument, 
%        {one can see} that the set 
%        $$
%        W' = \{ A \in W: A {\theta} \notin {H} \,\, \text{for ${\theta} \in \Phi'$} \}
%        $$
%    is still an open and dense set in $M_{m,n}$ (as an intersection of two open dense sets).
%    Furthermore, given  $A \in W'$ one can fix an open halfsphere $\Phi_1$ of $\Phi$ with boundary $\Phi'$; then the functions
%    $$
 %   {\theta}: \,\, W' \rightarrow \Phi_1 \,\,\,\, \text{and} \,\,\,\, {\bx}: \,\,W' \rightarrow {H}: \,\,\,\, A {\theta}(A) = {\bx}(A)
 %   $$
 %   are well defined. Since the vectors $\ba^i$ in \eqref{lin_dependence} depend continuously on $A$, the functions ${\theta}$ and ${\bx}$ are continuous on $W'$.
    %\end{itemize}
%\end{remark}

%For $x \in M_{m,n}(\mathbb{R})$, we define $R_x: = \{ (\q, \p) \in {\bf T}_{m,n}:  \,\,\, x \q = \p \}$. Let us note: 

%\begin{itemize}
%    \item The set $R_x$ is symmetric: $(\q, \p) \in R_x$ if and only if $(-\q, -\p) \in R_x$; 
%    \item $R_x$ is homeomorphic to ${\mathbb S}^{n-1}$. 
%\end{itemize}

%We begin with a convenient sufficient condition for density.

%\vskip+0.3cm

%The second 
{The next} statement provides a more specific condition   guaranteeing density; {it will be used in the proof of} Theorem \ref{ch_theorem}.
%in case when $\p$ is chosen from a set with maximal possible dimension. \comm{Sorry, I don't understand this description. More specific than what? and there is no $\p$ in the statement of the proposition which makes it confusing.}

%\comm{Merge with the above proposition}

%We will then show that Proposition \ref{ditect product dense} cannot be improved. {Namely,  if $\fQ \subseteq {\mathbb S}^{n-1}$ is such %a set that 
%    that %the convex hull of 
%{$0\notin\conv\left(\overline{\fQ}\right)$} and
%    as a subset of $\mathbb{R}^n$ contains zero
% ${H}$  an open half-space of ${\mathbb{R}^m}$, then one can find $\fR\in\tmn$ containing
%with respect to some linear hyperplane,
%and let $K = B_C(0) \cap \Pi$. %\comm{What is $C$? what is $B_C$, a ball with radius $C$? in which norm?}
    %\begin{enumerate}
       % \item[\rm(a)] \label{general_direct_dense} Suppose $\fR \subseteq {\bf T}_{m,n}$ is such %a set 
       % that $\fQ \times K \subseteq \overline{\fR}$. Then the collection $\mathcal{L}_{\fR}$ is dense in $B_C(0)$.
        %\subseteq M_{m,n}$. 
        %\item[\rm(b)] \label{part_direct_dense} 
%       ${H} \times \fQ$ with $\mathcal{L}_{\fR}$ not dense in $M_{m,n}$. More precisely:}
%\smallskip

\begin{proposition}\label{ditect product dense}
    Let ${H}$ be a half-space of $\mathbb{R}^m$ and {let} $\Theta \subseteq \mathbb{S}^{n-1}$. The collection $\mathcal{L}_{{H} \times \fQ}$ is dense if and only if
    $
    0 \in \conv(\overline{\Theta}).
    $
\end{proposition}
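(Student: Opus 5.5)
The plan is to turn density into a statement about a single linear functional and then apply a separation argument for convex sets. First I reduce to closed sets. By Lemma \ref{closure}, $\mathcal{L}_{{H}\times\Theta}$ is dense if and only if $\mathcal{L}_{\overline{{H}\times\Theta}}$ is dense, and $\overline{{H}\times\Theta}=\overline{{H}}\times\overline{\Theta}$. The condition $0\in\conv(\overline{\Theta})$ involves only $\overline\Theta$. So we may assume that ${H}$ is the closed half-space ${H}=\{\bx\in\R^m:\langle \bv,\bx\rangle\ge 0\}$ for some $\bv\ne 0$, and that $\Theta$ is closed, hence compact in $\mathbb{S}^{n-1}$.

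Next I describe the union explicitly. Since $\bfL_{\p,\theta}=\{A: A\theta=\p\}$, we have
$$\bigcup_{(\p,\theta)\in{H}\times\Theta}\bfL_{\p,\theta}=\{A\in\mr:\ A\theta\in{H}\text{ for some }\theta\in\Theta\}=\{A\in\mr:\ \max_{\theta\in\Theta}\langle A^{T}\bv,\theta\rangle\ge 0\}.$$
The function $A\mapsto\max_{\theta\in\Theta}\langle A^T\bv,\theta\rangle$ is continuous because $\Theta$ is compact, so this union is closed (if $\Theta=\varnothing$ the union is empty and $0\notin\conv(\varnothing)$, so the statement holds trivially). Consequently the collection is dense if and only if the union is all of $\mr$. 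The map $A\mapsto A^T\bv$ from $\mr$ to $\R^n$ is surjective because $\bv\neq 0$. Hence density is equivalent to
$$\text{for every } \bx\in\R^n \text{ there is } \theta\in\Theta \text{ with } \langle \bx,\theta\rangle\ge 0. \qquad (\ast)$$

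It remains to show that $(\ast)$ holds if and only if $0\in\conv(\Theta)$; this is the key step. Since $\Theta$ is compact in $\R^n$, its convex hull is compact and in particular closed, so no further closure is needed.

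Suppose first that $0=\sum_j t_j\theta_j$ is a convex combination with $\theta_j\in\Theta$. Then for any $\bx$ we get $\sum_j t_j\langle\bx,\theta_j\rangle=0$, so some term satisfies $\langle\bx,\theta_j\rangle\ge0$, and $(\ast)$ holds.

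Conversely, suppose $0\notin\conv(\Theta)$. Strict separation of the point $0$ from the compact convex set $\conv(\Theta)$ gives a vector $\bx$ with $\langle\bx,\theta\rangle<0$ for all $\theta\in\Theta$. Then every $A$ with $A^T\bv=\bx$ lies outside the union. In fact, by continuity and compactness, a whole open neighbourhood of such an $A$ lies outside the union, which exhibits non-density directly.

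I expect no serious obstacle. The only points needing care are the reduction to the closed half-space and the closed $\Theta$ via Lemma \ref{closure}, and using compactness of $\Theta$ both to make the union closed and to make $\conv(\Theta)$ closed so that strict separation applies.
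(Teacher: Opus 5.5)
Your proposal is correct and follows essentially the same route as the paper: reduce to closed $H$ via Lemma~\ref{closure}, use a vanishing positive combination $\sum\lambda_i\theta^i=0$ to force some $A\theta^i$ into $H$, and use strict separation to obtain matrices $A$ with $\bv^{\top}A\theta<0$ on all of $\Theta$. The only difference is in how the open set of excluded matrices is produced. You obtain it from the closedness of the union (via compactness of $\overline{\Theta}$), phrased through the surjective map $A\mapsto A^{\top}\bv$, whereas the paper writes an explicit positive lower bound on $\dist(A,\bfL_{\p,\theta})$.
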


\begin{proof}
{In view of Lemma \ref{closure}, we can assume that $H$ is a closed half-space and prove the statement for the %bigger 
{larger} collection $\mathcal{L}_{{H} \times \overline{\fQ}}$.
%= \overline{\fR}$ 
%is  closed. in this case, the %condition 
%{inclusion} $\overline{H} \times \overline{{\fQ}} \subseteq {\fR}$ holds.
    
    First suppose $0 \in \conv(\overline{\Theta})$.} We fix $A \in M_{m,n}$ and show that there {exist} $\theta \in \fQ$ and $ \p \in \overline{{H}}$ such that $A \in \bfL_{\p, \theta}$.
   By the convex hull condition, there exist $\theta^{1}, \ldots, \theta^{k} \in \overline{\fQ}$ and positive real scalars $\lambda_1, \ldots, \lambda_k$ such that 
\begin{equation}\label{convex}
    \sum\limits_{i=1}^k \lambda_i \theta^{i} = 0 \,\,\,\, \Longrightarrow  \,\,\,\, \sum\limits_{i=1}^k \lambda_i \cdot A \theta^{i} = 0.
    \end{equation}
    Suppose that {$\p^{i} := A \theta^{i} \notin   {{H}} $ for all} $i = 1, \ldots, k$. Since {the complement of  ${{H}}$ is an} open half-space, the %whole 
    {cone generated by $\p^{1},\dots,\p^{k}$} is {contained in} $\left( {{H}} \right)^c$, contradicting \eqref{convex}. Thus there exists $i$
    such that $A \theta^{i} = \p\in {{H}}$, and {hence} $A \in \bigcup\limits_{{\bfL} \in \mathcal{L}_{H \times \overline{\Theta}}} \bfL$, which {implies} the desired statement.

    Now suppose $0 \notin \conv(\overline{\Theta})$. By the Separating Hyperplane Theorem (see \cite[Chapter 2.5.1]{BV04} for reference) the %re exists a nonempty 
    open 
    subset { $$U := \{ \alpha \in \mathbb{R}^{n}: \,\,\, \inf\limits_{\theta \in \Theta} \alpha^{\top} \theta < 0 \}$$ of $\R^n$ is nonempty}.
    Suppose $\bv$ is a unit vector which defines ${H}$ by 
    $$
    {H} = \{ {\bf x} \in \mathbb{R}^m: \,\,\, \bv^{\top} { \bf x} \geq 0 \}.
    $$
    Let $W \subseteq M_{m,n}$ be an open set defined by $W = \left\{ A \in M_{m,n}: \,\,\,  \left( \bv^{\top} A \right)^{\top} \in U \right\}$. %It is easy to show 
    %\comm{Sorry, why is it open? In fact I don't think it is...} 
    {We claim} that $W \cap \bigcup\limits_{(\p, \theta) \in {H} \times \fQ} \bfL_{\p, \theta} = \varnothing$. Indeed: for any {$A\in W$}, $\theta \in \Theta$ and ${\p} \in {H}$,
    $$
    \begin{aligned}
    \dist(A, \bfL_{\p, \theta}) &= \|A \theta - \p\| = \| \bv^{\top}\| \cdot\| A \theta - \p\| \geq  \|\bv^{\top} (A \theta - \p)\| \\ &= \|\alpha^{\top} \theta - \bv^{\top} \p \|  \stackunder[1pt]{{}\geq{}}{\scriptstyle  \text{since} \  \alpha^{\top} \theta < 0, \, \bv^{\top} \p \geq 0}  \|\alpha^{\top} \theta\| \geq C(A) > 0,
    \end{aligned}
    $$
    where $\alpha = \left( \bv^{\top} A \right)^{\top} \in U$ and $C(A) = -\inf\limits_{\theta \in \Theta} \alpha^{\top} \theta > 0$. Since $C(A)$ is independent %on 
    {of} {the} choice of $\theta \in \Theta$ and $\p \in {H}$, this completes the proof.
\end{proof}

\begin{corollary}\label{sections_density}
    If ${\theta} \in {\mathbb S}^{n-1}$ is such that $\mathbb{R}^m \times \{ {\theta} \} \subseteq \overline{\fR}$, then the collection $\mathcal{L}_{\fR}$ is dense in $M_{m,n}$.
\end{corollary}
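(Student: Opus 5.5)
The plan is to reduce Corollary \ref{sections_density} to Proposition \ref{ditect product dense}, applied to a single direction together with its antipode, and then use Lemma \ref{closure} to pass from $\overline{\fR}$ back to $\fR$.

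First, by Lemma \ref{closure}, $\mathcal{L}_{\fR}$ is dense if and only if $\mathcal{L}_{\overline{\fR}}$ is dense. Since $\mathbb{R}^m\times\{\theta\}\subseteq\overline{\fR}$, it suffices to show that $\mathcal{L}_{\mathbb{R}^m\times\{\theta\}}$ is dense. Density of a subcollection implies density of the bigger collection, since the union only grows.

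Next, use the symmetry \eqref{scaling}: $L_{\p,\theta}=L_{-\p,-\theta}$. Hence
$$\mathcal{L}_{\mathbb{R}^m\times\{\theta\}}=\mathcal{L}_{\mathbb{R}^m\times\{\theta,-\theta\}}.$$
The right-hand side contains $\mathcal{L}_{H\times\{\theta,-\theta\}}$ for any closed half-space $H$ of $\mathbb{R}^m$. Set $\Theta=\{\theta,-\theta\}$. This set is closed, and $0=\tfrac12\theta+\tfrac12(-\theta)\in\conv(\Theta)$. So Proposition \ref{ditect product dense} gives that $\mathcal{L}_{H\times\Theta}$ is dense, and the claim follows.

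Alternatively, one can avoid half-spaces altogether. For every $A\in M_{m,n}$, take $\p=A\theta\in\mathbb{R}^m$; then $A\in L_{\p,\theta}$. So the union over $\mathbb{R}^m\times\{\theta\}$ is all of $M_{m,n}$, which is a one-line argument.

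There is no real obstacle. The only point needing care is the bookkeeping between $\fR$, its closure, and the symmetric reparametrization, and Lemma \ref{closure} handles exactly that. I would present the direct argument ($A\in L_{A\theta,\theta}$) combined with Lemma \ref{closure}.
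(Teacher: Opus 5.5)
Your proposal is correct. Your first route is essentially the paper's proof: the paper also passes to $\Theta=\{\theta,-\theta\}$, notes that $\overline{\fR}\cup(-\overline{\fR})\supseteq \mathbb{R}^m\times\Theta\supseteq H\times\Theta$ for a half-space $H$, and invokes Proposition \ref{ditect product dense} (using $0\in\conv(\Theta)$).

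The direct argument you say you would present is different and more elementary. You observe that $A\in L_{A\theta,\theta}$ for every $A\in M_{m,n}$, so $\bigcup_{\br\in\mathbb{R}^m\times\{\theta\}}L_{\br}=M_{m,n}$ outright. In other words, the parallel affine subspaces $\{L_{\p,\theta}\}_{\p\in\mathbb{R}^m}$ partition $M_{m,n}$. Lemma \ref{closure} then transfers density from $\mathcal{L}_{\overline{\fR}}$ to $\mathcal{L}_{\fR}$. This route needs neither the symmetrization nor the convex-hull/separating-hyperplane machinery.

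Unwinding the paper's route shows it does the same thing: with $\Theta=\{\pm\theta\}$, the first half of the proof of Proposition \ref{ditect product dense} just observes that one of $\pm A\theta$ lies in the closed half-space $H$. The paper's phrasing mainly buys uniformity with the half-space framework later used for Theorem \ref{ch_theorem}. Yours is shorter and self-contained.
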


\begin{proof}
    {Consider $\fQ = \{ {\theta}, -{\theta}  \}$; then %$\overline{\fR} \cup - \overline{\fR} \supseteq \fQ \times \mathbb{R}^m$ ${H}$ be an arbitrary open half-space of $\mathbb{R}^m$. Then 
    we have  
    $$\overline{\fR} \cup (- \overline{\fR}) \supseteq  \mathbb{R}^m\times\fQ \supseteq {H}\times \fQ $$
    for any half-space ${H}$ of $\mathbb{R}^m$. Since the convex hull of $\fQ$ contains {the origin}, it follows from Proposition~\ref{ditect product dense} %(b) 
    that} $\mathcal{L}_{\overline{\fR} \cup (- \overline{\fR})}$ (and thus $\mathcal{L}_{\fR}$) is dense.
\end{proof}

\section{Some general conditions {implying} total density}\label{general_conditions_section}

In this section we provide some general statements describing %the 
useful sufficient conditions for %the 
{a} collection $\mathcal{L}_{\fR}$ to be totally dense.
We start with the following simple lemma:

\begin{lemma}\label{anglelemma}
    For any $\delta > 0$ there exists $C(\delta) > 0$ with the following property.
    Suppose ${\theta}_1, {\theta}_2 \in {\mathbb S}^{n-1}$ %satisfy the condition 
    {are such that} 
   % $dist({\theta}_1, \pm {\theta}_2) > \delta$. 
    \eq{delta}{\dist({\theta}_1, \pm {\theta}_2)  > \delta.} 
    Fix $\varepsilon > 0$, %and 
    let $\p_1, \p_2 \in \mathbb{R}^m$, and take ${Y}_1 \in \bfL_{\p_1, {\theta}_1}$ and ${Y}_2 \in \bfL_{\p_2, {\theta}_2}$ 
    %be such that 
    %\begin{itemize}
    %    \item ${Y}_i \in \bfL_{{\theta}_i, \p_i}$, $i = 1, 2$; and
   %     \item 
   with %\eq{epsilon}{
   $\|{Y}_1 - {Y}_2\| < \varepsilon$.
   % \end{itemize}
    Then there exists ${Y} \in \bfL_{\p_1, {\theta}_1} \cap \bfL_{\p_2, {\theta}_2}$ such that $$\|{Y} - {Y}_i\| < C(\delta) \varepsilon, \ i=1, 2.$$
\end{lemma}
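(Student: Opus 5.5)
We look for $Y$ as a correction of $Y_1$ supported on the plane $\Span(\theta_1,\theta_2)$. Concretely, set $Y = Y_1 + B$, where $B\in M_{m,n}$ satisfies $B\theta_1 = 0$ and $B\theta_2 = \p_2 - Y_1\theta_2$. Then $Y\theta_1 = Y_1\theta_1 = \p_1$ and $Y\theta_2 = \p_2$, so $Y\in \bfL_{\p_1,\theta_1}\cap\bfL_{\p_2,\theta_2}$. The problem reduces to choosing $B$ of norm $O_\delta(\varepsilon)$.

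The first step is to bound the residual $\br := \p_2 - Y_1\theta_2$. Since $Y_2\theta_2 = \p_2$,
$$\br = (Y_2 - Y_1)\theta_2, \qquad\text{so}\qquad \|\br\|\le \|Y_2-Y_1\|\cdot\|\theta_2\| <\varepsilon .$$
Here we use that the Euclidean norm on $\R^m$ is dominated by the operator norm of $Y_2-Y_1$, which in turn is dominated by its Frobenius norm (the norm on $M_{m,n}$). Any other fixed choice of norm on $M_{m,n}$ only changes constants.

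The second step constructs $B$. Let $\theta_2^\perp := \theta_2 - \langle\theta_2,\theta_1\rangle\theta_1$ be the component of $\theta_2$ orthogonal to $\theta_1$, and put
$$B := \frac{\br\,(\theta_2^\perp)^{\top}}{\|\theta_2^\perp\|^2}.$$
Then $B\theta_1=0$, because $\theta_2^\perp\perp\theta_1$. Also $B\theta_2 = \br\,\langle\theta_2^\perp,\theta_2\rangle/\|\theta_2^\perp\|^2 = \br$, since $\langle\theta_2^\perp,\theta_2\rangle = \|\theta_2^\perp\|^2$. Its norm is $\|B\| = \|\br\|/\|\theta_2^\perp\|$.

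The only real point, and the step I expect to need care, is the lower bound on $\|\theta_2^\perp\|$ in terms of $\delta$. We have $\|\theta_2^\perp\|^2 = 1-\langle\theta_1,\theta_2\rangle^2$. By \equ{delta},
$$\|\theta_1\mp\theta_2\|^2 = 2\mp 2\langle\theta_1,\theta_2\rangle>\delta^2 .$$
Hence $|\langle\theta_1,\theta_2\rangle| < 1-\delta^2/2$, which gives $\|\theta_2^\perp\|^2 > 1-(1-\delta^2/2)^2 \ge \delta^2/2 - \delta^4/4$. This lower bound is positive as long as $\delta<2$. For $\delta\ge2$ the hypothesis is vacuous, since $\dist(\theta_1,\pm\theta_2)\le 2$ always and both distances would have to exceed $2$; in that case $C(\delta)$ can be set arbitrarily.

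Putting this together, $\|Y-Y_1\| = \|B\| < \varepsilon/c(\delta)$ with $c(\delta) := \sqrt{\delta^2/2-\delta^4/4}$. Then $\|Y-Y_2\|\le \|Y-Y_1\|+\|Y_1-Y_2\| < (1+1/c(\delta))\varepsilon$. Therefore $C(\delta) := 1+1/c(\delta)$ works for both $i=1,2$, and it depends only on $\delta$.
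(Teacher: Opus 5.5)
Your proof is correct and is the paper's argument in algebraic form: each row of your $Y=Y_1+B$ lies in the affine plane through the corresponding row of $Y_1$ parallel to $\Span(\theta_1,\theta_2)$, so it is exactly the paper's intersection point $\ell_1\cap\ell_2$, and your $\|\br\|$ and $\|\theta_2^\perp\|$ play the roles of the paper's $\|\by_1-\by_2'\|$ and $\sin\gamma$. The rank-one form $B=\br(\theta_2^\perp)^{\top}/\|\theta_2^\perp\|^2$ just lets you handle all $m$ rows at once instead of reducing to $m=1$ via \eqref{direct product}. One small slip: $\delta^2/2-\delta^4/4$ is positive only for $\delta<\sqrt2$, not for $\delta<2$, so your $c(\delta)$ is undefined on $[\sqrt2,2)$. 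This is harmless, since $\|\theta_1-\theta_2\|^2+\|\theta_1+\theta_2\|^2=4$ makes the hypothesis vacuous for $\delta\ge\sqrt2$; alternatively, use the exact value $1-(1-\delta^2/2)^2=\delta^2(1-\delta^2/4)$.
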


%{We remark that the above lemma will be applied with $|\cdot|$ being the supremum norm on $\R^n$ and on $\mr$, and the distance  on ${\mathbb S}^{n-1}$  being induced by the Euclidean norm. However it is clear that the statement is independent of the choice of the norm, with the function $\delta\mapsto C(\delta)$ being norm-dependent. Thus in the course of the proof it will be safe to switch to the Euclidean norm}.

\begin{proof}%[Proof of Lemma \ref{anglelemma}]
%One can also easily see that if $\p = (p_1, \ldots, p_m)$, then
%\begin{equation}\label{direct product}
%\bfL_{\p,{\theta}} = \bfL_{p_1, {\theta}} \times \ldots \times \bfL_{p_m, {\theta}}.
%\end{equation}
    In view of \eqref{direct product} and the %fact that we are working in 
    {equivalence between the supremum norm and the Euclidean norm,} it is enough to prove the statement in the case $m = 1$, {that is, for rational affine hyperplanes in $\R^n$}.  %Now, suppose ${\theta}_i = (q_1^i, \ldots, q_n^i)$ 
    {That is, we are given ${\theta}_1, {\theta}_2 \in {\mathbb S}^{n-1}$ 
    satisfying
    \equ{delta},  $p_1, p_2 \in \mathbb{R}$, and two vectors
     ${\by}_i \in \bfL_{p_i, {\theta}_i}\subset \R^n$ 
    %be such that 
    %\begin{itemize}
    %    \item ${\by}_i \in \bfL_{{\theta}_i, \p_i}$, $i = 1, 2$; and
   %     \item 
  % satisfying \equ{epsilon}
  {with %\eq{epsilon}{
   $\|{\by}_1 - {\by}_2\| < \varepsilon$}.}

   % From now on in this proof we 
   % will work {with the} Euclidean norm $| \cdot |_2$. 
    Note that ${\theta}_i$ are unit vectors orthogonal to $\bfL_{p_i, {\theta}_i}$. Let ${H}$ be %a 2-dimensional 
    {the} affine plane in $\mathbb{R}^n$ % = M_{1,n}$ 
    spanned by ${\theta}_1, {\theta}_2$ and containing ${\by}_1$; let $\ell_i = \bfL_{p_i, {\theta}_i} \cap {H}$ %$\ell_2 = \bfL_{{\theta}_2, \p_2} \cap {H}$ 
    and ${\by} = \ell_1 \cap \ell_2$. {Also} let ${\by_2'}% = {\by}_1 + k \cdot {\theta}_2
    $ be the orthogonal projection of ${\by}_1$ on $\ell_2$ {(equivalently, on $\bfL_{p_2, {\theta}_2}$)}; then 
   { ${\|{\by}_1 - {\by_2'}\| %\leq |{\by}_1 - {\by}_2|_2 %\leq \sqrt{n} |{\by}_1 - {\by_2'}| 
   % \leq \sqrt{n} |{\by}_1 - {\by}_2| < \sqrt{n}
   < \varepsilon}$ as well}.
    Suppose ${\gamma}$ is the angle between $\ell_1$ and $\ell_2$; then ${\gamma} > \delta$. Therefore {$\|{\by} - {\by}_1\| = \frac{\|{\by}_1 - {\by_2'}\|}{\sin {\gamma}} < \frac{
    %\sqrt{n} 
    \varepsilon}{\sin \delta}$}, %\comm{(maybe a picture here?)} 
    
  \begin{figure}[h!]
  \begin{center}
\begin{tikzpicture}[>=stealth, thick]

    % Define Coordinates
    % X is the intersection point at origin
    \coordinate (X) at (0,0);
    
    % Define the lines
    % Horizontal line (now l1, was l2)
    % Extends from left (-5) to right (1.5)
    \coordinate (L_horiz_left) at (-5, 0);
    \coordinate (L_horiz_right) at (1.5, 0);
    
    % Slanted line (now l2, was l1)
    % Extends from top-left (145 degrees) to bottom-right (-35 degrees)
    \coordinate (L_slant_top) at (145:5);
    \coordinate (L_slant_bot) at (-35:1.5);
    
    \draw (L_slant_top) -- (L_slant_bot) node[above, at start] {\Large $\ell_2$};
    \draw (L_horiz_left) -- (L_horiz_right) node[below, at start] {\Large $\ell_1$};

    % Define points on the lines
    % X1 is a point on horizontal line (l1)
    \coordinate (X1) at (-2.5, 0);
    
    % X2' is the orthogonal projection of X1 onto slanted line (l2)
    \coordinate (X2p) at ($(X)!(X1)!(L_slant_top)$);

    % Draw the projection segment
    \draw (X1) -- (X2p);

    % Draw the Right Angle Marker at X2'
    % Using coordinate calculations to draw a square perpendicular to the slanted line
    \draw[thin] 
        ($(X2p)!0.25cm!(X)$) -- 
        ($($(X2p)!0.25cm!(X)$) + ($(X2p)!0.25cm!(X1)$) - (X2p)$) -- 
        ($(X2p)!0.25cm!(X1)$);

    % Draw and Label Points
    \fill (X) circle (2pt);
    % Positioning X: 'below' might hit the tail of the slanted line.
    % 'below left' moves it away from the intersection slightly into clear space.
    \node at (X) [below left=7pt] {\Large ${\by}$};
    
    \fill (X1) circle (2pt) node[below=5pt] {\Large ${\by}_1$};
    \fill (X2p) circle (2pt) node[above=5pt, xshift=3pt] {\Large ${\by_2'}$};

    % Draw the Angle theta
    % Angle between slanted line (top part) and horizontal line (left part)
    % Using 'pic text' is safer than quotes for preventing compilation errors
    \pic[draw, pic text={\Large ${\gamma}$}, angle radius=0.8cm, angle eccentricity=1.4] {angle = L_slant_top--X--L_horiz_left};

\end{tikzpicture}

\caption{Configuration {in} the plane $H$}\label{planeH}

\end{center}

\end{figure}
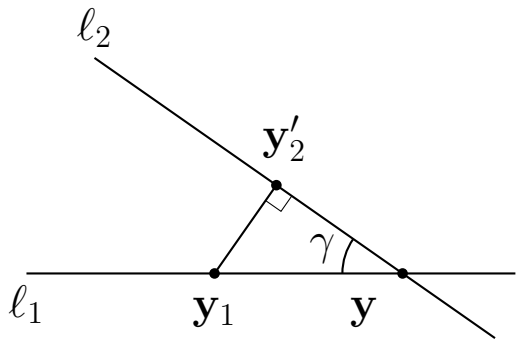

  \medskip  
    
    \noindent and thus {$\|{\by} - {\by}_1\| < \frac{
    %\sqrt{n}
    \varepsilon}{\sin \delta} $ {and $\|{\by} - {\by}_2\| < \left(\frac{%\sqrt{n}
    1}{\sin \delta} + 1\right) \varepsilon$}.}
\end{proof}

The next lemma is {a useful tool for studying total density}.

\begin{lemma}\label{usefullemma}
    Let $\fR \subseteq {\bf T}_{m,n}$. Fix $\br_0 = (\p_0, {\theta}_0) \in {\bf T}_{m,n}$,
    %and an open subset $W$ of $M_{m,n}$ intersecting ${\bfL}$.
   % Suppose there exists 
   {$\varepsilon>0$  %\varepsilon(W) > 0$ 
    and an open subset $U \subseteq M_{m,n}$} intersecting {$L:=\bfL_{\br_0}$} such that \eq{densityinU}{\text{the collection }\mathcal{L}_{\fR \smallsetminus \left(  B_{\varepsilon}(\br_0) \cup B_{\varepsilon}(-\br_0)\right)} \text{ is dense in }U.} Then \begin{equation}\label{local_dens_condition}
 \text{there exists a  {nonempty} %convex  
    open %set
    $\Omega \subseteq U$ such that }\Omega \subseteq \overline{\bigcup\limits_{\br \in \fR: \,\,\bfL_{\br} \cap {\bfL} \cap \Omega \neq \varnothing} {\bfL_{\br}}}.
 \end{equation}  
\end{lemma}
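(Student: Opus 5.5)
The plan is to shrink $U$ to a small ball around a point of $L$ and use Lemma \ref{anglelemma} to transport points of $L$ onto nearby subspaces $\bfL_{\br}$ with $\br$ far from $\pm\br_0$.

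First, the setup. Fix $Y_0\in L\cap U$ and a small radius $\rho>0$ with $B_{3\rho}(Y_0)\subseteq U$, and set $\Omega:=B_\rho(Y_0)$. The aim is to show that every $X\in\Omega$ lies in the closure of the union of those $\bfL_{\br}$, $\br\in\fR$, which meet $L\cap\Omega$.

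The key geometric input is that for $\br=(\p,\theta)\in\fR\smallsetminus(B_\varepsilon(\br_0)\cup B_\varepsilon(-\br_0))$ passing near $Y_0$, the direction $\theta$ is separated from $\pm\theta_0$. Indeed, if $\dist(\theta,\pm\theta_0)$ were tiny, say $\theta$ close to $\theta_0$, then $\p$ would be close to $\p_0$. To see this, take $Y\in\bfL_{\br}$ close to $Y_0$; then $\p=Y\theta\approx Y_0\theta_0=\p_0$, with an error bounded by $\|Y-Y_0\|+\|Y_0\|\,\|\theta-\theta_0\|$, and $\|Y_0\|$ is bounded on $U$. This would give $\br\in B_\varepsilon(\br_0)$, a contradiction. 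So there is $\delta>0$, depending on $\varepsilon$, $\|Y_0\|$ and $\rho$, such that every such $\br$ whose $\bfL_{\br}$ meets $B_{3\rho}(Y_0)$ satisfies $\dist(\theta,\pm\theta_0)>\delta$, provided $\rho$ is small relative to $\varepsilon$. Choosing $\rho$ small enough in terms of $\varepsilon$ and $\|Y_0\|$ is where care is needed. I expect this quantitative separation step to be the main obstacle, because the constant in Lemma \ref{anglelemma} must stay controlled while the smallness of $\rho$ is being chosen.

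Next comes the main step. Given $X\in\Omega$ and $\eta>0$, use density \equ{densityinU} to pick $\br\in\fR\smallsetminus(B_\varepsilon(\br_0)\cup B_\varepsilon(-\br_0))$ and $X'\in\bfL_{\br}$ with $\|X'-X\|<\eta$. We then need $\bfL_{\br}\cap L\cap\Omega\neq\varnothing$, and this is not automatic: $X$ may be at distance up to $\rho$ from $L$. To handle this, apply Lemma \ref{anglelemma} to $Y_1=X'\in\bfL_{\br}$ and to a point $Y_2$ of $L$ within distance about $\rho+\eta$ of $X'$. Such a $Y_2$ exists, for instance $Y_0$ itself, since $\|X'-Y_0\|<\rho+\eta$. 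The lemma yields a point of $\bfL_{\br}\cap L$ within $C(\delta)(\rho+\eta)$ of $Y_0$. This point need not lie in $\Omega=B_\rho(Y_0)$ when $C(\delta)>1$, so $\Omega$ must be redefined accordingly. Take $\Omega:=B_{\rho'}(Y_0)$ with $\rho'$ much smaller than $\rho$, while still requiring the intersection point to land in a ball of radius $\rho$. The cleanest arrangement is to fix $\delta$ first, then take $\Omega=B_{r}(Y_0)$ with $r(C(\delta)+1)<\rho$. The intersection point then lies within $C(\delta)(r+\eta)<\rho$ of $Y_0$, and we need it inside $\Omega$ itself.

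This exposes a mismatch between $r$ and $\rho$. To fix it, I will instead replace $L\cap\Omega$ by $L\cap B_\rho(Y_0)$ and define the set appearing in the conclusion with $\Omega$ equal to the larger ball. Equivalently, I will prove the stronger statement that the small ball $B_r(Y_0)$ is contained in the closure of the union of the $\bfL_{\br}$ meeting $L\cap B_r(Y_0)$. For this, choose $Y_2$ as the orthogonal projection onto $L$ of a point near $X'$. Since $X$ need not be close to $L$, I will use Lemma \ref{anglelemma} with $\varepsilon$ replaced by $\dist(X',L)\le r+\eta$. The resulting point of $\bfL_{\br}\cap L$ is then within $(C(\delta)+1)(r+\eta)$ of $Y_0$.

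Finally, to get points inside $\Omega$ itself, I will take $\Omega:=B_{r}(Y_0)$ and apply the argument above only to $X\in B_{r/(2(C(\delta)+1))}(Y_0)$. This shows that the smaller ball $\Omega':=B_{r/(2(C(\delta)+1))}(Y_0)$ is covered by the closure of the union of the $\bfL_{\br}$ meeting $L\cap\Omega$. Since $\Omega'\subseteq\Omega$, each such $\bfL_{\br}$ also meets $L\cap\Omega'$, so renaming $\Omega'$ as $\Omega$ gives the conclusion \eqref{local_dens_condition}, with $\delta$ fixed beforehand by the separation step.
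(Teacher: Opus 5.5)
Your estimate is correct as far as it goes, but the last step is invalid. What you actually prove is
\[
B_{r'}(Y_0)\subseteq\overline{\bigcup\big\{\bfL_{\br}:\ \br\in\fR,\ \bfL_{\br}\cap L\cap B_r(Y_0)\neq\varnothing\big\}},\qquad r'=\frac{r}{2(C(\delta)+1)}.
\]
Here the ball in the indexing condition is strictly larger than the one on the left. The sentence ``since $\Omega'\subseteq\Omega$, each such $\bfL_{\br}$ also meets $L\cap\Omega'$'' runs the implication backwards: meeting $L\cap\Omega'$ implies meeting $L\cap\Omega$, not conversely. So you never obtain a single $\Omega$ that appears on both sides of \eqref{local_dens_condition}.

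This is not a bookkeeping slip, because no ball can satisfy the lemma's conclusion in general. Take $m=1$, $n=2$, $\br_0=(0,\theta_0)$ with $\theta_0=(0,1)$, so that $L$ is the first coordinate axis. Take $\theta_1=(1,1)/\sqrt2$, $\fR=\mathbb{Q}\times\{\theta_1\}$ and $\varepsilon<\dist(\theta_1,\pm\theta_0)$; the density hypothesis then holds for every $U$. Every $\bfL_{\br}$ with $\br\in\fR$ is a level set of $f(A)=A\theta_1$. Hence, for any ball $B$ of radius $r$, the union of those $\bfL_{\br}$ meeting $L\cap B$ lies in $f^{-1}\big(f(L\cap B)\big)$. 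But $f(L\cap B)$ is an interval of length at most $\sqrt2\,r$, while $f(B)$ has length $2r$, so $B$ is not contained in the closure. Geometrically, slanted lines through $B$ near the ends of the segment $L\cap B$ hit $L$ outside $B$.

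The missing idea is to shape $\Omega$ so that the point produced by Lemma \ref{anglelemma} is forced back into $L\cap\Omega$. This means taking a neighbourhood of $L\cap B_\sigma(Y_0)$ whose thickness tapers linearly to zero at the boundary, with slope $1/C(\delta)$. The paper does exactly this. Let $L'$ be the orthogonal complement of the direction space of $L$, take $\sigma$ small enough that your separation step applies, and set
\[
\Omega=\big\{Y_1+Y_2:\ Y_1\in L,\ \|Y_1-Y_0\|<\sigma,\ Y_2\in L',\ \|Y_2\|<(\sigma-\|Y_1-Y_0\|)/C(\delta)\big\}.
\]
Then $\Omega$ is open and $L\cap\Omega=L\cap B_\sigma(Y_0)$. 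If $X=Y_1+Y_2\in\Omega\cap\bfL_{\br}$, Lemma \ref{anglelemma} applied to $X$ and $Y_1$ gives $Z\in\bfL_{\br}\cap L$ with $\|Z-Y_1\|<\sigma-\|Y_1-Y_0\|$, hence $Z\in L\cap\Omega$. Finding such an $X$ in every open $V\subseteq\Omega$ by density then yields \eqref{local_dens_condition}.

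Your separation step is fine; it is the paper's Case 2 computation. The two-ball statement you did prove would in fact suffice where the lemma is used, namely to check total density in Proposition \ref{usefulforproducts'}s proof. There one only needs some nonempty open set inside the closure of the union of the $\bfL_{\br}$ meeting $L\cap U$. It does not, however, prove the lemma as stated.
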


%{Note that the convexity of $\Omega$ is not needed for our argument and is included as part of the conclusions of the lemma for later applications.}
%it will be useful for additional 
%{
% We don't need convexity for total density, but we need it for local; in either case, it comes for free (we just need to check that the resulting set is convex). Since adding this (useless for this text) convexity condition allows us to directly use this lemma for local density without changes, I would suggest to leave it here in some form.}

\begin{proof}[Proof of Lemma \ref{usefullemma}]
    Let {$\rho_0 \geq 1$ be such %a constant 
    that $\|{Y}\| < \rho_0$} for any ${Y} \in {U}$. 
   Replacing $U$ by its subset, we can assume that $U$ is a $\sigma$-ball centered at $Y_0 \in {\bfL}$ {for some $0 < \sigma < \frac{\varepsilon}{2}$}.
    %is some parameter.
    
    Denote the orthogonal complement to $\bfL_{{\bf 0},{\theta}_0}$ by ${L'}$. Let $\rho$ be a function  defined %on ${\bfL}$, 
    by 
    $$
    \rho(Y) = \frac{\sigma - \|Y-Y_0\|}{C \left({\varepsilon}/{2 {\rho_0}} \right)}, \,\,\,\,\,\, \text{where } {C(\cdot}) \, \text{is {as} in Lemma \ref{anglelemma}.}
    $$ 
   % Let $U \subseteq W$ be an open set from the collection $\mathcal{U}$ intersecting ${\bfL}$; we can shrink $U$ if necessary and assume that $U$ is a ball of radius $\sigma$ centered at $y \in {\bfL}$; clearly, the same $\delta_U$ works for the re-defined $U$. 
   We define $B(Y): = Y + B_{\rho(Y)}(0) \cap {L'}$ and
   $$\Omega: = \bigcup\limits_{{Y} \in {\bfL}: \,\,\, \|Y - Y_0\| < \sigma} B(Y).$$
  It remains to show that $\Omega \subseteq U$ satisfies the conditions of {the lemma}. 

  \begin{itemize}
      \item $\Omega$ is open by {the} continuity of $\rho$. Indeed, let $Y=Y_1 + Y_2$, where $Y_1 \in {\bfL}$ and $Y_2 \in {L'}$ (this {decomposition} is unique). If $Y \in \Omega$, then $\|Y_1 - Y_0\| < \sigma$ and $\|Y_2\| < \rho(\|Y_1\|)$. By continuity of the function $\rho$ there exists $\delta > 0$ such that
      \begin{equation}\label{defdelta}
      \|Y_1' - Y_1\| <\delta \,\,\,\,\,\, \Longrightarrow \,\,\,\,\,\, \rho( Y_1') - \delta > \|Y_2\|,
      \end{equation}
      where $Y_1' \in {\bfL}$. It can be shown that $B_{\delta}(Y) \subseteq \Omega$: if $Y' = Y_1'+Y_2' \in B_{\delta}(Y)$, then ${\|Y_1' - Y_1\|} < \delta$ and thus, by \eqref{defdelta},
      $$
      \|Y_2'\| < \|Y_2\| + \delta < \rho(Y_1'),
      $$
      which by definition means that $Y' \in \Omega$.

  %    \item $\Omega$ is convex by construction. \comm{It's annoying that matrices are sometimes uppercase and sometimes lowercase, can we choose?} {Indeed,} let $v = v_1 + v_2$ and $u = u_1 + u_2$, where $v_1, u_1 \in {\bfL}$ and $v_2, u_2 \in \bfL_{\perp}$. We show that $w = tv + (1-t)u \in \Omega$, where $t \in [0,1]$ is arbitrary. It suffices to show that 
 %     $$
 %     |tv_2 + (1-t)u_2| < \rho\left( tv_1 + (1-t)u_1 \right).
 %     $$
%
 %     It directly follows as
  %    $$
 %     |tv_2 + (1-t)u_2| \leq t|v_2| + (1-t)|u_2|< t \rho(v_1) + (1-t) \rho(u_1)=
  %    $$
  %    $$
  %    \frac{\sigma - t|v_1-y_0| - (1-t)|u_1 - y_0|}{C \left({\varepsilon}/{2 {\rho_0}} \right)} \leq \frac{\sigma - | tv_1 + (1-t)u_1-y_0|}{C \left({\varepsilon}/{2 {\rho_0}} \right)} = \rho\left( tv_1 + (1-t)u_1 \right).
    %  $$
      \item Finally we check %the condition 
      {that the inclusion in \eqref{local_dens_condition} holds}. Let $V \subseteq \Omega$ be an open subset. {In view of %the density assumption,
      \equ{densityinU}} there {exist} $Y = Y_1 + Y_2 \in V$ and $\br = (\p, {\theta}) \in \fR \smallsetminus \big( B_{\varepsilon}(\br_0) \cup B_{\varepsilon}(-\br_0)\big)$  such that $Y \in \bfL_{\br}$ (where, as before, we assume $Y_1 \in {\bfL}$ and $Y_2 \in {L'}$). 
    It is now enough to show that 
    \begin{equation}\label{nonemptyint}
    \bfL_\br\cap {\bfL}\cap \Omega = \bfL_\br\cap {\bfL}\cap B_{\sigma}(Y_0) \neq \varnothing.
    \end{equation}
    \end{itemize}
    %There are two possible options:
{Now let us consider two cases.}

    %\begin{enumerate}
        %\item 
        \medskip

%\begin{enumerate}
    %\item[
    \noindent{\bf Case 1.} $\dist({\theta}_0, \pm {\theta}) > \varepsilon$: in this case by Lemma \ref{anglelemma} there exists $Z \in \bfL_{\br} \cap {\bfL}$ for which 
        $$
        \begin{aligned}
        \|Y_1- Z\| &< C(\varepsilon) \cdot \|Y - Y_1\| = C(\varepsilon) \cdot \|Y_2\|  <  C(\varepsilon) \rho(Y_1) \\
&=
        C(\varepsilon) \frac{\sigma - \|Y_1-Y_0\|}{C \left({\varepsilon}/{2 {\rho_0}} \right)} \leq \sigma - \|Y_1-Y_0\|.
        \end{aligned}
        $$
        {This} implies
        $$
        \|Y_0 - Z\| \leq \|Y_1 - Z\| + \|Y_1 - Y_0\| < \sigma,
        $$
        and thus $Z \in B_{\sigma}(Y_0)$, {hence} \eqref{nonemptyint} holds.

       \medskip

%\begin{enumerate}
    %\item[
    \noindent{\bf Case 2.} $\dist(\p_0, \pm \p) > \varepsilon$: let us show that in this case $\dist({\theta}_0, \pm {\theta})  > \frac{\varepsilon}{2\rho}$. 
        Indeed, assume that $\|\p - \p_0\| > \varepsilon$.  Since $Y \in \bfL_{\br}$, we know that $Y {\theta} = \p$ and hence
        $$
        \p - \p_0 = Y{\theta} - Y_0 {\theta}_0 = Y( {\theta} - {\theta}_0) + (Y-Y_0) {\theta}_0.
        $$
        Since $\sigma < \frac{\varepsilon}{2}$ and $Y \in U$, one has $\|Y - Y_0\| < \frac{\varepsilon}{2}$; therefore
        $$
        \|Y( {\theta} - {\theta}_0)\| \geq \|\p - \p_0\| - \|(Y-Y_0) {\theta}_0\| > \varepsilon - \frac{\varepsilon}{2} = \frac{\varepsilon}{2}
        $$
        and thus, as $\|Y\| < \rho$, one has $\|{\theta} - {\theta}_0\| > {\varepsilon}/{2 {\rho_0}}$. Analogously, the condition $\|\p + \p_0\| > \varepsilon$ implies $\|{\theta} - {\theta}_0\| > {\varepsilon}/{2 {\rho_0}}$.
        The remaining part of   the proof is the same as in the previous case, with the only difference that $C(\varepsilon)$ is replaced by $C \left({\varepsilon}/{2 {\rho_0}} \right)$.
   % \end{enumerate}
    %\end{itemize}
    %This completes the proof.
 \end{proof}

{Before formulating the next proposition, which is a direct application of Lemma \ref{usefullemma},} {let us} introduce some notation. {For  a collection  $\{ N_0, N_1, \ldots, N_l \}$ of sets, where  $l \in\Z_{\ge 0}$, %let 
%Consider a collection  $\{ N_0, N_1, \ldots, N_l \}$ of closed { and symmetric (that is, $S = -S$)} subsets of ${\bf T}_{m,n}$. 
define 
$$N: = \bigcup\limits_{i=0}^l N_i,\quad %\,\,\,\,\,\,\,\, 
N_i^* := \bigcup\limits_{j\neq i} N_j,
\ \text{ and } \ 
%\,\,\,\,\,\,\,\, S: = 
%\bigcup\limits_{ i<j} N_i \cap N_j 
N_i' := N\smallsetminus N_i^*
$$
($N_i'$ is the set of points that belong to $N_i$ and to no other set  from this collection). 
%Also for $i = 0,1,\dots,l$ define
%$$
%{N}'_i := N_i \smallsetminus S; %= N_i  \smallsetminus \bigcup\limits_{j\ne i} N_j,
%\,\,\,\,\,\,\,\, \text{and} \,\,\,\,\,\,\,\, {M}' := M \smallsetminus S = M \smallsetminus N;
%$$
%that is, the sets of points that belong to $N_i$ %(respectively, to $M$)
%and do not belong to any other set from the collection.}

\begin{proposition}\label{usefulforproducts}
    Let $\fR \subseteq {\bf T}_{m,n}$ and $l \in\Z_{\ge 0}$. Suppose there exist centrally  symmetric closed sets $N_0 \subseteq \overline{\fR}\smallsetminus \fR$ and $N_i \subseteq \overline{\fR}$, $i = 1, \ldots, l$, such that the following holds:
    \begin{itemize}
        \item[\rm(i)] $N_i \subseteq \overline{N_i'}$ for any $i = 0, \ldots, l$;
        \item[\rm(ii)] The collections
    $
  \mathcal{L}_{N}$   and $\mathcal{L}_{ {N}^*_i}, \  i=1, \ldots, l$, 
  %{  \text{(and $\mathcal{L}_{N_0},\,\,$ if $l=0$)}   }
    are dense.
    \end{itemize}
Then %$\mathcal{L}_{\fR}$ contains a subcollection which is totally dense.
$E$ has property {\rm (TDS)}.
\end{proposition}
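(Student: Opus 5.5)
The plan is to exhibit an explicit subcollection of $\mathcal{L}_\fR$ and verify total density through Lemma \ref{usefullemma}, applied with $U$ equal to the given open set $W$. The candidate is
$$\fR' := \fR\smallsetminus \bigcup_{0\le i<j\le l}(N_i\cap N_j),$$
so every $\br_0\in\fR'$ lies in at most one of the sets $N_i$. It can never lie in $N_0$, since $N_0\cap\fR=\varnothing$. I will show that $\mathcal{L}_{\fR'}$ is totally dense.

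\textbf{Step 1: $N\subseteq\overline{\fR'}$, hence density.} Fix $i$ and a point $\br\in N_i'$. Since $\br\notin N_i^*$ and $N_i^*$ is closed (a finite union of closed sets), some neighbourhood $V$ of $\br$ misses $N_i^*$. Then $V$ misses every $N_j$ with $j\ne i$, so $V$ misses all the pairwise intersections. Because $\br\in N_i\subseteq\overline\fR$, the set $V$ meets $\fR$, and $V\cap\fR\subseteq\fR'$. Hence $N_i'\subseteq\overline{\fR'}$, and by (i) $N_i\subseteq\overline{N_i'}\subseteq\overline{\fR'}$. Thus $N\subseteq\overline{\fR'}$. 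Since $\mathcal{L}_N$ is dense by (ii), Lemma \ref{closure} shows that $\mathcal{L}_{\fR'}$ is dense.

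\textbf{Step 2: the local condition.} Fix $\br_0\in\fR'$, put $L=\bfL_{\br_0}$, and take an open $W$ with $L\cap W\neq\varnothing$. Let $K$ denote $N$ if $\br_0$ lies in no $N_i$, and $N_i^*$ if $\br_0\in N_i$ for the unique such $i$ (here $i\ge1$). In either case $K$ is closed, centrally symmetric, does not contain $\br_0$, and $\mathcal{L}_K$ is dense by (ii). By symmetry $K$ also avoids $-\br_0$, so we may choose $\varepsilon>0$ with $B_\varepsilon(\pm\br_0)$ disjoint from a neighbourhood of $K$.

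Repeating the argument of Step 1 with $\fR$ replaced by $\fR'\smallsetminus(B_\varepsilon(\br_0)\cup B_\varepsilon(-\br_0))$ gives $K\subseteq\overline{\fR'\smallsetminus(B_\varepsilon(\br_0)\cup B_\varepsilon(-\br_0))}$. The point is that every $N_j'$ contained in $K$ is approached by points of $\fR'$ arbitrarily close to it, and these avoid the balls. By Lemma \ref{closure}, $\mathcal{L}_{\fR'\smallsetminus(B_\varepsilon(\br_0)\cup B_\varepsilon(-\br_0))}$ is then dense in all of $M_{m,n}$, in particular in $U:=W$.

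\textbf{Step 3: conclusion.} Lemma \ref{usefullemma}, applied to $\fR'$, now yields a nonempty open $\Omega\subseteq W$ contained in the closure of the union of those $\bfL_\br$, $\br\in\fR'$, with $\bfL_\br\cap L\cap\Omega\ne\varnothing$. Each such $\bfL_\br$ meets $L\cap W$, so the closure of the union required in the definition of total density contains $\Omega$ and has nonempty interior.

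\textbf{Main obstacle.} The step needing the most care is the approximation in Step 2: showing that removing the two small balls (and the pairwise intersections) does not destroy $K\subseteq\overline{\,\cdot\,}$. This is exactly where hypothesis (i) and the closedness of the $N_j$ are used. It is also where $N_0\subseteq\overline\fR\smallsetminus\fR$ matters: it guarantees that $N_0\subseteq K$ always, which is what keeps $K$ dense when $l=0$.
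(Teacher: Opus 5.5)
Your proof is correct. It runs on the same engine as the paper's: Lemma \ref{usefullemma}, fed by Lemma \ref{closure}, after checking that removing the balls $B_\varepsilon(\pm\br_0)$ does not lose a closed set $K$ with $\mathcal{L}_K$ dense. What differs is the choice of subcollection and the case split.

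The paper takes $E'=E\cap\bigcup_{i=0}^l U_i$, where the $U_i$ are pairwise disjoint, centrally symmetric open neighbourhoods of $N_i'$ missing $N_i^*$. It first shows that $\mathcal{L}_{E'\smallsetminus U_i}$ is dense for $i\ge 1$. It then distinguishes two cases:
\begin{itemize}
\item $\br_0\in U_i$ with $i\ge1$: the balls are chosen inside $U_i$, so $E'\smallsetminus U_i$ survives the removal.
\item $\br_0\in U_0$: the balls are chosen inside $U_0$ with $2\varepsilon<\dist(\br_0,N_0)$, so all of $N$ survives.
\end{itemize}

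You instead discard only $E\cap\bigcup_{i<j}(N_i\cap N_j)$, and you split according to whether $\br_0$ lies in $N$ at all, taking $K=N$ or $K=N_i^*$. A single observation then handles every point uniformly: $K$ is closed, symmetric, avoids $\pm\br_0$, and lies in $\overline{E'}$.

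What your route buys:
\begin{itemize}
\item It avoids constructing the disjoint neighbourhoods $U_i$.
\item It retains a larger subcollection.
\item It makes explicit which points of $E$ are genuine obstructions. These are points in two of the $N_i$, and points of $N_0$. Points of $N_0$ are excluded by $N_0\cap E=\varnothing$; for them one would need density of $\mathcal{L}_{N_0^*}$, which is not assumed. This is really the role of that hypothesis, rather than keeping $K$ dense when $l=0$, as your closing remark puts it.
\end{itemize}

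The paper's version, in exchange, gets density of $\mathcal{L}_{E'\smallsetminus U_i}$ once for all $\br_0\in U_i$ rather than point by point. The conclusion is the same either way.
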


We note that if $l>0$, then $N_i^*\subset N$ for any $i=1, \ldots, l$, hence the density of  $\mathcal{L}_{N}$ follows from that of $\mathcal{L}_{ {N}^*_i}$; however if $l=0$ one needs to check the density of $\mathcal{L}_{N}$ separately.
%\comm{A question: can we use $N_0$ instead of $M$? this might simplify  notation quite a bit.}

\begin{proof}[Proof of Proposition \ref{usefulforproducts}]
Let $U_i$ be {an open neighborhood of ${N}'_i$ such that} $U_i \cap N_i^*= \varnothing$. % Let $V$ be an open neighborhood of ${M}'$ that $V \cap N = \varnothing$.
Such neighborhoods exist {because} the sets $N_i^*$ are closed; moreover: we can assume that these neighborhoods are {centrally} symmetric and disjoint. For instance: fix some $i = 0, \ldots, l$ and let $U_i$ consist of $\frac{\dist(\br, N_i^*)}{2}$-neighborhoods of points $\br \in N_i'$. We %will
define 
$$
\fR': = \fR \cap  \bigcup\limits_{i=0}^l U_i 
$$
and prove that $\mathcal{L}_{\fR'}$ is totally dense.

    A key observation {here} is that the collections $\mathcal{L}_{\fR' \smallsetminus U_i}$ are dense for $i = 1, \ldots, l$. Indeed: 
    For each point $\br \in N_i'$ and any $\rho > 0$ {we have} $\br \in \overline{\fR \cap B_{\rho}(\br)}$. {It implies that}
        $$
        N_i \subseteq \overline{N_i'} \subseteq \overline{\fR \cap U_i}, \,\,\,\,\,\,\,\, i = 1, \ldots, l,  \,\,\,\,\,\,\,\, \text{and} \,\,\,\,\,\,\,\, N_0 \subseteq \overline{N_0'} \subseteq \overline{\fR \cap U_i } \smallsetminus \fR'.
        $$

    Therefore, 
    $$
    \overline{\fR' \smallsetminus U_i} \supseteq \bigcup\limits_{j\neq i} \overline{\fR \cap U_j} \supseteq N_i^* \,\,\,\,\,\,\,\, \text{for} \,\,i = 1, \ldots, l,
    $$
    thus {the collections $\mathcal{L}_{\overline{\fR' \smallsetminus U_i}}$, and hence (in view of  {Lemma} \ref{closure}) $\mathcal{L}_{\fR' \smallsetminus U_i}$, are} dense for $i = 1, \ldots, l$. In addition, the collection $\mathcal{L}_{\fR'}$ is dense.

     It remains to prove the second %property 
    {part of the definition} of total density. We will show that \eqref{local_dens_condition} holds for any $\br_0 \in \fR'$ and any open set $U$ in $M_{m,n}$ intersecting $\bfL: = \bfL_{\br_0}$.
    %there exists a nonempty open set $\Omega \subseteq U$ such that . There are two cases:
    %By Proposition \ref{closure}, it is enough to prove total density for $\overline{\fR}$. % we will prove that $\mathcal{L}_{\fR'}$ where $\fR' = \left( \fR \cup N \right) \smallsetminus N'$ is totally dense.
    %Note that
    %\begin{equation}\label{kminusn_in_closure}
    %    K \subseteq \overline{\fR'} \,\,\,\,\,\,\,\,\,\,\,\, \text{for any} \,\,\,\, i = 1, \ldots, l.
    %\end{equation}

     \medskip

%\begin{enumerate}
    %\item[
    \noindent{\bf Case 1.} Consider $\br_0 \in U_i \cap \fR$, {where $i = 1, \ldots, l$}. The set $U_i$ is open, {hence} there exists $\varepsilon > 0$ such that $B_{\varepsilon}(\br_0) \subseteq U_i$. {Since each $U_i$ is symmetric,   $B_{\varepsilon}(-\br_0) \subseteq U_i$ as well}, and therefore
        $$
        \fR' \smallsetminus \big( B_{\varepsilon}(\br_0) \cup B_{\varepsilon}(\br_0) \big) \supseteq \fR' \smallsetminus U_i,
        $$
        the collection $\mathcal{L}_{\fR' \smallsetminus \left( B_{\varepsilon}(\br_0) \cup B_{\varepsilon}(\br_0) \right)}$ is dense. By Lemma \ref{usefullemma}, for any open $U$ intersecting $\bfL$ %\bfL_{\br_0}$ \
        %there exists a  {nonempty} %convex  
   % open %set 
   % $\Omega \subseteq U$ such that 
   condition \eqref{local_dens_condition} is satisfied.

        \medskip

%\begin{enumerate}
    %\item[
    \noindent{\bf Case 2.} Consider $\br_0 \in U_0 \cap \fR$. The set $N_0$ is closed and $\br_0 \notin N_0$, {hence} $\dist(\br_0, N_0) > 0$. Take $\varepsilon > 0$ %small enough, 
    such that $2 \varepsilon < \dist(\br_0, N_0)$ and $B_{\varepsilon}(\br_0) \subseteq U_0$; by symmetry, we immediately have $B_{\varepsilon}(-\br_0) \subseteq U_0$. In this case, 
        $$
        N_0 \subseteq \overline{\fR \cap U_0 \smallsetminus \big( B_{\varepsilon}(\br_0) \cup B_{\varepsilon}(\br_0) \big)}
        $$
        and thus 
        $$
        \overline{\fR' \smallsetminus \big( B_{\varepsilon}(\br_0) \cup B_{\varepsilon}(\br_0) \big)} \supseteq N.
        $$
         Since $\mathcal{L}_{N}$ is dense,   the collection $\mathcal{L}_{\fR' \smallsetminus \left( B_{\varepsilon}(\br_0) \cup B_{\varepsilon}(\br_0) \right)}$ is dense, and the validity of \eqref{local_dens_condition} for any open $U$ intersecting $\bfL$  again follows from Lemma \ref{usefullemma}.% there exists a  {nonempty} %convex  
   % open set $\Omega \subseteq U$ such that  holds. }
    %\end{itemize}
    
   % \comm{Vasya sorry, I do not understand thе final part of the  proof. You are showing that \eqref{local_dens_condition} holds, but \eqref{local_dens_condition} involves $\Omega$, and you are not defining it here. Either modify \eqref{local_dens_condition} or construct $\Omega$ before you start the two cases. Also how to understand "\eqref{local_dens_condition} holds for any $\br_0 \in \fR'$"? there is no $\br_0$ in \eqref{local_dens_condition}. Please change! -- I modified the text, please check. There is $\br_0$ in Lemma \ref{usefullemma}, but I tried to make it more visible now.}
\end{proof}

%\section{On a  generalization of Caratheodory's theorem}

%\subsection{Convex hulls containing the origin}

%For each $\theta\in {\mathbb S}^{n-1}$, let $S_\theta=\{\phi\in {\mathbb S}^{n-1}: \,\,\,\phi\cdot\theta\ge 0\}$. The following is a useful equivalent condition to convex hull of a set containing the origin:

%\begin{lemma}\label{zero_in_conv}
%    For $Q \subseteq {\mathbb S}^{n-1}\subseteq \mathbb{R}^n$ the following statements are equivalent:

%    \begin{enumerate}
%        \item\label{partone} $0 \in \conv(Q)$;
%        \item\label{parttwo} For any $\theta \in {\mathbb S}^{n-1}$ the intersection $S_{\theta} \cap Q$ is nonempty.
%    \end{enumerate}
%\end{lemma}

%\begin{proof}
%    {\bf \ref{partone} $\, \Rightarrow \,$ \ref{parttwo}.} By \ref{partone}, there exist such elements $q_1, \ldots, q_s \in Q$ and scalars $\lambda_1, \ldots, \lambda_s > 0$ such that $\sum\limits_{i=1}^s \lambda_i q_i = 0$. Suppose there exists $\theta$ such that $S_{\theta} \cap Q = \varnothing$; in this case, $\theta \cdot q_i  < 0$ for any $i$, and thus 

%    $$
%    0 = \theta \cdot 0 = \theta \cdot \left( \sum\limits_{i=1}^s \lambda_i q_i \right) = \sum\limits_{i=1}^s \lambda_i \theta \cdot q_i < 0,
%    $$

%    which provides a contradiction.

%    \vskip+0.2cm

%    {\bf \ref{parttwo} $\, \Rightarrow \,$ \ref{partone}.} A special case of { \it Separating hyperplane theorem} (see \cite{BV04}, Section 2.5.1) for two convex sets: $\{ 0 \}$ and the convex hull of $Q$.
%\end{proof}

\section{{Proof of the} main results}\label{proofs}

\subsection{Proof of Theorem \ref{any_k_theorem}}\label{proof_anyk_section}
{Recall that we are given $\fR \subseteq { \bf T}_{m,n}$ and  $0 \leq k \leq \min(n-1, m)$ such that  one of the conditions \eqref{anykA},  \eqref{anykB} or  \eqref{anykC} holds. In
each of these cases} we  apply Proposition \ref{usefulforproducts}.

\begin{proof}[Proof of Theorem \ref{any_k_theorem}] We label the cases according to the conditions met in each of them. 
\medskip

    \noindent{\bf Case \ref{anykA}.} {We are given $\Phi_1, \Phi_2$ and ${H}$ such that $
        {H} \times \left( \Phi_1 \cup \Phi_2 \right) %\cap \mathbb{S}^{n-1} %\right) 
        \subseteq \overline{\fR}$.} Let 
    $$
    {N_1 := %\left( 
    {H} \times\Phi_1 %\cap \mathbb{S}^{n-1} \right)
    %\times {H} 
    \,\,\,\,\,\,\,\, \text{and} \,\,\,\,\,\,\,\, N_2 := %\left( 
   {H} \times \Phi_2 %\cap \mathbb{S}^{n-1} \right)}
    %\times {H}
    .}
    $$
    The sets $N_1$ and $N_2$ are closed symmetric subsets of ${ \bf T}_{m,n}$, and the set $S := {H} \times\left( \Phi_1 \cap \Phi_2 %\cap \mathbb{S}^{n-1} 
    \right) %\times {H}
    $ belongs to the closure of both sets $N_i' := N_i \smallsetminus S$. By Proposition \ref{density_any_k}, the collections $\mathcal{L}_{N_1}$ and $\mathcal{L}_{N_2}$ are dense. Therefore, by Proposition \ref{usefulforproducts} %$\mathcal{L}_{\fR}$ contains a totally dense subcollection
    {$E$ has property {\rm (TDS)}}.

    \medskip

    \noindent{\bf Case \ref{anykB}.}  {Now we are given $\Phi$, ${H}_1$ and ${H}_2$ such that $
        %\Phi %\cap \mathbb{S}^{n-1} 
%\right) 
%\times 
\left( {H}_1 \cup {H}_2 \right)\times\Phi  \subseteq \overline{\fR}$. We can argue as in} the previous case, with the difference that we define
    $$
    {N_1 := %\left( 
    %\Phi %\cap \mathbb{S}^{n-1} \right) 
    %\times 
    {H}_1 \times\Phi\,\,\,\,\,\,\,\, \text{and} \,\,\,\,\,\,\,\, N_2 := %\left( 
    %\Phi %\cap \mathbb{S}^{n-1} \right) 
    %\times 
    {H}_2\times\Phi.}
    $$
    
    \noindent{\bf Case \ref{anykC}.} {Here we are given $\Phi$ and ${H}$ such that %\left( 
       $%\Phi %\cap \mathbb{S}^{n-1} %\right) 
       %\times 
       {H} \times\Phi\subseteq \overline{\fR \smallsetminus \left( %\left( 
       {H} \times\Phi \right)}$.} We will replace the set $\fR$ with {$\fR \smallsetminus   \left( {H} \times\Phi \right)$} and prove that $\mathcal{L}_{\fR}$ contains a totally dense subcollection. {Indeed}, let $N_0: = %\left( 
       {H} \times\Phi$ {and $l=0$}; then $N_0 \subseteq \overline{\fR} \smallsetminus \fR$. Since the set $S$ (as defined in Proposition \ref{usefulforproducts}) is empty, $N_0' = N_0$ and we trivially have that $N_0 \subseteq \overline{N_0'}$. Finally, by Proposition \ref{density_any_k} the set $\mathcal{L}_{N_0}$ is dense, which allows us to use Proposition \ref{usefulforproducts} and conclude that %$\mathcal{L}_{\fR}$ contains a totally dense subcollection
       {$E$ has property {\rm (TDS)}}.
\end{proof}

\subsection{Proof of Theorem \ref{ch_theorem}}\label{Ch_proof_section}

We start {this subsection} with a simple observation from convex geometry, which {happens to be} useful for Theorem \ref{ch_theorem} {and} may also have a separate interest. Let us recall the classical Caratheodory Theorem (\cite{caratheodory}, proven in {full generality} by Steinitz in \cite{steinitz}; see \cite[Chapter 2.2]{egg}   for details):

\begin{thm}\label{caratheodory_theorem}
    Let $S$ be a subset of $\mathbb{R}^d$, and suppose {that}
    $
    0 \in \conv(S).
    $
    Then there exists {a subset $S'$ of $S$ of cardinality at most  $d+1$
    %0 < k \leq d+1$ and elements $s_1, \ldots, s_k \in S$, 
    such that $0 \in \conv(S')$}.
\end{thm}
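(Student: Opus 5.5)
The plan is to run the standard reduction argument for Carath\'eodory's theorem. Since $0 \in \conv(S)$ and the convex hull consists of finite convex combinations, there are finitely many points $s_1, \ldots, s_k \in S$ and coefficients $\lambda_i > 0$ with $\sum_i \lambda_i = 1$ and $\sum_i \lambda_i s_i = 0$. Among all such representations I will take one with $k$ minimal. It then suffices to show $k \le d+1$, and to take $S' := \{s_1, \ldots, s_k\}$.

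Suppose, for contradiction, that $k \ge d+2$. Then the $k-1 \ge d+1$ vectors $s_2 - s_1, \ldots, s_k - s_1$ in $\R^d$ are linearly dependent, so the points $s_i$ are affinely dependent. This gives reals $\mu_1, \ldots, \mu_k$, not all zero, with
$$\sum_i \mu_i s_i = 0 \quad\text{and}\quad \sum_i \mu_i = 0.$$
Because the $\mu_i$ sum to zero and are not all zero, some $\mu_i$ is strictly positive.

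The key step is to subtract a multiple of this dependence. Set
$$t := \min\{\lambda_i/\mu_i : \mu_i > 0\} > 0$$
and define $\lambda_i' := \lambda_i - t\mu_i$. Then every $\lambda_i' \ge 0$, at least one $\lambda_i'$ equals $0$, and we still have $\sum_i \lambda_i' = 1$ and $\sum_i \lambda_i' s_i = 0$. Discarding the indices with $\lambda_i' = 0$ gives a representation of $0$ as a convex combination of at most $k-1$ points of $S$. This contradicts the minimality of $k$, so $k \le d+1$.

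No step is a serious obstacle. The only point needing care is the choice of $t$: it must be taken over indices with $\mu_i > 0$, so that all new coefficients stay nonnegative while at least one of them vanishes. The existence of such an index is exactly what $\sum_i \mu_i = 0$ guarantees.
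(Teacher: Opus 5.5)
Your proof is correct. It is the standard affine-dependence reduction, and each step checks out: the $\mu_i$ come from a linear dependence among the $s_i - s_1$, $t$ is minimized over the indices with $\mu_i>0$, and both $\sum_i\lambda_i'=1$ and $\sum_i\lambda_i' s_i=0$ are preserved. The paper gives no proof of its own; it quotes the statement as the classical theorem of Carath\'eodory and Steinitz, citing Eggleston for details, where this same argument is the usual proof. The paper then uses it only as a black box in proving Theorem~\ref{generalized_caratheodory}.
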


We %show 
{will need} the following %straightforward generalization 
{modification} of %Caratherodory's 
Theorem \ref{caratheodory_theorem}:

\begin{theorem}\label{generalized_caratheodory}
    Let $S$ be a subset of $\mathbb{R}^d$ and suppose %$S = S_1 \sqcup S_2$, where 
    {$S_1\subset S$ is such that}
\begin{equation}\label{general_caratheod_condition}
    \text{for any} \,\, s \in S_1, \,\,\, 0 \in \conv\left( S \smallsetminus \{ s \} \right).
\end{equation}
Then {there exists  a subset $S'$ of $S$ of cardinality} 
%, there exist such $S_2' \subseteq S_2, \,\, S_1' \subseteq S_1$ such that the set $S' = S_1' \sqcup S_2'$ has 
at most $(d+1)(d+2)$ such that
\begin{equation}\label{new_caratheod_condition}
    {\text{for any} \,\, s \in S_1, \,\,\, 0 \in \conv\left( S' \smallsetminus \{ s \} \right).}
\end{equation}
%elements and \eqref{general_caratheod_condition} holds with $S$ replaced with $S'$.
\end{theorem}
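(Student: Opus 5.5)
The plan is to apply the classical Carath\'eodory Theorem~\ref{caratheodory_theorem} twice: once to get a single small ``base'' certificate, and then once more for each point of that certificate which belongs to $S_1$. The key observation is that a subset $T\subseteq S$ with $0\in\conv(T)$ already witnesses \eqref{new_caratheod_condition} for every $s\in S_1$ with $s\notin T$. So only points of $S_1$ lying inside the base certificate need separate certificates, and there are at most $d+1$ of them.

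First I will dispose of the degenerate case $S_1=\varnothing$. There condition \eqref{new_caratheod_condition} is vacuous, so $S'=\varnothing$ works. Assume now that $S_1\neq\varnothing$ and fix some $s_0\in S_1$. By \eqref{general_caratheod_condition}, $0\in\conv(S\smallsetminus\{s_0\})$. Theorem~\ref{caratheodory_theorem} then gives a set $T_0\subseteq S\smallsetminus\{s_0\}$ with $|T_0|\le d+1$ and $0\in\conv(T_0)$.

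Next, for each $t\in T_0\cap S_1$, hypothesis \eqref{general_caratheod_condition} gives $0\in\conv(S\smallsetminus\{t\})$. Applying Theorem~\ref{caratheodory_theorem} again yields $T_t\subseteq S\smallsetminus\{t\}$ with $|T_t|\le d+1$ and $0\in\conv(T_t)$. Set
$$S' := T_0\cup\bigcup_{t\in T_0\cap S_1} T_t .$$
Its cardinality is at most $(d+1)+(d+1)^2=(d+1)(d+2)$, as required.

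It remains to verify \eqref{new_caratheod_condition}; let $s\in S_1$.
\begin{itemize}
\item If $s\notin T_0$, then $T_0\subseteq S'\smallsetminus\{s\}$, so $0\in\conv(T_0)\subseteq\conv(S'\smallsetminus\{s\})$.
\item If $s\in T_0$, then $s\in T_0\cap S_1$, so $T_s$ was constructed. Since $T_s\subseteq S'$ and $s\notin T_s$, we get $T_s\subseteq S'\smallsetminus\{s\}$, and again $0\in\conv(S'\smallsetminus\{s\})$.
\end{itemize}

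I do not expect a genuine obstacle. The only points needing care are the case $S_1=\varnothing$, where no base point $s_0$ is available, and recording that each certificate $T_t$ omits exactly the point it is meant to replace. Once the counting observation is made, the bound $(d+1)(d+2)$ falls out directly.
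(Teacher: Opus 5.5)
Your proof is correct and follows the paper's argument: one Carath\'eodory certificate as a base, plus one certificate avoiding $t$ for each $t\in S_1$ lying in that base, for a total of at most $(d+1)+(d+1)^2$ points. You also treat the case $S_1=\varnothing$ explicitly; the paper glosses over it, since its first step ($0\in\conv(S)$ follows from the hypothesis) is not justified when $S_1$ is empty, although the conclusion is then vacuous.
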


\begin{proof}
    It follows from \eqref{general_caratheod_condition} that $0 \in \conv(S)$. By Theorem \ref{caratheodory_theorem} there exists a {subset $\tilde{S} = \{ s_1, \ldots, s_k \}$ of $S$ such that} $k \leq d+1$ and $0 \in \conv(\tilde{S})$. Let us assume without loss of generality that $${\tilde S\cap  S_1 = \{s_1, \ldots, s_r\}  \text{ for some } r = 0,\dots, k.}$$ For any $i \in \{ 1, \ldots, r \}$ the set $\conv\left( S \smallsetminus \{ s_i \} \right)$ contains the origin, so by Theorem \ref{caratheodory_theorem} there exists %such a set 
    {a subset $\tilde{S}_i$ of    $S \smallsetminus \{ s_i \}$ of cardinality $ \leq d+1$ such that} $0 \in \conv\big( \tilde{S}_i \big)$. %It is easy to check 
    {We claim} that the set $$S' := \tilde{S} \cup \bigcup\limits_{i=1}^r \tilde{S}_i$$ satisfies the {conclusion} of the theorem. Indeed, 
    %it is easy to see that 
    the cardinality of $S'$ is at most $$ k + r   (d+1) \leq d+1 + (d+1)^2 = (d+1)(d+2).$$ Now take $s \in S_1$. If  $s\in\tilde S$, then   $s = s_i$ for some $1 \leq i \leq r$,   and therefore
         $$
       \conv\left( S' \smallsetminus \{s\} \right) = \conv\left( S' \smallsetminus \{s_i\} \right) \supseteq  \conv\big( \tilde{S}_i \big) \ni 0.
        $$
        {Otherwise we have $S' \smallsetminus \{s\}  \supseteq    \tilde{S}$, hence}
        $
        \conv\left( S' \smallsetminus \{s\} \right)  \supseteq  \conv\big( \tilde{S} \big) \ni 0.
        $ 
{This finishes the proof.}\end{proof}

%\begin{remark}
%    Iterating this process can be used to prove more general statements: namely, one can wish to remove any prescribed number of points from several parts of a given set instead of just one. \comm{I am not sure if we really need this remark.}
%\end{remark}

\begin{proof}[Proof of Theorem \ref{ch_theorem}] 
{Recall that we are given a subset} $\fR$ of $ { \bf T}_{m,n}$, two subsets $\Theta_1, \Theta_2$ of $ \mathbb{S}^{n-1}$ and a closed linear half-space ${H}$ of $\mathbb{R}^{m}$  satisfying   \eqref{bigtheta1},  \eqref{bigtheta2} and \eqref{ch_theorem_condition}.
%Before proving Theorem \ref{ch_theorem}
 {Let us also define $$
\Theta : =  \Theta_1 \cup \Theta_2.$$
%$\Theta': = \overline{\Theta_1 \cup \Theta_2}$, and define 
%The following statements are straightforward:
Then  \eqref{bigtheta1} and \eqref{bigtheta2} immediately imply, by way of monotonicity and taking closures,} %\comm{(maybe add some explanations? -- I don't know what particularly: \eqref{bigtheta1} and \eqref{bigtheta2} already say that ${\Theta}_i \times {H} \subseteq \overline{\fR}$ ... OK)} 
that
\begin{equation*}
    {H \times {\overline{\Theta}} } \subseteq \overline{\fR},
    \end{equation*}
    and it follows from \eqref{ch_theorem_condition} that 
\begin{equation}\label{conv_of_theta_prime}
        0 \in \conv\big( {\overline{\Theta}}\big).
    \end{equation}
    %by monotonicity and taking closures
%\begin{itemize}
%    \item 
 %   \begin{equation}
 %       {\overline{\Theta}} \times \Pi \subseteq \overline{\fR}
 %   \end{equation}
%
 %   (follows from conditions \eqref{bigtheta1} and \eqref{bigtheta2} by monotonicity and taking closures). 
%
 %   \item \begin{equation}\label{conv_of_theta_prime}
 %       0 \in \conv( {\overline{\Theta}})
 %   \end{equation}
%
 %   (since ${\overline{\Theta}} = \overline{\Theta_1 \cup \Theta_2}$).
%\end{itemize}

%We will need two more useful statements:

%As a first step 
{Next we are going to modify the sets $\Theta_1$ and $\Theta_2$ by defining
\eq{deftheta}{
%\Theta': = \overline{\Theta_1 \cup \Theta_2}, \,\,\, 
\Theta_2': = \big\{ \theta \in {\overline{\Theta}}:  {H}\times \{\theta\} %\times \Pi
\subseteq \overline{\fR \smallsetminus \left({H}\times  \{ \theta \}  \right)}\big\}    \ \text{ and }  \  \Theta_1': = {\overline{\Theta}} \smallsetminus \Theta_2'.
}
Note that \eqref{bigtheta2} immediately implies that $\Theta_2\subseteq \Theta_2'$. Let us prove the following two   useful statements:
\begin{lemma}\label{claim_theta1}
One has%    \begin{enumerate}
        \eq{claimA}{ %One has 
        {\Theta_1' \subseteq \Theta_1}}
        and
       \eq{claimB}{\text{every $\theta \in \Theta_1'$ is an isolated point of } %the set 
        { \Theta}; \text{ equivalently, } {\theta \in \Theta_1'\ \Longrightarrow\ \overline{\Theta}} \smallsetminus \{ \theta \} = \overline{\Theta \smallsetminus \{ \theta \}}.}
\end{lemma}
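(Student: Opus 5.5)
We prove \equ{claimA} and \equ{claimB} directly from the definitions in \equ{deftheta}, using only \eqref{bigtheta1}, \eqref{bigtheta2} and the fact that $\fR\subseteq\overline{\fR}$.

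For \equ{claimA}, take $\theta\in\Theta_1'$, so $\theta\in\overline{\Theta}$ but ${H}\times\{\theta\}\not\subseteq\overline{\fR\smallsetminus({H}\times\{\theta\})}$. Since $\Theta_2\subseteq\Theta_2'$, we know $\theta\notin\Theta_2$. It therefore suffices to show $\theta\in\Theta$, because then $\theta\in\Theta_1$. We argue by contradiction and suppose $\theta\in\overline{\Theta}\smallsetminus\Theta$. Then there is a sequence $\theta_j\in\Theta$ with $\theta_j\ne\theta$ and $\theta_j\to\theta$. By \eqref{bigtheta1} and \eqref{bigtheta2}, each ${H}\times\{\theta_j\}\subseteq\overline{\fR}$. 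Moreover ${H}\times\{\theta_j\}$ is disjoint from ${H}\times\{\theta\}$, so every point $(\p,\theta_j)$ is a limit of points of $\fR$ lying near $(\p,\theta_j)$. Those points are eventually outside ${H}\times\{\theta\}$, since their $\mathbb{S}^{n-1}$-coordinate is close to $\theta_j\neq\theta$. Hence ${H}\times\{\theta_j\}\subseteq\overline{\fR\smallsetminus({H}\times\{\theta\})}$. Letting $j\to\infty$, every $(\p,\theta)$ with $\p\in{H}$ is the limit of $(\p,\theta_j)$, so ${H}\times\{\theta\}\subseteq\overline{\fR\smallsetminus({H}\times\{\theta\})}$. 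This says $\theta\in\Theta_2'$, contradicting $\theta\in\Theta_1'$. So $\theta\in\Theta$, and hence $\theta\in\Theta_1$.

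For \equ{claimB}, the same argument applies. If $\theta\in\Theta_1'$ were not isolated in $\Theta$, there would be $\theta_j\in\Theta\smallsetminus\{\theta\}$ with $\theta_j\to\theta$, and the reasoning above again gives $\theta\in\Theta_2'$, a contradiction. The stated equivalence is then an elementary topological fact. If $\theta$ is isolated in $\Theta$, fix a ball $B$ around $\theta$ with $B\cap\Theta=\{\theta\}$. Then every point of $\overline{\Theta}$ other than $\theta$ is a limit of points of $\Theta\smallsetminus\{\theta\}$: for points far from $\theta$ this is immediate, and no point of $B\smallsetminus\{\theta\}$ lies in $\overline{\Theta}$ except as such a limit. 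Conversely, $\theta\notin\overline{\Theta\smallsetminus\{\theta\}}$, so equality of the two sets holds. In the other direction, equality forces $\theta\notin\overline{\Theta\smallsetminus\{\theta\}}$, which means $\theta$ is isolated in $\Theta$.

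The only delicate point is the step ${H}\times\{\theta_j\}\subseteq\overline{\fR\smallsetminus({H}\times\{\theta\})}$. It requires approximating points of $\overline{\fR}$ by points of $\fR$ whose sphere coordinate avoids $\theta$, which works because $\theta_j\neq\theta$ lies at positive distance from $\theta$. After that, we pass to the limit using closedness of the closure.
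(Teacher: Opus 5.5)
Your proof is correct and follows essentially the same route as the paper. The paper shows once that every limit point of $\Theta$ lies in $\Theta_2'$, using a sequence $\theta_i\to\theta$ with $\theta_i\ne\theta$ and the inclusion ${H}\times\{\theta_i\}\subseteq\overline{\fR\smallsetminus({H}\times\{\theta\})}$, and then derives both \equ{claimA} and \equ{claimB} from that fact; you instead run the same argument separately for each claim, and your justification of the topological equivalence is slightly roundabout (it follows at once from $\overline{\Theta}=\overline{\Theta\smallsetminus\{\theta\}}\cup\{\theta\}$ for $\theta\in\Theta$) but correct.
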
}

\begin{proof}
    Let ${\Lambda}$ denote the set of limit points of   ${\Theta}$ (and thus also of ${\overline{\Theta}}$). It %is enough for us 
    {suffices} to show that 
    \begin{equation}\label{L(theta)}
        {\Lambda} \subseteq \Theta_2'.
    \end{equation}
    Indeed: in this case if $\theta \in {\overline{\Theta}}$ is not an isolated point of ${ \Theta}$, then $\theta \in \Theta_2' = {\overline{\Theta}} \smallsetminus \Theta_1'$, which immediately implies \equ{claimB}. To show that \equ{claimA} follows, notice that \eqref{L(theta)} yields
    $$
    \Theta_2 \cup \big({\overline{\Theta}} \smallsetminus \Theta)  \subseteq \Theta_2 \cup {\Lambda} \subseteq \Theta_2',
    $$
    and thus $\Theta_1' \subseteq \Theta \smallsetminus \Theta_2 = \Theta_1$.

    It remains to show that \eqref{L(theta)} holds. Take $\theta \in {\Lambda}$, and let $\{ \theta_i \}$ be a sequence of elements of $\Theta$ converging to $\theta$ (where $\theta_i \neq \theta$ for any $i$). Since $\theta_i \in \Theta$, we have $H \times\{ \theta_i \} %\times {H} 
    \subseteq \overline{\fR}$; and since $\theta_i \neq \theta$, 
    $$
    H \times\{ \theta_i \} %\times {H} 
    \subseteq \overline{\fR \smallsetminus \left( {H} \times \{\theta\} \right)} \,\,\,\,\,\,\,\, \Longrightarrow \,\,\,\,\,\,\,\, H \times\bigcup\limits_{i=1}^{\infty} \{ \theta_i \} %\times {H} 
    \subseteq \overline{\fR \smallsetminus \left( {H} \times \{\theta\} \right)};
    $$
    since $\theta \in \overline{\bigcup\limits_{i=1}^{\infty} \{ \theta_i \}}$, taking the closures of both sides shows that ${H} \times \{\theta\} \subseteq \overline{\fR \smallsetminus \left( {H} \times \{\theta\} \right)}$, which by definition means that $\theta \in \Theta_2'$ and completes the proof.
\end{proof}

%\begin{corollary}{(Directly follows from \equ{claimB})}\label{cor1_convexproof}
 %   For any $\theta \in \Theta_1'$, 
%    $$
 %   {\overline{\Theta}} \setminus \{ \theta \} = \overline{{\overline{\Theta}} \setminus \{ \theta \}}.
%    $$
%\end{corollary}

Combining {\equ{deftheta}, \equ{claimA} and \equ{claimB}}, we conclude that
\begin{equation}\label{thetaprime_conv_condition}
    \text{for any } \theta \in \Theta_1', \,\,\,\, \,\,\,\, 0 \in \conv\left({\overline{\Theta}} \smallsetminus \{ \theta \}\right) = \conv(\Theta_1' \sqcup \Theta_2' \smallsetminus \{ \theta \}).
\end{equation}
{Using} Theorem \ref{generalized_caratheodory} we can replace $\Theta_1'$ and $\Theta_2'$ by their finite subsets such that \eqref{thetaprime_conv_condition} holds. If the set $\Theta_1'$ happens to be empty, we apply Theorem \ref{caratheodory_theorem} and replace the set ${\overline{\Theta}} = \Theta_2'$ by its finite subset such that \eqref{conv_of_theta_prime} holds. 

\medskip

{Now} replace the set $\fR$ {with} its subset $\fR \smallsetminus \left( {H}\times \Theta_2% \times {H} 
\right)$, and then symmetrize {by} replacing $\fR$ with $\fR \cup (-\fR)$ (the latter, {as was  mentioned before,} does not affect the collection $\mathcal{L}_{\fR}$). Theorem \ref{ch_theorem} will follow if we prove that 
%$\mathcal{L}_{\fR}$ contains a totally dense subcolleciton.
{$E$ has property {\rm (TDS)}}. We will use {Proposition} \ref{usefulforproducts} to prove this.

\smallskip

Let $\Theta_1' = \{ \theta_1, \ldots, \theta_l \}$ be an enumeration of {the} finite set $\Theta_1'$. We define symmetric closed sets 
$$
{N_0 : = ({H}\times \Theta_2 %\times {H}
) \cup \big(- \left({H}\times \Theta_2 %\times {H} 
\right)\big), \,\,\,\,\,\,\,\, N_i : = ({H}\times \{ \theta_i \} %\times {H}
) \cup \big(- \left( {H}\times \{ \theta_i \} %\times {H}
\right)\big),} \,\,\,\,\,\,\, i = 1, \ldots, l.
$$
The pairs of sets $N_i$ and $N_j$ can only intersect %by 
{at} their boundary points, therefore $N_i \subseteq \overline{N_i'}$ for any $i = 0, \ldots, l$. It also follows from \eqref{thetaprime_conv_condition} and Proposition \ref{ditect product dense}    that the collections 
$$
\mathcal{L}_{N_0 \cup N \smallsetminus N_i'}, \,\,\,\, i = 1, \ldots, l
$$
are dense. In the case   $l=0$, it follows from \eqref{conv_of_theta_prime} and Proposition \ref{ditect product dense}    that the collection $\mathcal{L}_{N_0}$ is dense. {It remains to apply Proposition \ref{usefulforproducts} and finish the proof.} %$\mathcal{L}_{\fR}$ contains a totally dense subcollection.
\end{proof}

\section{{On the optimality of
Theorems 
\ref{any_k_theorem} and \ref{ch_theorem}}\label{optimality} % and} %Theorem 
%our results
}

%\subsection{On the optimality of conditions in Theorem \ref{ch_theorem} and Theorem \ref{any_k_theorem}}

%{ 

%Before proving Lemma \ref{ch_not_tot_dense},

%\comm{OK, something should be said right here. Do we or do we not have optimality of these theorems? or some kind of partial optimality? The way it is written right now  is too mysterious.}

%\comm{Move to section 3}
%\comm{Some introduction: we are going to discuss in what sense our theorems are optimal.}

In this section we show that our main results,  Theorems 
\ref{any_k_theorem} and \ref{ch_theorem}, are optimal in a certain sense. In order to do %it
{so} more efficiently it will be convenient to introduce a certain terminology and  {use it to restate our results}.
%{Recall that   the first two parts
%of Theorem
%\ref{any_k_theorem} were  of the following type:
%if the closure of ${E}$ contains a certain closed subset ${D}$ of $\tmn$, then the collection $\mathcal{L}_{\fR}$   contains a totally dense subcollection. Namely, ${D}$ was equal to   ${H}\times \left(\Phi_1 \cup \Phi_2 \right)$ in part (a), and $\left( {H}_1 \cup {H}_2 \right) \times \Phi$ in part (b). Moreover, in part (c) the same conclusion was reached under the assumption that the closure of ${E}\smallsetminus {D}$ contains   ${D} = {H}   
%       \times \Phi$. Our first result in this section   shows that a similar statement fails if ${D}$ is replaced with  any proper closed symmetric subset ${D}'$ of $D$.}
\begin{definition}\label{generaltds} Suppose we are interested in a certain property of subsets of some topological space $X$, which we will here refer to as property {\rm ($*$)}. Let $D_0$ be a subset of $X$, and let $\{D_\theta\}_{\theta \in \Theta}$ be a family of subsets of $X$, where $\Theta$ is a set of indices (perhaps uncountable).
Say that a collection $\big(D_0,\{D_\theta\}_{\theta \in \Theta}\big)$   \underline{has property {\rm ($*\pm$)}}  if 
%any dense subset $E$ of $E$ has property (TDS); equivalently, if 
any subset  $E$ of $X$ such that
%\begin{itemize}
%\item 
$$D_0\subseteq \overline{E}\qquad\text{and}\qquad
%\item $
D_\theta\subseteq \overline{E\smallsetminus D_\theta}\text{ for any }\theta \in \Theta$$
%\end{itemize}
has property  {\rm ($*$)}. {That is,} to prove {\rm ($*$)} for $E$ it is enough to check that $E$ contains $D_0$ in its closure, and 
that for each $\theta$ every point of $D_\theta$ can be approximated by points of $E$ that are not in $D_\theta$.

Also, let us say that:
\begin{itemize}
\item $D_0$  \underline{has property {\rm ($*$+)}} if $(D_0,\varnothing)$  has property {\rm ($*\pm$)}; that is, if $D_0\subseteq \overline{E}\text{ guarantees  {\rm ($*$)} for }E$.  
\item  $\{D_\theta\}_{\theta \in \Theta}$  \underline{has property {\rm ($*$--)}} if $(\varnothing,\{D_\theta\})$  has property {\rm ($*\pm$)}; equivalently, if $$D_\theta\subseteq \overline{E\smallsetminus D_\theta}\text{ for any $\theta \in \Theta$ implies  that $E$ has property {\rm ($*$)}}. $$ When $\Theta$ is a singleton, we will say that $D$  {has property {\rm ($*$--)}} if $(\varnothing,\{D\})$  has property {\rm ($*\pm$)}; equivalently,  if $D \subseteq \overline{E\smallsetminus D}$ implies  that $E$ has property {\rm ($*$)}.
\end{itemize}
\end{definition}
{Note that  ($*$+) clearly implies ($*$); however finding a set $D_0$ with property ($*$+) is much more valuable than with just ($*$), since it immediately establishes ($*$) for any set having $D_0$ in its closure.  On the other hand ($*$--) for $D$ does not necessarily imply ($*$) for $D$, but  does imply it for other sets.}

%\comm{Should we discuss it some more? Does it still look horrible? I am not sure if we really need to have this formalism, but you can see how nicely it transforms our theorems below, so perhaps it's worth it... -- Honestly I think it still looks rather scary, but maybe it's fine for Section 6. However I think we should provide reader an option to understand the formulations of Proposition 6.2 and 6.4 without reading this definitions. I'm not sure if it can be done without formulating them twice...}
\smallskip

All this looks somewhat formal and abstract. Yet upon inspection of our main theorems one can observe that  they actually exhibit certain subsets of $\tmn$ having {the} properties defined above, with ($*$) replaced by (TDS). Namely, the following are equivalent ways of stating our theorems:
\medskip

%\vfill\eject
\noindent{\bf Theorem 
 \ref{any_k_theorem}.}
Fix $m,n\in\N$ and $0 \leq k \leq \min(n-1, m)$, let $\Phi_1 \ne \Phi_2$ and $\Phi$ be $k$-dimensional subspheres   $\mathbb{S}^{n-1}$, and let ${H}_1\ne {H}_2$ and ${H}$ be $(m-k)$-dimensional linear subspaces   of $\mathbb{R}^m$. Then:
    \begin{enumerate}
        \item[\rm (ab)]   %The sets 
        ${H}\times \left(\Phi_1 \cup \Phi_2 \right)$ and $\left( {H}_1 \cup {H}_2 \right) \times \Phi$  have property {\rm (TDS+)};
 \item[\rm (c)]  %the set 
 ${H} %\cap \mathbb{S}^{n-1} %\right) 
       \times \Phi $ has property {\rm (TDS--)}.
\end{enumerate}
%\end{reptheorem}
 
\medskip
\noindent{\bf Theorem 
\ref{ch_theorem}.}  For $m,n\in\N$, let $\Theta_1, \Theta_2$ be two disjoint {(possibly empty)} subsets of $\mathbb{S}^{n-1}$ such that \eqref{ch_theorem_condition} holds,
and let  ${H}$ be a closed linear half-space  of $\mathbb{R}^{m}$. Then the collection \eq{collectionplusminus} {\left(H\times \Theta_1, \big\{H\times \{\theta\}\big\}_{\theta \in \Theta_2}\right)}}   has property {\rm (TDS$\pm$)}. 

\medskip
 We now explain in what sense the above theorems cannot be improved. %The next proposition shows that no proper symmetric closed subset of  ${H}\times \left(\Phi_1 \cup \Phi_2 \right)$, $\left( {H}_1 \cup {H}_2 \right) \times \Phi$ and 
 %${H} \times \Phi$ respectively can be used in place of those sets. 
 
\begin{proposition}\label{any_k_theorem_optimality_new}
    For any $m,n\in\N$ and  any   $0 \leq k \leq \min(n-1, m)$, the following  holds:
    %\begin{enumerate}
        %\item\label
        \begin{itemize}
        \item[\rm (ab)]
        $ {H}\times \left(\Phi_1 \cup \Phi_2 \right) $ and $\left( {H}_1 \cup {H}_2 \right) \times \Phi$ as in Theorem 
 \ref{any_k_theorem}(ab) are minimal   closed symmetric sets with property {\rm (TDS+)};
 %; in other words, their proper  closed symmetric subsets do not have this property.
 \item[\rm (c)] sets of the form ${H} \times \Phi$ as in Theorem 
 \ref{any_k_theorem}(c) are minimal   closed symmetric sets with property {\rm (TDS-)}.
 
       \end{itemize}
\end{proposition}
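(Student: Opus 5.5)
Since the sets in question already have the stated properties by Theorem \ref{any_k_theorem}, what remains is minimality. Take a proper closed symmetric subset $D'$ and build a set $E$ satisfying the hypothesis with $D'$ ($D'\subseteq\overline E$, respectively $D'\subseteq\overline{E\smallsetminus D'}$) such that $\mathcal L_E$ is not even dense. Since every collection with (TDS) is dense (a totally dense subcollection is in particular dense), this rules out (TDS+) and (TDS$-$) for $D'$. The tool for non-density is the ``furthermore'' part of Proposition \ref{density_any_k}: for a product $H\times\Phi$, the set $W$ of matrices with a unique up to sign pair $(\p,\theta)\in H\times\Phi$ satisfying $A\theta=\p$ is open and dense. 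Removing an open symmetric piece of $H\times\Phi$ leaves a region of $M_{m,n}$ with nonempty interior (Lemma \ref{opensetlemma}) that is not covered.

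For (c), let $D'\subsetneq D=H\times\Phi$ be closed and symmetric, and pick a nonempty open $U\subseteq D$ with $(U\cup-U)\cap D'=\varnothing$. Take $E:=D'$, or better $E:=D'\cup$ a set approximating $D'$ from outside $D'$ but staying close to $D'$. The simplest choice is $E=D'$ itself: then we need $D'\subseteq\overline{D'\smallsetminus D'}=\varnothing$, which fails unless $D'=\varnothing$. So instead take $E:=\bigcup_j (1+1/j)\cdot D'$ in the $\R^m$ and sphere coordinates suitably perturbed; concretely, choose points of $\tmn\smallsetminus D$ converging to $D'$ that lie within a small neighborhood $N_\eta(D')$. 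Then $\overline{E}\subseteq \overline{N_\eta(D')}$. Run the argument of Proposition \ref{density_any_k}. The open set $V=W\cap\bigcup_{\br\in U}L_\br$ has nonempty interior, and on $V$ the unique solution pair lies in $U$. By openness of $W$ and a continuity/compactness argument, for $A$ in a small ball inside $V$, every $\br\in\tmn$ with $A\in L_\br$ and $\br$ close to $D$ must be close to that unique pair, hence close to $U$ and far from $D'$. Choosing $\eta$ small, this ball avoids $\bigcup_{\br\in E}L_\br$.

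The main obstacle is this continuity step: one has to show that pairs near $H\times\Phi$ solving $A\theta=\p$ are near exact solutions. I would handle it by local compactness. Restrict to $A$ in a compact ball $K\subset V$ and to the set $\{\br\in\overline{N_\eta(D)}: A\in L_\br\}$ with bounded $\p$ (automatic, since $\|\p\|\le\|A\|$). As $\eta\to0$ these sets converge to the exact solution sets, which lie in $\pm U$, so by compactness they eventually avoid $\overline{N_\eta(D')}$.

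For (ab), take $D=H\times(\Phi_1\cup\Phi_2)$ and a proper closed symmetric $D'\subsetneq D$ with $E:=D'$. Some open piece $U$ of, say, $H\times\Phi_1$ minus $\Phi_1\cap\Phi_2$ is missing from $D'$. Apply Proposition \ref{density_any_k} to $H\times\Phi_1$ and to $H\times\Phi_2$ simultaneously: the intersection $W_1\cap W_2$ is still open and dense. By Lemma \ref{opensetlemma} and a small shrinking, there are matrices in $W_1\cap W_2$ whose $\Phi_1$-solution lies in $U$ and whose $\Phi_2$-solution lies in the region of $H\times\Phi_2$ that is removed or not. If the latter is in $D'$, we get coverage, so the argument needs more care. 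I would instead exploit the fact that for generic $A$ the two solutions are independent, in the sense that their map to $(H\times\Phi_1)\times(H\times\Phi_2)$ is open near a generic point (a dimension count via Lemma \ref{generic dimension}). We can then pick $A$ with both solutions in missing regions whenever $D'$ misses open sets on both pieces. If $D'$ contains all of one piece, say $H\times\Phi_2$, then $\mathcal L_{D'}$ is dense but not totally dense. In that case I would show directly, via Proposition \ref{density_any_k}, that every $L\in\mathcal L_{D'}$ meets only subspaces whose union lies near a lower-dimensional set, so no subcollection is totally dense. This last subcase is where I expect the most difficulty, and the $(H_1\cup H_2)\times\Phi$ case is handled identically.
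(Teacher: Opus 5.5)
Your part (c) is correct and is essentially the paper's argument. You take $E$ to be a thin neighbourhood of $D'$ with the points of $H\times\Phi$ removed. You then show that $\mathcal{L}_E$ misses a compact set $K$ with nonempty interior inside the region $V$ not covered by $\mathcal{L}_{D'}$. Your sequential-compactness step (using $\|\p\|\le\|A\|$ and the closedness of $D'$) gives a uniform $\eta$. The paper instead proves the pointwise Lemma \ref{distance_bounded_from_zero_lem} and takes a union of neighbourhoods; both work.

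Part (ab), however, has a genuine gap: your overall plan of finding $E$ with $\mathcal{L}_E$ not dense cannot succeed when $D'\supseteq H\times\Phi_2$, for instance $D'=H\times\Phi_2$. In that case every $E$ with $D'\subseteq\overline E$ has $\mathcal{L}_E$ dense, by Proposition \ref{density_any_k} and Lemma \ref{closure}. So you must show directly that no subcollection of $\mathcal{L}_{D'}$ is totally dense, and that is exactly the subcase you leave open. The heuristic you propose for it is false as stated: already for $D'=H\times\Phi_2$, every $L_{\p,\theta}$ with $\theta\ne\pm\theta_0$ meets $L_{\p_0,\theta_0}$.

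The missing idea is a local one.
\begin{itemize}
\item On the open dense set $W$ given by Proposition \ref{density_any_k} for $H\times\Phi_2$, distinct members of $\mathcal{L}_{H\times\Phi_2}$ do not meet.
\item The set $D'\smallsetminus(H\times\Phi_2)$ is contained in $D'\cap(H\times\Phi_1)$. The latter is a proper closed symmetric subset of $H\times\Phi_1$, since otherwise $D'$ would be all of $H\times(\Phi_1\cup\Phi_2)$.
\item The ``furthermore'' part of Proposition \ref{density_any_k} then gives a nonempty open $U\subseteq W$ met by no member of $\mathcal{L}_{D'\smallsetminus(H\times\Phi_2)}$.
\item Any dense subcollection must contain some $L_0\in\mathcal{L}_{H\times\Phi_2}$ meeting $U$. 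The only member of the subcollection meeting $L_0\cap U$ is $L_0$ itself, so total density fails at the pair $(L_0,U)$.
\end{itemize}
This is Lemma \ref{not_any_k} in the paper. It handles all cases uniformly: whichever piece $D'$ fails to fill, remove the other piece. Your first subcase, with its unproved joint-openness claim for the map sending $A$ to both of its solution pairs, then becomes unnecessary.
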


We remark that, by %way of 
unwrapping the definitions,   Proposition \ref{any_k_theorem_optimality_new}(ab) asserts that no proper  closed symmetric subset $D$ of $ {H}\times \left(\Phi_1 \cup \Phi_2 \right) $ and $\left( {H}_1 \cup {H}_2 \right) \times \Phi$ as in  Theorem 
 \ref{any_k_theorem}(ab) has property (TDS+); that is, for any such $D$ it is possible to find $E\subset \tmn$ such that $D\subseteq \overline{E}$ but $\mathcal{L}_E$ does not contain a totally dense subcollection. Likewise, Proposition \ref{any_k_theorem_optimality_new}(c) asserts that no proper  closed symmetric  $D\subset {H}\times   \Phi$ as in  Theorem 
 \ref{any_k_theorem}(c) has property (TDS--); that is, for any such $D$ it is possible to find $E\subset \tmn$ such that $D\subseteq \overline{E\smallsetminus D}$, but $\mathcal{L}_E$ does not contain a totally dense subcollection. %\comm{Here I am actually not sure that the statement is correct. Can you double-check? according to Proposition \ref{any_k_theorem_optimality}(c) below we can do even better: find $E\subset \tmn$ such that $D\subseteq \overline{E\smallsetminus ({H}\times   \Phi)}$, which is a smaller set. Or is there a mistake somewhere? -- It is correct}
\medskip

The optimality of Theorem  \ref{ch_theorem} can be phrased in a similar way:

\begin{proposition}\label{ch_thm_optimal_new}
 Suppose that two disjoint  sets $\Theta_1, \Theta_2 \subseteq \mathbb{S}^{n-1}$ are such that condition \eqref{ch_theorem_condition} does not hold. Then for any closed linear half-space ${H}$ of $\mathbb{R}^{m}$ the collection \equ{collectionplusminus}
 does not have property {\rm (TDS$\pm$)}.
\end{proposition}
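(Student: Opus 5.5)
The plan is to construct, for each way in which \eqref{ch_theorem_condition} can fail, an explicit set $\fR\subseteq\tmn$. It will satisfy $H\times\Theta_1\subseteq\overline{\fR}$ and $H\times\{\theta\}\subseteq\overline{\fR\smallsetminus(H\times\{\theta\})}$ for all $\theta\in\Theta_2$, yet $\mathcal{L}_\fR$ will have no totally dense subcollection. Two facts drive everything. First, a totally dense collection is in particular dense, and a subcollection of a non-dense collection is not dense. Second, two distinct elements $\bfL_{\p,\theta_0},\bfL_{\p',\theta_0}$ with the same $\theta_0$ are parallel affine subspaces, hence disjoint. The main tool for non-density is the ``only if'' direction of Proposition \ref{ditect product dense}, combined with Lemma \ref{closure}.

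\textbf{Step 1: a compact neighbourhood of $\overline{\Theta_1\cup\Theta_2}$ that still misses the origin in its convex hull.} Suppose $K_0\subseteq\mathbb{S}^{n-1}$ is closed with $0\notin\conv(K_0)$. Then $\conv(K_0)$ is compact, so it has positive distance from $0$. Hence for small $\eta>0$ the closed $\eta$-neighbourhood $K$ of $K_0$ in $\mathbb{S}^{n-1}$ still satisfies $0\notin\conv(K)$; a separating functional $\alpha$ with $\alpha^\top\theta\le-c<0$ on $K_0$ gives $\alpha^\top\theta<0$ on $K$.

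\textbf{Step 2: the first construction.} Let $K_0$ be such a closed set and $K$ as in Step 1. Put
\[
\fR' := \big\{(\p,\theta') : \p\in H,\ \theta'\in K\smallsetminus K_0\big\},
\]
or a countable subset of it that is dense in $H\times K$. Then $H\times K_0\subseteq\overline{\fR'\smallsetminus(H\times K_0)}$. Since $0\notin\conv(K)$, Proposition \ref{ditect product dense} shows $\mathcal{L}_{H\times K}$ is not dense, and therefore neither is $\mathcal{L}_{\fR'}$. So there is a nonempty open $W\subseteq\mr$ disjoint from $\overline{\bigcup_{\br\in\fR'}\bfL_\br}$.

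\textbf{Step 3: the case split.} Write $\Theta=\Theta_1\cup\Theta_2$.

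If $0\notin\conv(\overline{\Theta})$, take $K_0=\overline{\Theta}$ and $\fR=\fR'$. Both hypotheses of (TDS$\pm$) hold, but $\mathcal{L}_\fR$ is not dense, so no subcollection is totally dense. This covers the case $\Theta_1=\varnothing$.

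Otherwise $\Theta_1\ne\varnothing$ and there is $\theta_0\in\Theta_1$ with $0\notin\conv\big(\overline{\Theta\smallsetminus\{\theta_0\}}\big)$. If $\theta_0$ were a limit point of $\Theta$, this closure would be all of $\overline{\Theta}$, which is the previous case. So $\theta_0$ is isolated in $\Theta$. Take $K_0=\overline{\Theta\smallsetminus\{\theta_0\}}$, choose $\eta$ small enough that $\theta_0\notin K$, and set $\fR=\fR'\cup(H\times\{\theta_0\})$. Both hypotheses hold: the points of $\Theta_2$ lie in $K_0$, and the points of $\Theta_1\smallsetminus\{\theta_0\}$ also lie in $K_0$.

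\textbf{Step 4: ruling out a totally dense subcollection (the main step).} Suppose $\mathcal{L}\subseteq\mathcal{L}_\fR$ is totally dense. By density some $L\in\mathcal{L}$ meets $W$. No element of $\mathcal{L}_{\fR'}$ meets $W$, so $L=\bfL_{\p,\theta_0}$ for some $\p\in H$.

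Apply total density to this $L$ and $W$. Any $L'\in\mathcal{L}$ with $L'\cap L\cap W\ne\varnothing$ meets $W$, hence is also of the form $\bfL_{\p',\theta_0}$. Being parallel to $L$ and meeting it, it equals $L$. So the union in the definition of total density is just $L$, a proper affine subspace with empty interior, which is a contradiction.

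The delicate point is the identification $\bfL_{\p,\theta}=\bfL_{-\p,-\theta}$. If $-\theta_0\in K$, an element of $\mathcal{L}_{\fR'}$ could coincide with a subspace of the form $\bfL_{\cdot,\theta_0}$. This causes no problem, because the argument only uses that members meeting $W$ are not in $\mathcal{L}_{\fR'}$, and $W$ was chosen disjoint from the closure of the whole union $\bigcup_{\br\in\fR'}\bfL_\br$.
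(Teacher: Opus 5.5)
\textbf{Gap in Step 2.} Steps 1, 3 and 4 are sound. Step 4 is exactly the parallel-subspace argument of Lemma \ref{ch_not_tot_dense}. The problem is the claim in Step 2 that $H\times K_0\subseteq\overline{\fR'\smallsetminus(H\times K_0)}$. Equivalently, you are claiming that $K\smallsetminus K_0$ is dense in $K$, which is what your ``countable dense subset'' also presupposes. Since $K$ contains a spherical neighbourhood of $K_0$, this holds only when $K_0$ has empty interior in $\mathbb{S}^{n-1}$. Nothing prevents $\overline{\Theta}$ from having interior.

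For a concrete failure, take $\Theta_1=\varnothing$ and let $\Theta_2$ be an open spherical cap of small radius centred at $\theta^*$. Then $0\notin\conv(\overline{\Theta_2})$, so you are in your first case with $K_0$ a closed cap. The closure of $K\smallsetminus K_0$ misses every interior point of that cap, in particular $\theta^*$. Hence $H\times\{\theta^*\}\not\subseteq\overline{\fR}$, and \eqref{bigtheta2} fails at $\theta^*\in\Theta_2$, so your $\fR$ is not an admissible witness. The same thing happens in your second case whenever $\overline{\Theta\smallsetminus\{\theta_0\}}$ has interior, and then \eqref{bigtheta1} can fail too.

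\textbf{Repair.} The fix is cheap. You never need $\fR'$ to avoid all of $H\times K_0$, because \eqref{bigtheta2} only asks you to approximate $H\times\{\theta\}$ after removing that single direction. So take $\fR':=H\times K$ itself.
\begin{itemize}
\item For $n\ge 2$, every $\theta\in K_0$ is an interior point of $K$ in $\mathbb{S}^{n-1}$, hence not isolated in $K$. Therefore $H\times\{\theta\}\subseteq\overline{H\times(K\smallsetminus\{\theta\})}$, which gives \eqref{bigtheta2}, and \eqref{bigtheta1} is immediate.
\item The non-density of $\mathcal{L}_{H\times K}$ from Proposition \ref{ditect product dense} is unchanged.
\item In your second case, $\theta_0\notin K$ keeps $H\times\{\theta_0\}$ disjoint from $\fR'$, so Step 4 goes through verbatim.
\end{itemize}

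\textbf{Comparison with the paper.} With this repair your argument is essentially the paper's. The paper takes $\fR=H\times(\Theta_1\cup\Theta_2')$, where $\Theta_2'$ consists of sequences converging to each $\theta\in\Theta_2$ from within $\{\alpha^{\top}\theta>\delta\}$. It then uses Proposition \ref{ditect product dense} to show that $\mathcal{L}_{\fR\smallsetminus(H\times\{\theta_0\})}$ is not dense, and concludes via Lemma \ref{ch_not_tot_dense}. Your thickened set $K$ plays the role of those sequences.
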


In plain terms this means the following: for any $\Theta_1, \Theta_2 \subseteq \mathbb{S}^{n-1}$   such that   \eqref{ch_theorem_condition} fails and   any closed linear half-space ${H}$ of $\mathbb{R}^{m}$ there exists a set $\fR \subseteq { \bf T}_{m,n}$ such that conditions \eqref{bigtheta1} and \eqref{bigtheta2} hold, but %the collection $\mathcal{L}_{\fR}$ does not contain a totally dense subcollection.
$\fR$ does not have property (TDS). This gives a %pretty 
{fairly} sharp converse to Theorem  \ref{ch_theorem}.

%\vfil\eject
\medskip
%\centerline{\comm{MY CHANGES STOP HERE}}
We now proceed to prove the two propositions above. The proof of Proposition \ref{any_k_theorem_optimality_new} is based on the following lemma:

\begin{lemma}\label{not_any_k}
    Fix $0 \leq k \leq \min(n-1, m)$, let $\Phi$ be a $k$-dimensional subsphere of $\mathbb{S}^{n-1}$, and  {let}  ${H}$ be {an} $(m-k)$-dimensional linear subspace of $\mathbb{R}^m$. Suppose the collection $\mathcal{L}_{\fR \smallsetminus \left( {H} \times \Phi \right)}$ is not dense. Then 
    $\fR$ does not have property {\rm (TDS)}. %the collection $\mathcal{L}_{\fR}$ does not contain a totally dense subcollection.
\end{lemma}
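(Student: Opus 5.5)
We argue by contradiction. Suppose $\mathcal{L}' \subseteq \mathcal{L}_{\fR}$ is totally dense, and write $\mathcal{L}' = \mathcal{L}_{\fR'}$ with $\fR' \subseteq \fR$; after symmetrizing, $\fR' = -\fR'$. The aim is to find an open set $W'$ together with an element $\bfL_{\br_0}\in\mathcal{L}'$, $\br_0\in H\times\Phi$, such that the closure of the union of all $\bfL_{\br}\in\mathcal{L}'$ meeting $\bfL_{\br_0}\cap W'$ has empty interior. That would contradict total density.

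\emph{Step 1: a candidate open set.} Since $\mathcal{L}_{\fR\smallsetminus(H\times\Phi)}$ is not dense, there is a nonempty open $V\subseteq M_{m,n}$ with
$$V\cap\bigcup_{\br\in\fR\smallsetminus(H\times\Phi)}\bfL_{\br}=\varnothing.$$
By Proposition \ref{density_any_k}, the set $W$ of \eqref{density_in_W} is open and dense, so we may shrink $V$ and assume $V\subseteq W$. Every $A\in V$ then lies on $\bfL_{\br}$ for exactly one pair $\pm\br\in H\times\Phi$.

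\emph{Step 2: locating $\br_0$.} Because $\mathcal{L}'$ is dense, some $\bfL_{\br}\in\mathcal{L}'$ meets $V$, and by Step 1 its index lies in $\fR'\cap(H\times\Phi)$. Fix such an $\br_0$ and put $W':=V$, so that $\bfL_{\br_0}\cap W'\neq\varnothing$.

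\emph{Step 3: the neighbours of $\bfL_{\br_0}$ inside $W'$.} Let $\bfL_{\br}\in\mathcal{L}'$ satisfy $\bfL_{\br}\cap\bfL_{\br_0}\cap W'\neq\varnothing$, and pick $A$ in this intersection. Then $A\in V$, so Step 1 forces $\br\in H\times\Phi$. Since $A$ lies on both $\bfL_{\br}$ and $\bfL_{\br_0}$ and $A\in W$, uniqueness gives $\br=\pm\br_0$, i.e.\ $\bfL_{\br}=\bfL_{\br_0}$. The union in the definition of total density is therefore just $\bfL_{\br_0}$. This is a proper affine subspace of $M_{m,n}$ (its codimension is $m\ge1$), hence closed with empty interior, which contradicts total density.

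\emph{Main obstacle.} The delicate point is Step 3, which needs $V\subseteq W$. This is why we shrink $V$ by intersecting with the open dense set $W$; the intersection is nonempty because $V$ is open and $W$ is dense. The uniqueness statement of Proposition \ref{density_any_k} is exactly what makes distinct members of $\mathcal{L}_{H\times\Phi}$ disjoint inside $W$. A separate check is needed in the degenerate case $k=m$, $H=\{0\}$; there Case 2 of that proposition gives the same uniqueness, so the argument goes through unchanged.
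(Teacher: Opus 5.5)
Your proof is correct and follows essentially the same route as the paper. Both arguments shrink to an open set inside the set $W$ of Proposition~\ref{density_any_k} that misses every $L_{\br}$ with $\br\in E\smallsetminus(H\times\Phi)$, and then use the uniqueness in $W$ to see that $L_{\br_0}$ is the only member of the subcollection meeting $L_{\br_0}$ there. Your Step 2, which picks $\br_0$ from the subcollection itself by using its density, spells out a point the paper covers only by remarking that the argument works for any $E'\subset E$ in place of $E$.
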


%\begin{proof} Let $W$ be an open set \eqref{density_in_W} in $M_{m,n}$, and let $U$ be its open subset such that 
%$$
%\bigcup\limits_{\bfL \in \mathcal{L}_{\fR \smallsetminus \left( {H} \times \Phi \right)}} \bfL \cap U = \varnothing.
%$$
%Let $\bfL_0 \in \mathcal{L}_{{H} \times \Phi}$ be intersecting $U$. Since every point in $U$ can only belong to one element of $\mathcal{L}_{{H} \times \Phi}$ according to Proposition \ref{density_any_k},
%$$\bigcup_{\br \in \fR \,:\,\bfL_{\br}\cap \bfL_0 \cap U  \ne
                          %\varnothing}\bfL_{\phi, \p} = \bfL_0,$$
%and the latter holds for any subset of $\fR$. Thus $\mathcal{L}_{\fR}$ does not contain a totally dense subcollection.
%\end{proof}

%
\begin{proof} {Recall that in Proposition \ref{density_any_k} we have constructed an open dense set $W\subset M_{m,n}$ %be an open set
as in \eqref{density_in_W}
%in $M_{m,n}$, and let $U$ be its open subset such that 
with the property that every point in $W$ can only belong to one element of $\mathcal{L}_{{H} \times \Phi}$. Since $\mathcal{L}_{\fR \smallsetminus \left( {H} \times \Phi \right)}$ is not dense, there exists nonempty open $U\subset W$ such that 
$$
\bigcup\limits_{\bfL \in \mathcal{L}_{\fR \smallsetminus \left( {H} \times \Phi \right)}} \bfL \cap U = \varnothing.
$$
Let $\bfL_0 \in \mathcal{L}_{{H} \times \Phi}$ be intersecting $U$; then it follows that $$\bigcup_{\br \in \fR \,:\,\bfL_{\phi, \p}\cap \bfL_0 \cap U  \ne
                          \varnothing}\bfL_{\phi, \p} = \bfL_0,$$
and the latter holds for any  $E'\subset \fR$ in place of $\fR$. Thus $\mathcal{L}_{\fR}$ does not contain a totally dense subcollection.}
\end{proof}

%\medskip
{\begin{proof}[Proof of Proposition \ref{any_k_theorem_optimality_new}(ab).]
    %\begin{enumerate}
     %   \item[\rm (a)] 
     For part (a), let us fix $0 \leq k \leq \min(n-2, m)$ and ${H}\times \left(\Phi_1 \cup \Phi_2 \right)$ as in Theorem \ref{any_k_theorem}(a). Fix a closed symmetric proper subset $D \subset {H}\times \left(\Phi_1 \cup \Phi_2 \right)$. {Since (TDS+) implies (TDS)}, it is enough to prove that $\mathcal{L}_{D}$ does not contain a totally dense subcollection. 
     %Indeed: if we {assume, by way of contradiction,}  that $D$ has property (TDS+), {it would follow that since $D \subseteq \overline{D} = D$, we would conclude that $D$ has property (TDS).

        There are two possibilities: either the closed symmetric set $D \cap \left( {H} \times \Phi_1 \right)$ is a proper subset of ${H} \times \Phi_1 $, or the closed symmetric set $D \cap \left( {H} \times \Phi_2 \right)$ is a proper subset of ${H} \times \Phi_2$. Without loss of generality, assume the latter; then, by Proposition \ref{density_any_k}, the collection $\mathcal{L}_{D \cap \left( {H} \times \Phi_2 \right)}$ is not dense. Since $$D \smallsetminus \left( {H} \times \Phi_1 \right) \subseteq D \cap \left( {H} \times \Phi_2 \right) ,$$
        the collection $\mathcal{L}_{D \smallsetminus \left( {H} \times \Phi_1 \right)}$ is also not dense. By Lemma \ref{not_any_k}, $\mathcal{L}_{D}$ does not contain a totally dense subcollection.
        %\item[\rm (b)] 
        Part (b) is completely analogous to part (a).
        %\end{enumerate}
\end{proof}
        
        %Before proving   
        For the proof of Proposition \ref{any_k_theorem_optimality_new}(c)  we will  need the following simple lemma:

        \begin{lemma}\label{distance_bounded_from_zero_lem}
            Fix $(\p_0, \theta_0) \in \tmn$. Let $K \subseteq M_{m,n}$ be a compact set which does not intersect $\bfL_{\p_0, \theta_0}$. Then there exists an open neighborhood $U$ of $(\p_0, \theta_0)$ in $\tmn$ such that 
            $$
            \Big( \bigcup\limits_{(\p, \theta) \in U} \bfL_{\p, \theta} \Big) \cap K = \varnothing.
            $$
        \end{lemma}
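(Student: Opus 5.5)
The plan is a compactness argument combined with a uniform continuity estimate for the distance from a fixed matrix to the affine subspaces $\bfL_{\p,\theta}$.

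First, recall that for $\theta\in\mathbb{S}^{n-1}$ the set $\bfL_{\p,\theta}=\{A: A\theta=\p\}$ is an affine subspace. By \eqref{direct product}, the Euclidean (Frobenius) distance from $A$ to it equals $\|A\theta-\p\|$, since each row contributes the distance to the hyperplane $\{\ba:\ba\cdot\theta=p_i\}$ and $\|\theta\|=1$. This identity was already used in the proof of Proposition~\ref{ditect product dense}. So I would define
$$f(A,\p,\theta):=\|A\theta-\p\|=\dist(A,\bfL_{\p,\theta}),$$
which is jointly continuous on $M_{m,n}\times\tmn$.

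Next, use compactness. Since $K\cap\bfL_{\p_0,\theta_0}=\varnothing$, we have $f(A,\p_0,\theta_0)>0$ for every $A\in K$. The function $A\mapsto f(A,\p_0,\theta_0)$ is continuous on the compact set $K$, so it attains a positive minimum $\delta>0$ (if $K=\varnothing$ the statement is trivial). Let $\rho:=\max_{A\in K}\|A\|$, finite by compactness, with $\|\cdot\|$ the operator norm.

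Then make the neighborhood explicit. For $(\p,\theta)\in\tmn$ and $A\in K$,
$$\|A\theta-\p\|\ \ge\ \|A\theta_0-\p_0\|-\|A\|\,\|\theta-\theta_0\|-\|\p-\p_0\|\ \ge\ \delta-\rho\|\theta-\theta_0\|-\|\p-\p_0\|.$$
Take $U:=\{(\p,\theta): \|\p-\p_0\|<\delta/3,\ \|\theta-\theta_0\|<\delta/(3\rho+1)\}$. This set is open in $\tmn$ for the product metric fixed earlier. On $U$ the right-hand side exceeds $\delta/3>0$, so $A\notin\bfL_{\p,\theta}$ for every $A\in K$ and every $(\p,\theta)\in U$, which is exactly the required disjointness.

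There is no real obstacle here. The only point needing care is the distance identity $\dist(A,\bfL_{\p,\theta})=\|A\theta-\p\|$, which relies on $\theta$ being a unit vector. Even without it, the argument goes through: it suffices that $A\in\bfL_{\p,\theta}$ if and only if $A\theta=\p$, together with the displayed lower bound.
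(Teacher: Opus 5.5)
Your proof is correct, and it is more direct than the paper's.

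The paper parametrizes a neighborhood of $(\p_0,\theta_0)$ as the set of pairs $(\p_0+A\theta_0,\ g\theta_0/\|g\theta_0\|)$, with $A$ near $0$ in $M_{m,n}$ and $g$ near $I_n$ in $\GL_n(\R)$. It then uses the identity $\bfL_{A\q+\p,g\q}=(A+\bfL_{\p,\q})g^{-1}$ to move the perturbation from the subspace onto $K$, comparing the fixed subspace $\bfL_{\p_0,\theta_0}$ with the moving compact set $\|g\theta_0\|^{-1}Kg-A$. To finish, it needs three further facts:
\begin{itemize}
\item the distance function $\Delta(A,g)$ is continuous;
\item the maps $A\mapsto A\theta_0$ and $g\mapsto g\theta_0$ are open;
\item the radial projection onto $\mathbb{S}^{n-1}$ is open.
\end{itemize}
These together show that $U_1\times U_2$ is an open neighborhood.

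You use only the membership criterion $A\in\bfL_{\p,\theta}\iff A\theta=\p$ and the triangle inequality. This gives the uniform bound $\|A\theta-\p\|>\delta/3$ for all $A\in K$ and all $(\p,\theta)$ in an explicit product ball. Via the distance identity it even gives $\dist(K,\bfL_{\p,\theta})>\delta/3$ on $U$.

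Your version is shorter and fully quantitative. It avoids the continuity-of-$\Delta$ and open-map steps, which the paper asserts without much justification. The paper's route fits its group-action viewpoint from \eqref{scaling} but buys nothing extra here.

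Your treatment of the degenerate cases is fine: $K=\varnothing$ is trivial, and $\rho=0$ is handled by the $+1$ in the denominator.
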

        \begin{proof}
            %Let us assume that $\p_0 \neq 0$. %Suppose $K$ is contained in the $C$-ball in $M_{m,n}$, centered at the origin.%, and let $\delta: = \dist(K, \bfL_{\p_0, \theta_0})$.
            Consider the function 
            $$
            \Delta: \,\,\, M_{m,n} \times \GL_n(\R) \rightarrow \R_{\geq 0}, \,\,\, (A, g) \mapsto \dist\left( \bfL_{\p_0, \theta_0}, \frac{1}{\|g \theta_0\|} K g - A \right).
            $$
            Since $\Delta$ is continuous and $\dist(K, \bfL_{\p_0, \theta_0})>0$, there exist an open neighborhood $U_1'$ of $0$ in $M_{m,n}$ and an open neighborhood $U_2'$ of $I_n$ in $\GL_n(\R)$ such that $\Delta(A,g) > 0$ for $(A, g) \in U_1' \times U_2'$. Now consider the sets 
            $$
            U_1: = \{ \p_0 + A \theta_0: A \in U_1' \} \,\,\,\, \text{and} \,\,\,\, U_2: = \left\{ \frac{g \theta_0}{\|g \theta_0\|}: g \in U_2'  \right\}.
            $$
            %Clearly, $U_1$ is an open neighborhood of $\p_0$ in $\R^m$ and $U_2$ is an open neighborhood of $\theta_0$ in $\mathbb{S}^{n-1}$.
            Since the maps 
            $$
            A \mapsto A \theta_0 \,\,\,\, \text{and} \,\,\,\, g \mapsto g \theta_0
            $$
            are open,   $U_1$ is open in $\R^m$ and the set $\left\{ g \theta_0: g \in U_2'  \right\}$ is open in $\R^n$. The radial projection from $\R^n$ to $\mathbb{S}^{n-1}$ is also an open map, thus $U_2$ is   open in $\mathbb{S}^{n-1}$.
            We will show that the set $U: = U_1 \times U_2$ satisfies the conclusion of the lemma. We need the following observation  that is similar to \eqref{scaling}: 
        \eq{scalingandshifting}
{\bfL_{A \q + \p,g\q} = \left( A + \bfL_{\p,\q}\right) g^{-1} \ \forall\, g \in \GL_n(\R),\ A \in M_{m,n}, \ 
        \p \in \R^m\text{  and }\q \in \R^n.}
 Then one can write
            $$
         \bigcup\limits_{(\p, \theta) \in U} \bfL_{\p, \theta} = \bigcup\limits_{(A, g) \in U_1' \times U_2'} \bfL_{A \theta_0 + \p_0, \frac{g \theta_0}{\|g \theta_0\|}} \underset{\equ{scalingandshifting}}{=} \bigcup\limits_{(A, g) \in U_1' \times U_2'} \left( A + \bfL_{\p_0, \theta_0} \right) \left( \frac{g }{\|g \theta_0\|} \right)^{-1}.
            $$
      It remains to notice that 
            $$
            \left( A + \bfL_{\p_0, \theta_0} \right) \left( \frac{g }{\|g \theta_0\|} \right)^{-1} \cap K = \varnothing \,\,\,\,\,\, \iff \,\,\,\,\,\, \bfL_{\p_0, \theta_0} \cap \frac{1}{\|g \theta_0\|} K g - A  = \varnothing,
            $$
            and the latter is guaranteed for $(A, g) \in U_1' \times U_2'$ by the definition of $U_1'$ and $U_2'$.    \end{proof}
        
       % \smallskip

\begin{proof}[Proof of Proposition \ref{any_k_theorem_optimality_new}(c).]
         Fix a closed symmetric proper subset $D$ of ${H} \times \Phi$. To prove the proposition, it is enough to construct a set $E \subset \tmn$ such that $D\subseteq \overline{E\smallsetminus D}$, but $\mathcal{L}_E$ does not contain a totally dense subcollection. We will do slightly more and construct such an $E$ satisfying $D\subseteq \overline{E\smallsetminus {H} \times \Phi}$.
         
         By Proposition \ref{density_any_k} there exists an open neighborhood $V \subseteq M_{m,n}$ such that $$\Big( \bigcup\limits_{(\p', \theta') \in D} \bfL_{\p', \theta'} \Big) \cap V = \varnothing.$$ Let $K$ be a compact subset of $V$ with nonempty interior. By Lemma \ref{distance_bounded_from_zero_lem}, any $(\p', \theta') \in D$ has an open neighborhood $U(\p', \theta')$ in $\tmn$ such that $\Big( \bigcup\limits_{(\p, \theta) \in U(\p', \theta')} \bfL_{\p, \theta} \Big) \cap K = \varnothing.$ Then define $$E: = \bigcup\limits_{(\p', \theta') \in D} U(\p', \theta').$$ We claim that the above set %$E$ 
         satisfies the two desired conditions. 

        Indeed: since a nonempty open set in $\tmn$ cannot be contained in $H \times \Phi$, every neighborhood of any point $(\p', \theta') \in D$ contains a point from $E \smallsetminus \left( H \times \Phi \right)$, thus $D \subseteq \overline{E \smallsetminus \left( H \times \Phi \right)}$. At the same time, 
        $$
        \Big( \bigcup\limits_{(\p, \theta) \in E} \bfL_{\p, \theta} \Big) \cap K = \Big( \bigcup\limits_{(\p', \theta') \in D} \bigcup\limits_{(\p, \theta) \in U(\p, \theta)} \bfL_{\p, \theta} \Big) \cap K = \bigcup\limits_{(\p', \theta') \in D} \Big( \bigcup\limits_{(\p, \theta) \in U(\p, \theta)} \bfL_{\p, \theta}  \cap K \Big) = \varnothing.
        $$
        Since $K$ has nonempty interior, the collection $\mathcal{L}_{\fR}$ is not dense and does not contain a totally dense subcollection.
\end{proof}}

%\comm{Sorry, this seems to be the end of the discussion of optimality of Theorem \ref{any_k_theorem}, but I have no idea how it ended. In what sense the theorem is optimal? And is it really? suppose there is no $k$ such that one of \eqref{anykA}, \eqref{anykB} and \eqref{anykC} holds. Does it mean that there is no totally dense subcollection? probably it doesn't. Then what have you actually proved? need to give some conclusions here. It is written much better for  Theorem 
%\ref{ch_theorem} below.}
\medskip

%Now we can prove Lemma \ref{ch_not_tot_dense}.

%In this section 
Our next goal is to prove Proposition \ref{ch_thm_optimal_new}.
%show that the restrictions on $P$ and $Q$ %introduced in %both 
%{given in Theorem %\ref{any_k_theorem} and %Theorem 
%\ref{ch_theorem}} cannot be simultaneously relaxed and still guarantee total density. 
{The next lemma is an intermediate step in this direction.} %We start with Theorem \ref{ch_theorem}.

\begin{lemma}\label{ch_not_tot_dense}
    Let ${H}$ be a half-space of $\mathbb{R}^m$ and suppose that $\fR \subseteq \mathbb{S}^{n-1} \times {H}$ (we can always assume this condition, since one of any two opposite elements of $\tmn$ belongs to this set). Suppose that there exists $\theta_0 \in \mathbb{S}^{n-1}$ such that
\begin{equation}\label{nonch_theorem_condition}
   \text{the collection } \, \mathcal{L}_{\fR \smallsetminus \left( {H}\times \{ \theta_0 \}  \right)} \, \text{is not dense.}
    \end{equation}
    Then $\fR$ does not have property {\rm (TDS)}. %$\mathcal{L}_{\fR}$ does not contain a totally dense subcollection.
\end{lemma}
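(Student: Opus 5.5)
The plan is to mimic the proof of Lemma \ref{not_any_k}. There the key input was the open dense set $W$ of Proposition \ref{density_any_k}, on which every point lies on only one element of $\mathcal{L}_{H\times\Phi}$. Here the role of $H\times\Phi$ is played by $H\times\{\theta_0\}$, and uniqueness holds for a much simpler reason. If $A\in \bfL_{\p,\theta_0}$, then $\p=A\theta_0$ is determined by $A$. Moreover, since $\fR\subseteq H\times\mathbb{S}^{n-1}$ and $H$ is a half-space, the opposite element $(-\p,-\theta_0)$ does not lie in $\fR$ unless $\p\in\partial H$. So every $A$ lies on exactly one subspace $\bfL_{\br}$ with $\br\in\fR\cap\big(H\times\{\theta_0\}\big)$, namely the one with $\br=(A\theta_0,\theta_0)$ when that point belongs to $\fR$. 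No genericity argument is needed.

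\emph{Step 1.} By \eqref{nonch_theorem_condition}, choose a nonempty open $U\subseteq M_{m,n}$ that misses every $\bfL_{\br}$ with $\br\in\fR\smallsetminus(H\times\{\theta_0\})$. Let $\fR'\subseteq\fR$ be any subcollection, and suppose, aiming for a contradiction, that $\mathcal{L}_{\fR'}$ is totally dense. Then it is in particular dense, so some $\bfL_0=\bfL_{\br_0}$ with $\br_0\in\fR'$ meets $U$. By the choice of $U$ we must have $\br_0=(\p_0,\theta_0)\in H\times\{\theta_0\}$.

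\emph{Step 2.} Apply the total density condition with $W:=U$ and $L:=\bfL_0$. Let $\br\in\fR'$ be such that $\bfL_{\br}\cap\bfL_0\cap U\neq\varnothing$. Since $\bfL_{\br}$ meets $U$, Step 1 forces $\br=(\p,\theta_0)$ with $\p\in H$. Take $A$ in the intersection; then $\p=A\theta_0=\p_0$, so $\bfL_{\br}=\bfL_0$. Hence the union in the definition of total density equals $\bfL_0$ itself. This is a proper affine subspace of codimension $m\ge1$, so it is closed with empty interior, contradicting total density. Therefore no subcollection of $\mathcal{L}_{\fR}$ is totally dense, i.e.\ $\fR$ does not have property (TDS).

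\emph{Main obstacle.} The only delicate point is the identification in Step 2. One must make sure the assumption $\fR\subseteq H\times\mathbb{S}^{n-1}$ (in the $\tmn$ ordering) is used exactly so that elements $\br$ with second coordinate $-\theta_0$ do not sneak in. Such an $\br$ would lie outside $H\times\{\theta_0\}$ and is therefore already excluded by the choice of $U$; and even if it were allowed, it would give the same subspace $\bfL_{-\p,-\theta_0}=\bfL_{\p,\theta_0}$ by \eqref{scaling}. So the argument is robust, and beyond this check the proof is a direct unwinding of the definitions.
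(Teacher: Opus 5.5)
Your proof is correct and follows essentially the same argument as the paper. You pick a nonempty open set missed by every subspace coming from $\fR\smallsetminus(H\times\{\theta_0\})$, observe that any member of a would-be totally dense subcollection meeting it has the form $\bfL_{\p,\theta_0}$, and use that such subspaces are pairwise parallel (your identity $\p=A\theta_0=\p_0$) to conclude that the union in the definition of total density is the single subspace $\bfL_0$, which has empty interior. You state this final contradiction more cleanly than the paper does, and you are right that the half-space assumption on $\fR$ is not actually needed, since the choice of $U$ already excludes elements with second coordinate $-\theta_0$.
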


\begin{proof}%[Proof of Lemma \ref{ch_not_tot_dense}.]
If the collection $\mathcal{L}_{\fR}$ is not dense, it automatically does not contain a totally dense subcollection; so from now on assume 
%this collection 
{$\mathcal{L}_{\fR}$} is dense and contains a totally dense subcollection {$\mathcal{L}$}. Take $\theta_0$ satisfying \eqref{nonch_theorem_condition}; then there exists a nonempty open  $W \subseteq M_{m,n}$ such that $$W \cap \bigcup\limits_{(\p, \theta) \in \fR \smallsetminus \left( {H}\times \{ \theta_0 \}  \right)} \bfL_{\p, \theta} = \varnothing.$$ Fix any subspace from $\mathcal{L}_{\fR}$ intersecting $W$; %{and different from $\bfL_{\theta_0, \p}$}
 it has to be of the form $\bfL_{\p, \theta_0}$. %\comm{(I am confused here, what is $\p$ here and are the two  $\p$s the sane  or different?)} 
Suppose $\bfL_{\p, \theta_0} \in \mathcal{L}$ is a subspace intersecting $W$. Since all subspaces of %such
{the} form $\bfL_{\p, \theta_0}$ are parallel, {it follows that}
$$
\bigcup\limits_{\bfL \in \mathcal{L}: \,\,\, \bfL \cap \bfL_{\p, \theta_0} \cap W{\ne  \varnothing}} \bfL = \varnothing.
$$
It implies that no subspaces intersecting $W$ {other than $\bfL_{\p, \theta_0}$} can belong to %a totally dense subcollection
{$\mathcal{L}$}, which means that %totally dense subcollection
{$\mathcal{L}$}  is not dense; this contradiction completes the proof.
\end{proof}

%\vskip+0.3cm

%Lemma \ref{ch_not_tot_dense} alone does not mean that the Khintchine-Jarnik result fails to hold. \comm{Do we want some nearly trivial example here? I want to add some sort of condition of P and Q being "discrete enough" and prove that then Kh-J theorem fails - need to find a good formulation; it definitely works for subsets of integers.} 

%\vskip+0.3cm
%\begin{proposition}\label{ch_thm_optimal}
% Fix two disjoint {(possibly empty)} sets $\Theta_1, \Theta_2 \subseteq \mathbb{S}^{n-1}$ such that condition \eqref{ch_theorem_condition} does not hold. It is enough to show that for any closed linear half-space ${H}$ of $\mathbb{R}^{m}$ there exists a set $\fR \subseteq { \bf T}_{m,n}$ such that conditions \eqref{bigtheta1} and \eqref{bigtheta2} hold, but the collection $\mathcal{L}_{\fR}$ does not contain a totally dense subcollection.
%\end{proposition}

\begin{proof}[Proof of Proposition \ref{ch_thm_optimal_new}.]

 Fix two disjoint (possibly empty) sets $\Theta_1, \Theta_2 \subseteq \mathbb{S}^{n-1}$ such that condition \eqref{ch_theorem_condition} does not hold and a closed linear half-space ${H}$ of $\mathbb{R}^{m}$. It is enough to show that there exists a set $\fR \subseteq { \bf T}_{m,n}$ such that conditions \eqref{bigtheta1} and \eqref{bigtheta2} hold, but the collection $\mathcal{L}_{\fR}$ does not contain a totally dense subcollection.

    First assume that $\Theta_1$ is nonempty. {Since \eqref{ch_theorem_condition} fails, there exists} ${\theta_0} \in \Theta_1$ such that $$0 \notin \conv \big( \overline{\Theta_1 \cup \Theta_2 \smallsetminus \{ {\theta_0}\}} \big).$$ Let $\alpha \in \mathbb{S}^{n-1}$ and $\delta > 0$ be such that $\alpha^{\top} \theta > \delta$ for any $\theta \in \overline{\Theta_1 \cup \Theta_2 \smallsetminus \{ {\theta_0} \}}$,
    {and in particular for any $\theta \in   \Theta_2$}; such $\alpha$ and $\delta$ exist due to the Separating Hyperplane Theorem. 

    {Now for any $\theta \in \Theta_2$ choose} a sequence of elements $\{ \theta_i(\theta)\}_{i=1}^{\infty}$ converging to $\theta$ such that $$\alpha^{\top} \theta_i(\theta) > \delta\text{ and }\theta_i(\theta) \neq \theta,$$ and define
    $$
    \Theta_2' := \bigcup\limits_{\theta \in \Theta_2} \{ \theta_i(\theta)\}_{i=1}^{\infty} \,\,\,\,\,\,\,\, \text{and} \,\,\,\,\,\,\,\, \fR := {H}\times\left( \Theta_1 \cup \Theta_2' \right).
    $$
    By construction {${H}\times\Theta_1  \subseteq {\fR}$, hence \eqref{bigtheta1}  holds; and,  since any $\theta\in\Theta_2$ belongs to the closure of $\Theta_2'\smallsetminus \{\theta\}$, it follows that \eqref{bigtheta2} is satisfied as well.}
    %$\Theta_2 \subseteq \overline{\Theta_2'}$, one has ${H}\times\Theta_2  \subseteq \overline{\fR}$.
    %Thus conditions \eqref{bigtheta1} and \eqref{bigtheta2} hold. 
    {On the other hand,} since $\alpha^{\top} \theta \geq \delta$ for any $\theta \in \overline{\Theta_1 \cup \Theta_2' \smallsetminus \{ {\theta_0}\}}$, one has $0 \notin \conv \big( \overline{\Theta_1 \cup \Theta_2' \smallsetminus \{ {\theta_0}\}} \big)$. By Proposition \ref{ditect product dense} the collection $\mathcal{L}_{\fR \smallsetminus \left({H}\times \{ \theta_0\}  \right)}$ is not dense. Hence, by Lemma \ref{ch_not_tot_dense}, $\mathcal{L}_{\fR}$ does not contain a totally dense subcollection.

    In the case when $\Theta_1 = \varnothing$ the proof follows the same lines. We define $\alpha \in \mathbb{S}^{n-1}$ and $\delta > 0$ by the condition $\alpha^{\top} \theta > \delta$ for any $\theta \in \overline{\Theta_2}$, construct $\Theta_2'$ as above and define $\fR : = {H}\times\Theta_2 $. By Proposition \ref{ditect product dense} the collection $\mathcal{L}_{\fR}$ is not dense, and thus does not contain a totally dense subcollection.
\end{proof}

%\bigskip

\section{Logarithmically dense sets}\label{logdense_sets_section}

\subsection{Logarithmic density explained}\label{logdense_expl} In this section we collect several equivalent characterizations of logarithmically dense sets and discuss some interesting   examples. Recall that $P\subseteq \R^m$ was defined to be  logarithmically dense along $\phi \in \mathbb{S}^{m-1}$ if for any $\varepsilon > 0$  the set
%\eq{pepsilon}{
$
%Q_{\varepsilon}^{\theta}: = \left\{ \q \in Q: \,\,\, \left\| \frac{\q}{\| \q \|} - \theta \right\| < \varepsilon \right\} \,\,\,\,\,\,\,\,\,\,\,\, \text{and} \,\,\,\,\,\,\,\,\,\,\,\, 
P_{\varepsilon}^{\phi}  = 
\left\{    \p \in P :  \left\| \frac{\p}{\| \p \|} - \phi\right\| < \varepsilon \right\} 
$
contains an
unbounded sequence $\{ \p_k \}_{k=1}^{\infty}$ %of positive real numbers 
with \eq{lim=1}{\lim\limits_{k \rightarrow \infty} \frac{\|\p_{k+1}\|}{\|\p_k\|}=1.} 
%But this may not be the best way to describe  logarithmic density. \comm{Vasya -- this phrase raises a natural question: why don't we define it in a different way then? What we probably want to say instead is that sometimes alternative definitions/descriptions are more convenient to work with.} 
The next lemma offers several {alternative definitions}. 

\begin{lemma}\label{LDequiv}
For $P\subseteq \R^m$ and $\phi \in \mathbb{S}^{m-1}$, the following are equivalent.
\begin{itemize}
\item[\rm (i)] $P$ is  logarithmically dense along $\phi $.
\item[\rm (ii)] For any $\varepsilon,\delta > 0$ there exists $C' = C'(\varepsilon,\delta)$ such that $$C \ge C' \Longrightarrow 
\left\{  \p \in P: \ \left\| \frac{\p}{\| \p \|} - \phi\right\| < \varepsilon ,\ C < \|\p\| < (1+\delta)C\right\} \ne\varnothing.$$
\item[\rm (iii)] For any $\varepsilon  > 0$ there exists $C'' = C''(\varepsilon)$ such that $$C \ge C'' \Longrightarrow 
\left\{  \p \in P: \ \left\| \frac{\p}{\| \p \|} - \phi\right\| < \varepsilon ,\ C < \|\p\| < (1+\varepsilon)C\right\} \ne\varnothing.$$
\item[\rm (iv)] For any $\varepsilon  > 0$ there exists $C^\circ = C^\circ(\varepsilon)$ such that $$C \ge C^\circ \Longrightarrow 
\big\{  \p \in P: \ \left\|  \p - C\phi\right\| < C\varepsilon  \big\} \ne\varnothing.$$
\item[\rm (v)] $P+\vb$ is  logarithmically dense along $\phi $ for any $\vb\in\R^m$.
\end{itemize}
\end{lemma}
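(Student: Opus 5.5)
The plan is to prove the cycle (i)$\Rightarrow$(iii)$\Rightarrow$(ii)$\Rightarrow$(i), then show (iii)$\Leftrightarrow$(iv), and finally (iv)$\Leftrightarrow$(v).

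For (i)$\Rightarrow$(iii), fix $\varepsilon>0$ and take the unbounded sequence $\{\p_k\}\subseteq P_\varepsilon^\phi$ from the definition with $\|\p_{k+1}\|/\|\p_k\|\to1$. We may pass to a subsequence of strictly increasing norms; this is harmless, because discarding terms whose norm does not exceed a previous maximum keeps consecutive ratios tending to $1$. Choose $N$ so that $\|\p_{k+1}\|<(1+\varepsilon)\|\p_k\|$ for $k\ge N$, and put $C''=\|\p_N\|$. For $C\ge C''$, let $k$ be maximal with $\|\p_k\|\le C$. Then $C<\|\p_{k+1}\|<(1+\varepsilon)\|\p_k\|\le(1+\varepsilon)C$. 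This is exactly the argument of Lemma \ref{compatibility_from_logdense}.

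For (iii)$\Rightarrow$(ii), apply (iii) with $\min(\varepsilon,\delta)$. For (ii)$\Rightarrow$(i), fix $\varepsilon$ and pick points $\p_j\in P_\varepsilon^\phi$ with $\|\p_j\|\in(C_j,(1+1/j)C_j)$. Build the $C_j$ greedily: use $\delta=1/j$, start at $C'(\varepsilon,1/j)$, and step $C\mapsto(1+1/j)C$ a number of times before moving to $\delta=1/(j+1)$. Consecutive chosen points then have norm ratio at most about $(1+1/j)^2$, which tends to $1$. The main care is at the junction between levels $j$ and $j+1$; to handle it, start level $j+1$ only after the running $C$ exceeds $C'(\varepsilon,1/(j+1))$, and keep stepping level $j$ until then.

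For (iii)$\Leftrightarrow$(iv), note that $\|\p-C\phi\|<C\varepsilon$ forces $\|\p\|\in((1-\varepsilon)C,(1+\varepsilon)C)$ and $\|\p/\|\p\|-\phi\|\le 2\varepsilon/(1-\varepsilon)$. Conversely, a direction within $\varepsilon$ of $\phi$ and a norm in $(C,(1+\varepsilon)C)$ give $\|\p-C\phi\|\le\|\p-\|\p\|\phi\|+(\|\p\|-C)\le 3\varepsilon C$. So each of (iii) and (iv) implies the other after rescaling $\varepsilon$ by a constant.

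For (v), use (iv): if $\|\p-C\phi\|<C\varepsilon/2$ and $C\ge 2\|\vb\|/\varepsilon$, then $\|\p+\vb-C\phi\|<C\varepsilon$. Hence (iv) for $P$ implies (iv) for $P+\vb$. The converse (v)$\Rightarrow$(i) is the case $\vb=0$.

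The only step with real bookkeeping is (ii)$\Rightarrow$(i), i.e.\ building one sequence in which the thresholds for varying $\delta$ mesh correctly. Everything else is elementary triangle-inequality estimates.
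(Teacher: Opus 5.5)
Your proof is correct, but it is organized differently from the paper's.

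\textbf{Comparison with the paper.} The paper runs the single cycle (i)$\Rightarrow$(ii)$\Rightarrow$(iii)$\Rightarrow$(iv)$\Rightarrow$(i). It treats (v) separately, checking directly on a sequence that $\p_k+\vb$ eventually lies in the cone and that the norm ratios still tend to $1$. Its constructive step is (iv)$\Rightarrow$(i), using balls centred at $(1+2\varepsilon)^{2k}C^\circ(\varepsilon)\phi$ and then repeating with $\varepsilon/2,\varepsilon/4,\dots$. There the cone aperture and the gap ratio shrink together, so one must also check that the chosen points eventually lie in the prescribed cone.

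You return to (i) from (ii) instead, where aperture and ratio are decoupled. With $\varepsilon$ fixed and $\delta=1/j$, every chosen point is automatically in $P_\varepsilon^\phi$, and your running-$C$ device handles the junction between levels explicitly. The cost is that you need both directions of (iii)$\Leftrightarrow$(iv), whereas the paper only needs (iii)$\Rightarrow$(iv). The gain is that (v) becomes a one-line triangle inequality in the ball formulation (iv), with no ratio estimates.

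\textbf{Two small bookkeeping points.}

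First, in (ii)$\Rightarrow$(i), require at least one step per level, or replace $C'(\varepsilon,1/j)$ by $\max\{C'(\varepsilon,1/j),j\}$. Otherwise, if the thresholds stay bounded, ``keep stepping until the running $C$ exceeds the next threshold'' may prescribe no steps from some level on, and the sequence would be finite. With one step per level the running $C$ grows at least like $\prod_{i\le j}(1+1/i)=j+1$, so unboundedness is automatic.

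Second, in (iv)$\Rightarrow$(iii), shrinking $\varepsilon$ alone is not quite enough; you must also re-centre. Apply (iv) at $C/(1-\eta)$, so that the norms land in $\big(C,\frac{1+\eta}{1-\eta}C\big)$. For $\varepsilon\le 1$, which suffices by monotonicity of (iii) in $\varepsilon$, the choices $\eta=\varepsilon/3$ and $C''=(1-\eta)C^\circ(\eta)$ work. They give direction error $<2\eta<\varepsilon$ and $\frac{1+\eta}{1-\eta}\le 1+\varepsilon$.
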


\begin{proof} Assuming (i), take an
unbounded sequence $\{ \p_k \}\subseteq P_{\varepsilon}^{\phi}$ %of positive real numbers 
with \equ{lim=1} and choose $k_0$ such that $\left\|\frac{\|\p_{k+1}\|}{\|\p_k\|} - 1\right\| < (1+\delta)$ for all $k\ge k_0$; then $C'(\varepsilon,\delta) = \|\p_{k_0}\|$ will satisfy (ii). The implication (ii)$\Longrightarrow$(iii) follows by setting $C''(\varepsilon) = C'(\varepsilon,\varepsilon)$, and to prove  (iii)$\Longrightarrow$(iv) it  suffices to take $C^\circ(\varepsilon) = C''\big(\min(\varepsilon/3,1)\big)$: indeed, if  $\p \in P$ is such that $\left\|\frac{\p}{\|\p\| } - \phi\right\| < \frac{\varepsilon}{3}$ and $ C < \|\p\| < \left(1 + \frac{\varepsilon}{3}\right)C,$
then $$
\begin{aligned}\|\p - C\phi\| &\leq \big\|\p - \|\p\| \phi\big\| + (\|\p\| - C)\|\phi\| = \|\p\| \left\|\frac{\p}{\|\p\| } - \phi\right\| + (\|\p\| - C)\\ &< \left(1+\frac{\varepsilon}{3}\right)C \cdot \frac{\varepsilon}{3} + \frac{\varepsilon}{3}C = \frac{\varepsilon}{3}C\left(2 + \frac{\varepsilon}{3}\right) \le \frac{\varepsilon}{3}C \cdot 3 = \varepsilon C,\end{aligned}$$
where in the last step we used $\varepsilon/3 \leq 1$.

 Assuming (iv), %take $C = C_0 (\varepsilon)$ and 
consider  balls centered at $(1+2\varepsilon)^{2k}C^\circ(\varepsilon)\phi$ with radius $(1+2\varepsilon)^{2k}C^\circ(\varepsilon)\varepsilon$ for $k = 1,\dots,k^*$, where $$k^* := \max\left\{k: (1+2\varepsilon)^{2k}C^\circ(\varepsilon) < C^\circ(\varepsilon/2)\right\}.$$ They are all disjoint as long as $(1+\varepsilon) <  (1-\varepsilon)(1+2\varepsilon)^2$, which is equivalent to $\varepsilon < 1/\sqrt{2}$. By (iv) each of them contains a point of $P$ which we can call $\p_k$, and it follows that 
$$\frac{\|\p_{k+1}\|}{\|\p_k\|}\le \frac{(1+2\varepsilon)^2(1+\varepsilon)}{(1-\varepsilon)}.$$ Repeating this process with $\varepsilon$ replaced by  $\varepsilon/2$,  $\varepsilon/4$ etc.\ and arranging all the points in an infinite sequence $\{\p_k\}_{k=1}^{\infty}$, one gets \equ{lim=1}, and (i) follows.

Finally, if $\{ \p_k \}\subseteq P_{\varepsilon/2}^{\phi}$ %of positive real numbers 
is an unbounded sequence satisfying \equ{lim=1}, then one has $\p_k +\vb \in P_{\varepsilon}^{\phi}$ if $k$ is {sufficiently large},
%large enough, 
and, since
$$
\frac{\|\p_{k+1}\|-\|\vb\|}{\|\p_k\|+\|\vb\|} \le \frac{\|\p_{k+1}+\vb\|}{\|\p_k+\vb\|}\le \frac{\|\p_{k+1}\|+\|\vb\|}{\|\p_k\|-\|\vb\|},
$$
it follows that $\lim\limits_{k \rightarrow \infty} \frac{\|\p_{k+1}+\vb\|}{\|\p_k+\vb\|}=1$, proving the equivalence of (i) and (v).
\end{proof}
It is also worth pointing out that the original definition of logarithmic density, as well as all the above equivalent conditions, are independent on the choice of the norm on $\R^m$. 

\smallskip

To construct  logarithmically dense sets one can start with the case $m=1$ and $\phi = 1$. Examples include the set $\mathbb P$ of prime numbers, as well as any nontrivial intersection of $\mathbb P$ with any residue class (this follows from the Prime Number Theorem along arithmetic progressions). Another example is given by the set of natural numbers with two given prime factors, or, more generally, numbers of the form $a^kb^l$ where $a$ and $b$ are multiplicatively independent. Finally let us observe that if $P\subseteq \R_{\geq 0}$ is logarithmically dense in the positive direction, then so is $\{f(x) : x\in P\}$ for any polynomial  $f$ with a positive leading coefficient.

\smallskip

In order to exhibit {higher-dimensional} examples, we will show that products of   logarithmically dense sets are logarithmically dense. More precisely, we have

\begin{proposition}\label{prod_of_log_dense}
    Let ${H}_i \subseteq \mathbb{R}^{m_i}, \,\, i = 1, 2$, be two cones. If $P_1$ is logarithmically dense in ${H}_1$ and $P_2$ is logarithmically dense in ${H}_2$, then $P_1 \times P_2$ is logarithmically dense in ${H}_1 \times {H}_2 \subseteq \R^{m_1+m_2}$.
\end{proposition}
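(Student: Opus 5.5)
The plan is to use characterization (iv) of Lemma \ref{LDequiv}, which is well suited to products: it asks for a point of $P$ in the ball $B_{C\varepsilon}(C\phi)$ for all large $C$. Fix a unit vector $\phi\in\mathbb{S}^{m_1+m_2-1}\cap({H}_1\times{H}_2)$ and write $\phi=(\phi_1,\phi_2)$ with $\phi_i\in{H}_i$. Since ${H}_i$ are cones, $\phi_i/\|\phi_i\|\in{H}_i\cap\mathbb{S}^{m_i-1}$ whenever $\phi_i\neq0$. By Lemma \ref{LDequiv}(iv), it suffices to show the following: for every $\varepsilon>0$ and every sufficiently large $C$ there is $(\p_1,\p_2)\in P_1\times P_2$ with $\|(\p_1,\p_2)-C\phi\|<C\varepsilon$. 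In the product space we may use the norm $\max(\|\cdot\|,\|\cdot\|)$, since logarithmic density does not depend on the norm.

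\textbf{Generic case, $\phi_1,\phi_2\neq0$.} Put $\psi_i=\phi_i/\|\phi_i\|$ and $C_i=C\|\phi_i\|$. Apply (iv) to $P_i$ along $\psi_i$ with tolerance $\varepsilon$. For $C_i\ge C^\circ_i(\varepsilon)$, that is, for $C$ large, we obtain $\p_i\in P_i$ with $\|\p_i-C_i\psi_i\|<C_i\varepsilon\le C\varepsilon$. Since $C_i\psi_i=C\phi_i$, the pair $(\p_1,\p_2)$ is within $C\varepsilon$ of $C\phi$ in the max-norm, as required.

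\textbf{Degenerate case, $\phi_2=0$ (symmetrically $\phi_1=0$).} This is the step I expect to need the most care. Here we must find $\p_2\in P_2$ with $\|\p_2\|<C\varepsilon$ for all large $C$. We only need $P_2\neq\varnothing$: fix any $\p_2^0\in P_2$ and note that $\|\p_2^0\|<C\varepsilon$ once $C$ is large. Nonemptiness should be implicit in the hypothesis. If ${H}_2\neq\{0\}$, log density forces $P_2$ to be unbounded, hence nonempty. If ${H}_2=\{0\}$, log density is vacuous, so I would make explicit the standing assumption that $P_i\neq\varnothing$, as in Lemma \ref{compatibility_from_logdense}; without it the product is empty and the claim fails. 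For $\p_1$ we use (iv) along $\phi_1$, which is a unit vector, with $C_1=C$.

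The only remaining subtlety is the norm equivalence: a Euclidean ball of radius $C\varepsilon$ contains a max-norm ball of radius $C\varepsilon/\sqrt2$, so it suffices to run the argument with $\varepsilon/\sqrt2$. Combining the two cases gives (iv) for $P_1\times P_2$ along every $\phi$ in the product cone, and Lemma \ref{LDequiv} then yields logarithmic density.
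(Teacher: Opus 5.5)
Your proof is correct and follows essentially the same route as the paper: you use characterization (iv) of Lemma~\ref{LDequiv}, treat the generic case componentwise by applying (iv) to $P_i$ at scale $C\|\phi_i\|$, and handle the degenerate case by fixing an arbitrary element of the other factor. The differences are cosmetic. The paper avoids the $\sqrt2$ loss by summing the squared componentwise errors $\alpha_i^2\varepsilon^2C^2$ directly in the Euclidean norm. It also uses the nonemptiness of the factor tacitly, whereas you rightly make that assumption explicit.
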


\begin{proof} It is enough to show the result for ${H}_i = \R_{\geq 0}\xi_i$, where  $\xi_i$ are unit vectors in  $\mathbb{R}^{m_i}$. This reduces to   the following claim: if $P_i$ is logarithmically dense along  $\xi_i$, then $P_1 \times P_2$ is logarithmically dense along $\phi = (\alpha_1\xi_1,\alpha_2\xi_2)\in \mathbb{S}^{m_1+m_2-1}$ for any $\alpha_1,\alpha_2 \ge 0$ with $\alpha_1^2 + \alpha_2^2 = 1$.

Throughout the proof we are going to use %the Euclidean norm on all the vector spaces and
 the equivalence (i) $\Longleftrightarrow$ (iv) of Lemma \ref{LDequiv}. Fix $\varepsilon > 0$. We want to find $C^\circ$ such that  for any $C > C^\circ$ there exists $(\bx_1,\bx_2) \in P_1 \times P_2$ with  
 $$\|(\bx_1,\bx_2) - C\phi\| < C\varepsilon\quad\Longleftrightarrow\quad \|\bx_1 -  C\alpha_1\xi_1\|^2 + \|\bx_2 -  C\alpha_2\xi_2\|^2 < \varepsilon^2C^2.$$
In fact it suffices to have   $\|\bx_i -  C\alpha_i\xi_i\| <  \alpha_i{\varepsilon C}$ for $i=1,2$.
Assuming $\alpha_i > 0$, the   logarithmic density of $P_i$  along $\xi_i$ gives a function $C_i^\circ (\cdot)$ such that $$C \ge C_i^\circ (\varepsilon
%/\sqrt{2}
)/\alpha_i \Longrightarrow 
\big\{  \bx_i \in P_i: \ \left\|  \bx_i - (\alpha_i C)\xi_i\right\| < (\alpha_i C)\varepsilon  \big\} \ne\varnothing.$$
%Apply the  condition (iv) for $P_1$ along $\eta$ with parameter $\varepsilon/\sqrt{2}$: for $C > C''{\hat\phi}(\varepsilon/\sqrt{2})$, there exists $x \in P$ with $$\|x - \alpha C \hat\phi\| < \alpha C \cdot \frac{\varepsilon}{\sqrt{2}}\leq \frac{C\varepsilon}{\sqrt{2}}. $$ (We applied the condition at the value $\alpha C$ in place of $C$, which is valid for $\alpha C > C''{\hat\phi}(\varepsilon/\sqrt{2})$, i.e., $C > C''_{\hat\phi}(\varepsilon/\sqrt{2})/\alpha$.)
So as long as both $\alpha_1$ and $\alpha_2$ are nonzero, for all $C > C^\circ(\varepsilon) := \max_{i=1,2}\frac{C_i^\circ (\varepsilon
%/\sqrt{2}
)}{\alpha_i}$ we get $(\bx_1,\bx_2) \in P_1 \times P_2$ with $\|(\bx_1,\bx_2) - C\phi\| < C\varepsilon$. 
\smallskip

It remains to consider the case $\alpha_1 = 0$  (the case $\alpha_2=0$ is symmetric).
Then $\phi = (0,\xi)$, and we need to achieve  \eq{achieve}{\big\|(\bx_1,\bx_2)- C\phi\big\| = \big\|(\bx_1, \bx_2- C\xi)\big\| < C\varepsilon\quad\Longleftrightarrow\quad \|\bx_1\|^2 + \|\bx_2 - C\xi\|^2 < \varepsilon^2C^2.}
Using the   logarithmic density of $P_2$  along $\xi$, we get a function  $C_2^\circ (\cdot)$ such that $$C \ge C_2^\circ (\varepsilon
/\sqrt{2}
)  \Longrightarrow 
\big\{  \bx_2 \in P_2: \ \left\|  \bx_2 - C\xi\right\| < C\varepsilon/\sqrt{2}  \big\} \ne\varnothing.$$
Also let us fix any $\bx_1 \in P_1$; then we would have 
  $\|\bx_1\| < C\varepsilon/\sqrt{2}$ as long as $C > \|\bx_1\| \sqrt{2}/\varepsilon$. Thus the choice $C^\circ(\varepsilon) := \max\left(\|\bx_1\| \sqrt{2}/\varepsilon, 
  C_2^\circ (\varepsilon
/\sqrt{2}
) \right)$ yields \equ{achieve}, finishing the proof.
\end{proof}

\subsection{On the optimality of {logarithmic density  in Theorem \ref{not_on_line}}}\label{logdense_optimal_section}

In {this subsection} we will explain why logarithmic density is an essential condition in Theorem \ref{not_on_line}. Let us start with a special case $m = 1$ and $k = 0$ which highlights the {main} idea:

\begin{proposition}
{Let $n\ge 2$ and} suppose $P \subseteq \R$ is a set {that} is not logarithmically dense. Then there exists $Q \subseteq \R^n$   {satisfying} \equ{nonprop} such that the collection $\mathcal{L}_{P, Q}$ is not dense ({hence} not totally dense).
\end{proposition}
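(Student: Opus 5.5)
The plan is to exploit a single "gap" scale of $P$ and place $Q$ exactly at those scales, along two fixed non-proportional directions. Since $m=1$, each $L_{p,\vq}$ is the affine hyperplane $\{\bx\in\R^n:\ \bx\cdot\vq=p\}$. Scaling a hyperplane's normal vector by $t>0$ rescales its level accordingly, as in \equ{scaling}. So if every element of $Q$ is of the form $t\be$, with $t$ taken from a sequence of gap scales and $\be$ from a fixed pair of directions, the levels $p/t$ will avoid a fixed interval. The open set of points whose two coordinates $\bx\cdot\be$ both lie in that interval will then meet no element of $\mathcal{L}_{P,Q}$.

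\textbf{Step 1: extract the gaps.} Since $P$ is not logarithmically dense in $\R$, it fails to be logarithmically dense along some $\phi\in\mathbb{S}^0=\{\pm1\}$. Replacing $P$ by $-P$ if necessary, assume $\phi=1$; this only flips the sign of the interval used below. By the equivalence (i)$\Leftrightarrow$(iii) of Lemma \ref{LDequiv}, there are $\varepsilon\in(0,1)$ and a sequence $C_j\to\infty$ such that $P$ contains no element $p$ with $C_j<|p|<(1+\varepsilon)C_j$ and $\|p/|p|-1\|<\varepsilon$. For $\varepsilon<2$ the direction condition just says $p>0$. Therefore
$$P\cap\big(C_j,(1+\varepsilon)C_j\big)=\varnothing\quad\text{for all } j.$$

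\textbf{Step 2: construct $Q$.} Fix two linearly independent vectors $\be_1,\be_2\in\R^n$, which is possible since $n\ge2$. Put
$$Q:=\{C_j\be_1,\ C_j\be_2:\ j\in\N\}.$$
Then $Q$ contains the non-proportional elements $C_j\be_1$ and $C_j\be_2$ of arbitrarily large norm, so \equ{nonprop} holds.

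\textbf{Step 3: exhibit an open set missed by $\mathcal{L}_{P,Q}$.} Let
$$W:=\{\bx\in\R^n:\ 1<\bx\cdot\be_i<1+\varepsilon,\ i=1,2\}.$$
This set is open, and it is nonempty because the linear map $\bx\mapsto(\bx\cdot\be_1,\bx\cdot\be_2)$ is onto $\R^2$.

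Suppose $\bx\in W\cap L_{p,C_j\be_i}$. Then $p=C_j\,\bx\cdot\be_i\in\big(C_j,(1+\varepsilon)C_j\big)$, which contradicts Step 1. Hence the union of $\mathcal{L}_{P,Q}$ misses $W$, so $\mathcal{L}_{P,Q}$ is not dense and in particular not totally dense. In the case $\phi=-1$ the same argument works with the interval $(-(1+\varepsilon),-1)$ in place of $(1,1+\varepsilon)$.

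The only step needing care is Step 1, namely correctly negating logarithmic density via Lemma \ref{LDequiv}. The point is that the failure of condition (iii) yields gaps at arbitrarily large scales, and these gaps must sit on the correct side of $0$. Once the gaps are in hand, the construction in Steps 2 and 3 is immediate.
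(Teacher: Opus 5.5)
Your proof is correct and matches the paper's argument: you place $Q$ at the gap scales of $P$ along two non-proportional directions, and then the open set where both $\bx\cdot\be_1$ and $\bx\cdot\be_2$ lie in the gap interval meets no $\bfL_{p,\vq}$. The only differences are cosmetic: you work directly with two independent vectors in $\R^n$ instead of reducing to $n=2$, and you extract the gaps explicitly via Lemma \ref{LDequiv}(iii), which also pins down the ratio bound ($\delta>1$) that the paper leaves implicit.
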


\begin{proof}
    Without loss of generality we can assume that $P$ is not logarithmically dense in $\R_{\geq 0}$. Then there exists $\delta > 0$ and two increasing sequences $\{ b_i \}, \{ c_i \}$ of {positive} real numbers {such that} 
    \begin{equation} \frac{c_i}{b_i} > \delta \,\,\,\,\,\,\,\, \text{and} \,\,\,\,\,\,\,\, P \cap (b_i, c_i) = \varnothing \,\, \text{for any} \,\, i.
    \end{equation}
    {Also, it is clearly enough to construct such an example for $n=2$   (when   $n>2$ we can take the product of all the lines in the collection with $\R^{n-2}$).  Let $\be_1 = (1,0)$, $\be_2 = (0,1)$ be the standard basis of $\R^2$, and take   $Q := \{ b_i \be_j: i \in \mathbb{N},\,j=1,2 \}$; 
    %Q = \{ b_i \cdot (1,0, \ldots, 0)^{\top}, \, b_i \cdot (0,1, 0,\ldots, 0)^{\top}, \,\, i \in \mathbb{N} \}$;
    clearly %such a $Q$ 
    {it} satisfies \equ{nonprop}. Let $A = (a_1, a_2)$; we will show that
\begin{equation}\label{m=1_not_dense}
        \text{if} \,\,\, 1 < a_1 < \delta \, \text{and} \, 1 < a_2 < \delta, \, \text{then} \, A \notin \bigcup\limits_{p \in P, \, \q \in Q} \bfL_{p, \q}.
    \end{equation}
    Suppose 
\begin{equation}\label{contradiction_to this}
    \text{$1 < a_1 < \delta, \, 1 < a_2 < \delta$ and $p \in P, \, \q \in Q$ are such that $A\q = p$.} 
    \end{equation} By reordering if necessary, we may assume that $\q = b_i\be_1$; then \eqref{contradiction_to this} implies that 
    %Let $x = \frac{p}{b_i}$; %the 
   % condition 
    $a_1 = A \be_1
    %\cdot (1,0, \ldots, 0)^{\top} 
    =  \frac{p}{b_i}$. There are two possible cases:
    \begin{itemize}
        \item If $p < b_i$, then $a_1   < 1$, which contradicts \eqref{contradiction_to this}.
        \item If $p > c_i > \delta b_i$, then $a_1   > \delta$, which again contradicts \eqref{contradiction_to this}.
    \end{itemize}}
    This contradiction shows that \eqref{m=1_not_dense} holds, and the collection $\mathcal{L}_{P,Q}$ is not dense.
\end{proof}

In {greater} generality, we will prove the following fact:

\begin{proposition}\label{logdense_necessary}
    Fix $0 \leq k \leq \min(n-1, m-1)$, and let ${H}$ be an $(m-k)$-dimensional linear subspace {of} $\mathbb{R}^m$. If $P \subseteq {H}$ is not logarithmically dense in ${H}$, there exists $Q \subseteq \R^n$ satisfying \equ{twohalfspheres} such that the collection $\mathcal{L}_{P, Q}$ is not dense. In particular, \equ{conclusion1} does not hold, and $\mathcal{L}_{P,Q}$ does not contain a totally dense subcollection.
\end{proposition}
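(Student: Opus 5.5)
Our plan is to generalize the construction from the $m=1$, $k=0$ case by putting all the failure of logarithmic density into a single direction $\phi$ and a single $(k+1)$-dimensional subspace of $\R^n$.

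First we reduce to a clean gap statement. Since $P\subseteq {H}$ is not logarithmically dense in ${H}$, there is $\phi\in\mathbb{S}^{m-1}\cap{H}$ along which $P$ fails to be logarithmically dense. By Lemma \ref{LDequiv}(iv) we then get $\varepsilon\in(0,1)$ and $C_i\to\infty$ such that no $\p\in P$ satisfies $\|\p-C_i\phi\|<C_i\varepsilon$.

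Next we construct $Q$. Fix a $(k+1)$-dimensional subspace $F\subseteq\R^n$; this is possible since $k\le n-1$. Choose two different $k$-halfspheres $\Phi_1,\Phi_2$ of $F\cap\mathbb{S}^{n-1}$ that do not lie in a single $k$-sphere; one can take a $k$-halfsphere and its reflection across a hyperplane of $F$ that is neither equal to nor orthogonal to its boundary. For $k=0$ we instead take two non-proportional directions in $F=$ a $2$-plane, which also fits since $n\ge2$; we may simply take $F$ of dimension $\max(k+1,2)$. Let $\Theta\subseteq\mathbb{S}^{n-1}$ be a countable dense subset of $\Phi_1\cup\Phi_2$. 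Enumerate the pairs $(i,\theta)$ with $\theta\in\Theta$ so that each $\theta$ appears for infinitely many $i$, and set
\[
Q:=\{C_i\theta:\ (i,\theta)\text{ in the enumeration}\}.
\]
Then for every $C>0$ the set $\pi(Q\smallsetminus B_C(0))$ is dense in $\Phi_1\cup\Phi_2$, so \equ{twohalfspheres} holds.

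Then we show non-density. It suffices to exhibit a nonempty open set $W\subseteq M_{m,n}$ with $A\theta\notin P/C_i$ for all $A\in W$, all $i$, and all $\theta\in\Phi_1\cup\Phi_2$ (indeed all $\theta\in F\cap\mathbb{S}^{n-1}$). The condition $A\in\bfL_{\p,C_i\theta}$ means $\p=C_iA\theta$. So it is enough that
\[
\|A\theta-\phi\|<\varepsilon\quad\text{for all }\theta\in F\cap\mathbb{S}^{n-1},
\]
because then $\|\p-C_i\phi\|<C_i\varepsilon$, which forces $\p\notin P$. Such $A$ exist only if the map $\theta\mapsto A\theta$ can be nearly constant on the sphere of $F$, and it cannot: it is odd. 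Here we borrow the trick of the $m=1$ proof, which used positive directions only. We replace the full sphere by halfspheres lying in an open half-space of $F$: choose $\Phi_1,\Phi_2$, and hence $\Theta$, inside $\{\theta\in F\cap\mathbb{S}^{n-1}:\ \langle\theta,\bv\rangle>c\}$ for some unit $\bv\in F$ and some $c>0$. This is the main obstacle, since closed $k$-halfspheres always contain boundary points orthogonal to $\bv$. To handle it we use a slightly smaller compact piece: take closed halfspheres $\Phi_1,\Phi_2$ centered at two nearby but distinct poles $\bv_1,\bv_2$ (for $k\ge1$), and define $W$ by requiring $A\theta$ to stay within $\varepsilon$ of $\langle\theta,\bv\rangle$-scaled copies of $\phi$.

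The cleanest way to carry this out is to take $A$ near $A_0:=\phi\,\bv^{\top}$, so that $A_0\theta=\langle\theta,\bv\rangle\phi$. Then $C_iA_0\theta=(C_i\langle\theta,\bv\rangle)\phi$, and we should choose the radii of $Q$ along direction $\theta$ to be $C_i/\langle\theta,\bv\rangle$ instead of $C_i$. This reparametrization removes the need for the positivity bound wherever $\langle\theta,\bv\rangle>0$. Points of the halfspheres with $\langle\theta,\bv\rangle\le0$ are approximated using the other halfsphere or excluded from $\Theta$. The latter is harmless, since \equ{twohalfspheres} only needs closure, and open hemispheres are dense in closed ones. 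With $\Theta\subseteq\{\langle\theta,\bv\rangle>0\}$ and radii $C_i/\langle\theta,\bv\rangle$, every $A$ with $\|A-A_0\|<\varepsilon'$, for a small $\varepsilon'$ depending on $\varepsilon$, satisfies $\|C_iA\theta/\langle\theta,\bv\rangle-C_i\phi\|<C_i\varepsilon$ whenever $\langle\theta,\bv\rangle$ is bounded below. Uniformity near $\langle\theta,\bv\rangle\to0$ is the delicate point, and we would resolve it by restricting $\Theta$ to $\langle\theta,\bv\rangle>\eta$ for an $\eta$ that shrinks as $i$ grows, so that the closure still contains both full halfspheres. The non-density of $\mathcal{L}_{P,Q}$ then follows, and with it the failure of \equ{conclusion1} and of (TDS).
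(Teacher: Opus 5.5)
For $k=0$ your argument can be made to work: take $A$ near a matrix sending both chosen directions to $\phi$. For $k\ge1$ it breaks down, for two reasons.

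First, your $Q$ does not satisfy \equ{twohalfspheres}. A $k$-halfsphere spans a $(k+1)$-dimensional subspace. So any two $k$-halfspheres of $F\cap\mathbb{S}^{n-1}$, whether reflected copies or ones with nearby poles $\bv_1,\bv_2$, lie in the single $k$-sphere $F\cap\mathbb{S}^{n-1}$. You need $\Phi_1\subseteq F_1$ and $\Phi_2\subseteq F_2$ with $F_1\neq F_2$, which in particular requires $k\le n-2$.

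Second, and regardless of how $Q$ is chosen, the mechanism you rely on cannot exist when $k\ge1$. That mechanism is an open $W$ such that, for every $\q\in Q$ and every $A\in W$, $A\q\in B_{C_i\varepsilon}(C_i\phi)$ for some $i$. The closure of a $k$-halfsphere with $k\ge1$ contains an antipodal pair $\pm\theta^*$ on its boundary $(k-1)$-sphere. Hence $Q$ contains $\q_j,\q_j'$ whose directions tend to $\theta^*$ and $-\theta^*$. All forbidden balls lie in the closed cone $K=\{\bx\in\R^m:\ \langle\bx,\phi\rangle\ge\sqrt{1-\varepsilon^2}\,\|\bx\|\}$, and $K\cap(-K)=\{0\}$. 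So $A\q_j,A\q_j'\in K$ would give $\pm A\theta^*\in K$, i.e.\ $A\theta^*=0$, which fails on a dense open set of $A$.

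Your reparametrization hits exactly this wall. Since $C_iA\theta/\langle\theta,\bv\rangle-C_i\phi=C_i(A-A_0)\theta/\langle\theta,\bv\rangle$, a cutoff $\eta_i\to0$ forces $(A-A_0)\theta^*=0$ for every limit direction $\theta^*$ with $\langle\theta^*,\bv\rangle=0$. That is not an open condition on $A$. A fixed cutoff $\eta>0$ instead destroys \equ{twohalfspheres}.

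The missing idea is to use the hypothesis $P\subseteq H$ together with the dimension count $\dim H+\dim\Phi_j=m$; your argument never uses either. You only have to control those $\q=b\theta$ with $A\q\in P\subseteq H$. By Proposition \ref{density_any_k}, for generic $A$ there is a unique up to sign $\theta\in F_j\cap\mathbb{S}^{n-1}$ with $A\theta\in H$.

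The paper takes $Q=\{b_i\}\cdot(\Phi_1\cup\Phi_2)$, with $\Phi_1,\Phi_2$ open $k$-halfspheres in $\Span(\be_1,\dots,\be_{k+1})$ and $\Span(\be_2,\dots,\be_{k+2})$. It then uses Lemma \ref{opensetlemma} to get an open set of $A$ on which, for both $j$, this unique solution $\bx=A\theta$ lies in $\{\bx\in H:\ 1<\|\bx\|<\delta,\ \|\bx/\|\bx\|-\phi\|<\varepsilon\}$. Both conditions can be imposed at once: they involve columns $1,\dots,k+1$ and $2,\dots,k+2$, so a product open set handles both.

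Now suppose $A(b_i\theta)=\p\in P$. Then $\p/b_i$ must be that solution, so $\p\in P^{\phi}_{\varepsilon}$ with $b_i<\|\p\|<\delta b_i<c_i$, contradicting the gap. This circumvents the oddness obstruction: only solutions landing in $H$ need to be pushed into the forbidden region, and genericity makes those essentially unique.
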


%\comm{The condition that $P \subseteq {H}$ is somewhat restrictive, since we remove the cases when $P$ approximates some ${H}$ but is not inside it from consideration. I believe a stronger statement holds: we can assume that $P \subseteq {H}_{\varepsilon}$ and $|P|_{\varepsilon}^{ \phi}$ is not logatighmically dense for some $\varepsilon > 0$ and $\phi \in {H} \cap \mathbb{S}^{m-1}$. Here ${H}_{\varepsilon}$ denotes an $\varepsilon$-neighborhood (in terms of angle) of ${H}$). To do this, we may need to control how exceptional algebraic sets from Proposition \ref{density_any_k} continuously change under rotation of ${H}$ in directions, corresponding to taking ${H}_{\varepsilon}$. DK: OK, so what would you like to do with this remark? 
%-- At this point we probably can ignore it} 

\begin{proof}
    Since $P$ is not logarithmically dense in ${H}$, there exist
    %\begin{equation}
    $$
        \phi \in {H} \cap \mathbb{S}^{m-1}, \, \varepsilon > 0, \,  \delta > 1 \,\text{and two increasing sequences $\{ b_i \}, \{ c_i \}$ of real numbers}
    $$
    {such that} 
    \begin{equation} \frac{c_i}{b_i} > \delta \,\,\,\, \text{and} \, \,\,\,P_{\varepsilon}^{\phi} \cap \big( B_{c_i}(0) \smallsetminus B_{b_i}(0) \big) = \varnothing \,\,\,\, \text{for any} \, i.
    \end{equation}
    %\end{equation} 
    Consider two $(k+1)$-dimensional halfspheres
    {$$
    \Phi_1 = \big\{(\theta_1, \ldots, \theta_{k+1}, 0, \ldots, 0): \theta_1 > 0\big\} %, \, \max\limits_{i=1, \ldots, k+1} |\theta_i| = 1 
    %$$
    \text{ and }
    %$$
    \Phi_2 = \big\{(0, \theta_2, \ldots, \theta_{k+2}, 0, \ldots, 0): \theta_{k+2} > 0
    %, \, \max\limits_{i=2, \ldots, k+2} |\theta_i| = 1
    \big \}
    $$
   of  $ \mathbb{S}^{m-1}$}. Let $B = {\{ b_i \}_{i=1}^{\infty}}$, and let $Q := B \cdot \left( \Phi_1 \cup \Phi_2 \right)$. We want to construct an open set $U \subseteq M_{m,n}$ such that   \begin{equation}\label{noting_in_U}
        \bigcup\limits_{\bfL \in \mathcal{L}_{P,Q}} \bfL \cap U = \varnothing,
    \end{equation}
    proving that $\mathcal{L}_{P, Q}$ is not dense.

    Denote an open halfsphere $\{ (\theta_1, \theta_2, \ldots, \theta_{k+1}): \theta_1 > 0 \}$ of $\mathbb{S}^k$ by $\mathbb{S}_+^k$. Let ${\theta} \in \mathbb{S}_+^k, \,\, \bx \in \R^m$, and consider the subspaces 
    $$
    \bfL_{\bx, {\theta}}  = \{ {A} \in M_{m,k+1}:  {A} {\theta} = \bx \}.
    $$
    Let us also define the set
    $$
{H}_{\varepsilon}^{\phi}(\delta): = \left\{ \bx \in {H}: 1 < \|\bx\| <\delta,\, \left\| \frac{\bx}{\|\bx\|} - \phi \right\| < \varepsilon \right\}.
    $$
    \begin{observation}\label{nonempty_int}
        The set
        $$
        V: = \bigcup\limits_{{\theta} \in \mathbb{S}^{k}, \, \bx \in {H}_{\varepsilon}^{\phi}(\delta)} \bfL_{\bx, {\theta}}
        $$
        has %a 
        nonempty interior.
    \end{observation}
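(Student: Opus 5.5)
The plan is to recognize $V$ as exactly the kind of union handled by Lemma \ref{opensetlemma}, applied in the space $M_{m,k+1}$ rather than $M_{m,n}$. That is, we run the lemma with $k+1$ in the role of $n$. The ambient sphere is then $\mathbb{S}^{(k+1)-1}=\mathbb{S}^k$, and we take $\Phi:=\mathbb{S}^k$ itself, which is a $k$-dimensional subsphere: it is the intersection of the $(k+1)$-dimensional space $\R^{k+1}$ with $\mathbb{S}^k$. The subspace ${H}\subseteq\R^m$ has dimension $m-k$, as the lemma requires.

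First I check the range condition of Lemma \ref{opensetlemma}, namely $0\le k\le \min\big((k+1)-1,\,m\big)=\min(k,m)$. This holds because in Proposition \ref{logdense_necessary} we assume $k\le \min(n-1,m-1)$, so in particular $k\le m$.

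Next I verify that $U:={H}_{\varepsilon}^{\phi}(\delta)\times \mathbb{S}^k$ is a nonempty open subset of ${H}\times\Phi$. Openness is immediate, since ${H}_{\varepsilon}^{\phi}(\delta)$ is cut out inside ${H}$ by the strict inequalities $1<\|\bx\|<\delta$ and $\big\|\bx/\|\bx\|-\phi\big\|<\varepsilon$. Both are continuous conditions on ${H}\smallsetminus\{0\}$, and ${H}_{\varepsilon}^{\phi}(\delta)$ avoids a neighborhood of $0$. Nonemptiness uses $\delta>1$ and $\phi\in{H}\cap\mathbb{S}^{m-1}$: any $t\phi$ with $1<t<\delta$ lies in ${H}_{\varepsilon}^{\phi}(\delta)$.

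Finally, $V=\bigcup_{\br\in U}\bfL_{\br}$ by definition, so Lemma \ref{opensetlemma} gives that $V$ has nonempty interior in $M_{m,k+1}$. The only step needing care is this matching of indices, i.e.\ that the lemma really applies with $n$ replaced by $k+1$ and $\Phi$ equal to the whole sphere $\mathbb{S}^k$; the genuinely hard content is inside Lemma \ref{opensetlemma}, whose proof is deferred to \S\ref{technicalsection}.

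If one prefers a direct check, an alternative route works as follows. Take a point $A_0$ in the open dense set $W$ of Proposition \ref{density_any_k}, chosen so that $A_0\theta_0=\bx_0$ for some $(\bx_0,\theta_0)\in U$. The solution $(\bx,\theta)$ of $A\theta=\bx$ with $\bx\in{H}$ and $\theta\in\mathbb{S}^k$ is unique up to sign and depends continuously on $A\in W$, because it spans the one-dimensional space $A\R^{k+1}\cap{H}$. Hence all $A$ near $A_0$ still have their solution in $U$ and so lie in $V$. I would present this only as a remark and rely on Lemma \ref{opensetlemma} for the actual proof.
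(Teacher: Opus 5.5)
Your proposal is correct and follows the paper's proof exactly: the paper notes that ${H}_{\varepsilon}^{\phi}(\delta)$ is nonempty and open in ${H}$ and that $\mathbb{S}^k$ is open in itself, and then invokes Lemma \ref{opensetlemma} in $M_{m,k+1}$, i.e.\ with $k+1$ in place of $n$ and $\Phi=\mathbb{S}^k$. Your checks of the index range $0\le k\le\min(k,m)$ and of nonemptiness via $t\phi$ with $1<t<\delta$ spell out what the paper leaves implicit. One caveat on your optional continuity remark: density of $W$ alone does not show that $W$ meets $\bfL_{\bx_0,\theta_0}$, so it would need one more line, e.g.\ choosing $A_0$ that sends $\theta_0\mapsto\bx_0$ and maps $\theta_0^\perp$ onto a complement of ${H}$.
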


{Indeed: since the set ${H}_{\varepsilon}^{\phi}(\delta)$ is nonempty and open in $H$, and $\mathbb{S}^k$ is open in itself, {$V$} has nonempty interior by Lemma \ref{opensetlemma}.}
    %\begin{proof}
     %   Define %the set
     %   $$
     %   {W} = \{{A} \in M_{m,k+1}: \,\, \text{there exists a unique up to sign pair} \, (\bx, {\theta}) \in {H} \times \Phi \,\, \text{such that} \,\, {A} {\theta} = \bx \}.$$  By Proposition \ref{density_any_k} and Remark \ref{continuous_on_halfsphere}, this set is open and dense in $M_{m,k+1}$, and, moreover, the functions        $$ {\theta}: \,\, {W} \rightarrow \mathbb{S}_+^k \,\,\,\, \text{and} \,\,\,\, \bx: \,\,{W} \rightarrow {H}: \,\,\,\, A' {\theta}({A}) = \bx({A})       $$   are continuous. Since the set ${H}_{\varepsilon}^{\phi}(\delta)$ is open, the set $$\{ {A} \in {W}: \bx({A}) \in {H}_{\varepsilon}^{\phi}(\delta)  \} \subseteq V$$ is also open.
    %\end{proof}

Now let us choose an open set $V^0 \times V' \subseteq V$, where $V^0$ is %an 
open %set 
in $\R^m$ and $V'$ is %an 
open %set 
in $M_{m,k}$; here we identify elements of $\R^m$ with the first column of $m \times (k+1)$-matrices and elements of $M_{m,k}$ with submatrices  consisting of columns $2$ to $k+1$. Since $\Phi_1 = \mathbb{S}_+^k \times \{ 0 \}^{m-k-1}$, Observation \ref{nonempty_int} implies:
\begin{itemize}
    \item for any $A \in V^0 \times V' \times M_{m, n-k-1}$ there exists a unique $\big(\theta^1(A), \bx^1(A)\big) \in \Phi_1 \times {H}$ such that $A \theta^1(A) = \bx^1(A)$;
    \item $\bx^1(A) \in {H}_{\varepsilon}^{\phi}(\delta)$.
\end{itemize}

We note that $\Phi_2$ can be obtained from $\Phi_1$ by interchanging first and $k+2$-nd coordinates in $m$-dimensional vectors $\theta$. Thus, a similar statement holds for $\Phi_2$:

\begin{itemize}
    \item for any $A \in \R^m \times V' \times V^0 \times M_{m, n-k-2}$ there exists a unique $\big(\theta^2(A), \bx^2(A)\big) \in \Phi_2 \times {H}$ such that $A \theta^2(A) = \bx^2(A)$;
    \item $\bx^2(A) \in {H}_{\varepsilon}^{\phi}(\delta)$.
\end{itemize}

Combining the two statements above, we conclude:

\begin{observation}\label{double_uniqueness}
    Let 
    $$
    U: = V^0 \times V' \times V^0 \times M_{m, n-k-2} \subseteq \R^m \times M_{m,k} \times \R^m \times M_{m,n-k-2} = M_{m,n}.
    $$
    For any $A \in U$ there exists a unique pair
    $$
    (\theta^1, \bx^1) \in \Phi_1 \times {H} \,\,\,\, \text{such that} \,\,\,\, A \theta^1 = \bx^1
    $$
    and a unique pair
    $$
    (\theta^2, \bx^2) \in \Phi_2 \times {H} \,\,\,\, \text{such that} \,\,\,\, A \theta^2 = \bx^2,
    $$
    and in addition $\bx^1, \bx^2 \in {H}_{\varepsilon}^{\phi}(\delta)$.
    In particular: if $A \theta = \bx$ for $\theta \in \Phi_1 \cup \Phi_2$ and $\bx \in {H}$, then $\bx \in {H}_{\varepsilon}^{\phi}(\delta)$.
\end{observation}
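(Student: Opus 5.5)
The plan is to combine the two bullet-point statements that immediately precede Observation \ref{double_uniqueness}. Each of them was derived from Observation \ref{nonempty_int}, applied once for $\Phi_1$ and once (after permuting coordinates) for $\Phi_2$.

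First we check that $U$ lies in both sets where those statements apply. Write $A\in M_{m,n}$ as a block of columns $(\ba_1\,|\,A'\,|\,\ba_{k+2}\,|\,A'')$, with $\ba_1\in\R^m$, $A'\in M_{m,k}$, $\ba_{k+2}\in\R^m$ and $A''\in M_{m,n-k-2}$. If $A\in U$, then $(\ba_1,A')\in V^0\times V'$, so $A\in V^0\times V'\times M_{m,n-k-1}$, since the remaining columns are unconstrained. For the second statement we use the identification in which $\Phi_2$ is obtained from $\Phi_1$ by swapping coordinates $1$ and $k+2$. The relevant $m\times(k+1)$ block is then formed by the columns $k+2,2,\dots,k+1$, in that order. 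Its "first column" is $\ba_{k+2}\in V^0$ and its remaining columns form $A'\in V'$, so $A\in\R^m\times V'\times V^0\times M_{m,n-k-2}$. Since $\theta\in\Phi_1$ or $\Phi_2$ is supported only on the relevant coordinates, $A\theta$ depends only on those columns. Hence the existence and uniqueness of $(\theta^i,\bx^i)$, together with $\bx^i\in H^\phi_\varepsilon(\delta)$, transfers directly from the two bullet statements.

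The "in particular" clause follows from uniqueness. Suppose $A\theta=\bx$ with $\theta\in\Phi_1\cup\Phi_2$ and $\bx\in H$, say $\theta\in\Phi_1$. Uniqueness of the pair in $\Phi_1\times H$ gives $(\theta,\bx)=(\theta^1,\bx^1)$, and therefore $\bx=\bx^1\in H^\phi_\varepsilon(\delta)$. The case $\theta\in\Phi_2$ is identical.

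The step that needs the most care is the uniqueness itself. Observation \ref{nonempty_int} only asserts nonempty interior. Uniqueness should come from intersecting $V$ with the open dense set $W$ of Proposition \ref{density_any_k}, applied in $M_{m,k+1}$ with the $k$-sphere $\mathbb{S}^k$ and the subspace $H$. There the solution pair is unique up to sign, and the sign is fixed by restricting to the open halfsphere $\mathbb{S}^k_+$, that is, $\theta_1>0$. Accordingly, when choosing the product set $V^0\times V'$ we take it inside the interior of $V\cap W$. We also arrange that, for $A$ in this set, the unique solution has $\theta_1\neq0$; this is an open condition on $A$, so it can be imposed at the same time. With $\theta_1\neq 0$ the sign can always be normalized into $\mathbb{S}^k_+$, so the pair in $\Phi_1\times H$ is unique and has $\bx\in H^\phi_\varepsilon(\delta)$ by definition of $V$.
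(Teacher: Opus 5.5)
You combine the two bullet statements exactly as the paper does; its entire proof is ``combining the two statements above''. Your column bookkeeping is correct, with the block $(\ba_{k+2}\,|\,A')\in V^0\times V'$ playing the role of the $m\times(k+1)$ matrix for $\Phi_2$. Your derivation of the ``in particular'' clause from uniqueness is also correct. You are right that Observation~\ref{nonempty_int} alone gives no uniqueness, so one must intersect with the open dense set $W$ of Proposition~\ref{density_any_k} in $M_{m,k+1}$; the paper leaves this implicit.

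The gap is in your sign normalization. The set $V$ of Observation~\ref{nonempty_int} is built with $\theta$ ranging over all of $\mathbb{S}^k$, and $\bfL_{\bx,\theta}=\bfL_{-\bx,-\theta}$. For $A\in\operatorname{int}(V)\cap W$ there is a solution $(\bx,\theta)$ with $\bx\in H^\phi_\varepsilon(\delta)$, and every solution in $H\times\mathbb{S}^k$ is $\pm(\bx,\theta)$. But if $\theta_1<0$, normalizing into $\mathbb{S}^k_+$ produces the pair $(-\bx,-\theta)$. Now $-\bx\in -H^\phi_\varepsilon(\delta)$, which is disjoint from $H^\phi_\varepsilon(\delta)$ once $\varepsilon\le 1$ (an assumption one may always make), because that set only contains vectors whose direction is within $\varepsilon$ of $\phi$.

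Both signs occur on open subsets of $\operatorname{int}(V)$: apply Lemma~\ref{opensetlemma} to $H^\phi_\varepsilon(\delta)\times\{\theta_1<0\}$. So your condition $\theta_1\neq0$ does not stop $V^0\times V'$ from landing where $\bx^1(A)\in -H^\phi_\varepsilon(\delta)$. There the claim $\bx^1\in H^\phi_\varepsilon(\delta)$ fails. This is exactly what the proof of Proposition~\ref{logdense_necessary} needs, since non-logarithmic density is only assumed along $\phi$.

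The fix is to build the sign condition into $V$ itself:
\begin{itemize}
\item Apply Lemma~\ref{opensetlemma} to the open subset $H^\phi_\varepsilon(\delta)\times\mathbb{S}^k_+$ of $H\times\mathbb{S}^k$.
\item Choose $V^0\times V'$ inside the interior of the resulting union, intersected with $W$.
\end{itemize}
Then each such $A$ has a solution $(\bx,\theta)$ with $\theta_1>0$ and $\bx\in H^\phi_\varepsilon(\delta)$. The only other solution is $(-\bx,-\theta)$, and $-\theta\notin\mathbb{S}^k_+$. Uniqueness in $\Phi_1\times H$ together with $\bx^1\in H^\phi_\varepsilon(\delta)$ follows, and likewise for $\Phi_2$.

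Your auxiliary condition $\theta_1\neq0$ then becomes superfluous. As you wrote it, you would also have had to show that condition is dense, not merely open, in order to impose it at the same time.
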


%\medskip

It remains to show how Observation \ref{double_uniqueness} implies \eqref{noting_in_U}. Suppose there exist   $(\p,\q) \in P\times Q$ and $A \in U$  such that $A \in \bfL_{\p, \q}$. %There exists $i$ such that 
{We know that $\q = b_i \theta$ %where
for some $i\in\N$ and} $\theta \in \Phi_1 \cup \Phi_2$. Let $\bx  : = \frac{\p}{b_i} \in {H}$; then $A \theta^1 = \bx$. By Observation \ref{double_uniqueness} it implies that $\bx \in {H}_{\varepsilon}^{\phi}(\delta)$. %However,
{Now} there are two possibilities:

\begin{itemize}
    \item $\|\p\| < b_i$. In this case $\|\bx\| < 1$, and thus $\bx \notin {H}_{\varepsilon}^{\phi}(\delta)$;
    \item $\|\p\| > c_i > \delta b_i$. In this case $\|\bx\| > \delta$, and again $\bx \notin {H}_{\varepsilon}^{\phi}(\delta)$.
\end{itemize}
This contradiction shows that \eqref{noting_in_U} holds and completes the proof.
\end{proof}

\section{{Totally dense collections and subcollections}}\label{subcollections_section}
%Proof of Theorem \ref{8.1}}
%\label{proof8.1}

\begin{comment}
    A statement very similar to Theorem \ref{8.1} directly follows from the main results of our paper: namely, that these collections $\mathcal{L}_{P, Q}$ contain totally dense subcollections.  

To see how this modified version follows, one needs to notice that:
\begin{enumerate}
    \item We show that the conditions of Theorem \ref{8.1} (a) imply the condition \ref{anykA} of Theorem \ref{any_k_theorem} with $k = \min(n-2, m)$. Let $F$ be the span of $P$ over $\mathbb{R}$; it is a $>m-n+1$-dimensional linear subspace of $\mathbb{R}^m$, thus it contains a $\max(m-n+2, 0) = m-k$-dimensional subspace ${H}$. 

    If we define $\fR = \pr(P \times Q)$, one can notice that $\mathbb{S}^{n-1} \times \Pi \subseteq \overline{\fR}$, and since the inclusion $\left(\Phi_1 \cup \Phi_2 \right) \cap \mathbb{S}^{n-1} \subseteq \mathbb{S}^{n-1}$ trivially holds for any two distinct $k+1 = n-1$-dimensional linear subspaces of $\mathbb{R}^n$, condition \ref{anykA} of Theorem \ref{any_k_theorem} holds.
    \item In turn, the conditions of Theorem \ref{8.1} (b) are a special case of conditions of Theorem \ref{not_on_line}: such a direct product set will always have two non-proportional arbitrary long vectors, and any set with bounded Hausdorff distance from $\mathbb{R}^m$ is exponentially dense.
\end{enumerate} 

\vskip+0.3cm

\end{comment}
\subsection{Proof of Theorem \ref{8.1}}\label{pf8.1}
{As was explained in \S\ref{pf1.6}, our main results imply a modified (and weaker) version of Theorem~\ref{8.1}: namely, that under its assumptions   the  collection $\mathcal{L}_{P, Q}$ contains a totally dense subcollection. Such a conclusion has the same (known to us) applications as the total density itself. Here, for completeness, we prove Theorem \ref{8.1}} in its original form %. We will
{by showing how to deduce it} from Lemma \ref{usefullemma}. 

\begin{proof}[Proof of Theorem \ref{8.1}]
We will use the notation $\fR = \pr(P \times Q)$. Fix $(\q_0, \p_0) \in P \times Q$ and let $\br_0 = \left( \frac{\q_0}{\|\q_0\|}, \frac{\p_0}{\|\q_0\|}  \right) \in \fR$.

Recall that in part (a) we {let
   $P$ be a subgroup of $\Z^m$ of rank $>m-n+1$, and took $Q = \Z^n\nz$.}   Let ${H} = \Span_{\mathbb{R}}(P)$; it  is a  {subspace of $\mathbb{R}^m$ of dimension greater than $m-n+1$}. %It is easy to show 
 {We claim} that 
        \begin{equation}\label{81A_closure}
            {\mathbb S}^{n-1} \times {H} \subseteq \overline{\fR}.
        \end{equation} Indeed, for any $\theta \in {\mathbb S}^{n-1}$ and any $\varepsilon > 0$ we can find %an element 
        $\q \in %Q = 
        \mathbb{Z}^n \smallsetminus \{ 0 \}$ such that $\left\|\theta - \frac{\q}{\|\q\|} \right\| < \varepsilon$. Next for any $\bx \in {H}$ %we can 
       {find %an integer number 
        $k\in\Z$} and $\p \in P$ such that $\left\| \bx - \frac{\p}{k \q} \right\| < \varepsilon$, and thus
        $$
        \dist \left( \pr\big({(\p,k \q)\big), (\bx,\theta)} \right) < \varepsilon.
        $$
        Let $\Phi$ be {an %linear 
        $(n-1)$-subsphere}
        %dimensional subspace 
        of {$\mathbb{S}^{n-1}$ such that %which does not contain %$\q_0$
       % {$ \frac{\q_0}{\|\q_0\|}$}, and let 
    \begin{equation}\label{dist_to_f0}
       \dist\left( \frac{\q_0}{\|\q_0\|}, \Phi \right) \ge 2 \varepsilon.
        \end{equation}}
        Combining \eqref{81A_closure} and \eqref{dist_to_f0}, we conclude that 
        $$
        \fR_0: =  %\left( {\mathbb S}^{n-1} \cap 
        {\Phi %\right)
         \times {H} }\subseteq  \overline{\fR} \smallsetminus \big( B_{\br_0}(\varepsilon) \cup B_{-\br_0}(\varepsilon)\big) \subseteq \overline{\fR \smallsetminus \big( B_{\br_0}(\varepsilon) \cup B_{-\br_0}(\varepsilon)\big)}.
        $$

        By Proposition \ref{density_any_k} (applied with $k = n-2$) and {Lemma} \ref{closure} {the collections $\mathcal{L}_{\fR_0}$ and} 
        \linebreak
        %the collection 
        $\mathcal{L}_{\fR \smallsetminus \left( B_{\br_0}(\varepsilon) \cup B_{-\br_0}(\varepsilon)\right)}$ are dense. {Hence, by Lemma \ref{usefullemma}}, {condition \eqref{local_dens_condition} holds for any open $U \subseteq M_{m,n}$ intersecting $L = \bfL_{\p_0, \q_0}$}.
        %there exists a  {nonempty} %convex  
    %open set $\Omega \subseteq M_{m,n}$ such that condition \eqref{local_dens_condition} holds for $\bfL = \bfL_{\p_0, \q_0}$. 
    Since the density of %the collection 
        $\mathcal{L}_{\fR}$ has %also
    {already} been established, we conclude that the collection $\mathcal{L}_{\fR} = \mathcal{L}_{P, Q}$ is totally dense. 
\smallskip

       % \item 
      %  item[\rm(b)]  
{Now consider part (b), where      $Q = Q_1\times\cdots\times Q_n\subset \Z\times\cdots\times\Z$ 
such that
 at least two of the sets $Q_i$ are infinite, 
and $P$ is of bounded Hausdorff distance from $\R^m$.
 Assume without loss of generality that $Q_1$ is unbounded  and %without loss of generality 
 $\q_0$  is not proportional to % \neq 
 $(1,0, \ldots, 0)$ (otherwise we can interchange $Q_1$ with $Q_i$ such that $i>1$ and $Q_i$ is unbounded).}
 %is equivalent to $\q_0 = (0, \ldots, 0, q)$ by interchanging $Q_1$ and $Q_n$ \comm{sorry, I don't understand this remark}). 
 Then
        \begin{equation}\label{1st_coord}
        \dist\left( \frac{\q_0}{\|\q_0\|}, (1, 0, \ldots, 0) \right) =: 2 \varepsilon > 0.
        \end{equation}
        Since $Q_1$ is unbounded, we can approximate any element of $\mathbb{R}^m$ with an arbitrarily small error by an element of the form $\frac{\p}{q_1}$, where $\p \in P$ and $q_1 \in Q_1$. Therefore, $\fR_0: = (1, 0, \ldots, 0) \times \mathbb{R}^m \subseteq  \overline{\fR}$. Combining this with \eqref{1st_coord}, we can %claim
        {conclude} that
        $$
        \fR_0 \subseteq  \overline{\fR} \smallsetminus \big( B_{\br_0}(\varepsilon) \cup B_{-\br_0}(\varepsilon)\big) \subseteq \overline{\fR \smallsetminus \big( B_{\br_0}(\varepsilon) \cup B_{-\br_0}(\varepsilon)\big)}.
        $$
        By Corollary \ref{sections_density} and {Lemma} \ref{closure} the {collections $\mathcal{L}_{\fR_0}$ and %the collection 
        $\mathcal{L}_{\fR \smallsetminus \left( B_{\br_0}(\varepsilon) \cup B_{-\br_0}(\varepsilon)\right)}$} are dense. The rest of the proof follows as in the previous case. %\comm{I don't understand: did you use the assumption that $Q_n$ is unbounded?}
%   \end{enumerate}
\end{proof}

\subsection{A totally dense subcollection of a non-totally dense collection}\label{subcoll_td_section}
In {the remaining part of the} section we %will show why our ability to consider collections which are not totally dense, but contain a totally dense subcollections, provides us with new nontrivial examples. We 
construct  {an example of} sets $P$ and $Q$ {that} satisfy {the assumptions of} Theorem \ref{not_on_line} 
%such that 
{yet} {for which} the collection $\mathcal{L}_{P, Q}$ is not totally dense ({and}, according to Theorem \ref{not_on_line}, contains a totally dense subcollection).

We will work in the simplest case $n = 2$, $m=1$. Let $\q_k: = (1, k), \, \q' : = (0, 1)$, and define
$$
Q = \{ \q_k \}_{k=1}^{\infty} \cup \{ \q' \}, \,\,\,\,\,\,\,\, P = \mathbb{Z}.
$$

\begin{proposition}\label{subcoll}
 The collection $\mathcal{L}_{P, Q}$ is not totally dense.
\end{proposition}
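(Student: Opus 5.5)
The plan is to violate the second clause of total density directly, using a horizontal line as the witness. We identify $M_{1,2}$ with $\R^2$ by writing $A=(x,y)$. Then $A\q_k=x+ky$ and $A\q'=y$, so $\mathcal{L}_{P,Q}$ consists of two families:
\begin{itemize}
\item the lines $\{x+ky=p\}$ with $k\in\N$ and $p\in\Z$;
\item the horizontal lines $\{y=p\}$ with $p\in\Z$.
\end{itemize}

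Since the definition of total density requires the interior condition for \emph{every} $L\in\mathcal{L}_{P,Q}$ and \emph{every} open $W$ meeting $L$, it suffices to exhibit one pair $(L,W)$ with $L\cap W\ne\varnothing$ for which the closure of $\bigcup_{L'\cap L\cap W\ne\varnothing}L'$ has empty interior. A non-horizontal $L$ is useless for this. The lines $x+k'y=p'$ meeting it near a given point have slopes $-1/k'\to0$, and their intersection heights $(p-p')/(k-k')$ are dense. Their closure therefore contains a horizontal strip, which has interior.

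So take $L=\{y=0\}$, which is $L_{0,\q'}$, and let $W=(\tfrac14,\tfrac34)\times(-1,1)$. Then $L\cap W=(\tfrac14,\tfrac34)\times\{0\}$ is nonempty. We check which members of the collection meet $L\cap W$.
\begin{itemize}
\item A line $x+k'y=p'$ meets $L$ exactly at the point $(p',0)$. This point lies in $W$ only if $p'\in(\tfrac14,\tfrac34)$, which is impossible for $p'\in\Z$. So no line of the first family qualifies.
\item A horizontal line $y=p'$ with $p'\neq0$ is parallel to $L$ and disjoint from it. Only $L$ itself remains.
\end{itemize}
Hence the union in question is $L$, whose closure is a line and has empty interior in $\R^2$. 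This contradicts total density and proves the proposition.

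There is no real obstacle in this argument. The only point needing care is choosing the witness $L$ horizontal rather than slanted, together with a window $W$ whose $x$-range contains no integer.
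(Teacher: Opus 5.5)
Your proof is correct, and it is noticeably shorter than the paper's, although both use the same witness $\bfL_0=\{y=0\}=\bfL_{0,\q'}$.

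The paper fixes an \emph{arbitrary} interval $(a,b)$ on $\bfL_0$ and shows that the closure of the union of the members meeting $\bfL_0\cap(a,b)$ never has interior. It assumes this closure contains a box $\Delta$ above the axis and notes three things:
\begin{itemize}
\item the relevant slanted lines all pass through the finitely many points $(p,0)$, $p\in\Z\cap(a,b)$;
\item for large $k$ these lines are nearly horizontal and miss $\Delta$;
\item the finitely many remaining lines cannot cover $\Delta$.
\end{itemize}

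You instead use the freedom in choosing $W$. Since $x+k'y=p'$ meets $\bfL_0$ only at the integer point $(p',0)$, a window whose trace on $\bfL_0$ contains no integer abscissa is met by no slanted member at all, and the union collapses to $\bfL_0$. Total density fails as soon as a single pair $(\bfL,W)$ violates the interior condition, so this is enough. It also dispenses with the box estimate entirely. That estimate needs some care with orientation in the paper, because the lines $x+ky=p$ have slope $-1/k$.

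What the paper's route buys is a stronger fact: the interior condition fails for \emph{every} window meeting $\bfL_0$. This includes windows around integer points, where infinitely many slanted lines do cross $\bfL_0$; there the point is that these lines accumulate only onto $\bfL_0$.

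For the proposition as stated, your kind of window suffices. It even suffices to conclude that no subcollection containing a horizontal line $y=p_0$ is totally dense. The slanted lines meet that line at the integer abscissas $p'-k'p_0$, and passing to a subcollection only shrinks the union.

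Your preliminary remark about slanted $\bfL$ is only motivation and is not needed for the argument.
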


\begin{proof}
    Let $\bfL_0: = \{ (x, y): \,\,y = 0 \} \in \mathcal{L}_{P,Q}$. We will show that for any two real numbers $a < b$ the set 
\begin{equation}\label{closure_in_counterex}
        \overline{\bigcup\limits_{(\p, \q) \in P \times Q: \,\,\, \bfL_{\p, \q} \cap \bfL_0 \cap (a, b) \neq \varnothing} \bfL_{\p, \q} }
    \end{equation}
    has empty interior.
    {Assume} that the interior of this set is nonempty, and in addition that this set contains an open subset in the upper half-plane (the lower half-plane case is analogous). The key observation here is that in this case the set \eqref{closure_in_counterex} contains a set $\Delta$ of the form
    $$
    \Delta = \{ (x, y): \,\,\, x_1 < x < x_2, \, \delta < y < 2 \delta \}
    $$
    %as a subset 
    (here $x_1 < x_2$ are two real numbers and {$\delta>0$}).
    % is a positive real parameter). 
    We %want to show 
    {claim} that all but finitely many of the lines $\bfL_{\p, \q}$ intersecting the interval % $(a, b)$ 
    {$\{ (x, y): a<x<b, \ y = 0 \}$} of the line $\bfL_0$ pass strictly under $\Delta$.  

    Indeed: if $k > \frac{x_1 - a}{\delta}$, then any line of the form $\bfL_{\p, \q_k}$ is strictly under the line $\{(x, y): \,\,\, x - a = ky \}$, which is, in turn, strictly under $\Delta$. Lines of the form $\bfL_{\p, \q'}$ are parallel to $\bfL_0$ and thus are %out of consideration
    {need not be considered}. There are only finitely many values $k \leq \frac{x_1 - a}{\delta}$ and for each of them there are only finitely many values $\p \in \mathbb{Z}$ such that $\bfL_{\p, \q_k} \cap (a, b) \neq \varnothing$. Thus the closure of the union of these lines has empty interior, and in particular does not contain $\Delta$, which contradicts our assumption and completes the proof. %\comm{Even though we can use Theorem \ref{not_on_line} to guarantee a totally dense subcollection, why not also  explicitly pick a line to throw away from the collection to get totally density?}
\end{proof}

{\begin{remark} \rm
    In this particular case, we {can not only}  say that $\mathcal{L}_{P,Q}$ contains a totally dense subcollection, but {also} explicitly describe one. Namely, one {can} remove %the point 
    $\q'$ from %the set 
    $Q$,
    or, equivalently, remove all the horizontal lines from consideration. It is not hard to verify that the resulting collection %will be
    is totally dense.
\end{remark}
}

\section{Proof of Lemma \ref{opensetlemma}.}\label{technicalsection}

Recall that we are given  $0 \leq k \leq \min(n-1, m)$ and an  open subset $U$ of ${H} \times \Phi$, where $H$ is an $(m-k)$-dimensional linear subspace of $ \mathbb{R}^m$ and $\Phi$ a $k$-dimensional subsphere    of $\mathbb{S}^{n-1}$; our goal is to prove that the set 
    $\bigcup\limits_{
%(\p, \theta)
\br\in U} \bfL_{
%\p, \theta
\br}$
    has a nonempty interior in $M_{m,n}$.

We start the proof with a few reductions. It is enough to take $U$ to be a product of a small neighborhood of some $\theta_0\in\Phi$   and a small neighborhood of some $\p_0\in H$. Also, it suffices to prove the lemma with   $\Phi$, $\theta_0$ and $H$ being fixed, and the general case will follow from the transitivity of rotations of $\R^m$ and $\R^n$. Finally, instead of a   neighborhood of   $\theta_0\in\Phi$ it will be easier to work with  a  neighborhood of   $\theta_0$ in the tangent space to $\Phi$ at $\theta_0$; this is easily justified by the fact that one is a homeomorphic image of the other.

%\smallskip

    Now %we can work
    let us choose $\theta_0 = (1, 0, \ldots, 0)$ and $\Phi =   \mathbb{S}^{n-1}\cap \{\by\in\R^n:  y_{k+2} = \cdots =y_n = 0\}$. Then the  $\varepsilon$-neighborhood of   $\theta_0$ in the tangent space to $\Phi$ at $\theta_0$ can be parametrized as $\{ R_{\theta} \theta_0: \, \|\theta\| < \varepsilon \}$, where  
    %$R_{\theta}$ is a unipotent matrix given by 
    $$
    \theta = \begin{pmatrix}\theta_2 \\ \vdots \\ \theta_{k+1}\end{pmatrix}\quad\text{ and }\quad R_{\theta} = \begin{pmatrix}
        1 & 0 & 0  \\
        \theta & I_k & 0 \\
        0 & 0 &  I_{n-k-1}
    \end{pmatrix}. 
    $$
    Also let 
    $H = \{(\bx,0, \ldots, 0): \bx\in\R^{m-k}\}$, and fix 
    $\p_0 = (p_1, \ldots, p_{m-k}, 0, \ldots, 0) \in H$. In the computation below we will identify $H$ with  $\R^{m-k}$. 
    Then the  $\varepsilon$-neighborhood of   $\p_0$ in  $H$ can be expressed as $$\p_0 + \big\{ \bx \in H: \|\bx\| < \varepsilon \big\},$$ and, applying \eqref{scaling}, one can write\\
\begin{equation}\label{combinedmatrix}
\begin{aligned}
&\bigcup\limits_{(\p, \theta) \in B_{\varepsilon}(\p_0) \times B_{\varepsilon}(\theta_0)} \bfL_{\p, \theta} = \bigcup\limits_{\|\bx\| < \varepsilon, |\theta| < \varepsilon} \bfL_{\p_0 + \bx, R_{\theta}\theta_0} = \bigcup\limits_{\|\bx\| < \varepsilon, \|\theta\| < \varepsilon} \bfL_{\p_0 + \bx, \theta_0}R_{-\theta}\\
   & = \left\{ \left.\begin{pmatrix}
        p_1 + x_1 - \sum\limits_{j=2}^{k+1} a_{1,j} \theta_j & a_{1,2} & \ldots & a_{1,n} \\
        \ldots & \ldots & \ldots & \ldots \\
        p_{m-k} + x_{m-k} - \sum\limits_{j=2}^{k+1} a_{m-k,j} \theta_j & a_{m-k,2} & \ldots & a_{m-k,n} \\
        - \sum\limits_{j=2}^{k+1} a_{m-k+1,j} \theta_j & a_{m-k+1,2} & \ldots & a_{m-k+1,n} \\
          \ldots & \ldots & \ldots & \ldots \\
           - \sum\limits_{j=2}^{k+1} a_{m,j} \theta_j & a_{m,2} & \ldots & a_{m,n}
    \end{pmatrix}\ \right|\ 
    \begin{aligned}
    \|\bx\|&< \varepsilon, %\,%\, i = 1, \ldots, m-k;
    \\
    \|\theta\| &< \varepsilon%, \,\, j = 2, \ldots, k+1;
    \\
    a_{i,j} &\in \R\\ (i = \ &1, \ldots, m, \\ j = \ &2, \ldots, n)
    \end{aligned}
    \right\}.
\end{aligned}
    \end{equation}
    To prove that this set has nonempty interior for any $\varepsilon > 0$, we will show that the Jacobian of the real analytic map $\Pi: \R^{mn} \rightarrow \R^{mn}$ defined by the matrix from \eqref{combinedmatrix} %has a Jacobian determinant 
    is not identically equal to zero (and therefore the set of regular points %where the Jacobian   is nonzero
    of $\Pi$ is dense in $\R^{mn}$). For convenience, we will use a change of coordinates: let
    $$
    a_{i,1}: = x_i, \,\,\,\, i = 1, \ldots, m-k, \,\,\,\, \text{and} \,\,\,\, a_{i,1}: = \theta_{i-m+k+1},\,\,\,\, i = m-k+1, \ldots, m.
    $$
    Let $\Pi_{s, l}$ be the coordinate function in the $s$-th row and the $l$-th column of the matrix \eqref{combinedmatrix}. With the notation $\delta_{\alpha,\beta}: = \begin{cases}
                1, \,\,\,\,\alpha = \beta \\
                0, \,\,\,\, \alpha \neq \beta
\end{cases}$ it is straightforward to check that 
$$
\frac{\partial \Pi_{s, l}}{\partial a_{i,j}} =
\begin{cases}
    \frac{\partial \Pi_{s, 1}}{\partial x_i} = \delta_{s,i} \quad&\text{if }l = 1,\ j=1,\ i = 1, \ldots, m-k;\\\frac{\partial \Pi_{s, 1}}{\partial \theta_{i-m+k+1}} = a_{s, i-m+k+1}\quad&\text{if }l = 1,\ j=1,\ i = m-k+1, \ldots, m
    ;\\
    - \theta_j \delta_{s,i}\quad&\text{if }l = 1,\ j = 2, \ldots, k+1%,\ i = 1, \ldots, m
    ;\\
    0 \quad&\text{if }l = 1,\ j = k+2, \ldots, n%,\ i = 1, \ldots, m
    ;\\
    0 \quad&\text{if }l = 2, \ldots, n,\ j = 1%,\ i = 1, \ldots, m
    ;\\ \delta_{(s,l),(i,j)}&\text{if }l = 2, \ldots, n,\ j = 2, \ldots, n.
\end{cases}
$$
Then, with the notation
$$A_1 = \begin{pmatrix}
        a_{1,2} & \ldots & a_{1,k+1} \\
        \ldots & \ldots & \ldots \\
        a_{m-k,2} & \ldots & a_{mm-k,k+1}
\end{pmatrix}\quad\text{and}\quad A_2 = \begin{pmatrix}
        a_{m-k+1,2} & \ldots & a_{m-k+1,k+1} \\
        \ldots & \ldots & \ldots \\
        a_{m,2} & \ldots & a_{m,k+1}
    \end{pmatrix},$$
one can see that the Jacobian matrix of $\Pi$ has the form
$$
%J_\Pi = 
\begin{pmatrix}I_{m-k} & 0 & 0\\ -A_1^{\top} & -A_2^{\top}  & 0\\ * & * & I_{m(n-1)}  \end{pmatrix},
$$
and the proof is completed by noting that   the set $\{A\in\mr: A_2$ is invertible$\}$ is open and dense. \qed

%\printbibliography
%\end{document}

\end{document}